\documentclass[11pt]{article}
\usepackage[left=1in, right=1in, top=1in]{geometry}
\usepackage{mathtools}
\usepackage[toc,page]{appendix}
\usepackage{amsmath}
\usepackage{amssymb}
\usepackage[normalem]{ulem}
\usepackage[utf8]{inputenc}
\usepackage{tikz}
\usepackage{xcolor}
\usepackage{tikz-cd}
\usepackage{amsfonts}
\usepackage{amsthm}
\usepackage{hyperref}
\usepackage{xhfill}
\usepackage{mathrsfs}
\usepackage[english]{babel}
\usepackage{amsmath}
\usepackage{relsize}
\usepackage{authblk}
\usepackage{lipsum}
\usepackage{xcolor}
\newcommand{\ibull}{I_{\bullet}}

\newcommand{\tanxt}{\mct_{X/T}}
\newcommand{\mctbull}{\mct_{\bullet}}
\newcommand{\dbull}{\dd_{\bullet}}

\newcommand{\tanxjtj}{\mct_{X_j/T_j}}

\newcommand{\sDef}{\on{sDef}}

\newcommand{\xbull}{X_{\bullet}}
\newcommand{\mclbull}{\mcl_{\bullet}}
\newcommand{\oxj}{\mc{O}_{X_j}}

\newcommand{\oxnot}{\mc{O}_{X_0}}

\newcommand{\mcebull}{\mce_{\bullet}}

\newcommand{\mco}{\mc{O}}
\newcommand{\mcy}{\mc{Y}}

\newcommand{\prij}{\on{pr}_{ij}}
\newcommand{\prj}{\on{pr}_j}

\newcommand{\homiisq}{\underline{\Hom}_R(\iisq,R)}
\newcommand{\homomrk}{\underline{\Hom}_R(\Omega_{B/k} \otimes R,R)}
\newcommand{\iisq}{I/I^2}
\newcommand{\nspoly}{k[x,y,\theta_1, \theta_2]}

\newcommand{\oo}{\mc{O}}

\newcommand{\mca}{\mc{A}^1}

\newcommand{\osig}{\mc{O}_{\Sigma}}

\newcommand{\ou}{\mc{O}_{U}}

\newcommand{\ovscg}{\ov{\mc{S}\mc{C}}_g}

\newcommand{\scg}{\mc{S}\mc{C}_g}

\newcommand{\ovt}{\overline{t}}

\newcommand{\pull}{{}^*}

\newcommand{\prjpull}{\on{pr}_j^*}

\newcommand{\oy}{\mc{O}_Y}

\newcommand{\mce}{\mc{E}}

\newcommand{\pr}{\on{pr}}
\newcommand{\coker}{\on{coker}}

\newcommand{\minus}{{}^{-}}

\newcommand{\bracket}{[ \ , \ ]}
\newcommand{\hra}{\hookrightarrow}

\newcommand{\plus}{{}^{+}}

\newcommand{\omegaxjtj}{\Omega_{X_j/T_j}^1}

\newcommand{\ovmfr}{\ov{\mfr}}

\newcommand{\duaxs}{\omega_{X_S/S}}
\newcommand{\oxs}{\mc{O}_{X_S}}

\newcommand{\duaxiti}{\omega_{X_i/T_i}}

\newcommand{\duaxjtj}{\omega_{X_j/T_j}}
\newcommand{\uniso}{\underline{\on{Isom}}}
\newcommand{\hh}{\mathbb{H}}

\newcommand{\otimestwo}{{}^{\otimes 2}}

\newcommand{\duaxt}{\omega_{X/T}}

\newcommand{\xovt}{X_{\ov{t}}}

\newcommand{\tildelt}{\tilde{\delta}}

\newcommand{\Hilb}{\on{Hilb}}

\newcommand{\phipull}{\phi^*}

\newcommand{\pripull}{\on{pr}_i^*}

\newcommand{\cc}{\mc{C}}

\newcommand{\mcl}{\mc{L}}

\newcommand{\pibos}{\pi_{\bos}}
\newcommand{\pipull}{\pi^*}

\newcommand{\ppull}{p^*}

\newcommand{\lra}[1]{\overset{#1}{\longrightarrow}}

\newcommand{\mcg}{\mc{G}}

\newcommand{\bos}{\on{bos}}

\newcommand{\prijpull}{\on{pr}_{ij}^*}
\newcommand{\prjipull}{\on{pr}_{ji}^*}

\newcommand{\xbos}{X_{\on{bos}}}

\newcommand{\omegaxt}{\Omega_{X/T}^1}
\newcommand{\tbos}{T_{\on{bos}}}

\newcommand{\oc}{\mc{O}_C}
\newcommand{\jj}{\mc{J}}

\newcommand{\ipull}{i^*}

\newcommand{\ot}{\mc{O}_T}

\newcommand{\wh}[1]{\widehat{#1}}

\newcommand{\ii}{\mc{I}}

\newcommand{\mm}{\mc{M}}
\newcommand{\maxid}{\frak{m}}
\newcommand{\ox}{\mc{O}_X}

\newcommand{\mc}[1]{\mathcal{#1}}
\newcommand{\on}[1]{\operatorname{#1}}
\newcommand{\ov}[1]{\overline{#1}}
\newcommand{\sS}{\on{sSch}}

\newcommand{\pist}{\pi_*} 

\newcommand{\lgr}{\overset{\sim}{\longrightarrow}}

\newcommand{\pp}{\mathbb{P}}
\newcommand{\z}{\mathbb{Z}}

\newcommand{\Ber}{\on{Ber}}
\newcommand{\Spec}{\on{Spec}}
\newcommand{\mfr}{\frak{M}_g}

\newcommand{\ff}{\mc{F}}

\newcommand{\tilx}{\tilde{X}}
\newcommand{\mcx}{\mc{X}}

\usepackage{setspace} % Package to control line spacing
\newtheorem{theorem}{Theorem}[section]
\newtheorem{lemma}[theorem]{Lemma}
\newtheorem{prop}[theorem]{Proposition}

\newtheorem{definition}[theorem]{Definition}

\newtheorem{remark}[theorem]{Remark}

\newcommand{\inv}{{}^{-1}}
\newcommand{\dd}{\mc{D}}
\newcommand{\tbold}{\mc{T}}

\newcommand{\Hom}{\on{Hom}}

\newcommand{\und}[1]{\underline{#1}}
\newcommand{\undhom}{\und{\Hom}}
\newcommand{\mct}{\mc{T}}
\newcommand{\Def}{\on{Def}}
\usepackage[
 backend=biber,
 style=numeric,
 sorting=ynt
 ]{biblatex}
\title{Algebraicity of supermoduli of curves via Artin’s criteria } 
\author{Nadia Ott\footnote{\emph{Email address}: \href{mailto:nmott@imada.sdu.dk}{nmott@imada.sdu.dk}\\ \indent Department of Mathematics, University of Southern Denmark, Odense, Denmark.
}}

\date{}

\begin{document}

\maketitle

\begin{abstract}
We apply the supergeometric analogue of Artin's algebraicity criteria \cite{ott2023artin} to prove algebraicity for four moduli problems in supergeometry: supercurves, super Riemann surfaces, stable supercurves, and stable super Riemann surfaces. 
The algebraicity of the moduli of (stable) super Riemann surfaces is known (see e.g.\ \cite{codogni2017moduli, felder2020moduli}), but we give a new proof by verifying the super Artin conditions. 
The algebraicity of the moduli of (stable) supercurves is new.
\end{abstract}

\tableofcontents

\section{Introduction}

Artin's theorems on approximation \cite{artin1969algebraic}, algebraization of formal deformations \cite{artin1969algebraization}, and algebraicity of stacks \cite{artin1974versal} provide general criteria for functors to be algebraic. In ordinary algebraic geometry this allows one to verify algebraicity abstractly, rather than by constructing explicit atlases or quotient presentations. By contrast, in supergeometry, until recently \cite{ott2023artin}, analogues of Artin’s theorems were not available, and constructions of moduli spaces were necessarily explicit.

In this paper we apply the super Artin criteria to several moduli problems in supergeometry: supercurves (Section \ref{section: moduli of supercurves}), super Riemann surfaces (Section \ref{section: moduli of super riemann surfaces is superstack}), stable supercurves (Section \ref{section: moduli of stable supercurves is an alg stack}), and stable super Riemann surfaces (Section \ref{section: moduli of stable super Riemann surfaces is a superstack}).

The moduli space of super Riemann surfaces $\mfr$, often called \emph{supermoduli space}, is a central object in supergeometry. Super Riemann surfaces arise in superstring theory as worldsheets of superstrings, and their moduli spaces appear as domains of integration for superstring amplitudes. The first mathematical construction of $\mfr$ as a super--orbifold was given in \cite{lebrun1988moduli}. Constructions for $g>1$ as algebraic superspaces were obtained in \cite{perez1997global}, and as a complex-analytic Deligne--Mumford superstack in \cite{codogni2017moduli}. 
Variants with Ramond punctures were shown to be algebraic superspaces in \cite{bruzzo2021supermoduli} for $g>1$, and Deligne--Mumford superstacks for $g=0$ and $r>1$ in \cite{ott2023supermoduli} via explicit constructions of \'etale atlases.
Recent work in \cite{bruzzo2025moduli} shows that the moduli of stable supermaps from super Riemann surfaces to projective superschemes form a Deligne-Mumford superstack.

In Section \ref{section: moduli of super riemann surfaces is superstack} we give a new proof of the algebraicity of $\mfr$ by verifying the Artin conditions.

\begin{theorem}
    The moduli space of genus $g \ge 2$ super Riemann surfaces is an algebraic superstack. 
\end{theorem}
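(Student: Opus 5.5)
We verify, for the stack $\mfr$ of genus $g\ge 2$ super Riemann surfaces over locally noetherian superschemes over $k$, the hypotheses of the super Artin criterion of \cite{ott2023artin}. These are: $\mfr$ is a stack in the \'etale topology; it is limit preserving (locally of finite presentation); it satisfies the super Schlessinger--Rim homogeneity condition; it has finite-dimensional tangent and obstruction spaces admitting a coherent deformation--obstruction theory; formal objects are effective; and the diagonal is representable by algebraic superspaces. We intend to deduce most of these from the corresponding statements for the stack of supercurves treated in Section \ref{section: moduli of supercurves}, viewing a super Riemann surface as a smooth proper $1|1$ supercurve $X/T$ together with a rank $0|1$ distribution $\dd\subset\tanxt$ for which $\dd\otimes\dd\to\tanxt/\dd$, induced by the bracket, is an isomorphism.

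\textbf{Stack, limits, homogeneity.} Descent for $(X,\dd)$ follows from fpqc descent for the supercurve $X$ and for the quasicoherent subsheaf $\dd$; the nondegeneracy condition is local. Limit preservation: a family over $\varprojlim T_i$ descends to some $T_i$ by standard noetherian approximation for proper flat superschemes, and $\dd$ and the isomorphism $\dd^{\otimes 2}\cong \tanxt/\dd$ descend as well. Homogeneity: given $T'\leftarrow T\to T''$ with $T\hra T'$ a square-zero (even or odd) thickening, gluing the supercurves uses the pushout of structure sheaves exactly as in the supercurve case. The distribution then glues because $\dd$ is determined by a local generator $D$ with $D^2$ nonvanishing modulo $\dd$.

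\textbf{Deformation theory.} The key computation is the tangent–obstruction theory. First-order deformations of $(X,\dd)$ over $T_0$ are governed by the sheaf $\mct_{\dd}\subset \tanxt$ of vector fields whose bracket preserves $\dd$. Using superconformal coordinates $(z\,|\,\theta)$ with $D=\partial_\theta+\theta\partial_z$, one shows $\mct_{\dd}\cong \dd^{\otimes 2}\cong\tanxt/\dd$ as sheaves of $\ox$-modules (not $\ox$-linearly embedded), via $v\mapsto v \bmod \dd$. Tangent space $H^1(X_0,\mct_{\dd})$ and obstructions in $H^2(X_0,\mct_{\dd})=0$ then follow by the \v{C}ech argument for locally trivial deformations: superconformal structures are locally rigid, since any two SRS over an affine thickening are locally isomorphic by the super Darboux theorem. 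Hence $H^0$ gives infinitesimal automorphisms, $H^1$ deformations, and $H^2$ obstructions, all coherent and finite-dimensional by properness; relative versions over a base $T$ give the required coherence and constructibility. For $g\ge2$, $H^0(\mct_{\dd})=0$ because $\dd^{\otimes 2}$ has negative degree on the reduced curve (its bosonic part is $\omega^{-1}$ and its odd part $\omega^{-1/2}$). This gives unramified diagonal and a Deligne--Mumford type.

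\textbf{Effectivity and diagonal.} For a compatible system $(X_n,\dd_n)$ over $\Spec A/\maxid^{n+1}$, the supercurves algebraize by super Grothendieck existence, using an ample line bundle such as $\Ber$ or $\omega_{X/T}^{\otimes 3}$, which is relatively ample for $g\ge2$. The subsheaf $\dd$ algebraizes as a coherent subsheaf by formal GAGA, and the isomorphism condition is open and closed on a proper base neighborhood, hence holds. The diagonal $\uniso_T((X,\dd),(X',\dd'))$ is a closed subspace of $\uniso_T(X,X')$, cut out by the condition of preserving distributions; the latter is representable by the supercurve case, or via super Hilbert schemes of graphs. Finally, from the reduction to supercurves, I expect the main obstacle to be the deformation step. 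There one must verify carefully that $\mct_{\dd}$ is a coherent sheaf, identified with $\dd^{\otimes 2}$, and that the obstruction theory satisfies the specific linearity and compatibility axioms demanded in \cite{ott2023artin}, including their behaviour for odd square-zero extensions.
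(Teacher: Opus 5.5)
Your outline is correct and takes essentially the same route as the paper. You verify the super Artin conditions by reducing descent, limits, the diagonal and effectivity to strongly projective supercurves; for effectivity you algebraize $\mathcal{D}\subset\mathcal{T}_{X/T}$ by Grothendieck existence. You control deformations by the superconformal vector fields $\mathcal{A}_{X/T}\cong\mathcal{D}^{\otimes 2}\cong\omega_{X/T}^{\otimes -2}$, with $H^2=0$ giving unobstructedness. The only real divergence is that you get (S1)a from the standard pushout $\mathcal{O}_Y\times_{\mathcal{O}_X}\mathcal{O}_W$, whereas the paper inducts along powers of the odd ideal starting from the stack of spin curves. Two points need tightening: your descent of $X$ should explicitly use the canonical polarization by $\omega_{X/T}$, and you should state (A7) and (A8), which follow as in the supercurve case from flat base change of \v{C}ech cohomology of $\mathcal{D}^{\otimes 2}$.
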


Once algebraicity is established, the Deligne--Mumford property follows from the finiteness of automorphism groups (see e.g.\ \cite{deligneletter, faroogh2019existence}).

The space $\mfr$ is not compact. A compactification $\ovmfr$ by stable super Riemann surfaces was outlined by Deligne in a letter to Manin \cite{deligneletter}, up to verification of Artin’s conditions. Its existence as a Deligne--Mumford superstack was later established in \cite{felder2020moduli, faroogh2019existence} by constructing an \'etale atlas. In Section \ref{section: moduli of stable super Riemann surfaces is a superstack} we give a proof of algebraicity using the super Artin criteria.

\begin{theorem}
    The moduli space of stable super Riemann surfaces is an algebraic superstack. 
\end{theorem}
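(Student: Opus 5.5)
We verify the hypotheses of the super Artin criteria \cite{ott2023artin} for the fibered category $\ovmfr$ over superschemes. This category assigns to a superscheme $T$ the groupoid of flat proper families $X \to T$ of relative dimension $1|1$ whose bosonic fibres are stable nodal curves of genus $g$. Each family carries a superconformal structure: a rank $0|1$ distribution $\dd \subset \tanxt$ with $\dd\otimes\dd \to \tanxt/\dd$ an isomorphism away from the nodes, and the standard local model at the nodes (Deligne's NS/Ramond type). The plan is to reduce most conditions to those for stable supercurves, treated in Section \ref{section: moduli of stable supercurves is an alg stack}, and then show that adding the superconformal structure is a representable, locally finitely presented condition.

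\emph{Step 1 (stack and limit-preserving).} Descent for families of proper flat superschemes holds in the fppf topology because, on stable supercurves, a relatively ample line bundle is canonical: a power of $\duaxt$, as in the bosonic case. The distribution $\dd$ is a subsheaf, so it descends automatically. Local finite presentation follows by the standard limit arguments, since $X$, $\dd$ and the local normal forms at the nodes are finitely presented.

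\emph{Step 2 (Schlessinger--Rim homogeneity and effectivity).} Homogeneity for pushouts $T' \sqcup_T T''$ along nilpotent thickenings follows from the corresponding statement for flat superschemes, using Ferrand pushouts in the super setting. The structure $\dd$ glues because it is a subbundle condition. Effectivity of formal objects follows from the super Grothendieck existence theorem applied to the ample $\duaxt^{\otimes 3}$: the formal family algebraizes, and $\dd$, being a coherent subsheaf with locally free quotient away from the nodes, algebraizes by Grothendieck existence for coherent sheaves. Its local normal form at the nodes holds formally, hence étale locally by Artin approximation.

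\emph{Step 3 (deformation--obstruction theory).} This is the main obstacle. We need a coherent deformation theory $\Def$ and obstruction theory compatible with étale localization and completion, with finite-dimensional tangent and obstruction spaces over an artinian base. For smooth points we use the sheaf $\mc{A}$ of infinitesimal superconformal automorphisms, namely the sections of $\tanxt$ preserving $\dd$. Via $v\mapsto v \bmod \dd$ it is isomorphic to $\tanxt/\dd \cong \dd^{\otimes 2}$ as a sheaf of $\ot$-modules, though not as Lie algebras. First-order deformations are then $H^1(X_0,\mc{A})$, obstructions lie in $H^2 = 0$, and automorphisms are $H^0$. Near the nodes I would replace this by a two-term complex: the local-to-global Ext sequence, in the style of $\underline{\mathrm{Ext}}^1(\Omega, \mco)$ for stable curves, adapted to the sheaf of $\dd$-preserving derivations and the local smoothing parameters of the super node. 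The key check is that local deformations at a node are unobstructed with finite-dimensional tangent space; this comes from the explicit local model $xy = t$ together with the odd gluing, following Deligne \cite{deligneletter}. Constructibility and openness of versality are then deduced from the coherence of these cohomology sheaves, by the super version of Artin's argument. Combined with finiteness of automorphisms, this yields algebraicity.
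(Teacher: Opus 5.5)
Your overall plan matches the paper's. You verify the super Artin conditions for $\ovmfr$, reduce the underlying-curve parts to stable supercurves, govern deformations by $\mc{A}$ on the smooth locus plus a local superconformal term at the nodes, and take unobstructedness from the miniversal superconformal node models of Deligne and \cite{felder2020moduli}. These models live over $\Spec k[t]$; note that the NS model is $xy=t^2$, $x\theta_2=t\theta_1$, $y\theta_1=t\theta_2$, not $xy=t$. Two things, however, are genuinely missing. First, you never verify (A1), representability of the diagonal. Finiteness of automorphism groups does not supply it; it only upgrades an already algebraic superstack to a Deligne--Mumford one. The paper obtains (A1) in two steps. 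It identifies isomorphisms of the underlying stable supercurves with an open subscheme $U$ of the Hilbert superscheme of $X\times_T X'$, using projectivity of stable supercurves. It then cuts out the superconformal isomorphisms as the closed locus where $\phi^*\delta_{X'}-\delta_X$ vanishes, by \cite[Lemma 6.1]{felder2020moduli}.

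Second, your encoding of the structure fails at NS nodes, and Steps 1--2 depend on it. Take any odd vector field restricting to $u_1(\partial_{\theta_1}+\theta_1\partial_x)$ and $u_2(\partial_{\theta_2}+\theta_2\partial_y)$ on the two branches. Both $u_1$ and $u_2$ must vanish at the preimages of the node. So the field generates a strictly smaller submodule than the saturated distribution, which contains the field equal to $x(\partial_{\theta_1}+\theta_1\partial_x)$ on one branch and $0$ on the other. Hence there is no canonical locally free rank $(0|1)$ subsheaf $\dd\subset\tanxt$ at an NS node. Moreover, $\tanxt$ does not commute with base change near nodes, so neither pullback of $\dd$ nor your ``subbundle'' gluing is well defined. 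Likewise, $\duaxt$ is not invertible at an NS node. Locally $\duaxt\cong\oc\oplus\Pi\Hom_{\oc}(\mathfrak{m}_q,\oc)$, with $\mathfrak{m}_q\subset\oc$ the ideal of the node, and this needs one even and two odd generators (the paper's $s_0$, $[dx|d\theta_1]$, $[dy|d\theta_2]$). So no power of $\duaxt$ is a line bundle, and it cannot serve as the canonical polarization for descent or for Grothendieck existence.

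The paper avoids both problems. It takes the structure to be a derivation $\delta\colon\ox\to\duaxt$, equivalently $\tildelt\colon\omegaxt\to\duaxt$, whose source and target commute with base change because $X/T$ is relatively Cohen--Macaulay. It polarizes with a lift of $\omega_{C/S}^{\otimes n}$ from the bosonic reduction; the obstructions to lifting lie in $H^2$ of coherent sheaves on a curve and so vanish. With that formulation, your Ferrand-pushout argument for (S1) is a standard alternative to the paper's $J$-adic induction. Step 3 still needs the concrete local term: the paper uses the kernel $\mca\subset\mct^1_{X/T}$ of $\alpha$ (Ramond) and of $(\alpha_1,\alpha_2,a_1-a_2)$ (NS). This gives the exact sequence $0\to H^1(X,\mc{A}\otimes_A M)^+\to\sDef_{X/T}(T')\to H^0(X,\mca\otimes_A M)^+\to 0$, and (A3) and (A6)--(A8) are then checked on its two outer terms.
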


Again, the Deligne--Mumford property follows from the finiteness of automorphism groups \cite{deligneletter, faroogh2019existence}.

To verify the deformation-theoretic Artin conditions, we place the deformation theory of stable super Riemann surfaces into a superconformal $\mct^i$--sheaf framework (Theorem \ref{scf deformations of stable srs}) \footnote{A super version (not superconformal) of the $\mct^i$ sheaf formalism is given in Appendix \ref{appendix}.}. We introduce superconformal analogues of the classical $\mct^0$ and $\mct^1$ sheaves and show that the resulting deformation theory fits into exact sequences analogous to those governing deformations of schemes. The superconformal analogue of $\mct^0$ is the subsheaf $\mc{A}\subset\mct$ of superconformal vector fields, and the analogue of $\mct^1$ is a subsheaf $\mca\subset\mct^1$ defined in Section \ref{definition of aone}.

A super Riemann surface is a pair $(X/T,\mc{D})$, where $X/T$ is a smooth supercurve and $\mc{D}$ is a superconformal structure. Similarly, a stable super Riemann surface is a pair $(X/T,\delta)$ where $X/T$ is a stable supercurve and $\delta$ is a superconformal structure compatible with the singularities.

Here a \emph{supercurve} is a smooth proper superscheme of dimension $(1|1)$, and a \emph{stable supercurve} is a flat proper superscheme of dimension $(1|1)$ whose singularities are nodes (of Ramond or Neveu--Schwarz type). Thus the moduli of super Riemann surfaces and stable super Riemann surfaces may be viewed as moduli of supercurves and stable supercurves equipped with additional structure.

The moduli problems for supercurves and stable supercurves themselves have not been studied independently, although they form the natural base moduli spaces over which superconformal structures are imposed. In this paper we prove that both are algebraic superstacks.

\begin{theorem}
The moduli space of (stable) supercurves $\ovscg$ is an algebraic superstack.
\end{theorem}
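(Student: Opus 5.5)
We verify the hypotheses of the super Artin algebraicity criterion of \cite{ott2023artin} for the fibered category $\ovscg$ (and likewise for its smooth locus $\scg$) over the category of locally noetherian superschemes over $k$. Concretely, we check five things: (i) $\ovscg$ is a stack for the fppf (or étale) topology with representable, quasi-compact, separated diagonal; (ii) it is limit preserving; (iii) it satisfies the super Schlessinger--Rim homogeneity condition; (iv) it is effective on complete local superrings; and (v) it admits a finite deformation/obstruction theory that is compatible with étale localization and completion, so that openness of versality holds. Working throughout over a fixed genus $g$, a family is a flat proper morphism $X\to T$ of relative dimension $(1|1)$ whose geometric fibers are smooth supercurves, or stable supercurves with nodes of Ramond or Neveu--Schwarz type.

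\emph{Stack property and diagonal.} Descent for the family $X/T$ follows from descent of quasi-coherent sheaves of superalgebras together with the existence of a relatively ample line bundle. For the latter we use a power of $\omega_{X/T}$, or in the smooth case the Berezinian dualizing sheaf restricted appropriately; this is canonical, so the polarization descends. For the diagonal, we identify $\uniso_T(X,Y)$ with an open subsuperscheme of $\Hilb(X\times_T Y/T)$, namely graphs of isomorphisms. Representability of super Hilbert schemes of projective superschemes then gives representability, with quasi-compactness coming from boundedness of the Hilbert polynomial. Separatedness follows from the valuative criterion applied to the bosonic reduction, where the classical statement for (stable) curves $\xbos$ applies, together with nilpotent extension.

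\emph{Limit preservation, homogeneity, effectivity.} Limit preservation is standard finite-presentation bookkeeping: flat proper finitely presented morphisms descend along filtered limits, and both smoothness and the ``nodal of given type'' condition are open. For homogeneity we use pushouts of superschemes along nilpotent closed immersions $X_1\leftarrow X_0\to X_2$ with one map a thickening, which preserves flatness by the super version of Schlessinger's lemma. Effectivity uses super Grothendieck existence: a compatible system $X_n/\Spec A/\maxid^{n+1}$ carries compatible ample $\omega^{\otimes 3}$ and therefore algebraizes.

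\emph{Deformation and obstruction theory (main obstacle).} This is the step I expect to be hardest. I plan to use the super $\mct^i$ formalism from Appendix~\ref{appendix}. Infinitesimal automorphisms are governed by $H^0(X_0,\mct_{X_0}\otimes M)$. Deformations are governed by $\mathrm{Ext}^1(\Omega_{X_0},\mc{O}_{X_0}\otimes M)$, which sits in a local-to-global sequence with $H^1(\mct^0)$ and $H^0(\mct^1)$. Obstructions lie in $\mathrm{Ext}^2$, which reduces to $H^2(\mct^0)=0$ (by dimension), $H^1(\mct^1)=0$ (since $\mct^1$ is supported at the nodes), and $H^0(\mct^2)=0$ (since nodes are local complete intersections). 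So we must compute $\mct^1$ locally at super nodes, $xy=0$ in the NS case and the Ramond variant with odd coordinates, and show these modules are finite and compatible with base change. The remaining conditions are that these modules are coherent, commute with étale base change and completion, and satisfy constructibility. These follow from semicontinuity applied to the $\mathbb{Z}/2$-graded Ext sheaves, once one checks that the super Ext sheaves are coherent over the base; I would deduce this from the bosonic case via the filtration by powers of the odd nilpotent ideal. Openness of versality then follows by the criterion in \cite{ott2023artin}, and the conclusion is that $\ovscg$ is algebraic.
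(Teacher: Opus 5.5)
Two of the classical inputs you import fail for supercurves, and both carry weight in your argument. \emph{First, the polarization.} You take a power of $\omega_{X/T}$ as a canonical relatively ample line bundle. You use it three times: for descent (``this is canonical, so the polarization descends''), implicitly for projectivity of $X\times_T Y$ in the Hilbert scheme argument for the diagonal, and for effectivity (``compatible ample $\omega^{\otimes 3}$''). But over a bosonic base $\omega_{X/T}|_C\cong\omega_{C/T}\otimes\jj^{-1}$, so by \eqref{degree formula} its degree is $(2g-2)-\deg\jj$. This is zero whenever $\Pi\jj$ has degree $2g-2$, and then no nonzero power of $\omega_{X/T}$ is relatively ample (Lemma~\ref{theorem: degree condition for strongly projective}). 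That is exactly why the paper treats only \emph{strongly projective} supercurves in the smooth case.

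For stable supercurves it is worse. $\Pi\jj$ is an arbitrary torsion-free rank one sheaf, so even with only Ramond nodes the multidegree of $\omega_C\otimes\jj^{-1}$ can have both signs. At an NS node $\omega_{X/T}$ is not invertible at all: its odd part $\mathcal{H}om_{\oc}(\Pi\jj,\omega_C)$ is not free there, and $\jj$ kills its even part, so it is not generated by one section. The missing idea is Lemma~\ref{lemma: stable are projective}: lift $\omega_{C/\tbos}^{\otimes n}$, rather than $\omega_{X/T}$, through the filtration $\ox/\jj^{k+1}$, where the obstructions lie in $H^2$ of quasi-coherent sheaves on a curve and vanish. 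Strong relative very ampleness then follows from Proposition~\ref{propatwo}. Lemma~\ref{lemma: proj system of line on stable} does this compatibly along a formal system with $R^1\pist\mcl_j=0$, which is what Grothendieck existence needs. Such a lift is not canonical, so descent of objects needs a separate argument rather than ``the polarization descends''; the paper compares lifts on the smooth locus, where they are unique up to unique isomorphism, and extends across the nodes.

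\emph{Second, unobstructedness.} You get $H^0(\mct^2)=0$ from ``nodes are local complete intersections''. That holds for the Ramond model $k[[x,y,\theta]]/(xy)$, which is the hypersurface $xy=0$ (you attribute this equation to the NS node). It fails for the NS model $k[[x,y,\theta_1,\theta_2]]/(xy,x\theta_2,y\theta_1,\theta_1\theta_2)$. There the tangent space has dimension $(2|2)$, the ideal needs $(2|2)$ quadratic generators while the codimension is only $(1|1)$, and correspondingly $I/I^2$ is not free (Section~\ref{computation of ti for the nodes}). So nothing forces $\mct^2$ to vanish along NS nodes. Your argument therefore does not establish vanishing obstructions, nor the exact sequence $0\to H^1(X,\mct_{X/T}\otimes_A M)^+\to\Def_{X/T}(T')\to H^0(X,\mct^1_{X/T}\otimes_A M)^+\to 0$ on which your (A5)--(A8) rest.

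The paper obtains unobstructedness differently. It uses explicit miniversal deformations of both node models over the smooth bases $k[t,\eta]$ and $k[t_1,t_2,\eta_1,\eta_2]$ (Lemma~\ref{lemma: miniversal node}), transported to arbitrary bases \'etale locally (Lemma~\ref{lemma: over Tprime}). The remaining gluing obstruction lies in $H^2(X,\mct_{X/T}\otimes_A M)=0$.

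Once these two inputs are supplied, the rest of your outline is in line with the paper. Your pushout argument for (S1) is a legitimate and simpler alternative to the paper's induction on $J^k/J^{k+1}$, which relies on algebraicity of the bosonic stack.
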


The proof is given in Sections \ref{section: moduli of supercurves} and \ref{section: moduli of stable supercurves is an alg stack} by verifying the super Artin conditions. Although algebraicity for stable supercurves formally implies the result for smooth supercurves, we treat the smooth case separately by restricting in Section \ref{section: moduli of supercurves} to \emph{strongly projective supercurves}, i.e.\ supercurves admitting a polarization by a power of the canonical bundle.
This restriction simplifies the argument while omitting only a small class of supercurves (Lemma \ref{theorem: degree condition for strongly projective}). In particular, it does not exclude the supercurves underlying super Riemann surfaces (Lemma \ref{srs are strong projective}).

\paragraph{Future directions.}
For super Riemann surfaces and their stable compactification, the Deligne--Mumford property follows from algebraicity and the finiteness of automorphism groups. It is natural to ask whether the moduli of stable supercurves is Deligne--Mumford, and whether it admits a proper or compactification property analogous to the classical case.

More generally, the moduli of stable supercurves should play a foundational role for supergeometric moduli problems, serving as a base over which additional structures are imposed. It would be interesting to study moduli problems parametrizing supercurves together with additional geometric data (line bundles, vector bundles, coherent sheaves), and to determine whether these can be treated systematically using the super Artin criteria.

Another natural direction is to further develop the deformation theory of super Riemann surfaces in a more intrinsic manner. In this paper the sheaf $\mc{A}_{X/T}^1 \subset \mct_{X/T}^1$ is defined via explicit local computations at Ramond and NS nodes. It would be interesting to find a more intrinsic, functorial definition of $\mc{A}_{X/T}^1$ constructed directly from the quasi-superconformal structure $\delta$.

More broadly, many moduli problems in algebraic geometry and mathematical physics have not yet been developed in the supergeometric setting. We hope that this paper demonstrates that the super Artin criteria provide an effective tool for constructing such moduli spaces.

\section{Preliminaries}

Throughout the paper, $k$ will denote an algebraically closed field of characteristic zero. 

\subsection{Superalgebras and superschemes}

We use the heuristic term \emph{super object} in this section to mean the supergeometric analogue of a classical object (e.g.\ superalgebras, superschemes).
For any super object $A$, we write its $\z_2$-grading as
\[
A = A\plus \oplus A\minus.
\]
Morphisms of super objects are grading-preserving unless stated otherwise. If $X,Y$ are super objects, we write $\undhom(X,Y)$ for the set of all morphisms, both grading-preserving and grading-reversing. This set carries a natural $\z_2$-grading: the even component consists of grading-preserving morphisms, and the odd component consists of grading-reversing ones. We write $\on{Hom}(X,Y)$ for the even component.

We write \emph{superring} to mean a supercommutative Noetherian superring, and \emph{superalgebra} to mean a superring equipped with a $k$-superalgebra structure $k \to A$, which we refer to as its \emph{structure morphism}.
For a superring $A$, we will write $A_0$ for its \emph{full reduction}, i.e.\ the ordinary reduced ring obtained by quotienting $A$ by its nilradical. We write $J \subset A$ for the ideal generated by $A\minus$, and refer to it as the \emph{ideal of odd nilpotents}. The  \emph{bosonic reduction} of $A$ is the ordinary ring
\[
A_{\bos} := A/J.
\]

By a \emph{superscheme} we mean a locally superringed space $X=(|X|,\ox)$ with $\ox$ a sheaf of superalgebras and 
\[
X^{+} := (|X|,\ox^{+})
\]
an ordinary scheme over $k$.  We also require that the odd component $\ox\minus$ of $\ox$ is a quasi-coherent $\ox^{+}$-module.
For a superring $A$, the affine superscheme $\Spec(A)$ is defined in the usual way. Its bosonic reduction is $\Spec(A_{\bos})$. We denote by $\sS$ the category of Noetherian superschemes over $k$. In this paper, we assume all superschemes are locally Noetherian over the base.

A  superscheme over a base superscheme $T$ is a superscheme $X$ equipped with a morphism $X \to T$. All morphisms of superschemes $\pi:X \to Y$ are assumed to be \emph{local}: for every $x \in X$ with $y=\pi(x)$,
\[
\frak{m}_y \mc{O}_{X,x} \subseteq \frak{m}_x.
\]
The morphism $\pi$ is \emph{unramified} if the inclusion is an equality. 
We write $\jj \subset \ox$ for the ideal sheaf generated by $\ox\minus$. The \emph{bosonic reduction} of $X$ is the ordinary scheme
\[
\xbos := (|X|,\ox/\jj),
\]
and the quotient map $\ox \to \ox/\jj$ induces a closed immersion $\xbos \to X$.

A sheaf $\ff$ on a superscheme $X$ is an $\ox$-module if there exists for $\ox$-action on $\ff$: That is, for every $U \subset X$, an action of the superalgebra $\ox(U)$ on $\ff(U)$ compatible with restriction. A sheaf $\ff$ of $\ox$-modules is \emph{quasi-coherent} if for every open affine $U=\Spec(A) \subset X$ there exists an $A$-module $M$ such that $\ff \vert_U$ is the sheaf $M^{\sim}$ associated to $M$. It is \emph{coherent} if for every $U$, the $A$-module $M$ is finitely-generated, and \emph{locally free of rank $(m|n)$} if in addition $M$ is isomorphic to the free $A$ module of rank $(m|n)$: 
\[ A^{m|n}:= A^{\oplus n} \oplus \Pi A^{\oplus m}. \]
The sheaf associated to $A^{m|n}$ on $U$ is $(\ox \vert_U)^{m|n} $, so this means
\[ \ff \vert_U \cong (\ox \vert_U)^{m|n}.\]

A morphism $\pi:X \to Y$ of superschemes is \emph{flat} if for every point $x$ in $X$ and $y=\pi(x)$ in $Y$ the induced morphism  on local rings \[ \mc{O}_{Y,y} \to \mc{O}_{X,x} \]  makes $\mc{O}_{X,x}$  a flat $\oy$-module. Equivalently, the pullback functor
\[
\pipull(\ff) := \pi^{-1}\ff \otimes_{\pi^{-1}\oy} \ox 
\]
is exact for all sheaves $\ff$ off $\ox$-modules. 
The morphism $\pi:X \to Y$ is \emph{smooth of relative dimension $(m|n)$} if it is locally of finite presentation, flat, and the sheaf of relative differentials $\omegaxt$ is locally free of rank $(m|n)$. It is \emph{\'etale} if it is flat, of finite type, and unramified.
A morphism $f:X \to T$ is \emph{projective} (or \emph{superprojective}) if there exists a coherent sheaf $\mm$ on $T$ such that $f$ factors as
\[
X \xhookrightarrow{i} \pp(\mm) \xrightarrow{\pi} T,
\]
where $i$ is a closed immersion over $T$ and
\[
\pp(\mm) := \underline{\on{Proj}}_T(\mc{S}ym_{\ot}^{\bullet}\mm).
\]

We say that $X/T$ is \emph{strongly projective} if $\mm$ is locally free of finite rank. 

A line bundle $\mcl$ on $X/T$ is \emph{relatively very ample} if there exists a coherent sheaf $\mm$ on $T$ and a closed immersion
\[
i:X \hookrightarrow \pp(\mm)
\]
over $T$ such that
\[
\mcl \cong i^*\mc{O}_{\pp(\mm)}(1).
\]
It is \emph{relatively ample} if some power $\mcl^{\otimes n}$ is relatively very ample.
A line bundle $\mcl$ on $X/T$ is \emph{strongly relatively very ample} if the following conditions hold: 
\begin{enumerate}
\item[(i)] $f_*\mcl$ is locally free of finite rank.
\item[(ii)] The canonical map $f^*f_*\mcl \to \mcl$ is surjective. 
\item[(iii)] The induced morphism $i_{\mcl}:X \to \pp(f_*\mcl)$ is a closed immersion over $T$.
\end{enumerate}
A line bundle $\mcl$ is \emph{strongly relatively ample} if some tensor power of it is strongly relatively very ample.

The property of being strongly relatively very ample can be checked after restriction to the bosonic reduction: 

    \begin{prop}[Proposition A.2, \cite{felder2020moduli}] \label{propatwo}
        Let $X \to T$ be a morphism of superschemes. A line bundle $\mcl$ on $X$ is strongly relatively very ample over $T$ if 
        $\mcl|_{\xbos}$ is strongly relatively very ample on $\xbos$ over $\tbos$.
        \end{prop}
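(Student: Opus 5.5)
The plan is to lift each of conditions (i)--(iii) from $\xbos/\tbos$ to $X/T$ by a nilpotent-thickening argument. The filtration used is the one by powers of the ideal of odd nilpotents. Since all statements are local on $T$, we may assume $T=\Spec(A)$ is affine and Noetherian, with $J\subset A$ generated by $A\minus$; then $J$ is nilpotent, say $J^{N+1}=0$. On the source side we use the ideal $\jj\subset\ox$ and the closed immersion $\xbos\hookrightarrow X$. One subtlety: the hypothesis is stated on $\xbos$ over $\tbos$, while $X\times_T \tbos$ may differ from $\xbos$. I would therefore interpolate through the closed subschemes $X_n$ cut out by $\jj^{n+1}$ and argue over the successive quotients $\jj^n/\jj^{n+1}$. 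Each of these is a coherent $\mc{O}_{\xbos}$-module, because $\jj$ annihilates it.

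\textbf{Step 1 (cohomology and base change).} Since $\mcl|_{\xbos}$ is strongly relatively very ample, $f_{\bos*}(\mcl|_{\xbos})$ is locally free. Replacing $\mcl$ by $\mcl\otimes\mcl$ is not allowed here, so I would instead argue directly. Consider the short exact sequences
\[
0\to \jj^n\mcl/\jj^{n+1}\mcl \to \mcl/\jj^{n+1}\mcl \to \mcl/\jj^{n}\mcl\to 0 .
\]
The aim is to show that $f_*(\mcl/\jj^{n+1}\mcl)\to f_*(\mcl/\jj^n\mcl)$ is surjective and that its formation commutes with base change. I would deduce this from the theorem on formal functions or the standard criterion: local freeness of $f_*$ on the reduction, together with a vanishing of the obstruction in $R^1$, gives local freeness and base change. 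This step is where properness and flatness of $f$ enter, and here I am assuming $f$ proper and flat as in the intended application. Iterating up to $n=N$ shows that $f_*\mcl$ is locally free and that $f_*\mcl\otimes \mc{O}_{\tbos}\to f_{\bos*}(\mcl|_{\xbos})$ is an isomorphism.

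\textbf{Step 2 (surjectivity).} Let $C$ be the cokernel of $f^*f_*\mcl\to\mcl$. By Step 1 its restriction to $\xbos$ is the cokernel of the corresponding map on $\xbos$, which vanishes by hypothesis. Since $\jj$ is nilpotent, Nakayama's lemma then gives $C=0$.

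\textbf{Step 3 (closed immersion).} The morphism $i_\mcl:X\to\pp(f_*\mcl)$ is proper, since $X/T$ is proper and $\pp(f_*\mcl)/T$ is separated. Its restriction to the bosonic reductions is $i_{\mcl|_{\xbos}}$, which is a closed immersion. A proper morphism of superschemes is a closed immersion if it is injective on points with trivial residue extensions and surjective on structure sheaves. The first condition is topological and can be checked on the reductions. For the second, surjectivity of $\mc{O}_{\pp}\to i_*\ox$ follows from Nakayama once it holds modulo the nilpotent ideal generated by odd functions on $\pp(f_*\mcl)$. Its image contains the odd parts, because $\ox\minus$ is generated by sections of $\mcl$ times functions pulled back along $i_\mcl$, which lie in the image; the graded pieces $\jj^n/\jj^{n+1}$ are then handled by the same argument.

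\textbf{Main obstacle.} I expect Step 1 to be the hardest: establishing local freeness of $f_*\mcl$ and its base change across the odd nilpotent thickening. If the direct cohomology argument stalls, the fallback is to first reduce to $T$ split, i.e.\ $A=A_{\bos}\otimes\Lambda$ locally. There, $f_*\mcl$ decomposes using the $\jj$-adic grading into pieces computed on $\xbos$, and local freeness follows from that of each graded piece.
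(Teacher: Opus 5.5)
There is a genuine gap, and it cannot be closed for the statement as transcribed, because that statement is false without further hypotheses. The paper gives no proof of its own, only the citation \cite{felder2020moduli}, and your Step 3 is exactly where the argument breaks, even with your added flatness and properness.

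\textbf{Counterexample.} Take $T=\Spec k$, a smooth projective curve $C$, a line bundle $N$ on $C$, and the split supercurve $X=(|C|,\oc\oplus\Pi N)$ with projection $p\colon X\to C$. Then $\xbos=C$ and $\jj=\Pi N$. For a line bundle $L$ on $C$ of degree $\ge 2g+1$ put $\mcl=p^*L$. Then $\mcl|_{\xbos}=L$ is strongly very ample, while
\[
H^0(X,\mcl)=H^0(C,L)\oplus\Pi H^0(C,L\otimes N),
\]
and the odd summand vanishes once $\deg N<-\deg L$. In that case $\pp(f_*\mcl)$ is an ordinary projective space. A closed subsuperscheme of an ordinary scheme has purely even structure sheaf, but $\mathcal{O}_X^{-}=\Pi N\neq 0$. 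So $i_{\mcl}$ is not a closed immersion, although conditions (i) and (ii) hold.

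\textbf{Where Step 3 fails.} Nakayama modulo the odd ideal $\mathcal{J}_{\pp}$ of $\pp(f_*\mcl)$ only reduces you to surjectivity onto $\ox/\mathcal{J}_{\pp}\ox$. The closed immersion of bosonic reductions controls only $\ox/\jj$. Closing the gap between $\mathcal{J}_{\pp}\ox$ and $\jj$ requires odd sections of $\mcl$ that generate $\mcl\otimes\jj/\jj^2$. Your sentence asserting that $\mathcal{O}_X^{-}$ is generated by sections of $\mcl$ times pulled-back functions is precisely this unproved claim, and in the example above it is false.

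\textbf{Where Step 1 fails.} The $R^1$-vanishing you appeal to has no source. The hypothesis constrains only $\mcl|_{\xbos}$, not the twisted graded pieces $\mcl\otimes\jj^n/\jj^{n+1}$. Over $T=\Spec k[\epsilon]$, replace $N$ by a first-order deformation along which a section of $L\otimes N$ fails to extend; then $f_*\mcl$ is not even locally free. Your fallback of splitting $T$ does not help, since the first failure already occurs over a point.

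\textbf{What can be proved.} Keep $f$ flat and proper, and add one of the following.
\begin{enumerate}
\item[(a)] On every fibre $X_t$, the sheaves $\mcl_t\otimes\jj_t^n/\jj_t^{n+1}$ ($n\ge 0$) have vanishing $H^1$, and $\mcl_t\otimes\jj_t/\jj_t^2$ is globally generated.
\item[(b)] Weaken the conclusion to: $\mcl^{\otimes m}$ is strongly relatively very ample for $m\gg 0$.
\end{enumerate}
Version (b) implies (a) for $\mcl^{\otimes m}$, by relative Serre vanishing and relative global generation on $\xbos$ applied to the finitely many coherent $\mathcal{O}_{\xbos}$-modules $\jj^n/\jj^{n+1}$. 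Version (b) is also enough wherever the paper invokes the result, since each application only needs some tensor power.

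Under (a) your outline goes through:
\begin{enumerate}
\item[1.] Step 1 follows by d\'evissage along $\ox/\jj^{n+1}$ and cohomology and base change.
\item[2.] Step 2 is your Nakayama argument.
\item[3.] In Step 3, lifted odd sections give $\mathcal{J}_{\pp}\ox+\jj^2=\jj$, hence $\mathcal{J}_{\pp}\ox=\jj$ by nilpotence of $\jj$. Nakayama for $\mathcal{O}_{\pp}\to i_*\ox$ then reduces to the bosonic closed immersion.
\end{enumerate}
Your own remark that you may not replace $\mcl$ by a power is the crux: passing to a power is exactly what makes the statement true.
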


\subsection{Superstacks}
Let $\sS$ denote the category of Noetherian superschemes over $k$. 
\smallskip

The \emph{functor of points} of a superscheme $X$ is the contravariant functor
\[
h_X : \sS \longrightarrow \on{Set}
\]
sending a superscheme $T$ to the set \[ h_X(T):=\on{Hom}(T,X) \] of grading-preserving morphisms from $T$ to $X$.
A functor $F : \sS \to \on{Set}$ is \emph{representable} if there exists a superscheme $X$ such that $F \cong h_X$.

The functor of points formalism provides a precise formulation of a moduli problem. Given a class of objects together with a notion of isomorphism, define a functor
\[
M \colon \sS \longrightarrow \on{Set}
\]
sending a superscheme $T$ to the set of isomorphism classes of families parameterized by $T$. If $M$ is representable, then the moduli problem is represented by a superscheme, and we call it a \emph{fine moduli space}. 

Fine moduli spaces tend to not exist when the moduli objects have non-trivial automorphisms. This is especially common in supergeometry. 
The standard remedy is to incorporate isomorphisms into the moduli problem itself. Instead of recording only isomorphism classes, one assigns to each $T$ the groupoid of families over $T$, keeping track of both objects and their isomorphisms. A category in which every morphism is an isomorphism is called a groupoid.
One may encode such a moduli problem by a pseudo--functor
\[
\mcx \colon \sS \longrightarrow \on{Groupoids}
\]
where the category of superschemes $\sS$ is equipped with the big \'etale topology. 

Every such pseudo--functor gives rise to a \emph{category fibered in groupoids}. A category fibered in groupoids over $\sS$ is a functor
\[
p : \mcx \longrightarrow \sS
\]
satisfying the following conditions:

\begin{enumerate}
\item[(i)] For every morphism $f : S \to T$ in $\sS$ and every object $X$ of $\mcx$ over $T$, there exists a cartesian morphism $\tilde f : Y \to X$ lying over $f$.
\item[(ii)] Cartesian morphisms are stable under composition.
\end{enumerate}

A category fibered in groupoids $\mcx \to \sS$ is called an \emph{\'etale superstack} if the following descent conditions are satisfied:

\begin{itemize}
\item[(i)] (\emph{Descent for morphisms}.) For every $U \in \sS$ and $X,Y \in \mcx(U)$, the presheaf
\begin{align*}
\mathbf{\on{Isom}}(X,Y) : \sS/U &\longrightarrow \on{Set} \\
V &\longmapsto \{\alpha : X|_V \lgr Y|_V \text{ is an isomorphism in } \mcx(V)\}
\end{align*}
is a sheaf on the \'etale site $\sS/U$ of superschemes over $U$. 

\item[(ii)] (\emph{Descent for objects}.) For every \'etale cover $\{U_i \to U\}$, objects $X_i \in \mcx(U_i)$, and isomorphisms
\[
\alpha_{ij} : \prijpull X_i \lgr \prjipull X_j
\]
on overlaps $U_{ij}=U_i \times_U U_j$ satisfying the cocycle condition
$
\alpha_{ik} = \alpha_{jk} \circ \alpha_{ij}
$
on triple overlaps $U_{ijk}$, there exists an object $X \in \mcx(U)$ together with isomorphisms
$
\alpha_i : \pripull X \lgr X_i
$
whose restrictions to overlaps satify
$\alpha_{ij} = \prjipull \alpha_j \circ (\prijpull \alpha_i)^{-1}.$
\end{itemize}

A superstack $\mcx$ is \emph{algebraic}, or \emph{Artin}, if the diagonal morphism
\[
\Delta:\mcx\longrightarrow\mcx\times\mcx
\]
is representable, and there exists a smooth surjective morphism $\phi:U\to\mcx$ from a superscheme $U$.
An algebraic superstack is \emph{Deligne-Mumford} if $\phi$ is \'etale.

We now state the supergeometric analogue of  Artin's algebraicity theorem from \cite{ott2023artin}. The notation of the theorem will be used throughout the paper.
 
\begin{theorem}
    \label{theorem: introduction super artin statement} Let $\mcx$ be an \'etale superstack of the category $\sS$ of Noetherian superschemes over $k$. Then $\mcx$ is an algebraic superstack locally of finite type over $k$ if and only if the following conditions hold:
    
    \begin{enumerate}
    \item[\emph{(A1)}.] \emph{Representability of the diagonal}: The diagonal morphism $\Delta: \mcx \to \mcx \times \mcx$ is represented by an algebraic superspace locally of finite type. Equivalently, for every superscheme $T$ and $X,X' \in \mcx(T)$, the sheaf \[ \underline{\on{Isom}}(X,X'): \sS/T \to \on{Set} \] sending 
    $S \to T$ to the set of isomorphisms $X_S \lgr X_S'$ is
    representable by an algebraic superspace locally of finite type over $T$.
    \item[\emph{(A2)}.] \emph{Limit preserving}: For every directed system of superalgebras $(A_i \to A_j)$ with colimit $A= \varinjlim A_i$ the natural map
    \begin{equation} \label{intro: colimit preserving} \varinjlim \mcx(A_i) \to \mcx(A)\end{equation}
    is an equivalence of categories.
    \item[\emph{(A3)}.] \emph{Generalized Schlessinger conditions}: Let $A' \to A$ be a square-zero extension of superalgebras with kernel $M=\ker(A' \to A)$ a finitely-generated module over the full reduction $A_0:=A/\frak{N}$ of $A$. Let $X \in \mcx(A)$, and let $X_0$ denote the pullback of $X$ by $A \to A_0$. Define
    \begin{equation}
    \label{intro: dxm definition}
    D_X(M):= \ov{\mcx}_X(A[M]).
    \end{equation}
    \begin{enumerate}
        \item[\emph{(S1)a}.] For every surjection $B \to A$, the natural map
        \begin{equation}
        \label{intro: schless sonea}
        \ov{\mcx}_X(A' \times_A B) \to \ov{\mcx}_X(A') \times \ov{\mcx}_X(B)
        \end{equation}
        is a surjection. 
        \item[\emph{(S1)b}.] Let $X' \in \mcx(A')$ and let $X$ be its pullback by $A' \to A$. Then the canonical map
        \begin{equation}
        \label{intro: schless soneb}
        D_{X'}(M) \to D_X(M)
        \end{equation}
        is a bijection.
        \item[\emph{(S2)}.] $D_{X_0}(M)$ is a finitely generated $A_0$-module. 
    \end{enumerate}
    \item[\emph{(A4)}.] \emph{Effectivity}: Let $(R, \frak{m})$ be complete, local, Noetherian superalgebra. The natural map 
    \begin{equation} \label{intro: effectivity}  \mcx(R) \to \varprojlim \mcx( R/\frak{m}^{n+1})\end{equation}
    is an equivalence of categories.   
    \item[\emph{(A5)}.] \emph{Coherent obstruction theory}: The obstruction module $\mc{O}_X(M)$ is a coherent $A$-module. 
    \item[\emph{(A6)}.] \emph{Constructibility}:  Let $T=\Spec(A)$, and let $X \in \mcx(T)$. For every integral subscheme $S=\Spec(B) \subset T$ there exists a dense Zariski open $V=\Spec(C) \subset S$ such that for every closed point $t \in V$ the natural maps
    \begin{equation} \label{construct d}
    D_X(B^{1|1}) \otimes k(t) \;\longrightarrow\; D_X(k(t)^{1|1}),
    \end{equation}
    \begin{equation} \label{construct ob}
    O_X(B^{1|1}) \otimes k(t) \;\longrightarrow\; O_X(k(t)^{1|1})
    \end{equation}
    are bijective and injective, respectively.
    
    \end{enumerate}

    \item[\emph{(A7)}.] \emph{Compatible with completion}: 
    Let $\wh{A_0}$ denote the completion of $A_0$ along a maximal ideal $\maxid \subset A_0$. The natural map
    \begin{equation} \label{intro: compatible with completion}
    D_{X_0}(M) \otimes_{A_0} \wh{A_0} \;\longrightarrow\; \varprojlim_n D_{X_0}(M/\maxid^{n+1} M) 
    \end{equation}
    is a bijection. 
    
    \item[\emph{(A8)}.] \emph{Compatible with \'etale localization}:  Let $e: S=\Spec(B) \to T=\Spec(A)$ be an {\'e}tale morphism, and let $S_0$ and $T_0$ denote their respective full reductions. Let $X\in\mcx(T)$, and let $Y \in\mcx(S)$ be the base change of $X$ along $e$.  The natural maps
    \begin{equation} \label{intro: d is comp with etale morph}
    D_{Y_0}(M \otimes B_0) \to D_{X_0}(M)\otimes B_0 .
    \end{equation}
    \begin{equation} \label{intro: o is comp with etale morph}
        O_{Y_0}(M \otimes B_0) \to O_{X_0}(M)\otimes B_0 .
        \end{equation}
    are bijections. 
    
    \end{theorem}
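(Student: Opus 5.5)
The strategy is to reduce the statement to the classical Artin algebraicity theorem, in the form given by Artin and refined by Hall--Rydh, using two observations. First, over a Noetherian superalgebra $A$ the ideal of odd nilpotents $J$ is nilpotent, with $J^{n+1}=0$ for $n$ the number of odd generators. So $A$ is built from the ordinary ring $A_{\bos}$ by a finite tower of square-zero extensions $A \to A/J^{n} \to \cdots \to A/J^2 \to A_{\bos}$. Second, the even part $A\plus$ is an ordinary Noetherian commutative ring, and $A$ is a finite $A\plus$-module. With these in hand, one direction and the other go as follows.

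\emph{Necessity.} Suppose $\mcx$ is algebraic and locally of finite type. Then (A1) is part of the definition, because the diagonal is representable by algebraic superspaces. Limit preservation (A2) follows because a smooth atlas $U\to\mcx$ with $U$ locally of finite presentation commutes with filtered colimits, and descent lets us pass from $U$ to $\mcx$. The remaining conditions (A3)--(A8) are statements about deformation and obstruction modules, and these can be computed on a smooth atlas. There I would deduce them from the cotangent complex of a superscheme locally of finite type, which is the super analogue of the classical computation: the relevant modules are $\on{Ext}$-groups of a bounded-above complex with coherent cohomology. Finiteness, constructibility, compatibility with completion and compatibility with étale base change then follow as in the classical case, since all of these are statements about modules over the ordinary ring $A_0$.

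\emph{Sufficiency.} This is the heart of the proof. I would construct the atlas by producing formally versal deformations and algebraizing them.

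1. At each finite-type point $x_0\in\mcx(k)$, use (S1),(S2) to run a super Schlessinger argument. This builds a formally versal deformation over a complete local Noetherian superalgebra $R$ whose tangent space is $D_{x_0}(k^{1|1})$, taken as a super vector space. Obstructions are controlled by (A5). The key super input is that square-zero extensions of superalgebras by modules over $A_0$ still form the category on which the Schlessinger conditions are stated. This is why $D_X(M)$ is indexed by modules over the full reduction.

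2. Use (A4) to effectivize the formal family to an object of $\mcx(R)$.

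3. Algebraize. Apply Artin approximation to $\mcx$, regarded as a limit-preserving functor (A2) on superalgebras. Concretely, I would reduce to ordinary Artin approximation over the even subalgebra: present $R$ as a quotient of $R\plus[\theta_1,\dots,\theta_n]$ and regard the functor as one on $R\plus$-algebras. This yields a finite-type superscheme $U$, a point $u$, and an object $\xi\in\mcx(U)$ that is formally versal at $u$.

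4. Show that formal versality is an open condition. This uses (A6), (A7) and (A8), following Artin's original argument or Hall's version. The étale compatibility (A8) allows passage to étale neighbourhoods. Completion compatibility (A7) identifies the formal and algebraic deformation modules. Constructibility (A6) spreads versality from a point to a dense open set, and Noetherian induction finishes the step. Together with (A1), this shows that $U\to\mcx$ is smooth, and the disjoint union over points gives a surjective atlas.

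The main obstacle is step 3. Artin approximation must be justified for superalgebras, and the approximating object must remain versal. I would first try to reduce everything to the bosonic case by filtering by powers of $J$ and treating each graded piece $J^i/J^{i+1}$ as a coherent module over $A_{\bos}$. The deformation modules of (A3) are designed to be exactly the data needed for this dévissage. The openness of versality in step 4 should then carry over by the same filtration argument.
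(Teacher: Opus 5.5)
The paper gives no proof of this statement. It quotes the theorem from \cite{ott2023artin} and uses it as a black box, so there is no in-paper argument to compare your route against. On its own terms, your outline is Artin's classical blueprint (hull, effectivization, algebraization by approximation, openness of versality) transplanted to superalgebras, which is the natural strategy. But despite your opening sentence, nothing in it actually reduces to the classical theorem. The stack $\mcx$ is not a stack on ordinary schemes. The facts that $J$ is nilpotent and that $A$ is finite over $A\plus$ only tell you that every classical step must be redone while keeping track of odd directions. As written, the steps that carry the content are asserted rather than proved, and the $J$-adic d\'evissage you propose for them would not supply them.

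\textbf{Step 3 is missing two ideas.}
\begin{itemize}
\item \emph{Approximating the hull.} Artin approximation works over henselizations of finite-type rings, while your hull $R$ is only a complete local superalgebra. Passing to $R\plus$ does not help, since $R\plus$ is itself complete and the point is to replace $R$ by something of finite type. You must approximate simultaneously a presentation $R\cong k[[t_1,\dots,t_p]][\eta_1,\dots,\eta_q]/\mathcal{I}$ and the effective versal object. Do this by absorbing the odd variables and the relations into a limit-preserving functor on $k[t]$-algebras; this is the honest reduction to classical approximation. Choose the order of approximation via Artin--Rees so that the resulting $(U,u)$ satisfies $\widehat{\mathcal{O}}_{U,u}\cong R$.
\item \emph{Versality of the approximant.} Since $\hat x\equiv\hat\xi \bmod \mathfrak{m}^2$, versality of $\hat\xi$ produces $\varphi\colon R\to R$ with $\varphi^*\hat\xi\cong\hat x$ and $\varphi\equiv\on{id}\bmod\mathfrak{m}^2$. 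Such a $\varphi$ is surjective, hence an automorphism because $R$ is Noetherian, so $\hat x$ is versal.
\end{itemize}
The filtration by $J$ cannot replace either point. An object approximating the formal one modulo $J^{i}$ need not lift modulo $J^{i+1}$, because only the obstruction class of the formal object is known to vanish, not that of its approximant.

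\textbf{Step 4 lacks a versality criterion.} You assign roles to (A6)--(A8), but these axioms only become usable once you have a criterion for versality at a point. That criterion is surjectivity of $\on{Der}_k(A,M)\to D_X(M)$, together with the corresponding injectivity statement on obstructions, for finite $A_0$-modules $M$ of both parities. This is why $B^{1|1}$ and $k(t)^{1|1}$ appear in (A6). The criterion is not in your proposal.

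\textbf{Necessity is incomplete.} (S1)a and (A4) are not statements about deformation modules, and they do not come out of a cotangent complex. Effectivity, for instance, requires three things:
\begin{itemize}
\item lifting the compatible system through a smooth atlas $U\to\mcx$ by formal smoothness;
\item using $U(R)=\varprojlim U(R/\mathfrak{m}^{n+1})$ for superschemes;
\item using effectivity of the diagonal to identify the truncations compatibly.
\end{itemize}
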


\section{Supercurves} \label{section: supercurves}

A \emph{supercurve} $X$ is a smooth, proper, and connected superscheme over $k$ of dimension $(1|1)$ such that its bosonic reduction $C:= \xbos=(|X|, \ox/\jj)$ is an ordinary smooth, connected curve over $k$.
    
 The \emph{genus} $g$ of a supercurve $X$ is the genus $g$ of its underlying bosonic curve $C$. From now on we assume $g \ge 2$. 

\begin{definition} 
    A \emph{family of genus $g$ supercurves over a superscheme $T$} is a smooth, proper morphism of superschemes $\pi: X \to T$ of relative dimension $(1|1)$ such that each geometric fiber $X_t$ is a genus $g$ supercurve over $k(t)$ and such that the bosonic reduction 
    \[ \pibos: \xbos\to \tbos\]
    is a family of ordinary smooth curves of genus $g$ over $\tbos$.
    An \emph{isomorphism of families over the same base $T$} from $X$ to $X'$ is an isomorphism of superschemes $\phi: X \to X'$ commuting with the projections to $T$.  
    \end{definition}  

\begin{lemma} \label{lemma: supercurves correspondence with picard}
    There is a one to one correspondence between supercurves $X/T$ over purely bosonic schemes $T$ and pairs $(C/T, L)$ where $C/T$ is an ordinary family of curves and $L$ is a line bundle on $C$. 
    \end{lemma}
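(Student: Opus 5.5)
The plan is to construct the two assignments explicitly and check that they are mutually inverse up to isomorphism. Given $(C/T, L)$, define $X$ as the relative split superscheme $X := (|C|, \oc \oplus \Pi L)$, where the multiplication is $(f,s)\cdot(g,t) = (fg, ft+gs)$. Since $L\cdot L=0$, the odd part squares to zero and supercommutativity holds trivially. Locally on $C$, choose a trivialization $L|_U \cong \mc{O}_U\theta$ and an étale coordinate $z$ for $C/T$. Then $\mc{O}_X|_U \cong \mc{O}_U[\theta]$, so $\Omega^1_{X/T}|_U$ is free on $dz, d\theta$ of rank $(1|1)$. Moreover $\ox$ is locally free over $\oc$, hence flat over $T$, and properness and the genus condition follow because $\xbos = C$.

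For the converse, let $X/T$ be a family of supercurves with $T$ purely bosonic. Set $C := \xbos$ and $L := \jj/\jj^2$, and write $\jj$ for the ideal generated by $\ox\minus$. The key steps are as follows.

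First, show that $\jj = \ox\minus$ and $\jj^2 = 0$. By smoothness of relative dimension $(1|1)$ over a bosonic base, étale locally we have $\ox \cong \mc{O}_{\xbos}[\theta]$ with a single odd generator, so $\jj = \ox\minus = \oc\theta$ and $\theta^2=0$. I would extract this local normal form from the local structure of smooth morphisms: lift a basis $dz, d\theta$ of $\Omega^1_{X/T}$, and use that since $\ot$ has no odd nilpotents, the odd part is locally generated by one element. This local structure result is the step I expect to be the main technical obstacle. If it is not available from earlier material, I would prove it by lifting $z$ to an even function and $\theta$ to an odd function, then applying a Nakayama argument to $\ox\minus/\jj^2$ modulo the generator $\theta$.

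Second, $L := \ox\minus$ is then a line bundle on $C$, being locally $\oc\theta$, and $\oc = \ox\plus$ because $\jj\plus = \jj^2 = 0$.

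Third, construct the splitting. The even part $\ox\plus$ is a subring mapping isomorphically onto $\ox/\jj = \oc$. Hence $\ox = \ox\plus \oplus \ox\minus \cong \oc \oplus \Pi L$ as sheaves of superalgebras, and this decomposition is canonical. This canonicity is exactly where the bosonic base matters: over a general $T$, the even part would contain products of odd functions from $T$ with $\theta$.

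Finally, check that the two constructions are inverse. Isomorphisms of families correspond to pairs consisting of an isomorphism $C \cong C'$ over $T$ and an isomorphism of line bundles compatible with it, since any superalgebra isomorphism preserves the parity decomposition. Therefore the correspondence is a bijection on isomorphism classes, and in fact an equivalence of groupoids.
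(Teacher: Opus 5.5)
Your proposal is correct and follows essentially the same route as the paper. In one direction you take the split superscheme $(|C|,\oc\oplus\Pi L)$; in the other you take $(\xbos,\Pi\jj)$, with local freeness of rank $(0|1)$ coming from the smooth embedding $C\hra X$. You also justify two points the paper only asserts: why $\jj^2=0$ (namely, $\jj$ is locally generated by a single odd element, by Nakayama for the nilpotent ideal $\jj$), and why the splitting is canonical (since $\jj\plus\subseteq\jj^2=0$ forces $\ox\plus\cong\oc$). There is one small parity slip to tidy: with $L:=\ox\minus$ you should write $\ox\cong\oc\oplus L$, or instead set $L:=\Pi\ox\minus$ as the paper does.
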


    \begin{proof} When $T$ is an ordinary scheme, $\jj \subset \ox$ satisfies $\jj^2=0$. This has various implications: (1) the $\ox$-module structure on $\jj$ factors through $\oc=\ox/\jj$, \[ \ox= \oc \oplus \jj. \] 
and $\jj$ is the conormal sheaf of the embedding $C \hra X$. Since both $C$ and $X$ are smooth of dimension $(1|0)$ and $(1|1)$, respectively, the conormal sheaf $\jj$ is a locally free sheaf of $\oc$-modules of rank $(0|1)$. 
Thus, $X$ determines a pair \[ (C/T,L):=(\xbos/T, \Pi \jj). \]

Conversely, given a pair $(C/T, L)$, define the superscheme
     \[ X=(|C|, \ox \oplus \Pi L). \]
     This is clearly a supercurve with $\jj= \Pi L$. 
   \end{proof}

    \begin{definition} We say that a family of supercurves $X \to T$ is \emph{strongly projective} if there exists a non-zero integer $n$ such that \[ \omega_{X/T} ^{\otimes n}:= \Ber(\omegaxt) ^{\otimes n} \] is relatively very ample on $X/T$.  
    \end{definition}
       Clearly a supercurve $X/T$ is strongly projective if and only if either $\duaxt$ or $\duaxt \inv$ is relatively ample on $X/T$. 
\smallskip

We will now use Proposition \ref{propatwo} to determine which supercurves are strongly projective by computing the degree of the restriction $\duaxt \vert_C$ to the bosonic reduction $C$ of $X$. 
Since the criterion in \eqref{propatwo} is stated after restriction to the bosonic reduction, we may assume $T$ is a purely bosonic scheme. Then $\jj$ is a locally free sheaf on $C$ of rank $(0|1)$ and equal to the conormal bundle to the embedding $C \hra X$. Applying $\Ber$ to the conormal sequence of $C \hra X$, we find
        $\duaxt\vert_C  \cong \omega_{C/T}\otimes \jj\inv$ which implies: 
        \begin{equation}\label{degree formula}
        \deg(\duaxt\vert_C)=(2g-2)-\deg(\jj).
        \end{equation}

 % In use the next Proposition \ref{propatwo} together  with the fact from ordinary algebraic geometry that a line bundle $L$ on a curve $C$ is strongly very ample if the relative degree of $L$ is at least $2g+1$
 %    to determine which supercurves are strongly projective by considering the degree of the restriction of the dualizing bundle to the underlying bosonic curve.  
 %    \begin{lemma}[Proposition A.2] \label{propatwo}
 %        Let $X \to T$ be a morphism of superschemes. A line bundle $\mcl$ on $X$ is strongly relatively very ample over $T$ if 
 %        $\mcl|_{\xbos}$ is strongly relatively very ample on $\xbos$ over $\tbos$.
 %        \end{lemma}
 % Combining this with the fact from ordinary algebraic geometry that a line bundle $L$ on a curve $C$ is strongly very ample if the relative degree of $L$ is at least $2g+1$, we find: 

    \begin{lemma}
    \label{theorem: degree condition for strongly projective} A supercurve $X/T$ is strongly projective if and only if 
    \[ \deg(\duaxt \vert_C)\neq 0. \]
   where $C$ denotes the bosonic reduction of $X$. 
    \end{lemma}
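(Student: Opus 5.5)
We reduce both directions to the degree $d:=\deg(\duaxt\vert_C)$ of the restriction to the bosonic curve, using Proposition \ref{propatwo} for one direction and a direct restriction argument for the other. By the remark after the definition, $X/T$ is strongly projective if and only if $\duaxt$ or $\duaxt\inv$ is relatively ample. Relative degree is constant on connected components of $\tbos$, and it suffices to work fibrewise over geometric points $t\in\tbos$. The fibre $X_t$ is a supercurve over $k(t)$ with bosonic reduction $C_t$. Formula \eqref{degree formula} then gives $d=(2g-2)-\deg(\jj)$ on each fibre.

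\emph{($\Leftarrow$)} Suppose $d\neq 0$, and set $\mcl=\duaxt$ if $d>0$ and $\mcl=\duaxt\inv$ if $d<0$. Then $\mcl\vert_C$ has positive relative degree $|d|$ on the smooth genus $g$ family $C/\tbos$. Choose $n$ with $n|d|\ge 2g+1$. The classical facts for smooth proper curves then give the following for $\mcl^{\otimes n}\vert_C$: $R^1\pibos{}_*$ vanishes, $\pibos{}_*$ is locally free by cohomology and base change, the sheaf is relatively globally generated, and it defines a closed immersion into $\pp(\pibos{}_*\mcl^{\otimes n}\vert_C)$. So $\mcl^{\otimes n}\vert_C$ is strongly relatively very ample on $\xbos/\tbos$. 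Proposition \ref{propatwo} then shows that $\mcl^{\otimes n}$ is strongly relatively very ample on $X/T$, in particular relatively very ample, so $X/T$ is strongly projective.

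\emph{($\Rightarrow$)} Suppose $\duaxt^{\otimes n}$ is relatively very ample for some $n\neq 0$, via a closed immersion $i:X\hra\pp(\mm)$ over $T$. Composing with $C\hra X$ and restricting to a geometric fibre gives a closed immersion $C_t\hra \pp(\mm_t)$ with $\duaxt^{\otimes n}\vert_{C_t}\cong \mc{O}(1)\vert_{C_t}$. A very ample line bundle on a projective curve has strictly positive degree, so $n\,d>0$ and hence $d\neq 0$.

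The main obstacle is the forward direction when $\mm$ is only coherent. Here $\pp(\mm_t)$ is a closed subscheme of an ordinary projective space, possibly after passing to the bosonic part of the symmetric algebra. One needs the restriction of $\mc{O}_{\pp(\mm)}(1)$ to $C_t$ to be the hyperplane bundle of an honest projective embedding. I would handle this by noting that $\mc{S}ym^\bullet\mm_t$ surjects from a polynomial superalgebra on finitely many generators. Hence $C_t$, being bosonic, embeds in the bosonic reduction of a super projective space, which is an ordinary $\pp^N$, compatibly with $\mc{O}(1)$. The reverse direction is routine apart from checking the hypotheses of Proposition \ref{propatwo}.
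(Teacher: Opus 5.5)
Your proposal is correct and follows essentially the same route as the paper. The forward direction restricts the embedding defined by $\duaxt^{\otimes n}$ to the bosonic curve $C$ and uses that a very ample bundle on a curve has positive degree; the converse chooses $n$ with $n\,|d|\ge 2g+1$, applies the classical criterion (the paper's Lemma \ref{degree condition for sva}) on $C/\tbos$, and lifts to $X/T$ via Proposition \ref{propatwo}. Your extra step of passing to the bosonic reduction of $\pp(\mm_t)$ only makes explicit a point the paper glosses over by silently taking $\mm=\pist\duaxt^{\otimes n}$.
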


    \begin{proof}

    Suppose $X/T$ is strongly projective. Fix a nonzero integer $n$ such that $\mcl=\duaxt^{\otimes n}$ is relatively very ample.  Set $\mce:=\pist(\duaxt^{\otimes n})$ and let $i$ denote the induced closed immersion
        \[
        i\colon X\hra \pp_T(\mce),
        \qquad
        \duaxt^{\otimes n} \cong \ipull \mc{O}_{\pp_T(\mce)}(1)
        \]
        over $T$. 
   Denote by $\phi: C \hra X \to \pp(\mce)$ the composition with the canonical embedding $C\hra X$. Since $ \duaxt^{\otimes n}\vert_C \cong \phipull \mc{O}_{\pp_T(\mce)}(1)$,  $\duaxt^{\otimes n}\vert_C$ is relatively very ample on $C/T$, and hence
        \[
        \deg(\duaxt^{\otimes n} \vert_C)=n\cdot \deg(\duaxt\vert_C) > 0,
        \]
   from which it follows that $\deg(\duaxt\vert_C)\neq 0$.
   \smallskip
   
     Conversely,  assume $\deg(\duaxt\vert_C)\neq 0$, and choose a nonzero integer $n$ such that \[ \deg(\duaxt\vert_C^{\otimes n})\ge 2g+1. \]  
     
     Then $\duaxt^{\otimes n}\vert_C$ is  strongly relatively very ample on $C/T$ Lemma \ref{degree condition for sva}, and by Proposition \ref{propatwo},  $\duaxt^{\otimes n}$ is strongly relatively very ample on $X/T$. 
        \end{proof}

\paragraph{Projective system of supercurves} \label{proj notation}

We will now fix notation used in the proofs of effectivity (A4) throughout the paper. At the end of a section we state a lemma used in the proof of effectivity for the moduli of strongly projective supercurves. The arguments of the lemma  are used in later sections.

Let $(R,\maxid)$ be a complete local Noetherian $k$-superalgebra. Set
\[
R_j = R/\maxid^{j+1}, \qquad T = \Spec(R), \qquad T_j = \Spec(R_j).
\]
The natural surjections
\[
R \twoheadrightarrow R_j \twoheadrightarrow R_i
\]
induce morphisms
\[
T_i \to T_j \to T.
\]

Let $(X_j,\, X_i \to X_j)$ be a projective system of strongly projective supercurves over the system $(T_j,\, T_i \to T_j)$. For simplicity we denote all structure morphisms $X_j \to T_j$ by $\pi$. The curves are related by base change
\[
X_i \cong X_j \times_{T_j} T_i,
\]
and the transition morphisms
\[
\prij : X_i \to X_j
\]
are the projections from this fiber product. In particular each $\prij$ is a nilpotent thickening and all elements of such a projective system have the same topological space.

Let $\mcl$ be any coherent sheaf on a superscheme whose formation commutes with base change. 
On a projective system $(X_j, X_i \to X_j)$ these sheaves form a projective system $\mclbull=(\mcl_j,\, \mcl_j \to \mcl_i)$ as follows. For each $j \ge i$, base change gives a canonical isomorphism
\[
\prjipull \mcl_j \xrightarrow{\sim} \mcl_i.
\]
We define the transition morphism $\mcl_j \to \mcl_i$ to be the adjoint of this isomorphism, i.e. the composite
\begin{equation} \label{standard trans}
\mcl_j
\longrightarrow
\pr_{ij*}\prjipull \mcl_j
\longrightarrow
\pr_{ij*}\mcl_i.
\end{equation}
Since $\prij$ is a nilpotent thickening, all $X_j$ have the same underlying topological space, and we will identify sheaves on $X_i$ with their direct images on $X_j$. This gives us morphisms $\mcl_j \to \mcl_i$ that we refer to as \emph{standard transition maps}.

If $\mclbull = (\mcl_j, \mcl_j \to \mcl_i)$ is a system of canonically defined coherent sheaves on $\xbull = (X_j, X_i \to X_j)$, we say that $\mclbull$ has property $P$ if each $\mcl_j$ has property $P$. If $F$ is a functor on coherent sheaves, 
we write $F(\mclbull)$ for the induced system
\[
F(\mclbull) = (F(\mcl_j),\, F(\mcl_j) \to F(\mcl_i)).
\]
\begin{lemma}
\label{lemma: serre vanishing}
Using the above notation, for any projective system of strongly projective supercurves $(X_j, X_i \to X_j)$, there exists an integer $N$ such that the system
\[
\mclbull=(\mcl_j, \mcl_j \to \mcl_i), \qquad \mcl_j:= \omega_{X_j/T_j}^{\otimes N}
\]
is strongly relatively very ample and satisfies $R^1 \pi_{*}\mcl_{\bullet} = 0$.
\end{lemma}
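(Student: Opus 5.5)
The plan is to reduce everything to the closed fibre $X_0$ over $T_0=\Spec(R_0)=\Spec(k)$ and its bosonic reduction $C$, which is an ordinary smooth curve of genus $g\ge 2$. All $X_j$ share the topological space $|X_0|$, and all $\duaxjtj$ restrict to $\omega_{X_0/T_0}$ by base change. Since $X_0$ is strongly projective, Lemma \ref{theorem: degree condition for strongly projective} gives $d:=\deg(\omega_{X_0/T_0}\vert_C)\neq 0$. Replacing $\omega$ by $\omega^{-1}$ if necessary, and allowing $N$ to be negative as the definition of strong projectivity permits, we may assume $d>0$.

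\textbf{Choice of $N$.} We choose $N$ with $N d\ge 2g+1$, and large enough that Serre vanishing holds on the fibre in the following precise sense. The odd ideal $\jj_0\subset\mc{O}_{X_0}$ satisfies $\jj_0^2=0$, so $\mc{O}_{X_0}=\oc\oplus\jj_0$ with $\jj_0$ a line bundle on $C$. Therefore
\[ H^1(X_0,\omega_{X_0/T_0}^{\otimes N})=H^1(C,\mcl_0\vert_C)\oplus H^1(C,\mcl_0\vert_C\otimes\jj_0). \]
We take $N$ large enough that both line bundles on $C$ have degree $>2g-2$, so that both groups vanish.

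\textbf{Very ampleness.} With $Nd\ge 2g+1$, the bundle $\mcl_0\vert_C$ is strongly very ample on $C$. The bosonic reduction of each $X_j$ is a curve $C_j$ over $(T_j)_{\bos}$ with closed fibre $C$. For each $j$ we apply the classical fact (Lemma \ref{degree condition for sva}) to $\mcl_j\vert_{C_j}$: it has fibre degree $Nd\ge 2g+1$. Proposition \ref{propatwo} then shows that each $\mcl_j$ is strongly relatively very ample over $T_j$. The key point is that $N$ is uniform in $j$, since it depends only on the closed fibre.

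\textbf{Vanishing of $R^1$.} We argue by induction on $j$ using the small extensions $0\to \maxid^{j}/\maxid^{j+1}\to R_j\to R_{j-1}\to 0$. The kernel $K=\maxid^j/\maxid^{j+1}$ is a finite $k$-vector superspace. By flatness of $X_j/T_j$ we get an exact sequence
\[ 0\to \mcl_0\otimes_k K\to \mcl_j\to \mcl_{j-1}\to 0. \]
Since $H^1(\mcl_0)=0$ and curves have cohomological dimension $1$, the long exact sequence gives $H^1(\mcl_j)=0$ by induction. As $T_j$ is affine, $R^1\pi_*\mcl_j=0$.

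By cohomology and base change, $\pi_*\mcl_j$ is then locally free and commutes with base change. In particular, the standard transition maps induce isomorphisms $\pi_*\mcl_j\otimes R_i\cong\pi_*\mcl_i$, so the system is compatible.

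The main obstacle is the uniformity in $j$, together with the justification of the fibrewise splitting $\mc{O}_{X_0}=\oc\oplus\jj_0$ used to compute $H^1$ on $X_0$. Over $k$ this splitting is exactly Lemma \ref{lemma: supercurves correspondence with picard}. The remaining care is in the sign of $d$, which requires using $\omega^{-1}$ (negative $N$) when $\deg(\jj_0)>2g-2$.
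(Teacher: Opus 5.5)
Your proof is correct and has the same structure as the paper's. You fix the exponent from the closed fibre, propagate strong relative very ampleness to every $X_j$, and obtain $R^1\pi_*\mathcal{L}_j=0$ by induction along the small extensions $0\to\mathfrak{m}^j/\mathfrak{m}^{j+1}\to R_j\to R_{j-1}\to 0$. The differences lie only in the two inputs, and in both places your version is slightly more self-contained:
\begin{itemize}
\item You prove $H^1(X_0,\mathcal{L}_0)=0$ directly from the splitting $\mathcal{O}_{X_0}=\mathcal{O}_C\oplus\mathcal{J}_0$ and degree bounds on $C$, where the paper cites super Serre vanishing.
\item You get very ampleness of each $\mathcal{L}_j$ by applying Proposition~\ref{propatwo} exactly as stated, to the bosonic reduction $C_j\to (T_j)_{\mathrm{bos}}$. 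The paper instead asserts that this can be checked along the thickening $X_0\hookrightarrow X_j$.
\end{itemize}
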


\begin{proof}

Choose an integer $n$ such that $\omega_{X_0/T_0}^{\otimes n}$ is strongly relatively very ample. Since this property can be checked after restricting along the nilpotent extension $X_0 \to X_j$, it follows that $\duaxjtj^{\otimes n}$ is strongly relatively very ample for all $j$, and
\[
\mclbull=(\mcl_j, \mcl_j \to \mcl_i), \qquad \mcl_j:= \duaxjtj^{\otimes n}
\]
is a strongly relatively very ample projective system
of line bundles on $\xbull=(X_j, X_i \to X_j)$.

We now replace $n$ by a positive multiple to ensure vanishing of higher cohomology. Since $X_0/T_0$ is projective with polarization $\mcl_0$, the super version of Serre vanishing \cite{bruzzo2021supermoduli} implies that there exists an integer $m \ge 1$ such that
\[
R^1 \pist \omega_{X_0/T_0}^{\otimes mn} = 0.
\]
Redefine
\[
\mclbull=(\mcl_j, \mcl_j \to \mcl_i), \qquad \mcl_j:= \duaxjtj^{\otimes mn}.
\]

We claim that this system is strongly relatively very ample and satisfies $R^1 \pist \mclbull = 0$. The first statement holds since taking a positive tensor power preserves strong relative very ampleness.

For the vanishing of $R^1 \pist \mclbull$, we argue by induction on $j$. The base case $R^1 \pist \mcl_0=0$ holds by construction. Assume $R^1 \pist \mcl_{j-1}=0$, and set $M=\maxid^j/\maxid^{j+1}$. Since $X_j$ is $T_j$-flat, we have a short exact sequence
\begin{equation} \label{first sequence}
0 \to M \otimes_{R_j} \oxj \to \oxj \to \mco_{X_{j-1}} \to 0.
\end{equation}
after pulling back $0 \to M \to R_j \to R_{j-1} \to 0$. 

Because $\maxid \cdot M = 0$, the $R_j$-module structure on $M$ factors through $k=R/\maxid$, so $M$ is a super vector space of some finite dimension $(p|q)$. Thus,
\[
M \otimes_{R_j} \oxj
\cong
M \otimes_k (R_0 \otimes_{R_j} \oxj)
\cong
M \otimes_k \oxnot
\cong
\oxnot^{p|q},
\]
since $R_0 \otimes_{R_j} \oxj \cong \oxnot$. Then tensoring \eqref{first sequence} with $\mcl_j$ gives
\[
0 \to \mcl_0^{p|q} \to \mcl_j \to \mcl_{j-1} \to 0
\]
where we used that
$\mcl_{j-1} \cong  \mcl_j \otimes_{\oxj} \mc{O}_{X_{j-1}}$ since any power of the dualizing sheaf commutes with base change. 
Applying $\pist$, we obtain the exact sequence
\[
0 \to \pist \mcl_0^{p|q} \to \pist \mcl_j \to \pist \mcl_{j-1}
\to R^1 \pist \mcl_0^{p|q} \to R^1 \pist \mcl_j \to R^1 \pist \mcl_{j-1} \to \cdots .
\]

Since $R^1 \pist \mcl_0^{p|q}=0$ and $R^1 \pist \mcl_{j-1}=0$ by the inductive hypothesis, it follows that $R^1 \pist \mcl_j=0$.
\end{proof}

\subsection{Deformation theory of strongly projective supercurves}

Let $A' \to A$ be a square-zero extension of superalgebras with kernel $M=\ker(A' \to A)$ a finitely generated  $A$--module. Set $T=\Spec(A)$, and $T'=\Spec(A')$. 
\smallskip

A \emph{deformation of $X/T$ over $T'$} is a flat, proper superscheme $X'/T'$ such that
\[
X' \times_{T'} T \cong X.
\]
A \emph{projective deformation} is a deformation that is a strongly projective supercurve. An isomorphism of (projective) deformations is a $T'$-linear isomorphism restricting to the identity on the central fiber. 

\begin{lemma} \label{lemma:defs of supercurves}
Every deformation of a stable supercurve $X/T$ is projective. In particular, the set of isomorphism classes of projective deformations of $X/T$ is a torsor for the group
\[
H^1(X, \mct_{X/T} \otimes_{A} M)^{+},
\]
where $\mct_{X/T}$ is the relative tangent sheaf.
\end{lemma}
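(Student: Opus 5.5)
Read in context, the statement is about a strongly projective (smooth) supercurve $X/T$ with $T=\Spec(A)$; the word ``stable'' appears to be a slip, since this subsection treats smooth supercurves. The proof has two parts: projectivity of every deformation, and the torsor structure.

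\emph{Every deformation is projective.} Let $X'/T'$ be a deformation. First I will check that $X'\to T'$ is smooth of relative dimension $(1|1)$. It is flat by definition. Its base change to $T$ is $X$, which is smooth, and $T\to T'$ is a nilpotent thickening. So smoothness follows from the fibrewise criterion: a flat, finitely presented morphism is smooth if and only if its fibres are, and the fibres of $X'$ and $X$ over points of $|T|=|T'|$ agree. In particular $\omega_{X'/T'}=\Ber(\Omega^1_{X'/T'})$ is a line bundle, and its formation commutes with base change, so $\omega_{X'/T'}|_X\cong\duaxt$. The bosonic reductions satisfy $X'_{\bos}\supseteq X_{\bos}$ with the same underlying space. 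The reduced curves over $(T_{\bos})_{\mathrm{red}}$ coincide, so $\deg(\omega_{X'/T'}|_{C'})=\deg(\duaxt|_C)\neq 0$ by Lemma \ref{theorem: degree condition for strongly projective}. Choose $n$ with $\omega_{X/T}^{\otimes n}|_C$ of degree $\ge 2g+1$. Then $\omega_{X'/T'}^{\otimes n}|_{C'}$ is strongly relatively very ample on $C'/T'_{\bos}$ by the classical criterion, which applies to families over any base. Proposition \ref{propatwo} then makes $\omega_{X'/T'}^{\otimes n}$ strongly relatively very ample, so $X'$ is strongly projective.

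\emph{Torsor structure.} I will follow the classical argument for smooth morphisms.
\begin{enumerate}
\item[(i)] Locally, deformations exist and are unique up to isomorphism. On an affine open $U=\Spec(B)\subset X$ with $B$ smooth over $A$, lifts $B'$ over $A'$ exist, for instance by lifting a presentation via étale coordinates $(x|\theta)$. Any two such lifts are isomorphic by the infinitesimal lifting property of smooth morphisms of superschemes.
\item[(ii)] The automorphisms of a local lift restricting to the identity on $B$ are exactly the maps $\mathrm{id}+D$, where $D$ is an even $A'$-derivation $B'\to M\otimes_A B$. These form the group $\Gamma(U,\mct_{X/T}\otimes_A M)^{+}$. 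Here $M\otimes_A\ox$ is the kernel of $\mc{O}_{X'}\to\ox$ by flatness, and $\undhom(\Omega_{X/T},M\otimes\ox)\cong\mct_{X/T}\otimes_AM$ because $\Omega$ is locally free.
\item[(iii)] Fix an affine cover $\{U_i\}$. Comparing a given deformation $X'$ with a second deformation $X''$ on each $U_i$ gives a $1$-cocycle in $\mct_{X/T}\otimes M$, and hence a class in Čech $H^1(X,\mct_{X/T}\otimes_A M)^{+}$. Conversely, twisting the gluing data of $X'$ by a cocycle produces a new deformation.
\item[(iv)] Standard bookkeeping shows this action is free and transitive on isomorphism classes, and that the set of deformations is nonempty, since $X$ lifts. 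For nonemptiness I will first try to show that the obstruction class in $H^2(X,\mct_{X/T}\otimes M)$ vanishes, because $X$ has relative dimension $1$ and $H^2=0$ for coherent sheaves on a $(1|1)$-dimensional superscheme.
\end{enumerate}

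The main obstacle is the parity bookkeeping in the supercommutative setting. One must verify that even automorphisms correspond exactly to even derivations, which gives the superscript ${}^{+}$. One must also check that the comparison isomorphisms used to define the cocycles preserve the grading. I will handle this by working with $\undhom$ and then taking even parts throughout.
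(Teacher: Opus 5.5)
Your proposal is correct. Reading ``stable'' as a slip for the smooth, strongly projective case matches the paper's own proof, which relies on smoothness of the central fibre. Your route differs from the paper's in both halves.

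For projectivity, the paper argues in one line: $\omega_{X'/T'}$ restricts to the relatively ample $\duaxt$ along the nilpotent thickening $X\hookrightarrow X'$, and relative ampleness survives such thickenings. You instead re-run the degree criterion on $X'$: Lemma~\ref{theorem: degree condition for strongly projective}, then the classical bound $\deg\ge 2g+1$ on $X'_{\bos}/T'_{\bos}$, then Proposition~\ref{propatwo}. This is longer but uses only results stated in the paper. You should add that $X'_{\bos}\to T'_{\bos}$ is a smooth genus $g$ family, which follows from smoothness of $X'$.

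For the torsor, the paper cites the $T^i$-formalism (Theorem~\ref{from Hart a}) with $\mct^1_{X/T}=\mct^2_{X/T}=0$. You instead do the classical \v{C}ech argument by hand: local lifts are unique up to $\mathrm{id}+D$, and comparison cocycles lie in $\mct_{X/T}\otimes_A M$. That is exactly what the cited theorem reduces to for smooth $X$. Your version makes the parity bookkeeping visible. The paper's formulation is the one that carries over to the nodal case, where $\mct^1\neq 0$ contributes the extra $H^0$ term.

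Your step (iv) also covers something the paper's proof of this lemma leaves implicit. The vanishing $\mct^1=\mct^2=0$ kills only two of the three obstructions in Theorem~\ref{from Hart a}; nonemptiness also needs $H^2(X,\mct_{X/T}\otimes_A M)^{+}=0$. Justify this relatively, not via ``$X$ has dimension one'', since $|X|$ can have dimension larger than $1$. Because $T$ is affine and $\pi$ is proper with one-dimensional fibres, you get $H^2(X,\mathcal{F})=H^0(T,R^2\pi_*\mathcal{F})=0$ for every coherent $\mathcal{F}$. The paper's later device of passing to $A_0$ and splitting over $C$ does not apply here, because it needs $M$ to be an $A_0$-module, which this lemma does not assume.
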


\begin{proof} We need to show that every
deformation $X'/T'$ of $X/T$ is a strongly projective supercurve. It is flat by definition, so smoothness can be checked on geometric fibers. Since $T \hra T'$ is a nilpotent extension, the only geometric fiber is the central fiber $X/T$, which is smooth by assumption. 
$X'/T'$ is strongly projective since $\omega_{X'/T'}$ pulls back to the relatively ample line bundle $\omega_{X/T}$ along the nilpotent extension $X \hra X'$ and the property of being relatively ample is preserved under nilpotent thickening.

For the second claim, since $X/T$ is smooth, $\mct_{X/T}^1 = \mct_{X/T}^2=0$ and so there are no obstructions to deforming $X/T$ along square-zero extensions. Then, by Theorem \ref{from Hart a}, the deformations of  $X/T$ over $T'$ form a torsor for the group
\[
H^1(X, \mct_{X/T} \otimes_{A} M)^{+}.
\]
\end{proof}

\subsection{Moduli of strongly projective supercurves is an algebraic superstack} \label{section: moduli of supercurves}

Let $\scg$ be the category over $\sS$ whose fiber over a superscheme $T$ is the groupoid $\scg(T)$ of genus $g$ supercurves $X \to T$, with morphisms given by isomorphisms. 
Pullback in $\scg$ is defined by base change: for a morphism of superschemes $f:S \to T$ and a supercurve $X \to T$, the pullback is
\[
X_S := X \times_T S \to S,
\]
which is again a family of supercurves of genus $g$.

Let $\cc$ denote the category over $\sS$ whose fiber $\cc(T) \subset \scg(T)$ consists of strongly projective supercurves over $T$, with the same morphisms as in $\scg(T)$. $\cc$ is clearly a full subcategory of $\scg$. It is closed under pullback: if $X \to T$ is strongly projective, so that $\duaxt^{\otimes n}$ is strongly relatively very ample for some $n \ge 1$, then for any morphism $f:S \to T$ we have
\[
\duaxs^{\otimes n} \cong p^*(\duaxt^{\otimes n}), \qquad p: X_S \to X
\]
and hence $\duaxs^{\otimes n}$ is again strongly relatively very ample. Thus $X_S \to S$ is strongly projective.

\begin{prop}\label{proposition: supercurves is a superstack}
     $\cc$ is an \'etale superstack.
\end{prop}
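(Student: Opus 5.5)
We need to verify that $\cc$ is a category fibered in groupoids, and that it satisfies descent for morphisms and for objects in the big \'etale topology on $\sS$. That $\cc$ is fibered in groupoids is already established: pullback is base change $X_S = X\times_T S$, the projection $X_S \to X$ is cartesian, and cartesian morphisms compose because fiber products are transitive. Morphisms in each fiber are isomorphisms by definition.

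\emph{Descent for morphisms.} Fix $X, Y \in \cc(U)$. An isomorphism $X_V \lgr Y_V$ over $V$ is a $V$-morphism $X_V \to Y_V$, equivalently a $U$-morphism $X_V \to Y$, that is invertible. So $\mathbf{\on{Isom}}(X,Y)$ is a subpresheaf of $V \mapsto \Hom_U(X\times_U V, Y)$. I would show that this presheaf is an \'etale sheaf in two steps:
\begin{itemize}
\item Morphisms of superschemes glue along Zariski covers, since superschemes are locally superringed spaces.
\item For an \'etale cover $V' \to V$, the map $X_{V'} \to X_V$ is an \'etale cover. So it suffices to know that $\mc{O}_{X_V} \to \mc{O}_{X_{V'}} \rightrightarrows \mc{O}_{X_{V'\times_V V'}}$ is an equalizer, i.e.\ fpqc/\'etale descent for quasi-coherent sheaves. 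This reduces to faithfully flat descent for modules over supercommutative rings, and the classical Amitsur-complex proof goes through verbatim with the signs.
\end{itemize}
Invertibility descends because the inverses, where they exist locally, glue by uniqueness.

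\emph{Descent for objects.} Given an \'etale cover $\{U_i \to U\}$, strongly projective supercurves $X_i/U_i$ and a cocycle $\alpha_{ij}$, we use strong projectivity to reduce to descent of quasi-coherent data. The canonical sheaf is functorial for isomorphisms and commutes with base change, so the $\alpha_{ij}$ induce compatible isomorphisms of the $\omega_{X_i/U_i}^{\otimes n}$. Here I would first shrink to quasi-compact pieces and choose a single $n$ that works locally, using Lemma~\ref{lemma: serre vanishing}-type arguments, and then pass to a uniform choice. The plan is then:
\begin{itemize}
\item Because $\omega_{X_i/U_i}^{\otimes n}$ is strongly relatively very ample with $R^1\pi_* = 0$ (as in Lemma~\ref{lemma: serre vanishing}), its pushforward $\mce_i := \pi_*\omega_{X_i/U_i}^{\otimes n}$ is locally free and commutes with base change. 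The $\mce_i$ with the induced cocycle therefore descend to a locally free sheaf $\mce$ on $U$.
\item The homogeneous ideals of the closed immersions $X_i \hra \pp(\mce_i)$ are compatible with the cocycle, so they descend to a closed subsuperscheme $X \subset \pp(\mce)$. This uses descent of quasi-coherent ideal sheaves on $\pp(\mce)$ along the \'etale cover $\pp(\mce)\times_U U_i$.
\item We then check that $X/U$ is smooth, proper, of relative dimension $(1|1)$, and has genus $g$ bosonic fibers. Each of these properties is \'etale-local on the base, so it can be verified on the $X_i$.
\item Strong projectivity follows because $\omega_{X/U}^{\otimes n} \cong \mc{O}_{\pp(\mce)}(1)|_X$ by construction. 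This is checked \'etale-locally, after which Proposition~\ref{propatwo} applies.
\end{itemize}

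The main obstacle is the effectivity of descent for objects: making sure the descended ideal really defines a superscheme whose pullbacks recover the $X_i$ compatibly with the $\alpha_{ij}$, and that a uniform exponent $n$ can be chosen. The key input is faithfully flat descent for quasi-coherent sheaves of supermodules. This is exactly the reason for restricting to strongly projective curves, since the canonical polarization turns descent of superschemes into descent of quasi-coherent sheaves, avoiding general algebraic-superspace issues.
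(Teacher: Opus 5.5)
Your proposal is correct and follows essentially the same route as the paper: isomorphisms descend by \'etale descent for morphisms of superschemes, and objects descend by pushing forward $\mcl_i=\omega_{X_i/U_i}^{\otimes n}$, descending $\mce_i=\pi_*\mcl_i$ to a sheaf $\mce$ on $U$, descending the closed immersions $X_i\hookrightarrow\pp(\mce_i)$ to $X\subset\pp(\mce)$, and gluing $\tau^*\mc{O}_{\pp(\mce)}(1)\cong\omega_{X/U}^{\otimes n}$. You are slightly more careful than the paper in two places: you use the $\alpha_{ij}$ to identify the $\mce_i$ on overlaps, and you flag the choice of a uniform $n$, which the paper simply asserts; on the other hand, the extra input $R^1\pi_*\mcl_i=0$ is unnecessary, since local freeness of $\pi_*\mcl_i$ is already part of strong relative very ampleness and flat base change along $U_{ij}\to U_i$ gives the overlap identifications.
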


\begin{proof}

\noindent\emph{(i) Descent for isomorphisms.}
Let $X,Y \in \cc(U)$ and let $\{U_i \to U\}$ be an \'etale cover. Suppose we are given isomorphisms
\[
\alpha_i : \pripull X \;\lgr\; \pripull Y, 
\]
compatible on overlaps. Since morphisms of superschemes satisfy fppf, and thus \'etale descent \cite[Proposition A.28]{bruzzo2021supermoduli}) the $\alpha_i$ glue to a unique morphism $\alpha : X \to Y$ over $U$. It remains to check that $\alpha=(|\alpha|,\alpha^\#)$ is an isomorphism. The map $|\alpha|:|X|\to |Y|$ is a homeomorphism because each $|\alpha_i|$ is a homeomorphism and these agree on overlaps. The morphism $\alpha^\#:\mc{O}_Y\to \alpha_*\mc{O}_X$ is an isomorphism because its kernel and cokernel vanish after pulling back along the \'etale cover $\pripull Y\to Y$, and thus vanish globally by \'etale descent for quasi-coherent sheaves \cite[Proposition A.29]{bruzzo2021supermoduli}.
\smallskip

\noindent\emph{(ii) Descent for objects.}
Let $\{U_i \to U\}$ be an \'etale cover, and suppose for each $i$ we are given a supercurve $X_i/U_i$ together with isomorphisms
\[
\alpha_{ij} : \prijpull X_i \;\lgr\; \prjipull X_j
\]
satisfying the cocycle condition. 

There exists a non-zero integer $n$ such that $\mcl_i:=\omega_{X_i/U_i}^{\otimes n}$ is strongly relatively very ample  for all $i$. Set
\[
\mce_i = \pist \mcl_i .
\]

Since $\prij:U_i\times_U U_j\to U_i$ is \'etale, flat base change gives a canonical isomorphism
\[
\widetilde{\sigma}_{ij}:\pr_{ij}^*\mce_i \lgr \pr_{ji}^*\mce_j
\]
satisfying the cocycle condition. By \'etale descent for coherent sheaves there exists a unique coherent sheaf $\mce$ on $U$ such that $\pripull\mce\cong \mce_i$. The canonical maps $\pp(\mce_i) \to \pp(\mce)$ give an \'etale cover

\[
 \{ \pp(\mce_i) \longrightarrow \pp(\mce) \}_{i \in I}.
\]
The canonical immersions $X_i \to \pp(\mce_i)$ agree on overlaps by construction, and thus by \'etale descent for closed immersions \cite[Proposition A.30]{bruzzo2021supermoduli} there exists a closed superscheme $X$ over $U$ such that $\pripull X\cong X_i$ as closed subschemes of $\pp(\mce_i)$. 

The superscheme $X/U$ is clearly a supercurve since smoothness, properness, and relative dimension $(1|1)$ are \'etale local on the base. Furthermore, the bosonic reduction $C=\xbos$ of $X$ is a smooth ordinary curve over $U_{\bos}$ because $\{C_i\to C\}$ is an \'etale cover of $C$ and each $C_i=(X_i)_{\bos}$ is smooth over $U_i$ by assumption.
Finally, $\omega_{X/U}^{\otimes n}$ is relatively very ample on $X$. Indeed, let $\tau: X \hra \pp(\mce)$ denote the canonical immersion, and set
\[
\mcl := \tau^*\mc{O}_{\pp(\mce)}(1).
\]
Then $\pripull\mcl$ and $\pripull\duaxt^{\otimes n}$ are canonically isomorphic since both are canonically isomorphic to $\duaxiti^{\otimes n}$. These identifications agree on overlaps and therefore glue to a global isomorphism
\[
\mcl \cong \duaxt^{\otimes n}
\]
on $X$.
\end{proof}

\begin{prop}
\label{proposition: rep of diagonal for supercurves}
The superstack $\cc$ satisfies condition (A1) of Theorem \ref{theorem: introduction super artin statement}.
\end{prop}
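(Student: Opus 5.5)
We must show that for $X,Y\in\cc(T)$ the sheaf $\uniso(X,Y)$ is representable by an algebraic superspace locally of finite type over $T$. The idea is to realize isomorphisms as compatible isomorphisms of the canonically polarized projective embeddings, and hence as a locally closed subfunctor of a group scheme of linear automorphisms. Since any isomorphism $\phi: X_S \lgr Y_S$ over $S$ intertwines the relative Berezinian sheaves, $\phi^*\omega_{Y_S/S}\cong\omega_{X_S/S}$ canonically, the polarizations by powers of the canonical bundle are automatically preserved. This is exactly why working with strongly projective supercurves is convenient.

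\textbf{Step 1 (choice of polarization).} I would first fix $n$ such that $\omega_{X/T}^{\otimes n}$ and $\omega_{Y/T}^{\otimes n}$ are strongly relatively very ample and have vanishing $R^1\pist$. This is possible after passing to a Zariski cover of $T$, using the argument of Lemma \ref{lemma: serre vanishing} together with Proposition \ref{propatwo}; representability is local on $T$ by Proposition \ref{proposition: supercurves is a superstack}. Then $\mce_X:=\pist\omega_{X/T}^{\otimes n}$ and $\mce_Y:=\pist\omega_{Y/T}^{\otimes n}$ are locally free and commute with base change, by cohomology and base change for proper flat superschemes. We may further shrink $T$ so that both are free of the same rank $(p|q)$. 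If the ranks differ, $\uniso$ is empty, or we can restrict to the open and closed locus where they agree.

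\textbf{Step 2 (reduction to linear isomorphisms).} Any $\phi\in\uniso(X,Y)(S)$ induces an isomorphism $\phi^\#:\mce_{Y,S}\lgr\mce_{X,S}$, and hence an element of $\uniso_S(\mce_Y,\mce_X)$. The latter is represented by an open subsuperscheme $G\subset \mathbb{A}_T^{(p^2+q^2|2pq)}$, a torsor under $GL(p|q)_T$. This map $\uniso(X,Y)\to G$ is injective, because $\phi$ is recovered from the induced projective linear map $\pp(\mce_X)\lgr\pp(\mce_Y)$ restricted to $X$. Its image is the subfunctor of those $g\in G(S)$ for which the induced automorphism carries $X_S\subset\pp(\mce_{X,S})$ isomorphically onto $Y_S\subset\pp(\mce_{Y,S})$, together with the condition that the identification $g^*\mc{O}(1)|_{Y}\cong\mc{O}(1)|_X$ matches the canonical identification with $\omega^{\otimes n}$. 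The latter condition only changes $g$ by a scalar, which can be absorbed.

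\textbf{Step 3 (closedness of the condition).} I expect this to be the main obstacle. The condition "$g(X_S)\subseteq Y_S$" is the vanishing of the composite $g^{-1}(\mc{I}_Y)\to\mc{O}_{\pp}\to\mc{O}_{X_S}$ of sheaves. After twisting by $\mc{O}(m)$ for $m\gg0$ and pushing forward, this becomes the vanishing of a map of locally free $\mc{O}_G$-modules: $\pist(\mc{I}_Y(m))\to\pist\mc{O}_X(m)$. Such a locus is a closed subsuperscheme. Here I need super Serre vanishing \cite{bruzzo2021supermoduli} uniformly over $G$, together with base change for the graded pieces. Imposing the same condition for $g^{-1}$ gives a closed subsuperscheme $Z\subset G$ on which $g$ restricts to an isomorphism $X_S\lgr Y_S$. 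By Step 2, $Z$ represents $\uniso(X,Y)$.

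$Z$ is a superscheme of finite type over $T$, and in particular an algebraic superspace locally of finite type. Gluing over the Zariski cover is automatic by uniqueness of representing objects, which proves (A1).
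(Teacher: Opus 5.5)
Your route differs from the paper's. The paper realizes $\uniso(X,X')$ as the open subfunctor of $\Hilb(X\times_T X'/T)$ where both projections from the universal subscheme are isomorphisms; you instead linearize via the canonical polarization and cut $\uniso(X,Y)$ out of the $GL(p|q)$-torsor $G$. However, your final step fails as stated.

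The closed subscheme $Z\subset G$, cut out only by the condition that $\pp(g)$ carries $X_S$ isomorphically onto $Y_S$, does not represent $\uniso(X,Y)$. An $S$-point of $Z$ is the same thing as a pair $(\psi,\beta)$, where $\psi\colon X_S\lgr Y_S$ and $\beta\colon\psi^*\omega_{Y_S/S}^{\otimes n}\lgr\omega_{X_S/S}^{\otimes n}$ is an \emph{arbitrary} isomorphism (put $g=\pi_*(\beta)\circ\psi^*$). So $Z\to\uniso(X,Y)$ is a torsor under $S\mapsto\Gamma(X_S,\mc{O}_{X_S})^{+,\times}$. Already classically this makes $Z$ a $\mathbb{G}_m$-torsor over $\uniso$, since $\lambda\phi^{\#}\in Z(S)$ for every unit $\lambda$ on $S$.

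In the super setting, your justification that the compatibility condition ``only changes $g$ by a scalar'' is also false, because $\pi_*\mc{O}_X$ can have a nonzero odd part. Suppose $X$ corresponds to $(C,L)$ over $\Spec k$ with $H^0(C,L)\neq 0$ (e.g.\ the supercurve underlying a super Riemann surface with odd spin structure). Take $S=\Spec k[\epsilon]$ with $\epsilon$ odd, and $0\neq s\in\Gamma(X,\mc{O}_X)^-$. Then the even unit $u=1+\epsilon s$ acts on $\mce_{X,S}$ by a non-scalar automorphism whose induced automorphism of $\pp(\mce_{X,S})$ restricts to the identity on $X_S$. So neither $Z$ nor its image in a $PGL$-torsor represents $\uniso$.

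The repair is to keep your second condition and prove it is closed. Over $Z$ you have the universal $\psi$ and two isomorphisms $\beta_g,c_\psi\colon\psi^*\omega_{Y}^{\otimes n}\to\omega_{X}^{\otimes n}$: the one induced by $g$ and the canonical one. The locus where $\beta_g-c_\psi=0$ is closed by \cite[Lemma 6.1]{felder2020moduli}, since the target is flat over $Z$. Equivalently, $\uniso(X,Y)$ is the equalizer of $\on{id}_Z$ and the retraction $g\mapsto\psi_g^{\#}$, which is closed because $Z$ is separated over $T$.

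With this fix your argument works and even gives more than the paper's: a $\uniso$ that is affine of finite type over $T$, rather than an open in a Hilbert scheme. The price is reliance on the canonical polarization, uniform Serre vanishing over $G$, and this linearization bookkeeping. The paper's graph argument needs only projectivity of $X\times_T X'$. That is why it transfers verbatim to stable supercurves (Proposition \ref{proposition: rep of diagonal for prestable supercurves}), where the polarization of Lemma \ref{lemma: stable are projective} comes from a choice of lift of $\omega_{C}^{\otimes n}$ rather than being intrinsic.
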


\begin{proof}
Since $X \to T$ and $X' \to T$ are projective morphisms, their fiber product
\[
X \times_T X' \to T
\]
is also projective. In particular, $X \times_T X'$ is superprojective over $T$ and admits a relatively very ample line bundle. Therefore, by  \cite[Theorem 4.3]{bruzzo2023notes}, the super Hilbert functor of closed subschemes of $X \times_T X'$ that are flat and proper over $T$ is representable by a $T$-superscheme. We set
\[
\hh := \Hilb(X \times_T X'/T).
\]

Let $S$ be a $T$-superscheme. Any morphism $\phi : X_S \to X_S'$ determines a closed immersion
\begin{equation}\label{graph map}
X_S \xrightarrow{(\on{id},\phi)} X_S \times_S X'_S = (X \times_T X')_S .
\end{equation}
Let $\Gamma_S(\phi)$ denote the corresponding closed subscheme of $(X \times_T X')_S$. Since $\Gamma_S(\phi) \cong X_S$ flat and proper over $S$, it defines an $S$-point of $\hh$. This gives an injective map
\[
\Gamma_S : \uniso(X,X')(S) \hookrightarrow \hh(S)
\]
which is functorial in $S$, and therefore a natural transformation
\[
\Gamma : \uniso(X,X') \to \hh,
\]
making $\uniso(X,X')$ a subfunctor of $\hh$.

We claim that $\uniso(X,X')$ is representable by an open subscheme of $\hh$. Let $(S \to T,\xi)$ be an object with $\xi \in \hh(S)$. Then there exists an open subscheme $U_\xi \subset S$ such that a morphism $f:S' \to S$ factors through $U_\xi$ if and only if $f^*\xi \in \uniso(X,X')(S')$.

Let
\[
Z \subset (X \times_T X') \times_T \hh
\]
denote the universal closed subscheme. Thus for any $T$-superscheme $S$ and any $\xi \in \hh(S)$, the corresponding subscheme $Z_S \subset (X \times_T X')_S$ is obtained by pulling back $Z$ along the morphism $S \to \hh$ corresponding to $\xi$.

For $\xi \in \hh(S)$, the subscheme $Z_S$ is the graph of an isomorphism $X_S \to X_S'$ if and only if the projections
\[
p : Z_S \to X_S, 
\qquad 
q : Z_S \to X_S'
\]
are isomorphisms over $S$. Indeed, if $p$ is an isomorphism then the composite
\[
X_S \cong Z_S \xrightarrow{q} X_S'
\]
defines a morphism whose graph is $Z_S$, and the additional condition that $q$ is an isomorphism ensures that this morphism is an isomorphism.
Since $p$ and $q$ are projective morphisms, the locus where they are isomorphisms is open, and the result follows by the argument of \cite[Theorem 5.22.(b)]{fantechi2005fundamental}. \end{proof}

\begin{prop} \label{atwo: colimit preserving}
The superstack $\cc$ satisfies condition (A2) of Theorem \ref{theorem: introduction super artin statement}.
\end{prop}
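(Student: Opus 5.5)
We must show that for a directed system of superalgebras $(A_i)$ with colimit $A=\varinjlim A_i$, the functor $\varinjlim \cc(A_i)\to\cc(A)$ is an equivalence. We follow the classical limit argument (EGA IV \S 8, as in \cite{fantechi2005fundamental}), reducing everything to finitely presented data and the projective embedding given by a power of the dualizing sheaf.

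\emph{Full faithfulness.} Let $X_i,Y_i\in\cc(A_i)$ with base changes $X_j,Y_j$ over $A_j$ for $j\ge i$, and $X,Y$ over $A$. Since $X_i,Y_i$ are of finite presentation over $A_i$, the standard limit results for morphisms of finitely presented schemes carry over verbatim to superschemes. They apply because the affine-local statement is purely algebraic: a map of finitely presented superalgebras $B_i\otimes_{A_i}A\to C_i\otimes_{A_i}A$ is determined by the images of finitely many homogeneous generators, subject to finitely many relations, and so descends to some $A_j$. Hence
\[
\Hom_A(X,Y)=\varinjlim_{j}\Hom_{A_j}(X_j,Y_j).
\]
An isomorphism $\phi$ over $A$ with inverse $\psi$ then descends to morphisms $\phi_j,\psi_j$, and the identities $\psi_j\phi_j=\on{id}$ and $\phi_j\psi_j=\on{id}$ hold after enlarging $j$ by injectivity of the same colimit map. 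This gives fully faithfulness.

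\emph{Essential surjectivity.} Let $X\in\cc(A)$. Choose $n$ with $\mcl=\duaxt^{\otimes n}$ strongly relatively very ample, so that $X\hra \pp(\mce)$ is a closed immersion, where $\mce=\pist\mcl$ is locally free of finite rank. Zariski-locally on $\Spec A$ (or after shrinking, since $\Spec A$ is quasi-compact) $\mce$ is free of rank $(m|m')$, and $X$ is the closed subscheme of $\pp^{m-1|m'}_A$ cut out by a homogeneous ideal. By Noetherianity of the fibers, or more precisely since $X/A$ is of finite presentation, this ideal is finitely generated, and its generators and their coefficients come from some $A_i$. This defines a closed subscheme $X_i\subset\pp^{m-1|m'}_{A_i}$ with $X_i\times_{A_i}A\cong X$; gluing of local pieces descends by the fully faithfulness just proven. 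Next we enlarge $i$ so that $X_i$ acquires the required properties. Flatness and properness descend by the super analogues of EGA IV 11.2.6 and 8.10.5, which reduce to the even and odd components as finitely presented modules over the even part. Smoothness of relative dimension $(1|1)$ descends because $\Omega^1_{X_i/A_i}$ is finitely presented and becomes locally free of rank $(1|1)$ over $A$, hence already is for large $i$. Genus and the bosonic condition are checked on geometric fibers, and every fiber of $X_i$ over a point of the image of $\Spec A$ is a fiber of $X$, so the conditions hold on a constructible set containing that image; enlarging $i$, they hold everywhere. Finally, strong projectivity descends: the isomorphism $\omega_{X/A}^{\otimes n}\cong \mc{O}(1)|_X$ comes from some $A_j$, and by Proposition \ref{propatwo} strong very ampleness is checked on bosonic reductions, where the classical limit theorem applies.

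The main obstacle is the descent of the properties in the essential surjectivity step, especially flatness and fiberwise conditions, since the classical statements rely on EGA IV machinery not yet written out for superschemes. I would handle it by reducing each property to statements about finitely presented modules over the ordinary rings $A_i\plus$, using that the odd parts are quasi-coherent $\ox\plus$-modules and that $\xbos$ is an ordinary scheme. Surjectivity and flatness of the structure maps can then be tested on the even part, where EGA applies directly.
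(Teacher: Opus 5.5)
Your proposal is correct in outline, but it takes a different route from the paper. The paper works with a finite affine atlas. It descends finite presentations $B_\alpha\cong A[x]/I$ to some $A_i$, descends the gluing maps via the Hom--colimit lemma for finitely presented superalgebras (\cite[Lemma 7.13]{faroogh2019existence}), and glues. It never treats full faithfulness separately, and it simply asserts that the glued $X_i$ is a supercurve.

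You use the same Hom--colimit fact for full faithfulness and for gluing. You obtain the object differently, from the global embedding $X\hra\pp(\mce)$ given by $\duaxt^{\otimes n}$, so you only have to descend a finitely generated homogeneous ideal plus Zariski gluing over the base. This buys you three things:
\begin{itemize}
\item properness of $X_i$ for free;
\item strong projectivity almost for free, since the isomorphism $\omega_{X/A}^{\otimes n}\cong\mc{O}(1)|_X$ descends and this already gives relative very ampleness in the paper's sense;
\item an explicit treatment of descent of flatness, smoothness and the fiberwise conditions, which the paper's proof passes over in silence.
\end{itemize}

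The one step whose justification fails as written is flatness. Flatness over a superring cannot be tested on the even and odd components as modules over the even part. For example, $A=k[\theta_1,\theta_2]$ is flat over itself, but $A^{-}$ is nonzero and annihilated by $\theta_1\theta_2$, so it is not flat over the local Artinian ring $A^{+}$. Conversely, $k$ with $\theta$ acting by zero is flat over $k[\theta]^{+}=k$ but not over $k[\theta]$.

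A clean replacement follows the paper's own flatness argument in Proposition \ref{proposition: effectivity supercurves}:
\begin{itemize}
\item Zariski-locally on $\Spec A$, the subscheme $X\subset\pp^{m-1|m'}_A$ is an $A$-point of the super Hilbert scheme of $\pp^{m-1|m'}_k$.
\item This Hilbert scheme is representable by \cite[Theorem 4.3]{bruzzo2023notes}. It should also be locally of finite type over $k$, as in the classical Grassmannian construction; you would need to check this, since that theorem only states representability. Granting it, its functor of points commutes with filtered colimits.
\item Hence $X$ is the base change of a \emph{flat} closed subscheme $X_i'\subset\pp^{m-1|m'}_{A_i}$.
\item Since $X_i'$ and your $X_i$ are finitely presented closed subschemes with the same base change to $A$, they coincide after enlarging $i$.
\end{itemize}
With flatness in hand, your remaining steps go through verbatim for supermodules: local freeness of $\Omega^1$ of rank $(1|1)$, the constructibility argument for the fiber conditions, and descent of $\omega^{\otimes n}\cong\mc{O}(1)|_X$ are purely algebraic limit arguments.
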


\begin{proof}
Let $X/A$ be an object of $\cc(A)$. Since $X$ is finite type over the Noetherian superalgebra $A$, it is of finite presentation over $A$, and thus admits a finite affine open cover $\{U_{\alpha}=\Spec(B_{\alpha}) \}_{\alpha \in \mc{A}}$ with each $B_{\alpha}$ a finitely generated $A$-superalgebra. Choose a presentation
\[
B_{\alpha} \cong A[x]/I
\]
where $x=(x_1,\dots,x_n)$ and the $x_i$ can be even or odd. Since $A$ is Noetherian, $A[x]$ is Noetherian, and the ideal $I$ is finitely generated, $I=(f_1,\dots,f_r)$.
The coefficients of each polynomial $f_k \in I$ lie in $A=\varinjlim A_i$, and since there are only finitely many polynomials, there exists $i$ such that $f_k \in A_i$ for all $1 \le k \le r$. Thus,
\[
B_{\alpha} \cong \bigl(A_i[x]/I\bigr)\otimes_{A_i} A.
\]
Since the cover is finite, there exists some maximal index $i$ such that 
$B_{\alpha} \cong A_i[x]/I \otimes_{A_i} A $ for all $\alpha \in \mc{A}$.

The gluing morphisms on overlaps are $A$-superalgebra maps
\[
\phi_{\alpha\beta}^\# : B_\beta \longrightarrow B_\alpha .
\]
Since $B_\alpha \cong B_{\alpha,i}\otimes_{A_i} A$ and
$B_\beta \cong B_{\beta,i}\otimes_{A_i}A$, and
$B_{\alpha,i},B_{\beta,i}$ are finitely presented $A_i$-superalgebras, we have be \cite[Lemma 7.13]{faroogh2019existence}
\[
\Hom_A(B_\beta,B_\alpha)
\;\cong\;
\varinjlim_{j\ge i}\Hom_{A_j}(B_{\beta,j},B_{\alpha,j}).
\]
Therefore each morphism $\phi_{\alpha\beta}^\#$ is obtained by base change
from some morphism $B_{\beta,j}\to B_{\alpha,j}$ defined over $A_j$ for
$j\ge i$. Since there are only finitely many overlaps, we can again choose some maximal $i$ so that all gluing morphisms come from $A_i$. The
morphisms glue the affines $U_{\alpha,i}=\Spec(B_{\alpha,i})$
to a supercurve $X_i/A_i$ whose base change to $A$ is $X$.
\end{proof}
\begin{prop} \label{athree: schlessinger}
The superstack $\cc$ satisfies condition (A3) of Theorem \ref{theorem: introduction super artin statement}.
\end{prop}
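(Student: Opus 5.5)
We verify (S1)a, (S1)b and (S2) in turn. Throughout, $A' \to A$ is a square-zero extension with kernel $M$ finitely generated over $A_0$, and $X\in\cc(A)$.

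\emph{(S1)a.} Let $B \to A$ be a surjection and set $B' := A'\times_A B$. Suppose we are given deformations $X'/A'$ and $X_B/B$ of $X$. We glue them into the superscheme
\[
X_{B'} := (|X|, \mc{O}_{X'}\times_{\ox}\mc{O}_{X_B}).
\]
This makes sense because $X\hra X'$ and $X \hra X_B$ are homeomorphisms; the first is a nilpotent thickening, and the second is a closed immersion with the same underlying space once one observes that $\ker(B\to A)$ is nilpotent after the reduction to the relevant artinian situation. Since we are only asked for surjectivity onto pairs lying over $X$, it suffices to work over $\Spec$ of a fixed $B$ with $X_B$ given.

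The super version of Schlessinger's flatness lemma shows that $X_{B'}$ is flat over $B'$. This is local, and it follows from the ordinary argument applied to the fibered product of flat supermodules, since the local criterion of flatness holds for Noetherian superrings. Properness and relative dimension $(1|1)$ are read off on the bosonic reduction and on fibers, which agree with those of $X_B$. Strong projectivity follows from Proposition \ref{propatwo}, because $\omega_{X_{B'}/B'}$ restricts to $\omega_{X_B/B}$ along a nilpotent thickening. Hence $X_{B'}\in\cc(B')$ maps to the given pair, and the map in (S1)a is surjective.

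\emph{(S1)b and (S2).} Both reduce to the description of $D_X(M)$ given by Lemma \ref{lemma:defs of supercurves}. Since $X$ is smooth, $\mct^1_{X/T}=\mct^2_{X/T}=0$, so deformations exist and form a torsor under $H^1(X,\mct_{X/T}\otimes_A M)^+$. The trivial deformation over $A[M]$ is a canonical base point, which gives
\[
D_X(M)\cong H^1(X,\mct_{X/T}\otimes_A M)^+ .
\]

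For (S1)b, let $X'$ lie over $A'$ and restrict to $X$. Because $M$ is an $A_0$-module,
\[
\mct_{X'/T'}\otimes_{A'}M \cong \mct_{X/T}\otimes_A M \cong \mct_{X_0/T_0}\otimes_{A_0}M,
\]
using that the tangent sheaf of a smooth morphism commutes with base change. The comparison map $D_{X'}(M)\to D_X(M)$ is then the identity on this $H^1$, so it is a bijection.

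For (S2), $D_{X_0}(M)=H^1(X_0,\mct_{X_0/T_0}\otimes M)^+$. Here $X_0\to T_0$ is proper and $\mct_{X_0/T_0}\otimes M$ is coherent, so $H^1$ is a finitely generated $A_0$-module by coherence of higher direct images for proper morphisms of superschemes. This reduces to the bosonic case, because the sheaf has a finite filtration by powers of $\jj$ whose graded pieces are coherent on $\xbos$.

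\emph{Main obstacle.} The hard step is (S1)a: showing that the glued space is flat, and that it is a superscheme rather than merely a ringed space. For flatness I would cite the super analogue of [Schlessinger, Lemma 3.4] as developed in \cite{ott2023artin}. For the superscheme property I would check locally on affines $\Spec(C'\times_C C_B)$, where the fibered product of finitely generated superalgebras is again finitely generated because $B\to A$ is surjective.
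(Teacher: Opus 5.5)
Your arguments for (S1)b and (S2) are essentially the paper's. Both identify $D_X(M)$ and $D_{X'}(M)$ with $H^1(X_0,\mct_{X_0/A_0}\otimes_{A_0}M)^+$ via Lemma~\ref{lemma:defs of supercurves} and base change of the tangent sheaf, and both get finiteness from properness and coherence.

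For (S1)a you take a genuinely different route. You build the preimage directly as the pushout $X'\sqcup_X X_B$ and invoke a super Schlessinger flatness lemma. The paper never glues superschemes. It filters $A'$, $B$ and $A'\times_A B$ by powers of the odd ideal $J$. It treats the purely even layer by identifying $\cc_{\bos}$ with the relative Picard stack over $\mm_g$ (Lemma~\ref{lemma: supercurves correspondence with picard}) and using that stack's known algebraicity. It then lifts one layer $J^k/J^{k+1}$ at a time: lifts form torsors under $H^1(\mct\otimes J^k/J^{k+1})^+$, and the comparison maps on $H^1$ are surjective because $\mct$ is locally free and $H^2$ vanishes on a curve.

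What each buys:
\begin{itemize}
\item The paper's route needs only the deformation theory already set up plus a classical algebraicity result. But it is specific to curves, depends on knowing the bosonic stack, and needs the odd filtrations to be compatible with the fibre product $A'\times_A B$, a point it passes over quickly.
\item Your route is the standard homogeneity argument. It uses no cohomology vanishing and no bosonic input, and would work for any flat proper moduli problem. But it rests on two super foundational results not proved in this paper: that the pushout of a thickening and a closed immersion is a Noetherian superscheme, and the flatness lemma over non-Artinian bases.
\end{itemize}

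Two small repairs are needed.

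First, in (S1)a the map $B\to A$ is an arbitrary surjection. So $\ker(B\to A)$ need not be nilpotent, and there is no reduction to an Artinian situation. None is needed: take the underlying space to be $|X_B|$ and the structure sheaf to be $\mc{O}_{X_B}\times_{i_*\ox}i_*\mc{O}_{X'}$, with $i\colon|X|\hra|X_B|$. Then $X_B\hra X_{B'}$ is the square-zero thickening cut out by $M\mc{O}_{X_{B'}}$, so the geometric fibres of $X_{B'}$ are those of $X_B$.

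Second, Proposition~\ref{propatwo} would have to be applied to $(X_{B'})_{\bos}$ over $(B')_{\bos}$, which is a thickening of $(X_B)_{\bos}$ rather than equal to it. It is cleaner to note that the criterion of Lemma~\ref{theorem: degree condition for strongly projective} is fibrewise, so strong projectivity of $X_{B'}$ follows from that of $X_B$.
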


\begin{proof} We use the notation from Theorem \ref{theorem: introduction super artin statement}, Condition (A3).

Set $C=A' \times_A B$. 
Let $J\subset A$ denote the odd nilpotent ideal and set $A_0:=A_{\bos}=A/J$ $A_0'= A'/J_{A'}$ etc. We may assume that the full reduction of each of the superalgebras is equal to the bosonic reduction, since this doesn't change the argument. 
Since $J$ is nilpotent, $J^{N+1}=0$, and define
\[
A_k:=A/J^{k+1}, \qquad 0\le k\le N.
\]
Similarly, for $J_{A'}\subset A'$ and $J_B\subset B$, and set
\[
A_k':=A'/J_{A'}^{k+1}, \qquad B_k:=B/J_B^{k+1}, \qquad C_k:= C/J_C^{k+1}. 
\]
We have induced maps
\[ \phi_k: \ov{\cc}(C_k) \to \ov{\cc}(A_k') \times \ov{\cc}(B_k), \qquad 0\le k \le N.\]
Let $(Y,W)$ be in $\ov{\cc}(A') \times \ov{\cc}(B)$ 
For $k=0$, all inputs are purely bosonic, so $\phi_0$ is equal to the map
\[ \ov{\cc}_{\bos}(C_0) \to \ov{\cc}_{\bos}(A_0') \times \ov{\cc}_{\bos}(B_0)\]
where $\cc_{\bos}$ is the restriction of $\cc$ to the category of ordinary schemes. Lemma \ref{lemma: supercurves correspondence with picard} identifies $\cc_{\bos}$ as the relative Picard stack $\on{Pic}_{\cc_g/\mm_g}$ of the universal curve $\mc{C}_g$ over $\mm_g$. This is well-known to be an algebraic stack, and thus satisfies the Artin condition \textbf{(S1)(a)}. In particular, $\phi_0$ is surjective, and there exists a $Z_0 \in \ov{\cc}(C_0)$ such that $Y_0 \cong Z_0 \times_{C_0}A_0', W_0 \cong Z_0 \times_{C_0}B_0$. 

We now argue by induction. Suppose for $k$, $Z_k$ is a lift over $C_k$, with pullbacks $Y_k=Z_k\times_{C_k}A_k'$ and $W_k=Z_k\times_{C_k}B_k$. The pair $(Y_{k+1},W_{k+1})$ determines a class in
\[
H^1(Y_k,T_{Y_k/A_k'}\otimes_{A_k'} J_{A'}^k/J_{A'}^{k+1})
\oplus
H^1(W_k,T_{W_k/B_k}\otimes_{B_k} J_B^k/J_B^{k+1}),
\]
while the set of lifts of $Z_k$ to $C_{k+1}$ is a torsor under
\[
H^1(Z_k,T_{Z_k/C_k}\otimes_{C_k} J_C^k/J_C^{k+1}).
\]
Now $J_C^k/J_C^{k+1}$ is annihilated by $J_C$, hence is naturally a $C_0$-module, so there are canonical identifications
\[
T_{Z_k/C_k}\otimes_{C_k} J_C^k/J_C^{k+1}
\cong
T_{Z_0/C_0}\otimes_{C_0} J_C^k/J_C^{k+1},
\]
and similarly
\[
T_{Y_k/A_k'}\otimes_{A_k'} J_{A'}^k/J_{A'}^{k+1}
\cong
T_{Y_0/A_0'}\otimes_{A_0'} J_{A'}^k/J_{A'}^{k+1},
\qquad
T_{W_k/B_k}\otimes_{B_k} J_B^k/J_B^{k+1}
\cong
T_{W_0/B_0}\otimes_{B_0} J_B^k/J_B^{k+1}.
\]
Thus \eqref{intro: schless sonea} is identified with the map
\[
H^1(Z_0,T_{Z_0/C_0}\otimes_{C_0} J_C^k/J_C^{k+1})
\to
H^1(Y_0,T_{Y_0/A_0'}\otimes_{A_0'} J_{A'}^k/J_{A'}^{k+1})
\oplus
H^1(W_0,T_{W_0/B_0}\otimes_{B_0} J_B^k/J_B^{k+1}).
\]
for all $k$.
Since
\[
J_C^k/J_C^{k+1}
=
(J_{A'}^k/J_{A'}^{k+1})\times_{J_A^k/J_A^{k+1}}(J_B^k/J_B^{k+1}),
\]
the projections to $J_{A'}^k/J_{A'}^{k+1}$ and $J_B^k/J_B^{k+1}$ are surjective, so this map is surjective whenever
\[
T_{Z_0/C_0}\otimes_{C_0} A_0' \to T_{Y_0/A_0'},
\qquad
T_{Z_0/C_0}\otimes_{C_0} B_0 \to T_{W_0/B_0}
\]
are surjective, since then the induced map on $H^1$ is surjective because $Y_0$ and $W_0$ are even dimension $1$ so $H^2=0$. To finish the argument, since everything is smooth, the relative tangent sheaves are locally free, and thus are compatible with base change. In particular, the above maps are isomorphisms.

For condition \textbf{(S1)(b)}: Since $M$ is finite over the full reduction $A_0$ of $A$, from \eqref{reduce to the reduction} we have
\begin{equation} \label{isomorphism over the reduction}
\mct_{X/T}\otimes_A M \cong \mct_{X_0/A_0} \otimes_{A_0} M .
\end{equation}
Furthermore, since $A'\to A$ is nilpotent, the superrings $A'[M]$ and $A[M]$ have the same full reduction $A_0$, and the assumption that $X'$ is a lift of $X$ over $A'$ implies
\[
X'\times_A A_0 \cong X\times_A A_0 .
\]
Thus $D_X(M)$ and $D_{X'}(M)$ are both identified with
\[
H^1(X_0,\mct_{X_0/A_0}\otimes_{A_0} M)^+,
\qquad
X_0=X\times_A A_0,
\]
by Lemma \ref{lemma:defs of supercurves} and \eqref{isomorphism over the reduction}.

For condition \textbf{(S2)}: $D_{X_0}(M)$ is identified with the same group above. This group is a finite $A_0 \plus$--module because $X_0$ is proper over $A_0$ and $\mct_{X_0/A_0}\otimes_{A_0} M$ is coherent.
\end{proof}

\begin{prop}
\label{proposition: effectivity supercurves}
The superstack $\cc$ satisfies condition (A4) of Theorem \ref{theorem: introduction super artin statement}.
\end{prop}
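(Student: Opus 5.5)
We argue with the notation of the paragraph on projective systems of supercurves: $(R,\maxid)$ is complete local Noetherian, $T_j=\Spec(R_j)$, and $\xbull=(X_j, X_i\to X_j)$ is a projective system in $\cc$ over $(T_j)$. We must show that the functor $\cc(R)\to\varprojlim \cc(R_j)$ is fully faithful and essentially surjective. Essential surjectivity is the main point. We construct a projective superscheme $X/T$ by algebraizing the compatible embeddings $X_j\hra\pp(\mce_j)$, following the super Grothendieck existence strategy.

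First, Lemma \ref{lemma: serre vanishing} gives an integer $N$ with $\mcl_j=\duaxjtj^{\otimes N}$ strongly relatively very ample and $R^1\pist\mclbull=0$. Apply the same lemma to the power $2N$, or more generally to every power $kN$, $k\ge1$, by enlarging $N$ so that $R^1\pist\mcl_j^{\otimes k}=0$ for all $k\ge 1$ and $j$. Cohomology and base change for the flat proper $X_j/T_j$ then make $\mce_j^{(k)}:=\pist\mcl_j^{\otimes k}$ locally free, hence free over the local ring $R_j$. Their formation also commutes with base change, so $\mce_j^{(k)}\otimes_{R_j}R_i\cong\mce_i^{(k)}$ via the standard transition maps. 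Set $\mce^{(k)}:=\varprojlim_j \mce_j^{(k)}$. By the Mittag-Leffler property (the transition maps are surjective), this is a free $R$-module of finite rank $(p|q)$ with $\mce^{(k)}\otimes R_j\cong\mce_j^{(k)}$. In particular $\mce:=\mce^{(1)}$ gives compatible closed immersions $X_j\hra\pp(\mce_j)=\pp(\mce)\times_T T_j$.

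Next we algebraize. The graded $R$-superalgebra $S:=\bigoplus_{k\ge0}\mce^{(k)}$ receives a surjection from $\mc{S}ym^\bullet_R\mce$ in large degrees, because each $\mc{S}ym^\bullet\mce_j\to\bigoplus_k\mce_j^{(k)}$ is surjective in high degree. The point needing care is a uniform bound independent of $j$. We obtain it by applying Nakayama to the cokernels: they are finite over the complete ring and vanish modulo $\maxid$ in high degree by Castelnuovo–Mumford regularity on $X_0$. Define $X:=\on{Proj}(S)\subset\pp(\mce)$, a closed subsuperscheme. Its homogeneous ideal is the kernel $I=\varprojlim I_j$, where $I_j$ is the homogeneous ideal of $X_j$; it is finitely generated since $\mc{S}ym^\bullet\mce$ is Noetherian. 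Because $R^1$ vanishes, the sequences $0\to I_{j,k}\to \mc{S}ym^k\mce_j\to\mce_j^{(k)}\to0$ are compatible with base change, so $X\times_T T_j\cong X_j$. The main obstacle is precisely this step: ensuring that the base change of $\on{Proj}(S)$ is $X_j$ itself and not merely agreement in high degrees. This is why we first try to arrange vanishing and surjectivity uniformly in $k$, using the standard super Serre vanishing together with the induction from Lemma \ref{lemma: serre vanishing}.

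It remains to check that $X\in\cc(R)$. $X/T$ is proper, being closed in $\pp(\mce)$. Flatness follows from the local criterion of flatness, since each $X\times_TT_j=X_j$ is flat over $R_j$. Smoothness of relative dimension $(1|1)$ and genus $g$ can be checked on the closed fibre $X_0$, because the smooth locus is open and contains the closed fibre of a proper morphism. Strong projectivity follows from Proposition \ref{propatwo}, since $\omega_{X/T}^{\otimes N}$ restricts to $\mcl_0$. Full faithfulness follows from (A1), proved in Proposition \ref{proposition: rep of diagonal for supercurves}: $\uniso(X,X')$ is an open subscheme of the Hilbert superscheme, which is locally of finite type over $T$, and compatible isomorphisms over all $T_j$ form a formal point of it. By the formal GAGA (Grothendieck existence) for the proper graph subscheme — the closed subschemes of $(X\times_TX')$ algebraize, by the same Proj argument applied to the compatible ideals — this formal point is an $R$-point.
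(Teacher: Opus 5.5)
\paragraph{Verdict.} Your proposal is correct, but it takes a genuinely different route from the paper's proof.

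\paragraph{How the two routes differ.} The paper uses the super Grothendieck existence theorem as a black box, twice. First it algebraizes the coherent system $(\pist\mcl_j)$ on $\on{Spf}(R)$ to a sheaf $\mce$. Then it algebraizes the ideal sheaves $I_j\subset\mc{O}_{\pp(\mce_j)}$ to an ideal $I\subset\mc{O}_{\pp(\mce)}$. Flatness is obtained indirectly: it builds a flat algebraization from effectivity of the Hilbert superscheme and appeals to uniqueness. Full faithfulness is never addressed.

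You instead secure vanishing of $R^1\pist\mcl_j^{\otimes k}$ for \emph{all} $k\ge 1$ up front. This does follow from Serre vanishing on $X_0$ together with the induction of Lemma \ref{lemma: serre vanishing}. It lets you build the homogeneous coordinate ring $S=\bigoplus_k\varprojlim_j H^0(X_j,\mcl_j^{\otimes k})$ degree by degree out of free modules that commute with base change. Nakayama on the closed fibre then gives the uniform degree bound. This is essentially the classical projective proof of the existence theorem.

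\paragraph{What your route buys.} It is more self-contained, needing only cohomology and base change and Serre vanishing. Flatness comes almost for free: the pieces $S_k$ with $k\ge1$ are free, so each homogeneous localization $S_{(f)}$ is a filtered colimit of free $R$-modules, and you do not even need the local criterion. Your smoothness argument is also the right one: the smooth locus is open, contains the closed fibre, and $X\to\Spec(R)$ is proper. The paper's claim that every geometric fibre over $\Spec(R)$ is the central fibre overlooks the non-closed points of $\Spec(R)$. Your full-faithfulness sketch goes through as well, because $\mcl\boxtimes\mcl'$ restricts on each graph $\Gamma_j\cong X_j$ to $\omega_{X_j/T_j}^{\otimes 2N}$, so the same Proj argument applies. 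The paper's route, in turn, is shorter and is the template it reuses later to algebraize $\dd$ and $\delta$.

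\paragraph{Two small corrections.} First, the ``main obstacle'' you flag is not actually an obstacle. $\on{Proj}$ only sees degrees $k\ge k_0$, and there you already have exactness after base change. Second, Proposition \ref{propatwo} reduces strong projectivity to $\xbos$ over the non-Artinian base $\tbos$, not to $X_0$. You should add that the fibre degree of $\omega_{X/T}|_{\xbos}$ is locally constant. Then Lemma \ref{degree condition for sva}, or equivalently Lemma \ref{theorem: degree condition for strongly projective}, finishes the argument.
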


\begin{proof}
We use the notation of \ref{proj notation}.

Let $(X_j, X_i \to X_j)$ be a projective system of strongly projective supercurves in $\varprojlim_j \cc(T_j)$. We must show that there exists a strongly projective supercurve $X/T$ such that
\[
\prjpull X \cong X_j
\]
for all $j$. Equivalently, the formal superscheme $\mcx=\varprojlim X_j$ over $\mct=\on{Spf}(R)$ is algebraizable.

By Lemma \ref{lemma: serre vanishing}, there exists a non-zero integer $n$ such that the system
\[
\mclbull = (\mcl_j, \mcl_j \to \mcl_i),
\qquad
\mcl_j := \duaxjtj^{\otimes n}
\]
is strongly relatively very ample and satisfies $R^1\pist \mclbull=0$.

Set $\mce_j := \pist \mcl_j$. For each $j \ge i$, the transition map $\mce_j \to \mce_i$ is the adjoint of the base change morphism
\[
\prjipull \mce_j \lgr \mce_i,
\]
all of which are isomorphisms since $R^1\pist \mclbull=0$. In particular,  $\mcebull=(\mce_j,\mce_j\to \mce_i)$ is a coherent system on $\mct=\on{Spf}(R)$ in the sense of \cite[8.1.4]{fantechi2005fundamental}.

Hence $\mcebull$ determines a coherent sheaf $E$ on $\mct=\on{Spf}(R)$ such that $\prjpull E \cong \mce_j$. Since $\mct$ is the formal completion of $T=\Spec(R)$ along its closed point, the super Grothendieck existence theorem, \cite[Theorem 7.6]{faroogh2019existence}, gives a unique coherent sheaf $\mce$ on $T$ such that $\prjpull \mce \cong \mce_j$ and these identifications recover the transition maps of $\mcebull$.

By construction,
\[
\pp(\mce)\times_T T_j \cong \pp(\mce_j),
\]
so $\pp(\mce)$ algebraizes the formal projective superscheme $\wh{\pp}(\mcebull)$ determined by the system $\pp(\mce_j)$. The closed immersions
\[
X_j \hookrightarrow \pp(\mce_j)
\]
defined by $\mcl_j$ are compatible with the transition maps, and therefore define a closed formal subscheme $\mcx \subset \wh{\pp}(\mcebull)$. Its ideal sheaf corresponds to the coherent system
\[
\ibull = (I_j, I_j \to I_i),
\qquad
I_j := \ker(\oo_{\pp(\mce_j)} \to \iota_*\oo_{X_j}).
\]
Applying Grothendieck existence again, there exists a unique ideal sheaf $I \subset \oo_{\pp(\mce)}$ such that $\prjpull I \cong I_j$. Let
\[
X \subset \pp(\mce)
\]
be the associated closed subscheme. Then $\prjpull X \cong X_j$ for all $j$.

It remains to show that $X/T$ is a strongly projective supercurve. Properness is immediate since $X$ is a closed subscheme of the proper superscheme $\pp(\mce)$. To prove flatness, note that the compatible system of closed subschemes
\[
X_j \subset \pp(\mce_j)=\pp(\mce)\times_T T_j
\]
defines an element of
\[
\varprojlim_j \Hilb(\pp(\mce)/T)(T_j),
\]
where $\Hilb(\pp(\mce)/T)$ is the Hilbert functor of closed subschemes of $\pp(\mce)$ that are flat and proper over the base. Since this functor is representable by a superscheme \cite[Theorem 4.3]{bruzzo2023notes}, it is also an algebraic superstack, and thus  effective by Theorem \ref{theorem: introduction super artin statement}. Thus there exists a closed subscheme
\[
X' \subset \pp(\mce),
\]
flat and proper over $T$, such that $\prjpull X' \cong X_j$ for all $j$. Since also $\prjpull X \cong X_j$ for all $j$, uniqueness in Grothendieck existence implies $X \cong X'$. Hence $X$ is flat over $T$.

Since $X$ is flat over $T$, smoothness and relative dimension $(1|1)$ may be checked on geometric fibers. Sine $T=\Spec(R)$ is the Spec of a local superalgebra $(R, \maxid)$, all geometric fibers identify with the central fiber $X_0/T_0$, which is smooth of dimension $(1|1)$ by assumption. Finally, $X/T$ is strongly projective because this property is stable under nilpotent thickenings.
\end{proof}

\begin{prop}
\label{prop: supercurves satisfy coherent obstruction theory, condition afive}   The superstack $\cc$ satisfies condition (A5) of Theorem \ref{theorem: introduction super artin statement}.
    
\end{prop}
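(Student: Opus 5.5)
To verify (A5), I will first identify the obstruction module, and then show that it is coherent. The obstruction module $\mc{O}_X(M)$ is attached to $X \in \cc(A)$ and a finite $A$-module $M$. Following the $\mct^i$-formalism of Appendix \ref{appendix} (the super analogue of the classical theory, as in Theorem \ref{from Hart a}), obstructions to lifting $X/T$ along a square-zero extension $A' \to A$ with kernel $M$ live in the group
\[
\mathbb{T}^2(X/T, M) = \operatorname{Ext}^2(L_{X/T}, \ox \otimes_A M).
\]
There is a local-to-global spectral sequence filtering this group by $H^2(X, \mct_{X/T}\otimes_A M)$, $H^1(X,\mct^1_{X/T}\otimes M)$ and $H^0(X,\mct^2_{X/T}\otimes M)$.

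Since $X/T$ is smooth, $\mct^1_{X/T} = \mct^2_{X/T} = 0$, as already used in the proof of Lemma \ref{lemma:defs of supercurves}. So the only possibly nonzero piece is $H^2(X, \mct_{X/T}\otimes_A M)^{+}$. We take this as the obstruction module, i.e.\ we set $\mc{O}_X(M) := H^2(X,\mct_{X/T}\otimes_A M)^+$, with $\mct^0 = \mct_{X/T}$.

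Next I will argue that this group vanishes. The underlying topological space $|X|$ is that of the bosonic reduction $C = \xbos$, which is a family of curves over $\tbos$. As a first attempt, take $T = \Spec(A)$ affine, so that $\tbos$ is affine too. Then $X$ is proper of relative even dimension $1$ over an affine base, and $|X|$ can be covered by two affine opens. Čech cohomology of the quasi-coherent sheaf $\mct_{X/T}\otimes_A M$ with respect to such a cover vanishes in degrees $\ge 2$. This is the same fact used in the proof of (S1)a, where we said $H^2 = 0$ because the even dimension is $1$. Hence $\mc{O}_X(M) = 0$, which is trivially a coherent (indeed zero) $A$-module.

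The step I expect to need the most care is the first one: confirming that, in the super setting, the obstruction class really lies in $H^2(X,\mct_{X/T}\otimes M)$ when $X/T$ is smooth. The alternative to the spectral sequence is the standard local argument. Locally on an affine cover $\{U_\alpha\}$, a smooth $X$ lifts to $U_\alpha'$ over $A'$, because smooth affine superschemes are locally $\Spec$ of $A[x_1,\dots,x_p,\theta_1,\dots,\theta_q]$ localized, and such algebras lift. Any two local lifts differ by a section of $\mct_{X/T}\otimes M$, and the failure of the gluing cocycle condition gives a Čech $2$-cocycle. One has to check the signs from the odd variables, but they do not affect the argument. Since a cover by two affines has no triple overlaps, there is no room for a nonzero $2$-cocycle, and the obstruction vanishes outright. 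Finally, one should remark that the choice $\mc{O}_X(M) = 0$ is functorial in $M$ and compatible with the later conditions (A6) and (A8), whose statements about $O_X$ then hold trivially.
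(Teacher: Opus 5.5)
Your strategy is the paper's. The paper's proof just observes that the obstruction module is zero because strongly projective supercurves have no obstructions over square-zero extensions. That rests on $\mct^1_{X/T}=\mct^2_{X/T}=0$ by smoothness (Lemma~\ref{lemma:defs of supercurves}) together with the vanishing of the remaining group $H^2(X,\mct_{X/T}\otimes_A M)^+$.

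The weak point is your proof of that last vanishing. You claim that once $T=\Spec(A)$ is affine, $|X|$ is covered by two affine opens. This holds when $\tbos$ is a point: a projective curve over a field, or over an Artin local ring, is the union of the complements of two suitable hypersurfaces. In (A5), however, $A$ is an arbitrary Noetherian superalgebra, so $\tbos$ can have positive dimension. Then $|X|$ can have dimension $\dim\tbos+1$, and Grothendieck vanishing on $|X|$ only kills $H^i$ for $i>\dim\tbos+1$. Covering a relative curve over such a base by two affines is not a standard fact, and you give no argument for it. Both your \v{C}ech computation and your ``no triple overlaps'' lifting argument depend on this claim, so as written the vanishing of $H^2$ is not proved.

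The fix is to use the relative dimension rather than a special cover.
\begin{itemize}
\item Since $M$ is finite over $A_0$, \eqref{reduce to the reduction} gives $\mct_{X/T}\otimes_A M\cong \mct_{X_0/A_0}\otimes_{A_0}M=:\mathcal{G}$.
\item Because $T_0$ is purely bosonic, $\mathcal{G}$, viewed as a sheaf on $|X_0|=|C|$, is a coherent module over the ordinary curve $C=(X_0)_{\bos}$. This curve is projective over the affine scheme $T_0$ with one-dimensional fibres.
\item The theorem on formal functions (e.g.\ Hartshorne, \emph{Algebraic Geometry}, Cor.~III.11.2) gives $R^2\pi_*\mathcal{G}=0$.
\item Since $T_0$ is affine, the Leray spectral sequence gives $H^2(X_0,\mathcal{G})=H^0(T_0,R^2\pi_*\mathcal{G})=0$.
\end{itemize}
With this substitution your argument is complete and agrees with the paper's.
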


\begin{proof}  The obstruction moduli is the zero module since there are no obstructions to deforming strongly projective supercurves over square-zero extensions. 

\end{proof}

\begin{prop}
\label{prop: supercurves satisfy constructibility, condition asix}
The superstack $\cc$ satisfies condition (A6) of Theorem \ref{theorem: introduction super artin statement}.
\end{prop}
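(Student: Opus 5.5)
The obstruction part of (A6), the injectivity of \eqref{construct ob}, is automatic: by Proposition \ref{prop: supercurves satisfy coherent obstruction theory, condition afive} both obstruction modules vanish. The work is in the bijectivity of \eqref{construct d}. By Lemma \ref{lemma:defs of supercurves}, together with the reduction \eqref{isomorphism over the reduction} used in Proposition \ref{athree: schlessinger}, we have for any finite module $N$ over the reduction
\[
D_X(N) \cong H^1\bigl(X_0, \mct_{X_0/A_0}\otimes_{A_0} N\bigr)^{+}.
\]
So the claim becomes a cohomology-and-base-change statement for $H^1$ of the coherent sheaf $\mct_{X/T}$ restricted over the integral subscheme $S=\Spec(B)$. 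Because $S$ is integral it is an ordinary reduced scheme. Let $X_S = X\times_T S$ and $\mct_S := \mct_{X_S/S}$. Since $X/T$ is smooth, $\mct_{X/T}$ is locally free and commutes with base change, so $\mct_S$ is locally free on $X_S$ and flat over $S$.

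We need $H^1(X_S,\mct_S\otimes_B B^{1|1})\otimes k(t)\to H^1(X_t,\mct_{X_t}\otimes k(t)^{1|1})$ to be bijective on a dense open $V\subset S$. Tensoring with $B^{1|1}$ only doubles and parity-shifts the sheaf, so it suffices to treat $H^1(X_S,\mct_S)\otimes k(t)\to H^1(X_t,\mct_{X_t})$, where $X_t$ is the fibre. Taking even parts commutes with this, since everything is $\z_2$-graded and the maps are even.

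Viewed as a sheaf of $\mc{O}_{X_S}\plus$-modules on the ordinary scheme $X_S^+$, the sheaf $\mct_S$ is coherent. It is flat over $S$, because $\mct_S$ is locally free over $\mc{O}_{X_S}$ and $\mc{O}_{X_S}$ is flat over $B$. The morphism $X_S^+\to S$ is a proper morphism of ordinary Noetherian schemes. Generic flatness and Grothendieck's semicontinuity theorem then give a dense open $V=\Spec(C)\subset S$ over which $R^i\pist\mct_S$ is locally free and commutes with base change for all $i$.

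Alternatively, since the fibres have dimension $1$ we have $H^2=0$. The base change map for the top cohomology $H^1$ is therefore always surjective, and it is an isomorphism wherever $R^1\pist\mct_S$ is locally free, which holds on a dense open set because $S$ is integral. Restricting to closed points $t\in V$ gives the bijectivity of \eqref{construct d}.

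The step needing the most care is the reduction of the super statement to the classical one. First, $H^1$ of a sheaf on the superscheme $X_S$ must agree with sheaf cohomology on the underlying ordinary scheme $X_S^+$; this holds because both are computed on the same topological space and $\mct_S$ is quasi-coherent over $\mc{O}_{X_S}\plus$. Second, $\on{Tor}$-independence and flatness over the purely even ring $B$ must behave as in the classical setting. I would handle both by noting that $\mc{O}_{X_S}\plus$ is a finite flat $B$-module extension of the reduced structure, so the classical theorems apply verbatim.
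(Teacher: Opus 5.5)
Your proposal is correct and is essentially the paper's argument: identify \eqref{construct d} with the cohomology-and-base-change map for $H^1$ of the locally free, hence flat, relative tangent sheaf, and then use semicontinuity plus Grauert on the dense open where $h^1$ is minimal. The only differences are minor: you derive these facts from the classical theorems on the even scheme $X_S^+$ instead of citing the super versions in \cite{bruzzo2023notes}; you treat a general integral $S\subset T$ by base change to $X_S$ rather than assuming $T_0=S$; and your $H^2=0$ remark actually gives the base change isomorphism at every point of $S$, not only where $R^1\pi_*\mct_S$ is locally free.
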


\begin{proof} Let $A$ be a superalgebra, and recall $A_0= A/\frak{N}_A$ denotes the full reduction of $A$. Set $T=\Spec(A)$,  $T_0=\Spec(A_0)$, and $X_0= X \times_A A_0$. 
\smallskip

Injectivity of \eqref{construct ob} is immediate.

For bijectivity of \eqref{construct d}:  Assume that $T_0=\Spec(A_0)$ is integral, so that $T_0=S$. 
By (S1)b  of Proposition \ref{athree: schlessinger}, we can replace $D_X$ with $D_{X_0}$ since the two are canonically identified. 

For every closed point $s\in T_0$, we  have canonical identifications
\begin{align*}
D_{X_0}(A_0^{1|1}) & = H^1(X_0,\mct_{X_0/T_0} \otimes_{A_0} A_0^{1|1}) ^{+} \cong H^1(X_0, \mct_{X_0/A_0}) \\ 
D_{X_0}(k(s)^{1|1}) & = H^1(X_0,\mct_{X_0/T_0} \otimes_{A_0} k(s)^{1|1}) ^{+} \cong H^1(X_0, \mct_{X_0/A_0} \otimes_{A_0} k(s)).
\end{align*}
Under these identifications, the map in \eqref{construct d} is the base change morphism
\[
\phi_s : H^1(X_0,\mct_{X_0/T_0}) \otimes k(s) \longrightarrow H^1(X_0,\mct_{X_0/T_0} \otimes_{A_0} k(s)).
\]

The map $X_0 \to  T_0$ is proper, and $\mct_{X_0/A_0}$ is flat over $T_0$ since $X_0$ is smooth over $A_0$ and $\mct_{X_0/A_0}$ is locally free, henceby \cite[Theorem 3.9]{bruzzo2023notes}, the map 
\[ T_0 \to \z \times \z, \quad
s \longmapsto \dim_{k(s)}H^1(X_s,\mct_{X_0/A_0} \otimes_{A_0} k(s))
\]
is upper semi-continuous. 
Let $V\subset T_0$ be the open subset on which the above map
attains its minimal value in both even and odd dimension. Since $S$ is integral, super Grauert's theorem \cite[Theorem 3.11]{bruzzo2023notes}) implies that $\phi_s$ is an isomorphism for every closed point $s\in V$.
\end{proof}

    \begin{prop} \label{prop: supercurves satisfy aseven, comp with completion}
        The superstack $\cc$ satisfies condition (A7) of Theorem \ref{theorem: introduction super artin statement}.
        \end{prop}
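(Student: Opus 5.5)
Using the identifications from the proof of Proposition \ref{athree: schlessinger} (condition (S2)) and from Lemma \ref{lemma:defs of supercurves}, I would rewrite both sides of \eqref{intro: compatible with completion} as cohomology groups:
\[
D_{X_0}(M) \cong H^1(X_0,\mct_{X_0/A_0}\otimes_{A_0} M)^+,\qquad D_{X_0}(M/\maxid^{n+1}M)\cong H^1(X_0,\mct_{X_0/A_0}\otimes_{A_0} M/\maxid^{n+1}M)^+ .
\]
I would also check that these identifications are natural in $M$, so that the transition maps on the right-hand side are the ones induced by $M/\maxid^{n+2}M\to M/\maxid^{n+1}M$. This should follow because the torsor description in Lemma \ref{lemma:defs of supercurves} is functorial in the kernel of the square-zero extension.

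Set $\ff:=\mct_{X_0/A_0}\otimes_{A_0} M$. Since $X_0$ is smooth over $A_0$, the sheaf $\mct_{X_0/A_0}$ is locally free, and therefore $\ff$ is a coherent sheaf on $X_0$. The map $\pi:X_0\to T_0=\Spec(A_0)$ is proper and $T_0$ is affine, so
\[
H^1(X_0,\ff)=H^0\bigl(T_0,R^1\pi_*\ff\bigr)
\]
is a finite $A_0$-module; this is the finiteness already used in (S2). Moreover,
\[
\mct_{X_0/A_0}\otimes_{A_0} M/\maxid^{n+1}M \cong \ff\otimes_{A_0} A_0/\maxid^{n+1}.
\]
The map to be shown bijective is therefore the comparison map
\[
H^1(X_0,\ff)\otimes_{A_0}\wh{A_0}\longrightarrow \varprojlim_n H^1\bigl(X_0,\ff\otimes_{A_0}A_0/\maxid^{n+1}\bigr).
\]

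For this I would invoke the theorem on formal functions for proper morphisms of superschemes, in the form used alongside the super Grothendieck existence theorem of \cite[Theorem 7.6]{faroogh2019existence}. The idea is to reduce to the even part. Decompose $\ff=\ff^+\oplus\ff^-$ as $\mc{O}_{X_0}^+$-modules; both pieces are coherent over $X_0^+$, which is proper over $\Spec(A_0^+)$. This reduction relies on $A_0$ being a reduced ordinary ring when it is the full reduction, so $\maxid$ is an ordinary maximal ideal. The classical formal functions theorem \cite[8.2]{fantechi2005fundamental} then gives, for each piece $\mc{G}\in\{\ff^+,\ff^-\}$,
\[
H^1(\mc{G})\otimes\wh{A_0}\cong\varprojlim_n H^1(\mc{G}/\maxid^{n+1}\mc{G}).
\]
Taking the even part on both sides completes the argument, since completion and $\varprojlim$ commute with taking the $\z_2$-graded components.

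The main obstacle is the mismatch between $H^1(\ff\otimes A_0/\maxid^{n+1})$ and $H^1(\ff)/\maxid^{n+1}$. These differ in general, and the formal functions theorem is exactly what controls the inverse limit; the Mittag-Leffler property of the system holds because all the groups involved are finite-length modules. A secondary point is that $M/\maxid^{n+1}M$ is again a finite $A_0$-module, so the cohomological description of $D_{X_0}$ applies to it.
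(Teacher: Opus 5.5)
Your proof is correct, but it takes a different route from the paper.

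\emph{The paper's route.} It uses $\varprojlim_n M/\maxid^{n}M\cong M\otimes_{A_0}\wh{A_0}$ to replace the right-hand side of (A7) by $H^1\bigl(X_0,\mct_{X_0/A_0}\otimes_{A_0}(M\otimes_{A_0}\wh{A_0})\bigr)^+$. It then concludes by flat base change along $A_0\to\wh{A_0}$, computed with \v{C}ech complexes.

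\emph{Your route.} You apply the theorem on formal functions directly to the coherent sheaf $\ff=\mct_{X_0/A_0}\otimes_{A_0}M$. You first split it into its $\mc{O}_{X_0}^+$-coherent even and odd parts over the ordinary proper morphism $X_0^+\to\Spec(A_0)$.

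\emph{Comparison.} The paper's argument only needs the elementary fact that \v{C}ech cohomology commutes with flat base change. However, its first step silently commutes $\varprojlim_n$ with $H^1$. Knowing $\varprojlim_n M/\maxid^{n}M\cong M\otimes_{A_0}\wh{A_0}$ does not by itself give $\varprojlim_n H^1(X_0,\ff/\maxid^{n+1}\ff)\cong H^1(X_0,\ff\otimes_{A_0}\wh{A_0})$. The \v{C}ech terms are not finite $A_0$-modules, so their $\maxid$-adic completions need not agree with their base change to $\wh{A_0}$. Justifying that interchange is essentially the content of the formal functions theorem, which is exactly what you invoke; with it, the flat base change step becomes unnecessary.

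Your version costs a deeper citation and the routine even/odd reduction, but it is complete. It also makes explicit the naturality of the identification $D_{X_0}(-)\cong H^1(X_0,\mct_{X_0/A_0}\otimes_{A_0}-)^+$ in the module, which is needed to match the transition maps. Since it uses only coherence and properness, it transfers verbatim to the later applications to $\mct^1_{X/T}$, $\mc{A}_{X/T}$ and $\mc{A}^1_{X/T}$.
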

        
        \begin{proof} 
        We have a canonical identification
        \[
        D_{X_0}(M)\cong H^1(X_0,\mct_{X_0/A_0}\otimes_{A_0} M)^{+}
        \]
        For simplicity, we write $A$ for $A_0$ and $X$ for $X_0$.

        Since $M$ is finitely generated over $A$, 
        \[ \varprojlim M/\maxid^nM\cong M\otimes_A\wh{A}, \] 
        and so we are left to show bijectivity of the map: 
        \begin{equation}\label{dcomp}
        H^1(X,\mct_{X/A}\otimes_A M)^{+}\otimes_A\wh{A}
        \;\longrightarrow\;
        H^1\bigl(X,\mct_{X/A}\otimes_A(M\otimes_A\wh{A})\bigr)^{+}.
        \end{equation}
        Since $X$ is separated over $A$, and $\mct_{X/A}$ is coherent, \v{C}ech cohomology compute the sheaf cohomology, and \v{C}ech cohomology of any coherent sheaf $\ff$ on $X/A$ commutes with flat base change along $A\to\wh{A}$, i.e.,
        \[
        H^i(\check{C}^{\bullet}(\ff\otimes_A\wh{A})\bigr)
        \cong
        H^i\bigl(\check{C}^{\bullet}(\ff)\bigr)\otimes_A\wh{A}.
        \]
        \end{proof}
\begin{prop}
\label{prop: supercurves satisfy aeight}
The superstack $\cc$ satisfies condition (A8) of
Theorem~\ref{theorem: introduction super artin statement}.
\end{prop}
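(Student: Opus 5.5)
The plan is to reduce both maps in (A8) to flat base change for cohomology of a coherent sheaf, following the proofs of (A6) and (A7). For the obstruction map \eqref{intro: o is comp with etale morph}, there is nothing to check: by Proposition \ref{prop: supercurves satisfy coherent obstruction theory, condition afive} both obstruction modules are zero, so the map $0 \to 0$ is a bijection.

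For the deformation map \eqref{intro: d is comp with etale morph}, I first use the identifications from the proof of Proposition \ref{athree: schlessinger}, namely Lemma \ref{lemma:defs of supercurves} together with \eqref{isomorphism over the reduction}:
\[
D_{X_0}(M)\cong H^1(X_0,\mct_{X_0/A_0}\otimes_{A_0}M)^{+},\qquad D_{Y_0}(M\otimes B_0)\cong H^1(Y_0,\mct_{Y_0/B_0}\otimes_{B_0}(M\otimes_{A_0}B_0))^{+}.
\]
Next I check that the full reduction of an étale morphism is étale. Étale maps are flat and unramified and preserve reducedness, so $Y\times_S S_0 \cong X_0\times_{T_0}S_0$ and $e_0:S_0\to T_0$ is étale, hence flat. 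Since $X_0/T_0$ is smooth, $\mct_{X_0/A_0}$ is locally free, and its formation commutes with base change. Therefore
\[
\mct_{Y_0/B_0}\otimes_{B_0}(M\otimes_{A_0}B_0)\cong e_0'^{*}\bigl(\mct_{X_0/A_0}\otimes_{A_0}M\bigr),
\]
where $e_0':Y_0\to X_0$ is the projection.

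It then remains to show that for the coherent sheaf $\ff=\mct_{X_0/A_0}\otimes_{A_0}M$ on $X_0$, the natural map
\[
H^1(X_0,\ff)\otimes_{A_0}B_0 \longrightarrow H^1(Y_0,e_0'^{*}\ff)
\]
is an isomorphism, compatibly with the $\z_2$-grading. I will argue exactly as in the proof of Proposition \ref{prop: supercurves satisfy aseven, comp with completion}. Since $X_0$ is separated over $A_0$, \v{C}ech cohomology with respect to a finite affine cover $\{U_\alpha\}$ computes $H^*(X_0,\ff)$. The pulled-back cover $\{U_\alpha\times_{T_0}S_0\}$ is an affine cover of $Y_0$, and its \v{C}ech complex is $\check{C}^\bullet(\ff)\otimes_{A_0}B_0$. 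Because $B_0$ is flat over $A_0$, taking cohomology commutes with $-\otimes_{A_0}B_0$. The tensor product preserves parity, so passing to even parts preserves the isomorphism.

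The main point needing care is that the map in \eqref{intro: d is comp with etale morph} really is this base-change map under the identifications with $H^1$. In other words, I must check that the torsor structure of Lemma \ref{lemma:defs of supercurves} is functorial under the étale base change $e$. Since deformations are described by gluing trivial local deformations via \v{C}ech cocycles of $\mct$, pulling back a cocycle along $e$ gives the cocycle of the pulled-back deformation, so I expect this step to be routine.
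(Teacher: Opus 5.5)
Your proposal is correct and follows essentially the paper's route. You identify both deformation modules with $H^1$ of $\mct\otimes M$, use that the relative tangent sheaf is compatible with the étale base change, and conclude by flat base change of cohomology. The differences are cosmetic: you prove flat base change directly with the \v{C}ech complex (as the paper does for (A7)) instead of citing $e^*R^1\pi_*\ff\cong R^1\pi_{S*}\pr_X^*\ff$, and you explicitly treat the full reductions, the zero obstruction map, and the functoriality of the $H^1$-identification, all of which the paper leaves implicit.
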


\begin{proof}
Let $e\colon S\to T$ be the given \'etale morphism and set
$Y:=X\times_T S$ with projection $\pi_S\colon Y\to S$.

We first identify the deformation spaces. By definition,
\[
D_X(M)=H^1(X,\mct_{X/A}\otimes_A M)^{+} ,
\qquad
D_Y(M\otimes_A B)
=H^1\bigl(Y,(\mct_{Y/B}\otimes_B (M\otimes_A B))\bigr)^+.
\]

Since $Y\to X$ is \'etale, the induced morphism on relative tangent sheaves
$\mct_{Y/B}\to \mct_{X/A}\otimes_{\ox} \oy$ is an isomorphism. Since $Y$ is the base change of $X$, the right hand side is isomorphic to $\mct_{X/A} \otimes_A B$. 

Hence
\[
D_Y(M\otimes_A B)
=
H^1\bigl(Y,((\mct_{X/A}\otimes_A M)\otimes_A B)^{+}\bigr).
\]

Set $\ff:=\mct_{X/A}\otimes_A M$. Using cohomology and base change, we have a canonical isomorphism
\[
e^*R^1\pi_*\ff \xrightarrow{\sim} R^1\pi_{S*}(\pr_X^*\ff),
\]
where $\pr_X \colon Y=X\times_T S \to X$ is the projection. Taking global sections and using
\[
H^1(X,\ff)=H^0(T,R^1\pi_*\ff),
\qquad
H^1(Y,\pr_X^*\ff)=H^0(S,R^1\pi_{S*}(\pr_X^*\ff)),
\]
together with the fact that for any quasi-coherent sheaf $\mc{G}$ on $T$,
\[
H^0(S,e^*\mc{G}) \cong H^0(T,\mc{G})\otimes_A B,
\]
it follows that
\[
H^1(X,\ff)\otimes_A B
\cong
H^1(Y,\pr_X^*\ff).
\]
Then, combining the above identifications, we find:
\[
D_X(M)\otimes_A B \;\cong\; D_Y(M\otimes_A B).
\]

\end{proof}

\section{Super Riemann surfaces} \label{section: moduli of super Riemann surfaces}

We assume $g \ge 2$. 
\smallskip

A genus $g$ \emph{super Riemann surface} is a genus $g$ supercurve $X$ over $k$ equipped with a \emph{superconformal structure}, that is, a rank $(0|1)$ locally free subsheaf $\mc{D} \subset \mc{T}_X$ of the tangent sheaf which is maximally non-integrable in the sense that the induced map via the supercommutator $\bracket$ followed by the projection, 
\begin{equation} \label{beta comp}
\mc{D}^{\otimes 2} \lra{[\ , \ ]} \tbold_X \to \tbold_X / \mc{D},
\end{equation}
 is an isomorphism of locally free sheaves on $X$ \footnote{
    It may be surprising that the composition $\beta$ is $\ox$-linear, since $\bracket$ is not. But, by definition,
    \begin{equation} \label{ox linearity}
    [X, fY]= X(f)\, Y + (-1)^{|X||f|} f[X,Y],
    \end{equation}
    so the failure of $\ox$-linearity is precisely the term $X(f)\, Y$. If $X,Y$ both lie in $\dd$, then $X(f)\, Y$ also lies in $\dd$, and hence vanishes after composing with the quotient map $T_X \to T_X/\dd$.}
    .
Many properties of super Riemann surfaces are proved using the short exact sequence, 
\begin{equation}
    \label{sesofdist}
    0 \lra{{}} \mc{D} \lra{{}} T_X \lra{{}} \dd^{\otimes 2} \lra{{}} 0.
\end{equation}
where $\beta$ is the composition in \eqref{beta comp}.
%  \footnote{The composition $\beta$ is  always $\ox$-linear even though the supercommutator is not. Indeed, from the formula
%     \begin{equation} \label{ox linearity}
%     [X, fY]= X(f)\, Y + (-1)^{|X||f|} f[X,Y],
%     \end{equation}
%     we see that the failure for the supercommuator to be $\ox$-linear is the term $X(f)\, Y$. If $X,Y$ both lie in $\dd$, then $X(f)\, Y$ also lies in $\dd$, and hence vanishes after composing with the quotient map $T_X \to T_X/\dd$.}

        \begin{definition}
            A \emph{family of super Riemann surfaces over a superscheme $T$} is a family of supercurves $X\to T$ equipped with a relative superconformal structure $\dd\subset\tbold_{X/T}$.
            An \emph{isomorphism of super Riemann surfaces}, called a \emph{superconformal isomorphism} is a $T$-linear isomorphism of supercurves $\phi:  X \lgr Y$ such that  
            \[
                d\phi(\dd) = \phipull\dd
                \]
                as subsheaves of $\mct_{X/T}$:
              here $d\phi: \mct_{X/T} \lgr \phi^* \mct_{Y/T}$ denotes the induced isomorphism on relative tangent sheaves. 
          
            \end{definition}

\medskip

A family of \emph{spin curves} over an ordinary base scheme $T$ is a pair $(C/T,L)$ where $C/T$ is an ordinary family of curves over $T$ and $L$ is a line bundle on $C$ with a fixed isomorphism  $L^{\otimes 2} \cong \Omega_{C/T}^1$. 
\begin{lemma}
    \label{lemma: correspondence with spin curves} Families of super Riemann surfaces over purely bosonic base schemes are in one-to-one correspondence with families of spin curves. 
\end{lemma}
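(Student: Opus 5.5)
The plan is to refine the correspondence of Lemma \ref{lemma: supercurves correspondence with picard}. Over a purely bosonic base $T$, a family of supercurves $X/T$ is the same as a pair $(C/T,L)$, with $C=\xbos$, $L=\Pi\jj$ and $\ox=\oc\oplus\Pi L$. It then suffices to show two things. First, superconformal structures $\dd\subset\mct_{X/T}$ on $X$ correspond bijectively to isomorphisms $L^{\otimes 2}\cong\Omega^1_{C/T}$, in the appropriate normalization $L=\jj^{\vee}$ or $L=\jj$. Second, superconformal isomorphisms correspond to isomorphisms of spin curves.

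\emph{Step 1: structure of $\mct_{X/T}$ when $T$ is bosonic.} Since $\jj^2=0$, I will work with local coordinates $(z|\theta)$, where $\theta$ is a local generator of $\jj$. Derivations of $\ox=\oc\oplus\jj$ split by parity. The even part is $\mct_{C/T}$, acting through derivations of $\oc$ extended to $\jj$ via a connection-type lift, plus $\on{End}(\jj)\cong\oc$ given by $\theta\partial_\theta$. The odd part consists of maps $\oc\to\jj$ (derivations $\Omega^1_{C/T}\to\jj$, i.e.\ $\jj\otimes\mct_{C/T}$) together with maps $\jj\to\oc$, i.e.\ $\jj^{\vee}$, coming from $\partial_\theta$. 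A rank $(0|1)$ subsheaf $\dd$ is generated locally by an odd vector field $D=a(z)\partial_\theta+\theta b(z)\partial_z$. The bracket $[D,D]=2D^2$ equals $2ab\,\partial_z$ modulo $\dd$, up to even terms. Maximal non-integrability therefore says exactly that $a$ and $b$ are units.

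\emph{Step 2: extract the spin structure.} The condition that $a$ is a unit means the composite $\dd\to\mct_{X/T}\to\mct_{X/T}|_C$, projected to the odd part $\jj^\vee$, is an isomorphism $\dd|_C\cong\jj^{\vee}$. Restricting \eqref{sesofdist} to $C$ and taking even parts gives $(\dd|_C)^{\otimes 2}\cong\mct_{C/T}$. Together these yield $(\jj^{\vee})^{\otimes 2}\cong\mct_{C/T}$, that is, $\jj^{\otimes 2}\cong\Omega^1_{C/T}$. Conversely, given a spin curve $(C,L,\;L^{\otimes2}\cong\Omega^1_{C/T})$, I set $\jj=\Pi L$ and define $\dd$ locally by $D=\partial_\theta+\theta\partial_z$, where $z$ is a coordinate and $\theta$ is a local section of $L$ with $\theta^2\mapsto dz$ under the spin isomorphism. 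I then check that changing $(z,\theta)$ to $(w,\phi)$ with $\phi^2=dw$, so $\phi=\pm(w')^{1/2}\theta$, rescales $D$ by a unit. Hence $\dd$ glues and is maximally non-integrable.

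\emph{Step 3: bijectivity and morphisms.} I will show the two constructions are mutually inverse. One direction is the computation in Step 2. For the other, note that $D=a\partial_\theta+\theta b\partial_z$ with $a,b$ units; after rescaling $D$ by $a^{-1}$ the sheaf $\dd$ is determined by the single function $b/a$, which is determined by the spin isomorphism. Finally I will check that a superconformal isomorphism $\phi:X\to Y$ over bosonic $T$ induces an isomorphism of pairs $(C,L)$ compatible with the spin isomorphisms, and conversely.

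The main obstacle I expect is Step 2/3 uniqueness. Specifically, I must verify that $\dd$ is pinned down by the spin isomorphism, i.e.\ that no additional data survives (for instance, a possible even "connection" component relating $\partial_z$ on $X$ to $\partial_z$ on $C$). I would handle this with the local normal form argument, using $\jj^2=0$ to kill all higher-order terms in $\theta$.
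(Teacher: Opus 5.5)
Your proposal is correct and follows the paper's route: you restrict \eqref{sesofdist} to $C$ and separate parities to get $\dd|_C\cong\jj^{\vee}$ and $\dd^{\otimes 2}|_C\cong\mct_{C/T}$, and for the converse you use the split supercurve $(|C|,\oc\oplus\Pi L)$. Your local normal form $D=\partial_\theta+\theta\partial_z$ with $\theta^{2}\mapsto dz$ (which needs an \'etale-local square root) goes further than the paper: it specifies the embedding $\dd\subset\mct_{X/T}$ and proves the uniqueness and functoriality that the paper only asserts with ``one can check''.
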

\begin{proof}
This fact is well-known but we include it for completeness. 

Let $(X/T, \dd)$ be a super Riemann surface over a purely bosonic base $T$. The bosonic reduction $C=\xbos$ of $X$ is an ordinary family of curves over $T$ by definition of supercurve. The superconformal structure on $X$ determines a spin structure on $C/T$ as follows: Since $T$ is purely bosonic, $\jj^2=0$, and $\jj$ is a locally free sheaf on $C$ of rank $(0|1)$. Furthermore, the restriction of \eqref{sesofdist} to $C$ is the following short exact sequence of locally free sheaves on $C$:
\[ 0 \to \dd \vert_C \to \mct_{X/T} \vert_C = \mct_{C/T} \oplus \jj\inv \to \dd^{\otimes 2} \vert_C \to 0.\]
That $\mct_{X/T} \vert_C = \mct_{C/T} \oplus \jj \inv$ follows from the exactness of the tangent sequence associated to $C \hra X$ and by comparing parity of the terms in it.  

The terms of the above sequence are of rank $(0|1), (1|1)$ and $(1|0)$, respectively, and since each map in the sequence is parity-preserving, they induce isomorphisms
\[ \dd^{\otimes 2} \vert_C \cong \mct_{C/T}, \qquad \jj \inv \cong \dd \vert_C.\]
From which it follows that $\jj$, or more precisely $L=\Pi \jj$, is a spin structure on $C$.

For the other direction, let $(C/T,L)$ be a spin curve. Then the superscheme
\[ X=(|C|, \ox \oplus \Pi L) \]
is a supercurve admitting a morphism $s: X \to C$ induced by $\oc \to \ox$, and one can check that $s^*L \inv$ is a superconformal structure on $X$.
\end{proof}

\begin{lemma} \label{srs are strong projective}
    Super Riemann surfaces are strongly projective for all genus $g \ge 2$.
\end{lemma}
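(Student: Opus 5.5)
By Lemma \ref{theorem: degree condition for strongly projective}, it suffices to show that $\deg(\duaxt\vert_C)\neq 0$, where $C$ is the bosonic reduction of the underlying supercurve $X/T$ of a super Riemann surface $(X/T,\dd)$. Both the degree condition and the criterion of Proposition \ref{propatwo} only see the restriction to the bosonic reduction. We may therefore base change along $\tbos \hra T$ and assume that $T$ is purely bosonic. This is harmless because the restriction of a superconformal structure along a base change is again a superconformal structure, so $(X_{\tbos}/\tbos, \dd\vert)$ is a super Riemann surface with the same bosonic curve $C$.

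Over a purely bosonic base, Lemma \ref{lemma: correspondence with spin curves} gives that $L=\Pi\jj$ is a spin structure on $C/T$. Concretely, the proof of that lemma shows
\[
\jj\inv \cong \dd\vert_C, \qquad \dd^{\otimes 2}\vert_C\cong \mct_{C/T}.
\]
Hence $\jj^{\otimes 2}\cong \Omega^1_{C/T}$ up to parity shift, and on each fiber
\[
\deg(\jj)=\tfrac12(2g-2)=g-1.
\]
Substituting this into the degree formula \eqref{degree formula} gives
\[
\deg(\duaxt\vert_C)=(2g-2)-(g-1)=g-1,
\]
which is nonzero, and in fact positive, because $g\ge 2$.

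Lemma \ref{theorem: degree condition for strongly projective} then yields that $X/T$ is strongly projective. Concretely, any $n$ with $n(g-1)\ge 2g+1$ makes $\duaxt^{\otimes n}\vert_C$ strongly relatively very ample, and Proposition \ref{propatwo} lifts this property to $X/T$. For instance $n=3$ works when $g\ge 4$, while $n=5$ works for all $g\ge 2$.

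The main point requiring care is the reduction to a bosonic base. We must check that the degree computation is insensitive to the odd directions of $T$. This holds because the degree of $\duaxt\vert_C$ is computed fiberwise on geometric points, and the formation of $\duaxt$ commutes with base change. The remaining steps are direct consequences of the lemmas already established.
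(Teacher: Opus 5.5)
Your proposal is correct and is essentially the paper's own argument. You reduce to a purely bosonic base, use the spin-curve correspondence to see that $\Pi\mathcal{J}$ is a square root of $\Omega^1_{C/T}$ and hence has degree $g-1$, get $\deg(\omega_{X/T}\vert_C)=(2g-2)-(g-1)=g-1>0$ from the degree formula, and then conclude by the degree criterion for strong projectivity. Your extra remarks are sound refinements of details the paper leaves implicit: the explicit exponent (e.g.\ $n=5$ for all $g\ge 2$), and the check that base change to $T_{\mathrm{bos}}$ keeps the superconformal structure and the bosonic curve $C$.
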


\begin{proof}
Let $(X/T, \dd)$ be a super Riemann surface of genus $g \ge 2$, and set $C=\xbos$. By Lemma \ref{theorem: degree condition for strongly projective}, to prove that $X/T$ is strongly projective, it is enough to show 
\[ \deg(\omega_{X/T} \vert_C) =g-1, \qquad \duaxt:=\Ber(\omegaxt) \]
since this is positive for all $g \ge 2$. 
The degree condition is on the bosonic reduction, so we may assume that the base $T$ is bosonic, and apply the correspondence with spin curves to identify $\jj$ as a spin structure on $C$. Since the degree of any spin structure is $g-1$ (and $\deg(\Pi \jj)=\deg(\jj)$)  from \eqref{degree formula} we have
\[ \deg(\duaxt \vert_C)= (2g-2)-(g-1)=g-1,\]
which is positive for all $g \ge 2$.
\end{proof}

    %   The bosonic reduction of $\mfr$ is the moduli stack of spin curves, $\mc{S}\mc{M}_g$.  A \emph{family of spin curves} over an ordinary scheme $T$ is a smooth and proper morphism $C\to T$ together with a line bundle $L$ on $C$ and an isomorphism $L^{\otimes 2}\cong \Omega^1_{C/T}$.
      
    %   A super Riemann surface $(X/T,\dd)$ over an ordinary scheme $T$ determines a spin curve $(C/T,L)$.  Indeed, the bosonic reduction of $X/T$ is a family of curves $C/T$, and the bosonic reduction of $\dd$,
    %   \[
    %   \dd_{\bos}:=\dd\otimes_{\ox}(\ox/\jj),
    %   \]
    %   is a rank $(0|1)$ line bundle on $C$ equipped with a canonical isomorphism $\dd_{\bos}^{-2}\cong \Omega^1_{C/T}$ induced by \eqref{beta comp}.  Setting $L:=\dd_{\bos}\inv$ yields the associated family of spin curves.
      
    %   Conversely, a spin curve $(C/T,L)$ determines a super Riemann surface $(X/T,\dd)$ by taking
    %   \[
    %   X=\bigl(|C|,\ \ox=\oc\oplus \Pi L\bigr),
    %   \]
    %   with structure morphism induced by $\oc\to\ox$, and defining $\dd\inv$ to be the pullback of $\Pi L$ along the canonical map $X\to C$.
\subsection{Deformation theory of super Riemann surfaces}
   Let $A' \to A$ be a square-zero extension of superalgebras with kernel $M=\ker(A' \to A)$, $M^2=0$, a finite $A_0=A/\frak{N}_A$-module. Set $T=\Spec(A)$, $T'=\Spec(A')$ and $T_0=\Spec(A_0)$.

      A \emph{superconformal deformation of a super Riemann surface $(X/T, \dd)$ over $T'$} is a super Riemann surface $(X'/T', \dd')$ together with a superconformal isomorphism
      \[
      X' \times_{T'} T \cong X
      \]
      over $T$.  Superconformal deformations of $(X/T, \dd)$ over $T'$ are controlled by the sheaf of \emph{relative superconformal vector fields} $\mc{A}_{X/T}$. This sheaf admits a canonical isomorphism 
      \begin{equation} \label{good for a}
      \mc{A}_{X/T} \cong \omega_{X/T}^{\otimes -2},
      \end{equation}
      of sheaves of complex super vector spaces. In particular,  $\mc{A}_{X/T}$ inherits the properties of $\duaxt$.

\begin{prop} \label{prop: defs of srs}

There are no obstructions to deforming super Riemann surfaces over square-zero extensions, and the set of isomorphism classes of superconformal deformations $\sDef_{X/T}(T')$ is a torsor for the group
\[
H^1(X, \mc{A}_{X/T} \otimes_{A} M)^{+}.
\]
\end{prop}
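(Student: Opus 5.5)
The plan is to treat a super Riemann surface deformation as an ordinary deformation problem glued from local pieces, mirroring the argument behind Lemma \ref{lemma:defs of supercurves}. The sheaf $\mc{A}_{X/T}$ of superconformal vector fields plays the role of $\mct_{X/T}$ throughout.

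\emph{Step 1: local rigidity and local automorphisms.} Cover $X$ by affine opens $U_\alpha$ on which $(X/T,\dd)$ admits superconformal coordinates $(z,\theta)$, with $\dd$ generated by $D_\theta=\partial_\theta+\theta\partial_z$. Such coordinates exist étale locally by the standard Darboux-type normal form for maximally non-integrable distributions of rank $(0|1)$. Over $T'$ the superconformal structure on $U_\alpha\times_T T'$ is again the standard one, so it lifts. Any smooth lift of an affine piece is trivial, and then $\dd$ can be extended by the same normal form. Hence local superconformal deformations exist and are all isomorphic, and this gives the absence of local obstructions.

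Next we compute the automorphisms of the trivial lift $U'_\alpha$ that restrict to the identity on $U_\alpha$. They are of the form $\on{id}+v$ with $v\in \Gamma(U_\alpha,\mct_{X/T}\otimes_A M)^+$, using $M^2=0$. Preserving $\dd'$ means $[v,\dd]\subset\dd$ modulo $M$, that is, $v$ is a superconformal vector field. So the sheaf of infinitesimal superconformal automorphisms is $\mc{A}_{X/T}\otimes_A M$. To justify this identification with the tensor product I use flatness of $X/T$ and the isomorphism \eqref{good for a}, which shows that $\mc{A}_{X/T}$ is locally free of rank $(1|0)$ and compatible with base change.

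\emph{Step 2: gluing.} With the local facts in hand, the standard Čech argument applies. Two superconformal deformations differ on overlaps $U_{\alpha\beta}$ by a $1$-cocycle in $\mc{A}_{X/T}\otimes_A M$, and changing the local trivializations alters this cocycle by a coboundary. Conversely, any even $1$-cocycle can be used to reglue the trivial local lifts together with their structures $\dd'_\alpha$; the cocycle condition ensures the gluing is consistent. This yields a free and transitive action of $H^1(X,\mc{A}_{X/T}\otimes_A M)^+$ on $\sDef_{X/T}(T')$, where only even classes occur because the gluing maps are parity preserving.

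\emph{Step 3: global obstructions.} The obstruction to gluing local lifts lies in $H^2(X,\mc{A}_{X/T}\otimes_A M)$. We reduce to $X_0/T_0$ via $\mc{A}_{X/T}\otimes_A M\cong \mc{A}_{X_0/A_0}\otimes_{A_0}M$, as in \eqref{isomorphism over the reduction}. Since $X$ has even relative dimension $1$, Čech cohomology on an affine cover with two-fold intersections vanishes in degree $2$, so the obstruction vanishes. Lemma \ref{srs are strong projective} then shows that the resulting $X'$ lies in $\cc$, since strong projectivity survives nilpotent thickening. It remains to check that $\dd'$ is maximally non-integrable. This condition is open and is detected on $X$, so it holds automatically.

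\emph{Main obstacle.} I expect the difficulty to be Step 1 over a general superbase, in two respects. First, the existence of superconformal normal forms must be established when $A$ has odd nilpotents. Second, one must show that the automorphism sheaf is exactly $\mc{A}_{X/T}\otimes_A M$ and not something larger. I would handle both by a direct coordinate computation: write $v=f\partial_z+g\partial_\theta$, impose $[v,D_\theta]\in \ox D_\theta$, and verify that $v$ is determined by an arbitrary function $f$. This exhibits $\mc{A}_{X/T}$ as locally free with formation compatible with base change.
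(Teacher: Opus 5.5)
Your proposal is correct and follows essentially the same route as the paper: \'etale-local superconformal charts on which every deformation is the standard one with $D_\theta$, infinitesimal superconformal automorphisms identified with $\mc{A}_{X/T}\otimes_A M$, \v{C}ech gluing giving a (pseudo-)torsor under $H^1(X,\mc{A}_{X/T}\otimes_A M)^{+}$, and the $H^2$ obstruction killed for dimension reasons after passing to $X_0/T_0$. The differences are only in presentation: you kill $H^2$ with an affine cover having no triple overlaps rather than by splitting the sheaf over the bosonic curve $C$, and you spell out the coordinate computation of the automorphism sheaf that the paper quotes as well known (your charts should be \'etale rather than Zariski affines, as in the paper).
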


\begin{proof}
Since $X$ is smooth, it admits an \'etale cover by smooth affine superschemes isomorphic to
\[
Y=\Spec A[z,\theta].
\]
We may assume the cover is superconformal in the sense that the pullback of $\dd$ to $Y$ is generated by the odd vector field
\begin{equation} \label{standard scf structure form}
D_{\theta}=\partial_{\theta}+\theta\partial_z .
\end{equation}
and the subsheaves generated by $D_{\theta}$ agree on all overlaps as subsheaves of the relative tangent sheaf.  

Every deformation $Y'$ of such the \'etale chart $Y$ is isomorphic to
\[
\Spec A'[z,\theta]
\]
with superconformal structure generated by $D_{\theta}$. Any choice of such an isomorphism compatible with the superconformal structures is called a \emph{trivialization}.

On overlaps of the \'etale cover, these trivializations glue to a deformation of $X$ if and only if they compose to a superconformal automorphism of $\Spec A'[z, \theta]$ ( the composition of the transition functions  $\sigma$ sends $D_{\theta}$ to $F D_{\theta'}$ where $F$ is an even invertible function and $\theta'=\sigma(\theta)$),
and satisfy the cocycle condition on triple overlaps.

It is well-known that superconformal automorphisms of the overlaps correspond to sections of the sheaf of superconformal vector fields
\[
\mc{A}_{X/T}\subset \mct_{X/T}.
\]
The failure of the automorphisms to satisfy the cocycle condition therefore defines a class in
\[
H^2(X,\mc{A}_{X/T}\otimes_A M)^{+},
\]
and the set of isomorphism classes of superconformal deformations is a pseudo--torsor for
\[
H^1(X,\mc{A}_{X/T}\otimes_A M)^{+}.
\]

Since $M$ is assumed to be finite over $A_0$, by \eqref{isomorphism over the reduction} applied to the subsheaf $\mc{A} \subset \mct$, we have
\[ \mc{A}_{X/T} \otimes_A M \cong \mc{A}_{X_0/T_0} \otimes_{A_0} M.\]
Since $T_0=\Spec(A_0)$ is bosonic, splits into a direct sum of coherent sheaves on the bosonic reduction $C=\xbos$. Indeed, for any sheaf of $\oxnot$-modules, the identity $\oxnot=\oc \oplus \jj$ (as sheaves of modules over $\oc$) implies
\[ \ff \cong \ff \vert_C \oplus (\ff \vert_C \otimes_{\oc} \jj), \qquad \ff \vert_C =\ff \otimes_{\ox} \oc\]
 Thus $H^2$ splits into a direct sum of $H^2$s of the components of $\ff$. Since $C$ has dimension $1$, both component vanish, and thus $H^2(X,\mc{A}_{X_0/T_0}\otimes_{A_0} M)=0$. Hence there are no obstructions to deforming a super Riemann surface over square-zero extensions, and the set of isomorphism classes of superconformal deformations is a torsor for the above group by Theorem \ref{from Hart a}. 

\end{proof}

      \subsection{Moduli of super Riemann surfaces is an algebraic superstack} \label{section: moduli of super riemann surfaces is superstack} 

     We assume that the genus satisfies $g \ge 2$.

Let $\mfr$ be the category fibered in groupoids over $\sS$ defined as follows. For $T\in\sS$, the fiber $\mfr(T)$ is the groupoid whose objects are families of genus $g$ super Riemann surfaces over $T$ and whose morphisms are superconformal isomorphisms. Pullback in $\mfr$ is given by fiber product together with pullback of the superconformal structure.

Concretely, if $(X,\dd)\in\mfr(T)$ and $S\to T$ is a morphism in $\sS$, set $X_S:=X\times_T S$ and let $p\colon X_S\to X$ be the canonical projection. The pulled--back distribution $\dd_S$ is defined as $\ppull\dd$, viewed as a subsheaf of $\mct_{X_S/S}$ via the canonical isomorphism
\[
\ppull \mct_{X/T}\cong \mct_{X_S/S}.
\]
      Then $(X_S,\dd_S)\in\mfr(S)$.

\begin{prop}\label{prop: srs is superstack}
$\mfr$ is an \'etale superstack over $\sS$.
\end{prop}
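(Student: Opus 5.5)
We will reduce the statement to Proposition \ref{proposition: supercurves is a superstack}, using the forgetful functor $\mfr \to \cc$. This functor makes sense because super Riemann surfaces are strongly projective (Lemma \ref{srs are strong projective}). We then only need to show that the extra datum, the subsheaf $\dd \subset \mct_{X/T}$, satisfies étale descent, and that the superconformal condition on isomorphisms is étale local. Throughout we use that the relative tangent sheaf is compatible with flat base change, $\ppull \mct_{X/T} \cong \mct_{X_S/S}$, which holds because $X/T$ is smooth and $\Omega^1_{X/T}$ is locally free.

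\emph{(i) Descent for morphisms.} Let $(X,\dd),(Y,\dd') \in \mfr(U)$ and let $\{U_i \to U\}$ be an étale cover. Suppose we are given superconformal isomorphisms $\alpha_i$ over $U_i$ that agree on overlaps. By Proposition \ref{proposition: supercurves is a superstack}(i), they glue to a unique isomorphism $\alpha\colon X \to Y$ of supercurves over $U$. It remains to check that $d\alpha(\dd) = \alpha^*\dd'$ as subsheaves of $\alpha^*\mct_{Y/U}$. Both sides are rank $(0|1)$ locally free subsheaves. Their pullbacks to the étale cover $\{X_{U_i} \to X\}$ coincide by assumption. Since equality of quasi-coherent subsheaves can be checked étale locally (by \cite[Proposition A.29]{bruzzo2021supermoduli}, applied to the image of $d\alpha(\dd)$ in $\alpha^*\mct_{Y/U}/\alpha^*\dd'$ and vice versa), the two subsheaves agree globally.

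\emph{(ii) Descent for objects.} Suppose we are given $(X_i,\dd_i)$ over $U_i$ and superconformal isomorphisms $\alpha_{ij}$ satisfying the cocycle condition. First, forget the $\dd_i$ and apply Proposition \ref{proposition: supercurves is a superstack}(ii). This gives a supercurve $X/U$ together with isomorphisms $\alpha_i\colon \pripull X \cong X_i$. Via $d\alpha_i$ and the base change isomorphism for tangent sheaves, each $\dd_i$ becomes a subsheaf of $\pripull \mct_{X/U}$. Since the $\alpha_{ij}$ are superconformal, these subsheaves agree on overlaps, compatibly with the cocycle. By étale descent for quasi-coherent sheaves, together with their morphisms, they descend to a quasi-coherent subsheaf $\dd \subset \mct_{X/U}$. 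Local freeness of rank $(0|1)$ is fpqc (hence étale) local, so $\dd$ is a line subbundle, and the quotient $\mct_{X/U}/\dd$ is locally free because it descends $\mct_{X_i/U_i}/\dd_i$.

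We then check maximal non-integrability. As noted in the footnote to \eqref{beta comp}, the map $\beta\colon \dd^{\otimes 2} \to \mct_{X/U}/\dd$ is $\ox$-linear. Its formation commutes with étale pullback, because the supercommutator of vector fields is compatible with étale maps: an étale map identifies tangent sheaves and respects the bracket. The pullback of $\beta$ is $\beta_i$, which is an isomorphism, and the kernel and cokernel of $\beta$ vanish étale locally. Hence $\beta$ is an isomorphism and $(X,\dd) \in \mfr(U)$. By construction, the $\alpha_i$ are superconformal.

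The main point needing care is the compatibility of the bracket, and hence of $\beta$, with étale base change in the super setting. We handle it as follows. On an étale chart, the vector fields are derivations of $\ox$ that extend uniquely to the étale cover, and uniqueness of the extension gives compatibility with the bracket. Everything else is formal descent of quasi-coherent subsheaves.
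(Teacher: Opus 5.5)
Your proposal is correct and follows essentially the same route as the paper: it reduces to Proposition~\ref{proposition: supercurves is a superstack} via strong projectivity (Lemma~\ref{srs are strong projective}), descends the $\dd_i$ as a quasi-coherent subsheaf of $\mct_{X/U}$, and checks superconformality of isomorphisms and maximal non-integrability \'etale locally. You also verify some points the paper leaves implicit: that $\dd$ and $\mct_{X/U}/\dd$ are locally free, and that the bracket map $\beta$ commutes with \'etale pullback.
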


\begin{proof}

\noindent\emph{(i) Descent for isomorphisms.}
Let $X,Y \in \mfr(U)$ and let $\{U_i \to U\}$ be an \'etale cover. Suppose we are given superconformal isomorphisms
\[
\alpha_i : \pripull X \;\lgr\; \pripull Y
\]
compatible on overlaps. Since isomorphisms of superschemes  satisfy \'etale descent and glue to an isomorphism of supercurves by Proposition \ref{proposition: supercurves is a superstack}, the $\alpha_i$ glue to a unique isomorphism $\alpha : X \to Y$ of supercurves over $U$.

It remains to check that $\alpha$ is superconformal. This holds if and only if
\[
d\alpha(\dd_X) = \alpha^*\dd_Y
\]
as subsheaves of $\alpha\pull \mct_{Y/U}$. Since subsheaves of quasi-coherent sheaves satisfy \'etale descent and the two subsheaves become equal after pullback along the \'etale cover $\pripull X \to X$ (because the $\alpha_i$ are superconformal), they must already coincide on $X$.
\smallskip

\noindent\emph{(ii) Descent for objects.}
Let $\{U_i \to U\}$ be an \'etale cover, and suppose we are given super Riemann surfaces $(X_i/U_i,\dd_i)$ together with superconformal isomorphisms
\[
\alpha_{ij}\colon \prijpull X_i \;\lgr\; \prjipull X_j
\]
satisfying the cocycle condition.

Since super Riemann surfaces are strongly projective by Lemma \ref{srs are strong projective}, Proposition~\ref{proposition: supercurves is a superstack} implies that there exists a unique supercurve $X/U$ such that $\pripull X \cong X_i$ for all $i$. 

Because the $\alpha_{ij}$ are superconformal, the distributions $\dd_i$ agree on overlaps under the identifications $\prijpull X_i \cong \prjipull X_j$. Hence the $\dd_i$ define compatible subsheaves of the tangent sheaf on the overlaps of the \'etale cover $\pripull X \to X$, and therefore glue by \'etale descent to a subsheaf
\[
\dd \subset \mct_{X/U}.
\]

It remains to check that $\dd$ is superconformal. The bracket morphism
\[
\dd^{\otimes 2} \lra{\bracket} \mct_{X/U} \to \mct_{X/U}/\dd
\]
pulls back to the corresponding morphisms for the $(X_i,\dd_i)$, where it is an isomorphism. Since isomorphisms of quasi-coherent sheaves can be checked \'etale locally, the bracket morphism is an isomorphism, and thus $\dd$ is a superconformal structure. 

\end{proof}

      \noindent \textbf{A1.}  Super Riemann surfaces are strongly projective by Lemma \ref{srs are strong projective}, so by Proposition \ref{proposition: rep of diagonal for supercurves}, there exists a Hilbert superscheme
      \[
      \hh:=\mathbb{H}(X\times_T X'/T)
      \]
      parametrizing closed subschemes of $X\times_T X'$ that are proper and flat over $T$.
      Let $U\subset\hh$ be the open subscheme from Proposition \ref{proposition: rep of diagonal for supercurves} representing isomorphisms of the underlying supercurves, forgetting the superconformal structure.
      Denote by $\phi\colon X_U\to X'_U$ the universal isomorphism over $U$.
      The locus where $\phi$ is superconformal is the vanishing locus of the morphism
      \begin{equation}\label{srs isos}
      \dd \hookrightarrow T_{X_U/U} \xrightarrow{d\phi} \phi^*T_{X'_U/U} \twoheadrightarrow \phi^*(T_{X'_U/U}/\dd').
      \end{equation}
      Since \eqref{srs isos} is a morphism of locally free sheaves ($\phipull(\mct'/\dd')$ is locally free because $\dd'$ is by definition of superconformal structure a direct summand of $\mct'$), hence flat over $U$, so its zero locus is a closed subscheme of $U$ by  \cite[Lemma 6.1.a]{felder2020moduli}. 
      \vspace{10mm}

      \noindent \textbf{A2.}  The same arguments from Proposition \ref{atwo: colimit preserving} show there exists $\lambda$ such that $X\cong X_{\lambda} \times_{A_{\lambda}} A$ as supercurves over $A$. The superconformal structure $\dd_{\lambda}$ on $X_{\lambda}$ pulls back to a superconformal structure $\dd_X$ on $X$. Since a supercurve admits at most one superconformal structure, $\dd_X = \dd$.
      \vspace{10mm}

      \noindent \textbf{A3.} We can use the same arguments as in Proposition \ref{athree: schlessinger} after making the following adjustments for (S1)a: By Lemma \ref{lemma: correspondence with spin curves}, the bosonic reduction of $\mfr$ is the moduli of genus $g$ spin curves. This is well-known to be an algebraic (in fact, DM) stack, so the same arguments then go through after replacing $H^1(X, \mct_{X_0/T_0} \otimes_{A_0} M)^{+}$ with  $H^1(X, \mc{A}_{X_0/T_0} \otimes_{A_0} M)^{+}$, and noting that the sheaf of superconformal vector fields $\mc{A}$ is compatible with base change through \eqref{good for a}.  For (S1)b and (S2), we can  use the same arguments as in Proposition \ref{athree: schlessinger}, after making the same replacement. 
      \vspace{10mm}

      \noindent \textbf{A4.}
      Let $\xbull=(X_j, X_i \to X_j)$ be a projective system of super Riemann surfaces in $\varprojlim \mfr(T_j)$. The superconformal structure on $\xbull$ is the projective system 
      \[ \dbull=(\dd_j, \dd_j \to \dd_i)\]
with transition maps the adjoints of $\prjipull \dd_j \cong \dd_i$.

Since the supercurves underlying super Riemann surfaces are strongly projective, Lemma \ref{srs are strong projective},  Proposition \ref{proposition: effectivity supercurves} applies, and there exists a unique supercurve $X$ over $T$ algebraizing the formal superscheme $\mcx$ over $\mct$ determined by $\xbull$. This means that $\varprojlim (X \times_T T_j) \cong \mcx$ as formal superschemes over $\mct=\on{Spf}(R)$.
Let 
\[ \mctbull = (\mct_{X_j/T_j}, \mct_{X_j/T_j} \to \mct_{X_i/T_i} ) \]
denote the system of relative tangent sheaves on $X_j/T_j$ where the transition maps are the standard ones \eqref{standard trans}. 

The systems $\mctbull$ and $\dbull$ are clearly coherent: the transition maps are linear, and induce the canonical isomorphisms $\prjipull \dd_j \cong \dd_i$ and $\prjipull \mct_{X_j/T_j} \cong \mct_{X_i/T_i}$. Furthermore, the inclusions $\dd_j \to \tanxjtj$ are compatible with the transition maps, and thus define a monomorphism of coherent systems $\dbull \to \mctbull$.
Since $X/T$ algebraizes the formal superscheme $\mcx/\mct$, from the super Grothendieck existence theorem we get a unique morphism of sheaves $\dd \to \tanxt$ pulling back to $\dd_j \to \tanxjtj$ for all $j$. That $\tanxt$ is the relative tangent sheaf on $X/T$ follows from uniqueness: Indeed, both $\tanxt$ and the relative tangent sheaf pullback to $\tanxjtj$, so the two must be isomorphic.

Our main goal now is to show that $\dd$ is a superconformal structure on $X$. 

To show that $\dd$ is locally free of rank $(0|1)$, we argue locally on $X$. Since each $\dd_j$ is locally free of rank $(0|1)$, and an affine open cover of $X_0$ induces compatible affine opens on $X_j$, and $X$,  we have a compatible system of isomorphism $\oo_{X_j}^{0|1} \lgr \dd_j$ on each affine of the cover.  By the super Grothendieck existence theorem, this compatible system algebraizes uniquely to an isomorphism
\[
\ox^{0|1} \xrightarrow{\sim} \dd
\]
on the corresponding open of $X$. Hence $\dd$ is locally free of rank $(0|1)$.

To prove the remaining properties of a superconformal structure, we will need the following lemma.

\begin{lemma} \label{inside}
Suppose $\phi:\ff\to\mcg$ is a morphism of coherent sheaves on $X$ such that
\[
\prjpull\phi:\prjpull\ff\to\prjpull\mcg
\]
is injective (resp.\ surjective, bijective) for all $j$. Then $\phi$ is injective (resp.\ surjective, bijective).
\end{lemma}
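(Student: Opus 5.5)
We work in the setting of the effectivity argument: $X/T$ is proper over $T=\Spec(R)$, with $(R,\maxid)$ a complete local Noetherian superalgebra, and $\prj: X_j=X\times_T T_j\to X$ is the closed immersion cut out by $\maxid^{j+1}\ox$. The strategy is to reduce everything to the vanishing of a coherent sheaf. Let $\mc{K}=\ker\phi$ and $\mc{Q}=\coker\phi$, and set $\mc{I}=\on{im}\phi$. These are all coherent, since $X$ is Noetherian. The key fact we will use is the following: if $\ff$ is a coherent sheaf on $X$ with $\prjpull\ff=0$ for all $j$ (equivalently, for $j=0$), then $\ff=0$.

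To prove this fact, note that the support of $\ff$ is closed in $|X|$. Since $X\to T$ is proper and $T$ is local, every nonempty closed subset of $|X|$ meets the closed fiber $|X_0|$. At a point $x$ of the closed fiber, $\maxid\mc{O}_{X,x}\subset\frak{m}_x$. Now $\ff_x\otimes\mc{O}_{X,x}/\maxid\mc{O}_{X,x}=0$, so Nakayama's lemma over the Noetherian local superring $\mc{O}_{X,x}$ gives $\ff_x=0$. (Nakayama holds for superrings because odd elements are nilpotent, so the usual determinant-free argument applies to the maximal ideal.) Hence the support of $\ff$ is empty. Alternatively, one can invoke the super Grothendieck existence theorem \cite[Theorem 7.6]{faroogh2019existence}: $\ff$ and $0$ have the same completion, so uniqueness gives $\ff=0$.

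The surjective case is immediate. The functor $\prjpull$ is right exact, so $\prjpull\mc{Q}=\coker(\prjpull\phi)=0$, and therefore $\mc{Q}=0$.

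The injective case is the main obstacle, because $\prjpull$ is not left exact and so $\prjpull\mc{K}$ need not be the kernel of $\prjpull\phi$. We plan to pass to completions. The system $(\prjpull\ff)_j$ is a coherent system whose limit is the formal completion $\wh{\ff}$ along $X_0$. The hypothesis that every $\prjpull\phi$ is injective, together with left exactness of $\varprojlim$, shows that $\wh{\phi}=\varprojlim\prjpull\phi:\wh{\ff}\to\wh{\mcg}$ is injective. Completion along $X_0$ is exact on coherent sheaves, because $\wh{\mc{O}}_{X}$ is flat over $\ox$ (Artin--Rees in the Noetherian super setting, which reduces to the even part since $\ox$ is finite over $\ox\plus$). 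Therefore $\wh{\mc{K}}=\ker\wh{\phi}=0$. Applying the key fact to $\mc{K}$, whose completion vanishes and hence so do all $\prjpull\mc{K}$, we conclude $\mc{K}=0$.

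The bijective case follows by combining the two previous cases.
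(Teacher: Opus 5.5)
Your proof is correct, but it is organized differently from the paper's.

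The paper treats only injectivity in detail. With $K=\ker\phi$, it asserts a canonical surjection $\prjpull K\to\ker(\prjpull\phi)$ and concludes $\prjpull K=0$ from the vanishing of the target. It rewrites this as $K=\maxid^{j+1}K$ for all $j$ and finishes with the Krull intersection theorem. Surjectivity and bijectivity are called analogous.

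You instead isolate a vanishing criterion: Nakayama at points of $|X_0|$, plus properness, so that the closed support of a coherent sheaf must meet the closed fiber. You then get surjectivity directly from right exactness of $\prjpull$, and injectivity from exactness of $\maxid$-adic completion, i.e.\ from Artin--Rees.

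What your route gains is that it faces the failure of left exactness of $\prjpull$ explicitly. In the paper's argument, a surjection onto $\ker(\prjpull\phi)=0$ says nothing by itself about $\prjpull K$. One would need $\prjpull K\to\prjpull\ff$ to be injective, and that is not available a priori. Your identification $\wh{K}=\ker\wh{\phi}$ supplies exactly what the paper's step leaves out. You also make explicit the use of properness of $X/T$, which the paper uses tacitly to pass from stalks over the closed point to global vanishing.

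The paper's route can be made equally rigorous, and slightly more elementary. Note that the image of $K$ in $\prjpull\ff$ vanishes, so $K\subset\maxid^{j+1}\ff$ for all $j$. Then apply Krull's theorem to the stalks of $\ff$ (rather than $K$) at points of $X_0$, and conclude with properness as you do. This avoids flatness of $\wh{\mc{O}}_X$. Both arguments ultimately rest on Artin--Rees, which, as you observe, reduces to the Noetherian even part $\ox\plus$.
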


\begin{proof}
We first prove injectivity. Let $K=\ker(\phi)$. For each $j$ there is a canonical surjective morphism
\[
\prjpull K \longrightarrow \ker(\prjpull\phi)
= \ker(\ff_j\to\mcg_j),
\qquad
\ff_j=\prjpull\ff,\;\mcg_j=\prjpull\mcg .
\]
Since $\prjpull\phi$ is injective, the right-hand side vanishes, hence
\[
\prjpull K=0 \quad \text{for all } j.
\]
where:
\[
\prjpull K
= K\otimes_{\ox}\mc{O}_{X_j}
= K\otimes_R R/\maxid^{j+1}
\cong K/\maxid^{j+1}K .
\]
Thus $\prjpull K=0$ implies $K=\maxid^{j+1}K $, and since this holds for all $j$,
\[
K \subset \bigcap_{j\ge0}\maxid^{j+1}K .
\]
Because $K$ is coherent and $R$ is Noetherian, the Krull intersection theorem implies
\[
\bigcap_{j\ge0}\maxid^{j+1}K = 0,
\]
and hence $K=0$. This proves injectivity.
The proofs for surjectivity and bijectivity are analogous. 
\end{proof}

We can now apply the lemma to prove that $\dd$ is a superconformal structure.

By Lemma \ref{inside} the morphism $\dd \to \tanxt$ is injective because its pullbacks to $X_j$ are injective. Furthermore, this implies that the short exact sequence $0 \to \dd \to \tanxt \to \tanxt/\dd \to 0$ remains short exact after pullback, and thus
\[ \prjpull(\tanxt/\dd) \cong \prjpull(\tanxt)/\prjpull(\dd) \cong \tanxjtj/\dd_j.  \]
The same argument using Grothendieck existence shows that $\tanxt/\dd$ is locally free of rank $(1|0)$. 

Finally, define $\beta$ to be the composition $\dd^{\otimes 2} \xrightarrow{\bracket} \tanxt \to \tanxt/\dd$. Then 
\[ \prjpull(\beta)= \beta_j, \qquad \beta_j: \dd_j^{\otimes 2} \xrightarrow{\bracket} \tanxjtj \to \tanxjtj/\dd_j \]
Since $\dd_j$ is superconformal, $\beta_j$ is an isomorphism, and so $\beta$ is an isomorphism by Lemma \ref{inside}. In particular,  $\dd$ is a superconformal structure on $X/T$. 
      \vspace{10mm}

\noindent \textbf{A5.} By Proposition \ref{prop: defs of srs}, there are no obstructions to deforming super Riemann surfaces over square-zero extensions and so there are no non-trivial obstruction modules. 
\vspace{10mm}

\noindent \textbf{A6.}
The proof is the same as in
Propositions \ref{prop: supercurves satisfy constructibility, condition asix}
-- \ref{prop: supercurves satisfy aeight} after replacing $\tanxt$ with
$\mc{A}_{X/T}$.
Indeed, in those arguments the only properties of the sheaf used are that it is
coherent and flat over the base in order to apply the cohomology
base--change theorem and super Grauert’s theorem \cite{bruzzo2023notes}, and $\mc{A}_{X/T}$ via the 
isomorphism \eqref{good for a}. 

\begin{remark} \label{remark about freeness} For the argument it is enough that the deformation sheaf (in this case $\mc{A}$) is coherent. Indeed, the constructibility
condition is imposed on dense open subsets of the connected components of the base, and every coherent sheaf on $X$ is generically flat over
every integral subscheme of the base.
\end{remark}
\vspace{10mm}

\noindent \textbf{A7, A8.}
The proofs are identical to those of
Propositions \ref{prop: supercurves satisfy aseven, comp with completion}
and \ref{prop: supercurves satisfy aeight} after replacing $\mct_{X/T}$
with $\mc{A}_{X/T}$.

\section{Stable supercurves} \label{section: stable supercurves}

Let $k$ be an algebraically closed field, and let $X$ be a proper, Cohen--Macaulay superscheme of finite type over $k$. 
We assume that the bosonic reduction 
\[
C=(|X|,\ox/\jj)
\]
is a one-dimensional scheme, and that $X$ has superdimension $1|1$ at every smooth point: a point $q \in X$ is \emph{smooth} if there exists an isomorphism
\[
\widehat{\mc{O}}_{X,q} \cong k[[z,\theta]],
\]
in which case $\widehat{\mc{O}}_{X,q,\bos} \cong k[[z]]$.

Let $q \in X$ be a closed point. We denote by 
\[
\widehat{\mc{O}}_{X,q} = \varprojlim \mc{O}_{X,q}/\mathfrak{m}_q^n
\]
the completion of the local superring at $q$ with respect to its maximal ideal.

We say that $q$ is a \emph{Ramond node} if there exists an isomorphism
\[
\widehat{\mc{O}}_{X,q} \cong k[[x,y,\theta]]/(xy),
\]
and a \emph{Neveu--Schwarz (NS) node} if there exists an isomorphism
\[
\widehat{\mc{O}}_{X,q} \cong k[[x,y,\theta_1,\theta_2]]/(xy,\,x\theta_2,\,y\theta_1,\,\theta_1\theta_2).
\]
In both cases, the bosonic reduction of $\widehat{\mc{O}}_{X,q}$ is isomorphic to $k[[x,y]]/(xy)$.

By a \emph{node} on $X$ we mean either a Ramond node or an NS node.

We say that $X$ is a \emph{nodal supercurve} if every point of $X$ is either smooth or a node, and the bosonic reduction
\[
C=(|X|,\ox/\jj)
\]
is an ordinary nodal curve.

By definition, the bosonic reduction $C$ is obtained by quotienting $\ox$ by the nilpotent ideal $\jj$, so the canonical morphism
\[
X \to C
\]
induces a homeomorphism on underlying topological spaces. In particular, $|X|=|C|$.

Moreover, at a point $q \in X$, the completed local rings satisfy
\[
\widehat{\mc{O}}_{C,q} \cong \widehat{\mc{O}}_{X,q}/\jj_q.
\]
It follows from the explicit local models that $q$ is smooth (respectively a node) on $X$ if and only if its bosonic reduction is smooth (respectively a node) on $C$. Therefore, the set of singular points of $X$ coincides with the singular locus of $C$. 

In particular, if $\Sigma \subset X$ denotes the singular locus of $X$, then
\[
|\Sigma| = \mathrm{Sing}(C),
\qquad 
|X \setminus \Sigma| = C_{\mathrm{sm}}.
\]

Since $X$ has superdimension $(1|1)$, the ideal $\jj$ is locally free of rank $(0|1)$ and satisfies $\jj^2=0$, so that
\[
\ox = \oc \oplus \jj .
\]
The inclusion $\oc \hookrightarrow \ox$ induces a morphism
\[
X \to C,
\]
whose pre-composition with the canonical closed immersion $C \hra X$
is the identity on $C$.

This projection is used to define the normalization of $X$. Specifically, since $\jj^2=0$,  $\ox$ and $\oc=\ox/\jj$ have the same integral closure, and the  normalization $\tilx$ of $X$ satisfies $\tilx \cong X \times_C \tilde{C}$. The normalization map $\nu: \tilx \to X$ is  the top horizontal arrow in the cartesian diagram
\[
\begin{tikzcd}
\tilde{X} \arrow[r, "\nu"] \arrow[d] & X \arrow[d] \\
\tilde{C} \arrow[r] & C
\end{tikzcd}
\]
where $\tilde{C} \to C$ is the normalization of $C$.

Since $|X|=|C|$, the schemes $X$ and $C$ have the same set of irreducible components, which we denote by $X_i$ and $C_i$, respectively. The genus $g_i$ of a component $X_i$ is defined to be the genus of the corresponding component $C_i$.

A point $\tilde{q} \in \tilx$ is called \emph{special} if $q=\nu(\tilde{q})$ is a node.

\begin{definition}\label{definition: stable supercurve}
A connected nodal supercurve $X/k$ is \emph{stable} if
\[
2g_i - 2 + \#\{\text{special points in } \tilx_i\} > 0
\]
for every irreducible component $X_i \subset X$.
\end{definition}

\begin{definition}
\label{definition: prestable supercurve}
A prestable supercurve over a superscheme $T$ is a proper, flat, relatively Cohen--Macaulay morphism $X \to T$
such that for every geometric point $\ovt \to T$ the fiber $\xovt$ is a connected nodal supercurve over $k(\ovt)$. 
In this case, the bosonic reduction
\[
\pibos: C \to \tbos, \qquad C:=\xbos
\]
is automatically a family of ordinary prestable curves.

A family $X/T$ of prestable supercurves is called \emph{stable} if for every geometric point $\ovt \to T$ the fiber $X_{\ovt}$ is a stable supercurve over $k(\ovt)$.
\end{definition}

Historically, stable curves are preferred to prestable ones because stable curves have finite automorphism groups, which implies that their moduli stack is Deligne--Mumford. If one instead works with prestable curves, the resulting moduli stack generally has non-finite automorphism groups and is therefore only algebraic. 
For ease of reference we will work with stable supercurves, though the  arguments presented here apply equally to prestable supercurves.

\begin{prop} \label{prop: torsion free sheaves of rank 1}
Stable supercurves $X/T$ over a purely bosonic base scheme $T$ are in one-to-one correspondence with pairs $(C/T,L)$, where $C/T$ is a family of ordinary stable curves and $L$ is a torsion-free sheaf of rank $1$ on $C$.
\end{prop}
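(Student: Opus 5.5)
The plan is to imitate the proof of Lemma \ref{lemma: supercurves correspondence with picard}, with local freeness replaced by torsion-freeness at the nodes.

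\emph{From a supercurve to a pair.} Let $X/T$ be a stable supercurve with $T$ purely bosonic. The ideal $\jj$ is generated by odd sections. Since $\ot$ has no odd part, these sections come from the odd coordinates of $X$ over $T$, and there is at most one odd coordinate at a smooth point and two at an NS node. I will check on the local models that $\jj^2=0$. At a Ramond node this holds because $\theta^2=0$; at an NS node it holds because $\theta_1\theta_2=0$. Hence the $\ox$-module structure on $\jj$ factors through $\oc=\ox/\jj$, and the splitting $\oc\hookrightarrow\ox$ (the even part of $\ox$ is $\oc$) gives $\ox=\oc\oplus\jj$. I set $L:=\Pi\jj$. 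Then $C=\xbos\to T$ is a family of ordinary stable curves: it is flat because $\oc$ is a direct summand of the flat sheaf $\ox$, its fibres are nodal by definition, and the stability inequality is phrased in terms of the genera $g_i$ and the special points of $C$. The sheaf $L$ is coherent and $T$-flat because it too is a direct summand of $\ox$.

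\emph{Torsion-freeness of rank $1$.} This is the step that needs care. On the smooth locus, $\jj$ is locally free of rank $(0|1)$. At a node I will compute $\jj$ in the completed local ring, fibre by fibre.
\begin{itemize}
\item At a Ramond node, $\jj=\theta\, k[[x,y]]/(xy)$, which is free of rank $1$ over $\widehat{\mc{O}}_{C,q}$.
\item At an NS node, $\jj=(\theta_1,\theta_2)$ with the relations $x\theta_2=y\theta_1=0$. Thus $\jj\cong k[[x]]\theta_1\oplus k[[y]]\theta_2$, which is the maximal ideal $(x,y)$ of $k[[x,y]]/(xy)$, equivalently $\nu_*\mc{O}_{\tilde C}$ locally.
\end{itemize}
In both cases $\jj$ is torsion-free of rank $1$ at the node. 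The NS case is exactly the non-locally-free case.

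Passing from the fibres to the family is the main obstacle. I will use the standard fact that a $T$-flat coherent sheaf on $C/T$ whose fibres are torsion-free of rank $1$ is, by definition, a relative torsion-free rank $1$ sheaf. Flatness was shown above, so this reduces the global statement to the fibrewise local computation.

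\emph{From a pair to a supercurve.} Given $(C/T,L)$, I define
\[ X=(|C|,\ \oc\oplus\Pi L), \]
with multiplication determined by $L\cdot L=0$. This is supercommutative, $X/T$ is proper and flat because $L$ is $T$-flat, and $\jj=\Pi L$. To see that the points of $X$ have the required local form, I will use the classification of torsion-free rank $1$ modules over $k[[x,y]]/(xy)$: such a module is either free or isomorphic to the maximal ideal. In the free case the completed local ring is $k[[x,y,\theta]]/(xy)$, a Ramond node. In the other case, taking $\theta_1,\theta_2$ to be the images of $x$ and $y$, the ring is $k[[x,y,\theta_1,\theta_2]]$ modulo exactly the NS relations, an NS node. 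Smooth points of $C$ give $k[[z,\theta]]$.

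It remains to check the Cohen--Macaulay property. Over a one-dimensional base this holds because $\jj$ is torsion-free, so the maximal ideal contains a non-zero-divisor on all of $\ox$, namely $x+y$ acting on $\oc\oplus L$. Finally, the two constructions are mutually inverse, since $X\mapsto(\xbos,\Pi\jj)\mapsto(|C|,\oc\oplus\jj)\cong X$ by the splitting established in the first step.
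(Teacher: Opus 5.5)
Your proposal is correct. The converse direction, building $(|C|,\mathcal{O}_C\oplus\Pi L)$ and using the free-or-maximal-ideal classification over $k[[x,y]]/(xy)$, is the same as the paper's.

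The real difference is in the forward direction. You get torsion-freeness of $\mathcal{J}$ by computing it in the completed local rings of the Ramond and NS models. The paper, after the same fibrewise reduction to $T=\Spec k$ via flatness, never looks at the node models. Instead it argues from the Cohen--Macaulay hypothesis that depth one of $\mathcal{O}_{X,p}$ forbids torsion in $\mathcal{J}_p$.

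Your route does not use the CM hypothesis at all. It also gives finer information: Ramond nodes are exactly where $L$ is locally free, and NS nodes are exactly where $L$ is locally the maximal ideal, which matches the converse node by node. The paper's route is coordinate-free. As written, however, it assumes that a lift of a non-zerodivisor of $B$ is a non-zerodivisor of $A$, which is essentially the claim being proved. The clean version is $\mathfrak{m}_p\notin\operatorname{Ass}(\mathcal{O}_{X,p})\supseteq\operatorname{Ass}(\mathcal{J}_p)$.

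On your side, two small additions are needed:
\begin{itemize}
\item Add one line explaining why torsion-freeness of the completed stalk gives it for the stalk. Completion is faithfully flat, so $\mathfrak{m}$ is an associated prime of one iff it is of the other.
\item In your Cohen--Macaulay check, which the paper omits and you rightly include, replace ``over a one-dimensional base'' by ``on the one-dimensional fibres''. Relative CM means flat with CM fibres.
\end{itemize}

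Finally, like the paper, you read off $\mathcal{J}^2=0$ from the local models, which are only fibrewise. Over a non-reduced bosonic base this really rests on the \'etale local structure of the family, as in Lemma~\ref{lemma: over Tprime}.
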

\begin{proof}
For the forward direction, let $C=\xbos$ and set $L=\Pi\jj$. By definition the bosonic reduction $C$ is a family of stable curves over $T$, so it remains to show that $\jj$ is torsion-free of rank $(0|1)$ on $C$. The same then holds for $L=\Pi\jj$, since the parity-reversal functor only changes the parity of the rank.

Since $T$ is purely bosonic, the odd ideal sheaf $\jj\subset\ox$ satisfies $\jj^2=0$. This is obviously true on the smooth locus, and can be deduced from the definition of the nodes on the singular locus. Hence the $\ox$-module structure on $\jj$ factors through $\oc=\ox/\jj$, so $\jj$ is naturally an $\oc$-module. Because $\jj$ is coherent over $\ox$, it is also coherent over $\oc$.

Let $U\subset C$ denote the smooth locus of $C$. Since $C$ is nodal, $U$ is dense. Its preimage $U'\subset X$ is the smooth locus of $X$, and $U$ is the bosonic reduction of $U'$. The ideal sheaf of the embedding $U\hookrightarrow U'$ is $\jj|_U$, and since $\jj|_U^2=0$, it is also the conormal sheaf of this embedding. Because $U$ and $U'$ are smooth and $\on{sdim}(U')=(1|1)$ while $\dim(U)=1$, it follows that $\jj|_U$ is locally free of rank $(0|1)$.

It remains to prove torsion-freeness. Since $\ox=\oc\oplus\jj$, and $X$ and $C$ are defined to be $T$-flat, $\jj$ is flat over $T$. Thus torsion-freeness may be checked fiberwise, so we may assume $T=\Spec k$. Let $p\in X$ and set $A=\mc{O}_{X,p}$, $J=\jj_p$, and $B=A/J=\mc{O}_{C,p}$. Since $X$ is Cohen--Macaulay of even dimension $1$, the local ring $A$ has depth $1$ and therefore admits a non-zerodivisor. If $g\in B$ is a non-zerodivisor and $g\theta=0$ for some $\theta\in J$, then for any lift $\tilde g\in A$ we have $\tilde g\theta=0$. As $\tilde g$ is a non-zerodivisor in $A$, it follows that $\theta=0$. Hence $J$ is torsion-free over $B$, and therefore $\jj$ is torsion-free as an $\oc$-module.

Conversely, suppose $(C/T,L)$ is a family of stable curves with $L$ a torsion-free sheaf of rank $1$ on $C$. We claim that the superscheme
\[
X=(|C|,\oc\oplus\Pi L)
\]
is a stable supercurve over $T$.

Let $p$ be a smooth point of $C$. Then $\wh{B}:=\wh{\mc{O}}_{C,p}\cong k[[z]]$. Since any torsion-free sheaf of rank $1$ on a smooth curve is locally free, we have $\wh{L}_p=L_p\otimes\wh{B}\cong\wh{B}$. Choosing a generator $\theta$ of $\Pi\wh{L}_p$, we obtain
\[
\wh{B}\oplus\Pi\wh{L}_p \cong k[[z,\theta]] .
\]

Now let $p$ be a node, and set $\wh{B}=\wh{\mc{O}}_{C,p}\cong k[[x,y]]/(xy)$. A torsion-free $\wh{B}$-module of rank $1$ is isomorphic to either $\wh{B}$, or  ideal $(x,y)\subset \wh{B}$ which under the normalization $\wh{B}\hookrightarrow k[[x]]\oplus k[[y]]$ is $(x)\oplus(y)\subset k[[x]]\oplus k[[y]]$, \cite[Proposition 1.2.8]{orecchia2014torsion}). 

In the first case we again choose a generator $\theta$ and obtain
\[
\wh{B}\oplus\Pi\wh{L}_p \cong k[[x,y,\theta]]/(xy).
\]
In the second case, $\wh{L}_p \cong (x)\oplus(y) \subset k[[x]]\oplus k[[y]]$, write $\theta_1=\Pi x$ and $\theta_2=\Pi y$ for the generators, noting the relations $x\,\Pi y=0$, $y\,\Pi x=0$, $(\Pi x)(\Pi y)=0$, and implying
\[
\theta_1\theta_2=0,\qquad x\theta_2=0,\qquad y\theta_1=0,
\]
and therefore
\[
\wh{B}\oplus\Pi\wh{L}_p
\cong
k[[x,y,\theta_1,\theta_2]]/(xy,x\theta_2,y\theta_1,\theta_1\theta_2).
\]

Flatness of $X$ over $T$ follows because both $\oc$ and $\Pi L$ are flat over $T$, and properness can be checked on the bosonic reduction $C$.
\end{proof}

\begin{lemma}\label{lemma: stable are projective}
Every stable supercurve $X/T$ is projective.
\end{lemma}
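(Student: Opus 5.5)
The plan is to reduce projectivity of $X/T$ to the bosonic reduction via Proposition \ref{propatwo}, exactly as in the proof of Lemma \ref{theorem: degree condition for strongly projective}. We will exhibit a line bundle $\mcl$ on $X$ whose restriction $\mcl\vert_C$ to $C=\xbos$ is strongly relatively very ample on $C/\tbos$. Proposition \ref{propatwo} then makes $\mcl$ strongly relatively very ample on $X/T$, so $X\hra \pp(\pist\mcl)$ is a closed immersion over $T$. This is in particular a projective (indeed strongly projective) embedding.

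For the candidate line bundle, the natural first choice would be $\duaxt$, but on a stable supercurve the relevant degree on components need not be positive. Its restriction to $C$ is $\omega_{C/T}\otimes\jj\inv$ only where $\jj$ is locally free, and $\jj$ may have arbitrary degree. Instead I would use the projection $\rho: X\to C$ induced by $\oc\hra\ox$. Over $\tbos$ this projection exists by the splitting $\ox=\oc\oplus\jj$ (Proposition \ref{prop: torsion free sheaves of rank 1}). Over a general base I would first work over $\tbos$ and then lift. Since $C/\tbos$ is a family of ordinary stable curves, $\omega_{C/\tbos}^{\otimes 3}$ is strongly relatively very ample, with $\pi_*$ locally free and $R^1\pi_*=0$ (standard Deligne--Mumford). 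Set $\mcl_{\bos}:=\omega_{C/\tbos}^{\otimes 3}$.

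The main obstacle is producing a line bundle $\mcl$ on $X$ itself, over a base $T$ with odd nilpotents, with $\mcl\vert_C\cong\mcl_{\bos}$. I would filter by the nilpotent thickenings $C=X^{(0)}\subset X^{(1)}\subset\cdots\subset X$ defined by powers of $\jj$ (and of the nilradical of $T$). At each square-zero step, the obstruction to extending a line bundle lies in $H^2$ of a coherent sheaf on a space of even dimension $1$. As in the proof of Proposition \ref{prop: defs of srs}, this $H^2$ splits into $H^2$'s of coherent sheaves on the one-dimensional fibres of $C/\tbos$, hence vanishes. So the line bundle lifts step by step, with no obstruction.

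Alternatively, if $T$ is bosonic one can simply take $\mcl=\rho^*\mcl_{\bos}$. In that case $\mcl\vert_C=\mcl_{\bos}$ tautologically, and the lifting step is unnecessary. Having $\mcl$, conditions (i)--(iii) for $\mcl\vert_C$ hold by the classical theory, and Proposition \ref{propatwo} concludes the argument.
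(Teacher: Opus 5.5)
Your proposal is correct and follows essentially the same route as the paper: reduce strong relative very ampleness to the bosonic reduction via Proposition~\ref{propatwo}, then lift a power of $\omega_{C/S}$ through the square-zero steps $\mathcal{O}_X/\mathcal{J}^{k+1}\to\mathcal{O}_X/\mathcal{J}^{k}$, where the obstructions lie in $H^2$ of sheaves on a relative curve and therefore vanish. The differences are cosmetic: the paper lifts $\omega_{C/S}$ itself and then takes an $n$-th power with $n\ge 3$, while you lift $\omega_{C/S}^{\otimes 3}$ directly; and your parenthetical about also filtering by the nilradical of $T$ is unnecessary, since powers of $\mathcal{J}$ alone already connect $C$ to $X$.
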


\begin{proof}
Let $C/S$ denote the bosonic reduction: $C=\xbos$ and $S=\tbos$.  
The canonical embedding $C \to X$ induced by $\ox \to \ox/\jj$, $\jj^{n+1}=0$, factors as a finite sequence of square-zero extensions
\[
\ox \to \ox/\jj^n \to \cdots \to \ox/\jj^2 \to \ox/\jj ,
\]
where $\ker(\ox/\jj^i \to \ox/\jj^{i-1})=\jj^{i-1}/\jj^{i}$.  
Setting $X_i=(|X|,\ox/\jj^{i+1})$, this yields closed immersions
\[
X_0 \hra X_1 \to \cdots \to X .
\]

Let $\mcl_0:=\omega_{C/S}$. We lift $\mcl_0$ inductively along the above square-zero extensions.  
The obstruction to lifting $\mcl_0$ to $X_1$ lies in
\[
H^2(|C|,\jj/\jj^2),
\]
which vanishes since $C$ has relative dimension $1$. Hence $\mcl_0$ lifts to a line bundle $\mcl_1$ on $X_1$.  

Assuming $\mcl_k$ is defined on $X_k$, the obstruction to lifting it to $X_{k+1}$ lies in
\[
H^2(|C|,\jj^k/\jj^{k+1}),
\]
which again vanishes: Indeed, $\jj^k/\jj^{k+1}$ is a $\oc=\ox/\jj$-module since $\jj \cdot (\jj^k/\jj^{k+1})=0$, and $H^2$ of any quasi--coherent sheaf on $C$ vanishes.  By induction we obtain a line bundle $\mcl$ on $X$ with
\[
\mcl|_C=\mcl_0 .
\]

For any $n\ge3$ we then have
\[
\mcl^{\otimes n}|_C \cong \omega_{C/S}^{\otimes n}.
\]
Since $\omega_{C/S}^{\otimes n}$ is strongly relatively very ample for $n\ge3$, Proposition~\ref{propatwo} implies that $\mcl^{\otimes n}$ is strongly relatively very ample on $X/T$, hence $X/T$ is projective.
\end{proof}

The lemma will be used to establish effectivity of the moduli of stable supercurves. 
\ref{proj notation}. 
 \begin{lemma} \label{lemma: proj system of line on stable} Using the notation from Section \ref{proj notation}, let $\xbull=(X_j, X_i \to X_j)$ be a projective system of stable supercurves over $(T_j, T_i \to T_j)$. There exists a projective system $\mclbull=(\mcl_j, \mcl_j \to \mcl_i)$ of strongly relatively very ample line bundles on  $\xbull$ satisfying $R^1 \pist \mclbull=0$.   
 \end{lemma}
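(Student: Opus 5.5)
The plan is to imitate the proof of Lemma \ref{lemma: serre vanishing}, with the relative dualizing sheaf (whose ampleness on a stable supercurve is unclear) replaced by a line bundle lifted from the bosonic reduction as in Lemma \ref{lemma: stable are projective}. The difficulty is that such lifts are not canonical. So we will not choose them independently on each $X_j$; instead we lift compatibly along the tower $X_0 \hra X_1 \hra \cdots$ and use the lifts themselves to define the transition maps.

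First, apply Lemma \ref{lemma: stable are projective} to $X_0/T_0$. This gives a line bundle $\mcl'_0$ on $X_0$ with $\mcl'_0|_{C_0}\cong \omega_{C_0/S_0}$, where $C_0=(X_0)_{\bos}$. Next we lift $\mcl'_0$ inductively along the square-zero extensions $X_{j-1}\hra X_j$, whose ideal is $M\otimes_{R_j}\oxj\cong \oxnot^{p|q}$ with $M=\maxid^j/\maxid^{j+1}$, exactly as in Lemma \ref{lemma: serre vanishing}. The obstruction to lifting a line bundle along this extension lies in $H^2(|X_0|, \oxnot^{p|q})$. Using $\oxnot=\mc{O}_{C_0}\oplus\jj$, this group is a sum of $H^2$'s of coherent sheaves on the relative curve $C_0$, so it vanishes. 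We therefore obtain line bundles $\mcl'_j$ on $X_j$ together with chosen isomorphisms $\prjipull\mcl'_j\cong\mcl'_i$, compatible under composition (each is built one step at a time). We take the transition maps $\mcl'_j\to\mcl'_i$ to be the adjoints of these isomorphisms, as in \eqref{standard trans}. This gives a projective system of line bundles on $\xbull$ restricting on every bosonic reduction to $\omega_{C_0/S_0}$.

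For ampleness, for any $n\ge 3$ the restriction of $\mcl'^{\otimes n}_j$ to the bosonic reduction is $\omega_{C_0/S_0}^{\otimes n}$, which is strongly relatively very ample. Proposition \ref{propatwo} then shows that each $\mcl'^{\otimes n}_j$ is strongly relatively very ample. Next, the super Serre vanishing \cite{bruzzo2021supermoduli}, applied to the projective $X_0/T_0$, gives $m\ge1$ with $R^1\pist(\mcl'_0)^{\otimes mn}=0$. We set $\mcl_j:=(\mcl'_j)^{\otimes mn}$; this remains strongly relatively very ample and the system is preserved under tensor powers.

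Finally, $R^1\pist\mcl_j=0$ follows by induction on $j$, verbatim as in Lemma \ref{lemma: serre vanishing}. Flatness of $X_j/T_j$ gives $0\to\mcl_0^{p|q}\to\mcl_j\to\mcl_{j-1}\to0$; here we use $\mcl_{j-1}\cong\mcl_j\otimes\mc{O}_{X_{j-1}}$, which now holds by construction rather than because of base change for $\omega$. The long exact sequence then kills $R^1$.

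The main obstacle is the first step. We need lifts that are genuinely compatible, forming a system rather than an isomorphism class on each level, and we need the vanishing of the $H^2$ obstruction over a non-reduced super base $T_0$. If handling $H^2$ via the decomposition $\oxnot=\mc{O}_{C_0}\oplus\jj$ is delicate, the fallback is to note that $|X_0|$ is a one-dimensional Noetherian space. Grothendieck vanishing then kills $H^2$ of any sheaf on it.
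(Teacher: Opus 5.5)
Your proposal is essentially the paper's own proof. You lift from the bosonic reduction to $X_0$ as in Lemma \ref{lemma: stable are projective}, then continue step by step along the square-zero tower $X_{j-1}\hra X_j$ using the vanishing of $H^2$ on a one-dimensional space. You take the adjoints of the chosen isomorphisms as transition maps, get ampleness from Proposition \ref{propatwo}, and obtain $R^1\pist\mclbull=0$ from Serre vanishing on $X_0$ plus the induction of Lemma \ref{lemma: serre vanishing}. Lifting $\omega_{C_0/S_0}$ once and then passing to the $mn$-th power of the whole system is a slightly cleaner way to implement the paper's ``increase $n$ if necessary'' (the paper lifts $\omega_{C/S}^{\otimes n}$ directly).

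One correction to the ampleness step. $T_0$ is a point, so your worry about a non-reduced base there is moot. More importantly, the bosonic reduction of $X_j$ is $(X_j)_{\bos}$ over $(T_j)_{\bos}$, which is a thickening of $C_0$ rather than $C_0$ itself. So before invoking Proposition \ref{propatwo} on $X_j$ you need the standard fact, which the paper also uses, that strong relative very ampleness over this Artinian base can be checked on the closed fiber $C_0$. This follows from $H^1(C_0,\omega_{C_0}^{\otimes n})=0$, cohomology and base change, and Nakayama.
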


 \begin{proof} Let $C$ denote the bosonic reduction of $X_0$, and let $S=\tbos$. Fix an integer $n >>0$ such that $L=\omega_{C/S}^{\otimes n}$ is strongly relatively very ample and satisfies $R^1 \pist \omega_{C/S}^{\otimes n}=0$ The proof of the previous proposition shows that we can lift $L$ to a line bundle $\mcl_0$ on $X_0$. Since $X_0 \to X_1$ is nilpotent, the same arguments again can be used to lift $\mcl_0$ to a line bundle $\mcl_1$ on $X_1$, and by induction we can lift $\mcl_{j-1}$ on $X_{j-1}$ to a line bundle $\mcl_j$ on $X_j$. This gives rise to a projective system of  
 line bundles $\mclbull=(\mcl_j, \mcl_j \to \mcl_i)$ on $\xbull$ whose transition maps are the adjoints of the base change isomorphisms $\prjipull \mcl_j \cong \mcl_i$.  
 
Each line bundle $\mcl_j$ is strongly relatively ample since that property can be checked after restriction along the nilpotent extension $X_0 \to X$, where $\mcl_j \vert_{X_0} \cong \mcl_0$ and $\mcl_0$ is strongly very ample because its restriction to $C$ is (Proposition \ref{propatwo}). 
If necessary, increase $n$ so that $R^1 \pist \mcl_0 =0$. Then the same arguments as in Lemma \ref{lemma: serre vanishing}, show that $R^1 \pist \mcl_j=0$ for all $j$
 
 \end{proof}

\subsection{Deformation theory of stable supercurves} \label{section on deformations of stabel supercurves}

Let $A' \to A$ be a square zero extension of superalgebras with kernel $M=\ker(A' \to A)$, $M^2=0$, a finite $A_0=A/\frak{N}_A$-module. Set $T=\Spec(A)$, $T'=\Spec(A')$, and $T_0=\Spec(A_0)$.  
\smallskip

A \emph{deformation} of a stable supercurve $X/T$ over $T'$ is a stable supercurve $X'/T'$ such that
\[
X'\times_{T'}T \;\cong\; X .
\]
over $T$. 
\begin{prop} \label{deformations of stabel supercurves}
Let $X/T$ be a stable supercurve. There are no obstructions to deforming stable
supercurves over square--zero extensions. Moreover, fixing a deformation
$X_1'/T'$, there is a short exact sequence
\begin{equation}\label{ses of ss}
0 \to H^1(X,\mct_{X/T}\otimes_{A}M)^{+}
\to \on{Def}_{X/T}(T')
\to H^0(X,\mct_{X/T}^1 \otimes_{A}M)^{+}
\to 0 .
\end{equation}
\end{prop}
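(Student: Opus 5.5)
We follow the classical proof for nodal curves (local-to-global analysis via the $\mct^i$ sheaves), using the super $\mct^i$ formalism of Appendix \ref{appendix}, in particular Theorem \ref{from Hart a}. First we reduce to the full reduction. Since $M$ is a finite $A_0$-module, \eqref{isomorphism over the reduction} and its analogue for $\mct^1_{X/T}$ and $\mct^2_{X/T}$ give $\mct^i_{X/T}\otimes_A M\cong \mct^i_{X_0/T_0}\otimes_{A_0}M$. Here $X_0=X\times_A A_0$ lives over a purely bosonic base, so, as in the proof of Proposition \ref{prop: defs of srs}, every $\mc{O}_{X_0}$-module $\ff$ splits as $\ff|_C\oplus(\ff|_C\otimes_{\oc}\jj)$, a sum of quasi-coherent sheaves on the relative curve $C=(X_0)_{\bos}$. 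Consequently $H^2(X_0,-)$ of these sheaves vanishes. Moreover $\mct^1\otimes M$ and $\mct^2\otimes M$ are supported on the singular locus $\Sigma$, which is finite over $T_0$, so $H^1(X_0,\mct^1\otimes M)=0$.

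Next comes the local analysis. On the smooth locus deformations are locally trivial and unobstructed. At a node, we work étale locally (or on completions) with the models $A[x,y,\theta]/(xy)$ and $A[x,y,\theta_1,\theta_2]/(xy,x\theta_2,y\theta_1,\theta_1\theta_2)$. We show that each admits a deformation over $A'$: for the Ramond node take $xy=t$ with $t\in M^+$; for the NS node deform the generators of the ideal with coefficients in $M$, for instance $xy=t$, $x\theta_2=\eta y$, $y\theta_1=\eta' x$ and so on, chosen so that flatness holds, which we check by lifting the relations among the relations. This shows that the sheaf $\mct^2_{X/T}\otimes M$ receives no nonzero local obstruction classes; equivalently, the local obstruction lies in $H^0(\mct^2\otimes M)$ and vanishes because the local models are unobstructed. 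The main obstacle is this step: computing $\mct^1$ and $\mct^2$ for the NS node and verifying $\mct^2=0$ (or at least that the local obstructions vanish). We would compute these with the cotangent complex of the explicit superalgebra, i.e.\ the complex $I/I^2\to\Omega\otimes B$, with $\mct^1=\coker(\homomrk\to\homiisq)$. The fallback is to use the correspondence of Proposition \ref{prop: torsion free sheaves of rank 1} to describe the NS model as a torsion-free module over the node.

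For the global step we use the local-to-global spectral sequence for $\on{Ext}(\mathbb{L}_{X/T},\ox\otimes M)$, as packaged in Theorem \ref{from Hart a}. The global obstruction lives in an extension of $H^0(\mct^2\otimes M)$ (zero by the local step), $H^1(\mct^1\otimes M)$ (zero since the support is finite) and $H^2(\mct\otimes M)$ (zero since $C$ has relative dimension one). Hence there are no obstructions. The same spectral sequence gives the five-term exact sequence
\[
0\to H^1(\mct\otimes M)\to \on{Def}_{X/T}(T')\to H^0(\mct^1\otimes M)\to H^2(\mct\otimes M)=0,
\]
where we take even parts. This is \eqref{ses of ss}. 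Fixing the deformation $X_1'$ serves as a base point, turning the torsor structure into a bijection with an actual group extension. Finally, every deformation is again a stable supercurve: flatness is by definition, the fibers are unchanged since $T\hra T'$ is a nilpotent thickening, and projectivity follows by Lemma \ref{lemma: stable are projective}. So $\on{Def}_{X/T}(T')$ here agrees with the Ext-theoretic deformation set.
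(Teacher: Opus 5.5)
\textbf{The global part of your argument is fine; the local step has a genuine gap.} The pieces that work are: the vanishing of $H^2(X,\mct_{X/T}\otimes_A M)$ after passing to $X_0$, the vanishing of $H^1(X,\mct^1_{X/T}\otimes_A M)$ because $\mct^1$ is supported on the finite singular locus, and the five-term sequence from Theorem \ref{from Hart a} giving \eqref{ses of ss}. Over $T=\Spec k$ your argument is essentially the paper's first step.

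The gap is this. You take the nodes of $X/T$ to be \'etale locally (or formally) the constant models $A[x,y,\theta]/(xy)$ and $A[x,y,\theta_1,\theta_2]/(xy,x\theta_2,y\theta_1,\theta_1\theta_2)$. That only holds over a point: Lemma \ref{etalenbhd} concerns superschemes of finite type over $k$. Over a general base a node looks like $A[x,y,\theta]/(xy-t-\eta\theta)$ with $t\in A^+$, $\eta\in A^-$, or the analogous pullback of the NS family, and such charts are not isomorphic to the constant ones.

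For the constant models a lift over $A'$ exists trivially: take the same equations over $A'$. So constructing explicit deformations such as $xy=t$ does not address the real issue. What must be shown is that an arbitrary family lifts near its nodes, i.e.\ that the local obstruction class in $H^0(X,\mct^2_{X/T}\otimes_A M)^+$ vanishes for every $X/T$ and every $A'\to A$.

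Two routes you might try do not close this:
\begin{itemize}
\item Reducing to the full reduction identifies the obstruction groups, but the obstruction class depends on the extension $A'\to A$, not only on $X_0$.
\item Proving $\mct^2=0$ cannot work either. The NS model is not a hypersurface, and a direct syzygy computation (the relations $\theta_2\cdot x\theta_2=0$, $\theta_1\cdot y\theta_1=0$, $\theta_i\cdot\theta_1\theta_2=0$ contribute) shows that its $T^2$ has nonzero even part. Only the vanishing of the class can be proved.
\end{itemize}

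\textbf{The missing ingredient} is what the paper supplies in Lemma \ref{lemma: miniversal node} and Lemma \ref{lemma: over Tprime}. Each node model has a miniversal family $\mcy\to\Spec B$ over a free superalgebra: $B=k[t,\eta]$ for Ramond and $B=k[t_1,t_2,\eta_1,\eta_2]$ for NS. Every family $X/T$ is, \'etale locally near a node, of the form $\mcy\times_B S'$ for some $f\colon S'\to\Spec B$. Proving this needs an approximation argument: reduce to finite type via limit preservation, pass to the henselization, write its completion as a filtered colimit of smooth algebras, and use a section. Since $B$ is free, $f$ lifts along $T\hra T'$, and $\mcy\times_B S''$ is the required local lift. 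With this input, your local-to-global argument goes through.

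\textbf{A smaller point.} Your sample NS deformation $xy=t$, $x\theta_2=\eta y$, $y\theta_1=\eta' x$ is not flat for nonzero parameters. The syzygy $\theta_2\cdot xy-y\cdot x\theta_2=0$ fails to lift: it leaves the term $-t\theta_2+\eta y^2$, which is not in $M\cdot I$. In the versal NS family the even parameters enter through $x\theta_2-t_1\theta_1$ and $y\theta_1-t_2\theta_2$, and $xy$ is deformed only by $t_1t_2+\eta_1\theta_1+\eta_2\theta_2$.
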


\begin{proof} Once we
prove that stable supercurves have unobstructed deformations, the theorem follows from Theorem~\ref{from Hart a} and \eqref{isomorphism over the reduction}, we have
\[
H^2(X,\mct_{X/T}\otimes_{A}M) \cong H^2(X_0, \mct_{X_0/T_0} \otimes_{A_0} M)  =0 ,
\]
since $T_0$ is purely bosonic, $\mct_{X_0/T_0}$ is a direct sum of coherent sheaves on $C=\xbos$ (see also proof of Lemma \ref{lemma:defs of supercurves}), and thus the above group vanishes since $C$ has dimension $1$. 
\smallskip

We first treat the case $T=\Spec(k)$. Let $\Sigma\subset X$ denote the singular locus and set $U=X\setminus\Sigma$. The restriction $U$ is smooth,
hence deformations of $U$ are unobstructed. Since deformations satisfy
\'etale descent, it suffices to show that \'etale neighborhoods of the
nodes admit unobstructed deformations with compatible gluing on overlaps. 

By Proposition \ref{etalenbhd}, there exists an \'etale cover
$\{X_i\to X\}$ such that the image of each $X_i$ is either contained in
$U$ or intersects $\Sigma$ in exactly one point, and each such chart containing a node is \'etale over a standard local model
\[
Y=\Spec(R)
\]
where

\[
R=
\begin{cases}
k[x,y,\theta]/(xy) & \text{(Ramond node)},\\[4pt]
k[x,y,\theta_1,\theta_2]/(xy,x\theta_2,y\theta_1,\theta_1\theta_2)
& \text{(NS node)}.
\end{cases}
\]The charts contained
in $U$ are unobstructed, so we restrict attention to those intersecting
$\Sigma$.

Unobstructedness of these models is proved in
Lemma~\ref{lemma: miniversal node}.  Hence every $X_i$ admits
deformations over arbitrary local Artin bases.

We now glue these local deformations. Let $(A,\maxid)$ be a local Artin
superalgebra with residue field $k$. Choose deformations $X_i'/A$ of
each chart $X_i$. The chosen \'etale cover from \ref{etalenbhd} has only self--overlaps,
so the induced deformations automatically agree on overlaps.

The only remaining obstruction is the Čech $2$--cocycle measuring
failure of the transition isomorphisms to satisfy the cocycle condition.
This obstruction lies in
\[
H^2(X,\mct_{X/T}\otimes_A M)^{+},
\]
which we already showed vanishes for dimension reason (and the fact that $M$ is assumed to be finite $A_0=k$-module).

For arbitrary $T$, we prove that $X/T$ deforms over $T'$ in  Lemma \ref{lemma: over Tprime}. 
\end{proof}

\begin{lemma}\label{lemma: miniversal node}
The Ramond and NS node models admit miniversal deformation families.

For the Ramond node let
\[
R=k[x,y,\theta]/(xy),
\qquad
B=k[t,\eta],
\]
and define
\[
\mcy=\Spec B[x,y,\theta]/(xy-t-\eta\theta).
\]

For the NS node let
\[
R=k[x,y,\theta_1,\theta_2]/(xy,x\theta_2,y\theta_1,\theta_1\theta_2),
\qquad
B=k[t_1,t_2,\eta_1,\eta_2],
\]
and define
\[
\mcy=\Spec B[x,y,\theta_1,\theta_2]/
(xy-t_1t_2-\eta_1\theta_1-\eta_2\theta_2,
x\theta_2-t_1\theta_1,
y\theta_1-t_2\theta_2,
\theta_1\theta_2).
\]

In both cases $\mcy\to\Spec(B)$ is flat and satisfies
\[
Y \cong \mcy\times_B k .
\]
Moreover, for every local Artin $k$--superalgebra $(A,\maxid)$ and
every deformation $Y_A$ of $Y$ over $A$, there exists a morphism
$B\to A$ such that
\[
Y_A \cong \mcy\times_B A .
\]
\end{lemma}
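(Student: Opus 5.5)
We prove the lemma by imitating the classical computation of the miniversal deformation of $xy=0$, in the super setting. The route is through the $\mct^1$ sheaf formalism (Appendix \ref{appendix}, Theorem \ref{from Hart a}): we compute $T^1_{Y/k}$ for each local model and check that the displayed family realizes a basis of it with no obstructions. The argument has three steps.

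\textbf{Step 1: flatness and the central fiber.} Setting all parameters of $B$ to zero recovers the defining ideal of $R$ in both cases, so $\mcy\times_B k\cong Y$.

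For the Ramond family, $xy-t-\eta\theta$ is a single even relation in $B[x,y,\theta]$. Flatness follows from the local criterion: $t,\eta$ act as a regular sequence in the super sense on $B[x,y,\theta]/(xy-t-\eta\theta)$, or more directly, the family is a free $B$-module on the monomials $x^a\theta^\epsilon$, $y^b\theta^\epsilon$.

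For the NS family, we exhibit an explicit $B$-basis of $B[x,y,\theta_1,\theta_2]/I$ lifting the $k$-basis of $R$. That basis is $\{x^a, y^b, x^a\theta_1, y^b\theta_2\}$, together with $1,\theta_1,\theta_2$. To see it spans, use the relations as rewriting rules:
\begin{itemize}
\item $x\theta_2\mapsto t_1\theta_1$ and $y\theta_1\mapsto t_2\theta_2$;
\item $xy\mapsto t_1t_2+\eta_1\theta_1+\eta_2\theta_2$.
\end{itemize}
Confluence of these rules (e.g.\ the overlaps $xy\theta_2$ and $xy\theta_1$, which give $t_1t_2\theta_2+\eta_1\theta_1\theta_2$ on both sides, using $\theta_1\theta_2=0$) gives independence, hence freeness and so flatness. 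This Gröbner-type check is routine but must be done carefully with signs.

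\textbf{Step 2: computing $T^1$.} We compute $T^1(R/k)=\coker(\undhom(\Omega_{P/k}\otimes R,R)\to \undhom_R(I/I^2,R))$ for $P$ the free superpolynomial ring.

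For the Ramond node, $I=(xy)$. The derivations hit $(y,x)\cdot R$, so $T^1\cong R/(x,y)=k[\theta]$. This is a $(1|1)$-dimensional space spanned by the classes of $1$ and $\theta$.

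For the NS node, we write out the four generators and compute the Jacobian image. We expect $T^1$ to be $(2|2)$-dimensional, spanned by:
\begin{itemize}
\item even classes: perturbations of $x\theta_2$ by $\theta_1$ and of $y\theta_1$ by $\theta_2$, which are compatible with $xy$ through the syzygies;
\item odd classes: perturbations of $xy$ by $\theta_1$ and by $\theta_2$.
\end{itemize}
We also check that the $xy$-perturbation by $1$ is forced to be $t_1t_2$ by these syzygies; this is why the NS base has four rather than five coordinates. We then verify that the Kodaira–Spencer map of $\mcy$ at the origin, $\frak{m}_B/\frak{m}_B^2\,{}^\vee\to T^1$, is an isomorphism.

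\textbf{Step 3: versality.} We show $T^2(R/k,M)=0$ for finite $M$. For the Ramond node this holds because $R$ is a hypersurface. For the NS node we compute that the module of relations among the four generators has no nontrivial classes. The super analogue of Schlessinger's criterion then makes the hull smooth, with tangent space $T^1$, and hence pro-represented by $\widehat B$.

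Finally, the given family induces a map $\widehat B\to$ hull that is an isomorphism on tangent spaces between formally smooth complete rings, so it is an isomorphism. Every deformation $Y_A$ over a local Artin $A$ is therefore pulled back along some $B\to\widehat B\to A$, which is induced from $B$ because $A$ is Artin.

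The main obstacle is the NS node. It is not a complete intersection, so we expect the vanishing of $T^2$ and the exact identification of $T^1$ to fail naïvely. If direct vanishing fails, the fallback is to prove versality by explicit lifting: take any deformation over $A'\to A$ small and lift the relations one at a time, using the $B$-basis from Step 1 to show the perturbations can always be absorbed into parameter changes $t_i,\eta_i$ after a coordinate change.
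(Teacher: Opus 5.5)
Your Steps 1 and 2 are fine. Both families are free on the monomial bases you list. $T^1$ is $(1|1)$ for the Ramond node and $(2|2)$ for the NS node, with exactly the classes you describe, and the Kodaira--Spencer map of $\mcy$ is an isomorphism.

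The gap is Step 3 for the NS node: $T^2$ does not vanish there. Present $I$ by $F=\bigoplus_{i=1}^4 Pe_i\to I$, where $P=k[x,y,\theta_1,\theta_2]$ and $e_1\mapsto xy$, $e_2\mapsto x\theta_2$, $e_3\mapsto y\theta_1$, $e_4\mapsto \theta_1\theta_2$. Since $\theta_i^2=0$ in $P$, the relation module $Q$ contains $\theta_2e_2$, $\theta_1e_3$, $\theta_1e_4$, $\theta_2e_4$. It also contains $\theta_2e_1-ye_2$, $\theta_1e_1-xe_3$, $\theta_1e_2-xe_4$ and $\theta_2e_3+ye_4$. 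All of these have linear coefficients, whereas $F_0\subseteq IF\subseteq\mathfrak{m}^2F$ and $Q\subseteq\mathfrak{m}F$. Hence $L_2\otimes_R k\neq0$ and $T^2(R/k,k)=\Hom(L_2\otimes_R k,k)\neq 0$, contradicting your claim for all finite $M$.

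Even the group that actually receives obstructions is nonzero. Take the even map $L_2\to R$ sending $\theta_1e_4\mapsto\theta_1$, $\theta_1e_2-xe_4\mapsto -x$, and the other six generators to $0$. It respects the relations of $L_2$: for example $\theta_1(\theta_1e_2-xe_4)=-x\,\theta_1e_4$, and $\theta_2\cdot\theta_1e_4=-\theta_1\cdot\theta_2e_4$ is vacuous in $R$ because $\theta_1\theta_2=0$. Yet it is not the restriction of any $\varphi\colon F\to R$, since $\varphi(\theta_1e_4)=\theta_1\varphi(e_4)$ and $\varphi(\theta_2e_4)=\theta_2\varphi(e_4)$ must have the same constant coefficient. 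So $T^2(R/k,R)\neq0$, and the step ``unobstructed, hence the hull is smooth'' is not available.

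The missing idea is that smoothness follows from the family $\mcy$ itself, with no $T^2$ input. There are two ways to finish.

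First, you can argue as the paper does inductively. Take a small extension $A'\to A$ with kernel $M$, a deformation $Y_{A'}$, and a map $B\to A$ inducing $Y_{A'}\times_{A'}A$. Lift it to $B\to A'$, which is possible since $B$ is free. Liftings of a flat affine deformation form a torsor under $T^1$, so the two liftings of $Y_{A'}\times_{A'}A$ differ by a class in $T^1(R/k,R)\otimes_k M$. Changing the lift by a derivation $B\to M$ moves the pulled-back family by its Kodaira--Spencer image, and surjectivity lets you hit that class.

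Second, you can keep your hull argument but drop the assumption that the hull $H$ is smooth. Versality of $H$ gives a local map $H\to\widehat B$ carrying the universal deformation to $\mcy$. It is bijective on tangent spaces by your Kodaira--Spencer computation, hence surjective. Since $H$ is a quotient of a power series superring in $(2|2)$ variables and $\widehat B\cong k[[t_1,t_2]][\eta_1,\eta_2]$, the map is an isomorphism, so $H$ is smooth after the fact.

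Your fallback points toward the first argument, but it is the torsor structure, not a Gr\"obner manipulation, that makes it work. Also replace ``pro-represented by $\widehat B$'' with ``hull''. Already the ordinary node's deformation functor is not pro-representable, and the lemma only claims versality.
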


\begin{proof}
The computation of the tangent modules $T^1(R/k,R)$ (see
Appendix~\ref{computation of ti for the nodes}) shows that the parameters deformations over square--zero extensions are determined by precisely the variables appearing
in the algebras $B$ above. Hence every deformation over
$A_1=A/\maxid^2$ arises uniquely from a morphism $B\to A_1$.

Given such a deformation over $A_1$, any lift of the corresponding
map $B\to A_1$ to $B\to A/\maxid^3$ produces a deformation lifting
the previous one. Since the parameters of $B$ may be chosen
arbitrarily modulo powers of $\maxid$, such lifts always exist.

Repeating this construction  inductively along the composite of $A \to k$ into square--zero extension shows that
every deformation over $A$ arises by base change from $\mcy$.
\end{proof}

\begin{lemma}\label{lemma: over Tprime}
Let $X/S$ be a stable supercurve, let $x\in X$ be a node, and let $s\in S$ be its image. 
Then there exist an \'etale neighborhood $S'\to S$ of $s$, a morphism $f:S'\to \Spec(B)$, and an \'etale neighborhood $X'\to X\times_S S'$ of $x$ such that
\[
X' \cong \mcy \times_B S',
\]
where $\mcy\to\Spec(B)$ is the miniversal deformation corresponding to the type of node $x$.

In particular, if $T\hra T'$ is a square--zero extension with kernel $M$, then the \'etale morphism $S'\to S$ admits a unique lift $S''\to T'$, and for any morphism $f':S''\to\Spec(B)$ satisfying $f'\equiv f \mod M$, the superscheme $\mcy \times_B S''$ is a deformation of $X'\cong X\times_S S'$ over $T'$.
\end{lemma}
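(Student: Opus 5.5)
The plan is to deduce the local structure from the miniversality statement of Lemma~\ref{lemma: miniversal node}, combined with Artin approximation, in the same way as the classical proof that a family of nodal curves is étale locally pulled back from $xy-t$. First, I work formally. Let $\wh{\mc{O}}_{S,s}$ be the completed local ring of $S$ at $s$. The completion of $\mc{O}_{X,x}$ (completed along $\maxid_x$) defines a compatible system of deformations of the fibre node $Y$ over the local Artin superalgebras $\wh{\mc{O}}_{S,s}/\maxid_s^{n+1}$. By Lemma~\ref{lemma: miniversal node}, each such deformation is induced by a morphism $B\to \wh{\mc{O}}_{S,s}/\maxid_s^{n+1}$. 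I need to choose these morphisms compatibly in $n$. For this, I will use the lifting part of the proof of Lemma~\ref{lemma: miniversal node}: a map at level $n$ can be lifted to level $n+1$ inducing the given deformation, because $B$ is a free polynomial superalgebra and the square-zero lifts are controlled by $T^1$. Passing to the limit gives a morphism $\hat f:B\to\wh{\mc{O}}_{S,s}$ and an isomorphism $\wh{\mc{O}}_{X,x}\cong (\mc{O}_{\mcy}\otimes_B\wh{\mc{O}}_{S,s})^{\wedge}$ of complete local $\wh{\mc{O}}_{S,s}$-superalgebras.

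Second, I algebraize this formal isomorphism. The data of a morphism $f:S'\to\Spec B$, together with an isomorphism of $S'$-schemes between étale neighbourhoods of $x$ in $X_{S'}$ and of the node in $\mcy\times_B S'$, is given by a functor that is locally of finite presentation. Concretely, I write both local rings as finitely presented algebras and record finitely many polynomial equations in the coordinates of $f$ and in the images of $x,y,\theta$ (resp.\ $x,y,\theta_1,\theta_2$), using Proposition~\ref{atwo: colimit preserving}-type limit arguments. Super Artin approximation, as used in \cite{ott2023artin}, then gives a solution over an étale neighbourhood $S'$ of $s$ that agrees with $(\hat f,\text{iso})$ modulo $\maxid_s^2$. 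A map that is an isomorphism on completions at $x$, or even just modulo $\maxid^2$ together with flatness over the base, is étale at $x$ after shrinking. This yields the étale neighbourhoods $X'\to X\times_S S'$ and $X'\to\mcy\times_B S'$, and hence $X'\cong\mcy\times_B S'$ after replacing $X'$ by a common étale neighbourhood. I expect this step to be the main obstacle, specifically checking that super Artin approximation applies with the odd variables present and that étaleness can be detected from the completed isomorphism.

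For the ``in particular'' part, I use the topological invariance of the étale site, which holds for superschemes because $T\hra T'$ is a nilpotent thickening. This gives the unique étale $S''\to T'$ lifting $S'\to T$. Next, $B=k[t,\eta]$ (resp.\ $k[t_1,t_2,\eta_1,\eta_2]$) is free, so $f$ lifts to some $f':S''\to\Spec B$ with $f'\equiv f\bmod M$. Finally, $\mcy\times_B S''$ is flat over $S''$ by flatness of $\mcy/B$ (Lemma~\ref{lemma: miniversal node}), and its base change to $S'$ is $\mcy\times_B S'\cong X'$. It is therefore a deformation of $X'$ over $T'$.
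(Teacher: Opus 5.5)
The overall plan matches the paper's: formal data from miniversality, then passage to an \'etale neighbourhood of $s$. The paper does the approximation by hand. It replaces $S$ by $\Spec R^h$, writes $\widehat{R^h}=\varinjlim R_i$ with each $R_i$ smooth over $R^h$ (Popescu), factors the finitely presented data through some $R_i$, and pulls back along a section $R_i\to R^h$ supplied by henselianity. You instead cite super Artin approximation. Your treatment of the ``in particular'' part is correct: topological invariance of the \'etale site gives the unique lift $S''\to T'$, $f'$ exists because $B$ is free, and flatness of $\mcy\times_B S''$ comes from flatness of $\mcy/B$.

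The gap is in how you apply approximation. Your formal datum consists of two pieces:
\begin{itemize}
\item $\hat f\colon B\to\widehat{\mathcal{O}}_{S,s}$;
\item images of $x,y,\theta$ (resp.\ $x,y,\theta_1,\theta_2$) in $\widehat{\mathcal{O}}_{X,x}$.
\end{itemize}
The second piece consists of functions on the completion of the total space at $x$, not on anything over $\Spec\widehat{\mathcal{O}}_{S,s}$. So it is not an $\widehat{\mathcal{O}}_{S,s}$-point of the locally finitely presented functor on $S$-schemes you describe. Such a point would be an \'etale neighbourhood over $\Spec\widehat{\mathcal{O}}_{S,s}$, which you do not have, so approximation cannot directly return data ``over an \'etale neighbourhood $S'$ of $s$''.

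If you instead approximate your polynomial system over the henselization $\mathcal{O}^h_{X,x}$, the approximate $f$ becomes a function on an \'etale neighbourhood of $x$ in $X$, not one pulled back from $S'$. Keeping the $f$-unknowns in the base ring is a nested approximation problem, which goes beyond the standard form of (super) Artin approximation. Relatedly, agreement modulo $\maxid_s^2$ does not control the fibre coordinates; you would need agreement modulo $\maxid_x^2$.

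The missing step is the intermediate object the paper uses. First produce $\hat f$ together with an honest \'etale neighbourhood $\widehat{X}'\to X\times_S\Spec\widehat{\mathcal{O}}_{S,s}$ of $x$ such that $\widehat{X}'\cong\mcy\times_{B,\hat f}\Spec\widehat{\mathcal{O}}_{S,s}$. You can get this from your Step 1 together with a version of Lemma \ref{etalenbhd} over the excellent base $\widehat{\mathcal{O}}_{S,s}$, applied to $X\times_S\Spec\widehat{\mathcal{O}}_{S,s}$ and $\mcy\times_{B,\hat f}\Spec\widehat{\mathcal{O}}_{S,s}$, whose completions at the node agree. Only then descend this finitely presented datum to an \'etale neighbourhood of $s$.

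If you do the descent via Popescu and a henselian section as in the paper, the resulting data is exact rather than approximate. Your argument that \'etaleness follows from agreement modulo $\maxid^2$ then becomes unnecessary.

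One further point: Step 1 produces deformations of the complete local ring $\widehat{\mathcal{O}}_{X_s,x}\cong\widehat{\mathcal{O}}_{Y,y}$, not of the affine model $Y$ that Lemma \ref{lemma: miniversal node} concerns. You therefore need a formal-germ version of that lemma, which should follow from the same $T^1$ computation.
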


\begin{proof}
The statement is \'etale local on $S$ near $s$, so after replacing $S$ by an \'etale neighborhood of $s$ we may assume that $S=\Spec(R^h)$, where $R^h=\mc{O}_{S,s}^h$ is the henselization of the local ring of $S$ at $s$. Write $\widehat{R^h}$ for its completion. By the defining property of the miniversal deformation, after base change to $\Spec(\widehat{R^h})$ there exist a morphism $\widehat{f}:\Spec(\widehat{R^h})\to\Spec(B)$ and an \'etale neighborhood $\widehat{X}'\to X\times_S \Spec(\widehat{R^h})$ of $x$ such that $\widehat{X}'\cong \mcy\times_B \Spec(\widehat{R^h})$.

Since the moduli of stable supercurves is colimit preserving by Proposition~\ref{prop: colimit preserving for stable supercurves}, we may assume that $S$ is of finite type over $k$. Then the map $(R^h)^+\to(\widehat{R^h})^+$ is geometrically regular \cite{grothendieck1965elements}. Hence, by Lemma 7.30 of \cite{faroogh2019existence}, the superring $\widehat{R^h}$ is a filtered direct limit of smooth $R^h$-superalgebras, say $\widehat{R^h}\cong\varinjlim_i R_i$. Since $B$ is finitely generated over $k$, the morphism $B\to\widehat{R^h}$ factors through some $R_i$. Since $\widehat{X}'$ is finite over $\Spec(\widehat{R^h})$, it comes from an \'etale neighborhood $X_i'\to X\times_S \Spec(R_i)$ of $x$ together with an isomorphism $X_i'\cong \mcy\times_B \Spec(R_i)$.

The composite $R_i\to \widehat{R^h}\to k(s)$ defines a $k(s)$-point of the smooth $R^h$-superscheme $\Spec(R_i)$. Since $R^h$ is henselian and $R_i$ is smooth over $R^h$, this point lifts to an $R^h$-point, equivalently, the structure map $R^h\to R_i$ admits a section $\sigma:R_i\to R^h$. Pulling back $X_i'$ along $\sigma$, we get an \'etale neighborhood $X'\to X\times_S \Spec(R^h)$ of $x$ such that $X'\cong \mcy\times_B \Spec(R^h)$. Then by definition of henselization, there exists an \'etale neighborhood $S' \to S$ of $s$ such that $X' \cong \mcy \times_B S'$. 

For the final statement, let $T\hra T'$ be a square--zero extension with kernel $M$. Since $S'\to S$ is \'etale, there is a unique lift $S''\to T'$ of the induced morphism $T\to S'$. Any morphism $f':S''\to\Spec(B)$ satisfying $f'\equiv f\mod M$ defines the pullback $\mcy\times_B S''$, whose restriction to $T$ is $\mcy\times_B S'\cong X'$, so $\mcy\times_B S''$ is a deformation of $X'\cong X\times_S S'$ over $T'$.
\end{proof}

\subsection{Moduli of stable supercurves is an algebraic superstack} \label{section: moduli of stable supercurves is an alg stack}

Let $\ovscg$ denote the category over $\sS$ whose fiber over a superscheme
$T$ is the groupoid of genus $g$ stable supercurves over $T$. The pullback
in $\ovscg$ is given by fiber products, and the cartesian morphisms are
the canonical morphisms induced by these fiber products.

\begin{prop} \label{prop: stable supercurves are a superstack}
$\ovscg$ is an \'etale superstack over $\sS$.
\end{prop}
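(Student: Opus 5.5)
I will follow the proof of Proposition \ref{proposition: supercurves is a superstack} almost verbatim, replacing the canonical polarization $\duaxt^{\otimes n}$ (which need not be a line bundle on a nodal supercurve, and in any case is not the natural choice there) by the polarization produced in Lemma \ref{lemma: stable are projective}. The argument splits into descent for isomorphisms and descent for objects.

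For \emph{descent for isomorphisms}, let $X,Y\in\ovscg(U)$, let $\{U_i\to U\}$ be an \'etale cover, and let $\alpha_i:\pripull X\lgr\pripull Y$ be isomorphisms agreeing on overlaps. Morphisms of superschemes satisfy \'etale descent \cite[Proposition A.28]{bruzzo2021supermoduli}, so the $\alpha_i$ glue to a unique $U$-morphism $\alpha:X\to Y$. To see that $\alpha$ is an isomorphism, I will argue exactly as before. First, $|\alpha|$ is a homeomorphism because it is one locally on the cover. Second, the kernel and cokernel of $\alpha^\#$ are quasi-coherent and vanish after \'etale pullback, so they vanish by \cite[Proposition A.29]{bruzzo2021supermoduli}. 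None of this uses smoothness.

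For \emph{descent for objects}, take stable supercurves $X_i/U_i$ with gluing isomorphisms $\alpha_{ij}$ satisfying the cocycle condition. The key step is to produce polarizations $\mcl_i$ on $X_i$ together with isomorphisms $\alpha_{ij}^*\prjipull\mcl_j\cong\prijpull\mcl_i$ that themselves satisfy the cocycle condition. I will not use the arbitrary lifts from Lemma \ref{lemma: stable are projective}, since those lifts are not canonical and need not be compatible with the $\alpha_{ij}$. Instead I will take $\mcl_i=\omega_{X_i/U_i}^{\otimes n}$, where $\omega_{X_i/U_i}$ is the relative dualizing sheaf. This sheaf exists and is invertible for a flat, proper, relatively Cohen--Macaulay morphism with Gorenstein (nodal) fibers, it commutes with base change, and it is functorial for isomorphisms. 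Its restriction to the bosonic reduction $C_i$ has degree computed fibrewise as in \eqref{degree formula}, which relates it to $\omega_{C_i}$ and $\jj$.

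If this degree is not positive on every component, I will fall back to a different canonical choice. One option is the pullback of $\omega_{C/S}^{\otimes 3}$ along the canonical retraction $X\to C$; this retraction exists only over bosonic bases, so it is not an option in general. A better option is to descend the bosonic reduction first. The family $C_i=(X_i)_{\bos}$ descends, by descent for ordinary stable curves, to a curve $C$ over $U_{\bos}$. Then $\omega_{C/U_{\bos}}^{\otimes 3}$ is strongly very ample, and by Proposition \ref{propatwo} any lift of it is strongly very ample. With such compatible polarizations in hand, $\mce_i=\pist\mcl_i$ is locally free and satisfies base change after increasing $n$ so that $R^1\pist$ vanishes. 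The sheaves $\mce_i$ then descend to a sheaf $\mce$ on $U$, the embeddings $X_i\hra\pp(\mce_i)$ descend to a closed subscheme $X\subset\pp(\mce)$ by \cite[Proposition A.30]{bruzzo2021supermoduli}, and $\pripull X\cong X_i$.

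It then remains to check that $X/U$ is stable. Flatness, properness, the relative Cohen--Macaulay property, and the condition that the geometric fibers are connected nodal stable supercurves are all \'etale local on the base, since every geometric fiber of $X$ is a geometric fiber of some $X_i$. The bosonic reduction descends compatibly, so it is a family of stable curves.

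The main obstacle is making the polarization canonical enough to descend, since the lifts in Lemma \ref{lemma: stable are projective} involve choices. If the dualizing-sheaf route fails, I would sidestep polarizations entirely. Because the gluing data is effective for affine morphisms, I would descend along an affine open cover of the $X_i$ that is stable under the $\alpha_{ij}$, or else invoke effective \'etale descent for superschemes that are quasi-projective \'etale locally.
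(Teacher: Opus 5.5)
Your descent for isomorphisms and your final check that the glued $X$ is stable are fine. Descent for objects, however, has a genuine gap at exactly the step you flag. None of your routes produces line bundles $\mcl_i$ on $X_i$ together with isomorphisms $\prijpull\mcl_i\cong\alpha_{ij}^*\prjipull\mcl_j$ satisfying the cocycle condition. Without these, the descent of the $\mce_i$ and of the embeddings $X_i\hra\pp(\mce_i)$ never starts.

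\begin{itemize}
\item \textbf{Dualizing sheaf.} First, $\omega_{X_i/U_i}$ is not invertible at NS nodes. Over a bosonic base, $\omega_{X/T}=\mathcal{H}om_{\oc}(\ox,\omega_{C/T})$ has odd part $\mathcal{H}om_{\oc}(L,\omega_{C/T})$, and odd functions annihilate its even part. A local generator would therefore have to be odd and would give $L\cong\omega_{C/T}$, which is impossible where $L=\Pi\jj$ is not locally free. So NS-nodal supercurves are not Gorenstein; this is the extra section $s_0$ in the NS model. Second, even where $\omega$ is invertible, its degree on a component involves $-\deg\jj$, so it can vanish or change sign between components. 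Already the smooth supercurves with $\deg\jj=2g-2$ are stable but not strongly projective, by Lemma~\ref{theorem: degree condition for strongly projective}.
\item \textbf{Descending $C$ first.} This only makes $\omega_C^{\otimes 3}$ compatible on the bosonic reductions. At each step $\ox/\jj^{k+1}\to\ox/\jj^{k}$, the lifts form a torsor under $H^1(\jj^k/\jj^{k+1})$, which need not vanish on a proper curve. Automorphisms of a lift restricting to the identity one step down are given by $H^0$ of the even part of $\jj^k/\jj^{k+1}$, which is nonzero once the base has two odd parameters. So $\prijpull\mcl_i$ and $\alpha_{ij}^*\prjipull\mcl_j$ need not be isomorphic, and even when they are, there is no preferred isomorphism to satisfy a cocycle condition. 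Proposition~\ref{propatwo} gives ampleness of each lift, not compatibility, so ``with such compatible polarizations in hand'' assumes what must be proved.
\item \textbf{Last fallback.} Being quasi-projective \'etale locally is not an effectivity criterion. You also do not say where an affine cover stable under the $\alpha_{ij}$ would come from.
\end{itemize}

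The paper handles this point by invoking the proof of \cite[Proposition 7.6]{faroogh2019existence} verbatim. Its alternative argument tries to make the lifts of $\omega_{C_i/T_i}^{\otimes n}$ canonical, so that the smooth-case proof applies. It does this via uniqueness on the smooth locus and Hartogs extension across the nodes, treating zero-dimensional $\tbos$ separately.

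With the ingredients you already have, there is a more direct repair that needs no polarization at all. Reduce to $U$ and the $U_i$ affine; this is allowed because descent for objects is Zariski local on $U$, descended objects being unique up to unique isomorphism. Descend the bosonic reductions to a stable curve $C/U_{\bos}$ as you propose, and use $|X_i|=|(X_i)_{\bos}|\cong|C\times_{U_{\bos}}(U_i)_{\bos}|$. For an affine open $W\subset C$, let $V_{W,i}\subset X_i$ be the open subsuperscheme over the preimage of $W$. It is affine, since its bosonic reduction $W\times_{U_{\bos}}(U_i)_{\bos}$ is. Moreover $\alpha_{ij}$ carries $\prijpull V_{W,i}$ onto $\prjipull V_{W,j}$, because the bosonic reductions of the $\alpha_{ij}$ are the descent isomorphisms of $C$. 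The quasi-coherent $\mc{O}_{U_i}$-superalgebras corresponding to $V_{W,i}\to U_i$ then descend, by \'etale descent for quasi-coherent sheaves, to affine superschemes $V_W\to U$. These glue over an affine cover of $C$ to the required $X$. This is exactly the stable affine cover your last fallback needs, and it works uniformly in $\dim\tbos$ without choosing any lifts.
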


\begin{proof}
The statement can be proved using the same arguments as in
\cite[Proposition 7.6]{faroogh2019existence}. That proposition is
stated for stable super Riemann surfaces, but the proof applies
verbatim to stable supercurves, since a stable super Riemann surface
is a stable supercurve equipped with additional structure.

We also provide a alternative proof, more aligned with out previous arguments. 
We first treat the case in which the bosonic reduction $\tbos$ has even dimension at least $1$. Since dimension is invariant under \'etale base change, the same holds for each $T_{i,\bos}$. Let $\Sigma_i \subset X_i$ denote the singular locus, let $Y_i:=X_i\setminus \Sigma_i$ be the smooth locus, let $C_i:=(X_i)_{\bos}$, and let $U_i:=Y_{i,\bos}\subset C_i$ be the smooth locus of $C_i$. Since each geometric fiber of $X_i/T_i$ is a nodal supercurve and $T_{i, \bos}$ has positive dimension, the subset $\Sigma_i$ has codimension at least $2$ in $X_i$. Hence, by Hartogs, morphisms and isomorphisms defined on $Y_i$ extend uniquely to $X_i$.

Choose a line bundle $L_i$ on $X_i$ lifting $\omega_{C_i/T_i}^{\otimes n}$ for some fixed $n\ge 3$. We claim that such a lift is unique up to unique isomorphism. Indeed, if $L_i'$ is another lift, then $L_i|_{Y_i}$ and $L_i'|_{Y_i}$ are both lifts of $\omega_{C_i/T_i}^{\otimes n}|_{U_i}$, because the smooth loci of $X_i$ and $C_i$ agree after bosonic reduction. Now $U_i\to T_i$ is affine, and since $Y_i$ is a nilpotent thickening of $U_i$, $Y_i$ is also affine. Choose a factorization of the nilpotent thickening $U_i\hookrightarrow Y_i$ into square-zero extensions. At each step, the set of lifts of a line bundle is a torsor under $H^1$ of the corresponding square-zero ideal sheaf, and these groups vanish because all intermediate thickenings are affine. Thus the lift of $\omega_{C_i/T_i}^{\otimes n}|_{U_i}$ to $Y_i$ is unique up to unique isomorphism, so $L_i|_{Y_i}\cong L_i'|_{Y_i}$. By Hartogs, this extends uniquely to an isomorphism $L_i\cong L_i'$ on $X_i$. This proves the claim.

Now set $X_{ij}:=X_i\times_T T_j$, and similarly define $Y_{ij},U_{ij},C_{ij},L_{ij}$. Since $\alpha_{ij}:X_{ij}\xrightarrow{\sim}X_{ji}$ is an isomorphism of stable supercurves, its bosonic reduction identifies $C_{ij}$ with $C_{ji}$, and therefore $\alpha_{ij}^*L_{ji}$ is again a lift of $\omega_{C_{ij}/T_{ij}}^{\otimes n}$. By the uniqueness just proved, there exists an isomorphism
\[
\phi_{ij}:L_{ij}\xrightarrow{\sim}\alpha_{ij}^*L_{ji}.
\]
We show that the $\phi_{ij}$ can be chosen to satisfy the cocycle condition. On the smooth locus $Y_{ij}$ the lift of $\omega_{C_{ij}/T_{ij}}^{\otimes n}|_{U_{ij}}$ is unique up to unique isomorphism by the affine argument above, so the isomorphisms $\phi_{ij}|_{Y_{ij}}$ are uniquely determined. In particular, on a triple overlap $Y_{ijk}$ the two isomorphisms
\[
L_{ijk}\longrightarrow \alpha_{ij}^*\alpha_{jk}^*L_{ki}
\]
given by $\phi_{ik}|_{Y_{ijk}}$ and by $\alpha_{ij}^*(\phi_{jk})\circ \phi_{ij}|_{Y_{ijk}}$ coincide. Therefore 
\[
(\alpha_{ij}^*\phi_{jk})\circ \phi_{ij}\circ \phi_{ik}^{-1}
\]
is the identity on $Y_{ijk}$. Since this is an automorphism of the line bundle $L_{ijk}$ and $X_{ijk}\setminus Y_{ijk}$ has codimension at least $2$, Hartogs implies that it is the identity on all of $X_{ijk}$. Thus the $\phi_{ij}$ satisfy the cocycle condition on triple overlaps.

We can now repeat the argument of Proposition \ref{proposition: supercurves is a superstack}, setting $\mce_i = \pist \mcl_i$, and get a coherent sheaf $\mce$ on $T$ with $\prjpull \mce \cong \mce_j$, and a closed subscheme $X \subset \pp_T(\mce)$, flat and proper over $T$, with $\prjpull X \cong X_j$. 

The superscheme $X/T$ is a stable supercurve. Indeed, it is proper, flat, and relatively Cohen--Macaulay because these properties are \'etale local on the base, and its geometric fibers are stable supercurves because this is also checked after \'etale base change. The closed singular subschemes $\Sigma_i \subset X_i$, are compatible on overlaps and thus glue to singular locus $\Sigma \subset X$. The defining conditions of the nodes can be checked on geometric fibers. 

This proves descent when $\tbos$ has even dimension at least $1$. The argument for even dimension $0$ is follows from $\on{Isom}$ being a sheaf. 
\end{proof}

\begin{prop}
    \label{proposition: rep of diagonal for prestable supercurves}
    The superstack $\ovscg$ satisfies condition (A1) of Theorem \ref{theorem: introduction super artin statement}.
   
    \end{prop}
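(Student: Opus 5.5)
The plan is to follow the proof of Proposition \ref{proposition: rep of diagonal for supercurves}, replacing the polarization by $\omega^{\otimes n}$ with the one provided by Lemma \ref{lemma: stable are projective}. Fix a superscheme $T$ and stable supercurves $X, X' \in \ovscg(T)$. By Lemma \ref{lemma: stable are projective}, both $X/T$ and $X'/T$ are projective. Concretely, each carries a line bundle lifting $\omega_{C/\tbos}^{\otimes n}$ for $n\ge 3$, and by Proposition \ref{propatwo} this lift is strongly relatively very ample. It follows that $X\times_T X'\to T$ is projective, with relatively very ample line bundle given by the external tensor product of the two polarizations. Then \cite[Theorem 4.3]{bruzzo2023notes} shows that the Hilbert functor
\[
\hh:=\Hilb(X\times_T X'/T)
\]
of closed subschemes that are flat and proper over $T$ is represented by a $T$-superscheme. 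I expect it to be locally of finite type, as a disjoint union of its Hilbert-polynomial components.

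Next I embed $\uniso(X,X')$ into $\hh$ via graphs. For $S\to T$ and an isomorphism $\phi:X_S\to X'_S$, the graph $(\on{id},\phi)$ is a closed immersion. The only point that needs care in the nodal case is separatedness: $X'_S\to S$ is proper, hence separated, so the graph is closed. The graph is isomorphic to $X_S$, so it is flat and proper over $S$. This gives a functorial injection $\Gamma:\uniso(X,X')\hra\hh$, exactly as in the smooth case.

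It remains to show that $\Gamma$ is representable by an open immersion. Let $Z\subset (X\times_T X')\times_T\hh$ be the universal family, with projections $p:Z\to X_{\hh}$ and $q:Z\to X'_{\hh}$. A point $\xi\in\hh(S)$ lies in $\uniso(X,X')(S)$ exactly when $p_S$ and $q_S$ are both isomorphisms. Both $p$ and $q$ are proper morphisms between $\hh$-flat, proper superschemes. I would show that the locus of $\hh$ over which a proper morphism $f:Z\to W$ of flat proper $\hh$-superschemes restricts to an isomorphism is open and is compatible with base change. I then intend to invoke the argument of \cite[Theorem 5.22(b)]{fantechi2005fundamental}, adapted to the super setting, as in Proposition \ref{proposition: rep of diagonal for supercurves}.

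The main obstacle is this openness statement, since $X$ is no longer smooth. I would argue it as follows. First, the locus where $f$ is a closed immersion is open: this uses properness and the fiberwise criterion, namely Nakayama applied to $f_*\mc{O}_Z$ being a quotient of $\mc{O}_W$, together with the fact that the support of the cokernel of $\mc{O}_W\to f_*\mc{O}_Z$ is closed and proper over $\hh$. Second, a closed immersion $Z\hra W$ of $\hh$-flat schemes that is an isomorphism on the fibre over a point is an isomorphism in a neighbourhood of that point: the ideal $I$ is $\hh$-flat by the local criterion, and $I\otimes k(s)=0$, so $I$ vanishes near the fibre by Nakayama, and properness makes the good locus open in $\hh$. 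The flatness inputs come from $X$, $X'$ and $Z$ being flat over the base, and no smoothness is needed. This shows that $\uniso(X,X')$ is an open subscheme of $\hh$, hence an algebraic superspace locally of finite type over $T$, which is (A1).
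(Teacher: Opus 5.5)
Your proposal is correct and follows the paper's own route: projectivity of stable supercurves from Lemma \ref{lemma: stable are projective}, the Hilbert superscheme of $X\times_T X'$ from \cite[Theorem 4.3]{bruzzo2023notes}, then the graph embedding and openness of the isomorphism locus exactly as in Proposition \ref{proposition: rep of diagonal for supercurves} via \cite[Theorem 5.22(b)]{fantechi2005fundamental}. Your openness sketch unpacks that citation faithfully, with one step to add: before applying Nakayama to $\mc{O}_W\to f_*\mc{O}_Z$, first use properness and finiteness of the fibres to make $f$ finite over a neighbourhood of the fibre, so that $f_*$ commutes with base change and surjectivity of $\mc{O}_W\to f_*\mc{O}_Z$ genuinely gives a closed immersion.
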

    
    \begin{proof} Suppose $X$ and $X'$ are stable supercurves over $T$. By Lemma \ref{lemma: stable are projective}, $X$ and $X'$ are both projective, flat and proper over $T$, and so their product $X \times_T X'$ is also. By Theorem 4.3 in \cite{bruzzo2023notes}, there exists a Hilbert superscheme parameterizing closed subschemes of $X \times_T X'$ which are flat and proper over $T$, and so we can repeat the arguments from  Proposition \ref{proposition: rep of diagonal for supercurves}. 
            \end{proof}

\begin{prop} \label{prop: colimit preserving for stable supercurves} The superstack $\ovscg$ satisfies condition (A2) of Theorem \ref{theorem: introduction super artin statement}.
\end{prop}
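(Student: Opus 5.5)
The plan is to follow the argument of Proposition \ref{atwo: colimit preserving} and then add one extra step: check that the stability and nodal conditions descend to some finite stage $A_i$. Let $A=\varinjlim A_i$ be a directed colimit of superalgebras and let $X/A$ be a stable supercurve. We must show two things. First, $X$ is isomorphic to $X_i\times_{A_i}A$ for some stable supercurve $X_i/A_i$ (essential surjectivity). Second, morphisms between such objects come from some finite stage (full faithfulness).

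\textbf{Descending the superscheme and its morphisms.}
\begin{enumerate}
\item By Lemma \ref{lemma: stable are projective}, $X$ is projective, hence of finite presentation over $A$.
\item Choose a finite affine cover $\{\Spec(B_\alpha)\}$ with $B_\alpha\cong A[x]/(f_1,\dots,f_r)$. All coefficients of the $f_k$ lie in some $A_i$.
\item By \cite[Lemma 7.13]{faroogh2019existence}, the gluing maps $B_\beta\to B_\alpha$ descend to some $A_j$. The cocycle identities also descend, after enlarging the index.
\item The result is a superscheme $X_i/A_i$ with $X_i\times_{A_i}A\cong X$.
\end{enumerate}
Full faithfulness follows from the same $\Hom$-colimit statement applied to isomorphisms between finitely presented $A_i$-superschemes. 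The inverse and the two compositions-equal-identity conditions each descend after enlarging $i$.

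\textbf{Making $X_i$ stable.} Flatness, properness and being relatively Cohen--Macaulay are all needed, and the plan handles them as follows.
\begin{itemize}
\item \emph{Properness and flatness.} These hold for $X_i$ after enlarging $i$. For the even part I would invoke the classical limit results of EGA IV, 8 and 11.2.6 on $X_i^{+}$ over $A_i^{+}$. Since $\mc{O}_{X_i}^-$ is a finitely presented $\mc{O}_{X_i}^{+}$-module, flatness of the whole superscheme reduces to flatness of finitely presented modules, which also descends (EGA IV 11.2.6).
\item \emph{Relatively Cohen--Macaulay.} This condition defines an open locus in the source, which is constructible and compatible with limits (EGA IV 12.1 and 8.10.5). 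So it holds at some finite stage.
\end{itemize}

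The main obstacle is the fibrewise condition: the geometric fibres of $X_i$ over $T_i=\Spec(A_i)$ must be connected stable nodal supercurves. Here I would pass to the bosonic reduction. By Proposition \ref{prop: torsion free sheaves of rank 1} and the local models, a fibre is a nodal supercurve exactly when two conditions hold:
\begin{enumerate}
\item its bosonic reduction is a nodal curve;
\item the odd part $\jj/\jj^2$ restricted to the fibre is torsion-free of rank $(0|1)$.
\end{enumerate}
The structure of the fibre is recovered from its bosonic reduction because $\jj^2=0$ over a field.

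The nodal curve locus and the stability condition define an open subset of the base for flat proper families of curves. Torsion-freeness of rank $1$ of a flat coherent sheaf is likewise an open condition. Hence the set of points of $T_i$ over which the fibres are stable supercurves is open in $T_i$, and I will call it $W_i$. Its preimage in $\Spec(A)$ is everything. Because $\Spec(A)=\varprojlim \Spec(A_i)$ as quasi-compact spaces, the complement of $W_i$ (a closed, hence constructible, set) pulls back to the empty set. A constructible set with empty preimage in the limit is already empty at some finite stage (EGA IV 8.3.4). Therefore $W_i=T_i$ for $i\gg0$.

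The genus is locally constant, so it is preserved once fibres are connected, and connectedness is handled by the same constructibility argument. This yields $X_i\in\ovscg(A_i)$ with $X_i\times_{A_i}A\cong X$.
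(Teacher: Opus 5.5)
Your core argument is the paper's. The paper proves this proposition by saying that the proof of Proposition \ref{atwo: colimit preserving} applies verbatim, since it only uses properness of $X/A$ and Noetherianity of $A$; that proof descends a finite affine presentation and the gluing maps to some $A_i$ via \cite[Lemma 7.13]{faroogh2019existence}. The paper never checks that the descended $X_i$ is flat, relatively Cohen--Macaulay and has stable nodal fibres, and it never addresses full faithfulness. Your second half therefore addresses a real omission, and your strategy there is the right one: show the bad locus in $|T_i|=|(T_i)_{\bos}|$ is constructible with empty preimage in $|\Spec A|$, so it is empty for $i\gg 0$. Two of its justifications are wrong as written, however.

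First, $\jj^2=0$ does not hold for an arbitrary superscheme over a field; $\Spec k[z,\theta_1,\theta_2]$ is a counterexample. As a result, your conditions (1)--(2) do not characterize nodal supercurves. For instance, $\Spec k[x,y,\theta_1,\theta_2]/(xy,x\theta_2,y\theta_1)$ has nodal bosonic reduction and $\jj/\jj^2\cong(x,y)$ torsion-free of rank one. Yet $\theta_1\theta_2\neq 0$, so the point is not an NS node. There are two ways to repair this.
\begin{enumerate}
\item Add $\jj_t^2=0$ as a further constructible fibre condition.
\item Derive it from the Cohen--Macaulay condition you impose anyway. Where $\jj_t/\jj_t^2$ is invertible, nilpotent Nakayama shows $\jj_t$ is generated by one odd element, so $\jj_t^2=0$ there. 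Thus $\jj_t^2$ is supported at finitely many nodes, and a nonzero section would give an element killed by the maximal ideal, contradicting depth one.
\end{enumerate}

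Second, you cannot quote the classical results of EGA IV \S 8 and 11.2.6 for $X_i^{+}$ over $A_i^{+}$. The even part does not commute with base change: $(k[\theta]\otimes_k k[\eta])^{+}=k\oplus k\,\eta\theta$, while $k[\theta]^{+}\otimes_k k[\eta]^{+}=k$. So the $X_j^{+}$ do not form a base-change system. Moreover, flatness over the superring $A_i$ is not flatness of $\ox^{+}$ and $\ox^{-}$ over $A_i^{+}$: the module $k=k[\theta]/(\theta)$ is flat over $k[\theta]^{+}=k$ but not over $k[\theta]$.

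Properness is fine, because bosonic reduction does commute with base change. The fibre conditions also reduce to classical constructibility for coherent modules on ordinary schemes. Flatness, however, needs a genuinely super input. One option is a super analogue of 11.2.6. A cheaper option is to descend the embedding $X\subset\pp(\mm)$ to $\pp(\mm_i)$ and view $X$ as an $A$-point of the super Hilbert scheme of $\pp(\mm_i)$ over $A_i$ \cite[Theorem 4.3]{bruzzo2023notes}. That scheme is locally of finite type, hence limit preserving, so the point comes from some $A_j$ and yields a flat proper model.

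Relatedly, at stage $i$ you do not know that $(X_i)_{\bos}$ or $\jj/\jj^2$ is flat over the base; the latter is killed by the odd elements of $A_i$. Replace your openness claims by constructibility (EGA IV, \S 9), which is all the limit argument needs.
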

\begin{proof} 
We can repeat word for word the proof of Proposition \ref{atwo: colimit preserving} since the main argument used only that a supercurve $X/A$ is proper over $A$, and that $A$ is a Noetherian superalgebra. A stable supercurve $X/A$ is proper by definition, and $A$ is Noetherian since all superalgebras appearing in this paper are assumed to be Noetherian.
\end{proof}

\begin{prop} \label{prop: schlessinger for stable supercurves}
The superstack $\ovscg$ satisfies condition (A3) of Theorem \ref{theorem: introduction super artin statement}.
\end{prop}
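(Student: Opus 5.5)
The plan is to follow the proof of Proposition \ref{athree: schlessinger}, replacing the torsor description of deformations of smooth supercurves by the exact sequence \eqref{ses of ss} of Proposition \ref{deformations of stabel supercurves}.

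For \textbf{(S1)a}, filter $A'$, $B$, $C=A'\times_A B$ by powers of their odd nilpotent ideals, as in Proposition \ref{athree: schlessinger}. The base case $k=0$ is purely bosonic. By Proposition \ref{prop: torsion free sheaves of rank 1}, $\ovscg$ restricted to ordinary schemes is the stack of pairs $(C/T,L)$, with $C/T$ stable and $L$ torsion-free of rank $1$. This is the stack of rank-one torsion-free sheaves over $\ov{\mm}_g$, which is algebraic by the classical theory of compactified Jacobians. Hence it satisfies (S1)a, and a lift $Z_0$ over $C_0$ exists. For the inductive step $k\to k+1$, lifts of $Z_k$ over $C_{k+1}$ exist because deformations are unobstructed (Proposition \ref{deformations of stabel supercurves}). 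The map \eqref{intro: schless sonea} then becomes a map between extensions of the form \eqref{ses of ss}. One applies \eqref{isomorphism over the reduction} to replace $\mct_{Z_k/C_k}$ and $\mct^1_{Z_k/C_k}$ by their restrictions to $Z_0$, tensored with $J_C^k/J_C^{k+1}=(J_{A'}^k/J_{A'}^{k+1})\times_{J_A^k/J_A^{k+1}}(J_B^k/J_B^{k+1})$. It then suffices to show that the induced maps on $H^1(\mct\otimes -)^+$ and on $H^0(\mct^1\otimes -)^+$ are surjective, and to conclude by the five lemma applied to surjections.

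For the $H^1$ term, the argument of Proposition \ref{athree: schlessinger} applies: the fibers have even dimension one, so $H^2=0$ and $H^1$ is right exact. This requires that the comparison maps $\mct_{Z_0/C_0}\otimes A_0'\to\mct_{Y_0/A_0'}$ be surjective. For the $H^0$ term, note that $\mct^1$ is supported on the finite singular locus $\Sigma$, so $H^0$ is exact there. Surjectivity then reduces to the local models. By Lemma \ref{lemma: miniversal node} and Lemma \ref{lemma: over Tprime}, étale locally $\mct^1$ is the pullback of the cokernel describing the miniversal parameters $(t,\eta)$, respectively $(t_1,t_2,\eta_1,\eta_2)$. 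It therefore commutes with base change, and the map is surjective.

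For \textbf{(S1)b}, both $D_X(M)$ and $D_{X'}(M)$ sit in the sequence \eqref{ses of ss}. After \eqref{isomorphism over the reduction}, each outer term depends only on $X_0=X\times_A A_0=X'\times_{A'}A_0$, so the two sequences are canonically identified. The five lemma, applied compatibly with the chosen base deformation, gives bijectivity. For \textbf{(S2)}, $D_{X_0}(M)$ is an extension of $H^0(X_0,\mct^1_{X_0/A_0}\otimes M)^+$ by $H^1(X_0,\mct_{X_0/A_0}\otimes M)^+$. Both are finite $A_0$-modules, since $X_0$ is proper and the sheaves are coherent, the first with finite support. Hence $D_{X_0}(M)$ is finite.

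I expect the main obstacle to be base-change compatibility. The tangent sheaf $\mct_{X/T}$ of a nodal supercurve is not locally free, so the surjectivity of $\mct_{Z_0/C_0}\otimes A_0'\to\mct_{Y_0/A_0'}$ is no longer automatic. I would attack it using the explicit Ramond and NS local models in Appendix \ref{computation of ti for the nodes}. There I would verify directly that $\mct$ and $\mct^1$ of $\mcy/B$ commute with arbitrary base change $B\to A$, and then transport this along the étale charts given by Lemma \ref{lemma: over Tprime}.
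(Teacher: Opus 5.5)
Your architecture matches the paper's. Specifically:
\begin{itemize}
\item the bosonic base case via the algebraic stack of rank-one torsion-free sheaves over stable curves;
\item induction along powers of the odd ideal;
\item reduction through \eqref{ses of ss} to surjectivity on $H^1(\mct\otimes -)^+$ and $H^0(\mct^1\otimes -)^+$;
\item the $\mct^1$ term handled by its description as copies of $\osig$ compatible with base change;
\item (S1)b and (S2) read off from the outer terms.
\end{itemize}

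The gap is in the $H^1$ step, exactly at the obstacle you flag. You say the argument requires $\mct_{Z_0/C_0}\otimes A_0'\to\mct_{Y_0/A_0'}$ to be surjective. You propose to prove this by checking that the tangent sheaf of the local models commutes with base change, but that statement is false. Classically, take $\mcy=\Spec k[t][x,y]/(xy-t)\cong\Spec k[x,y]$ over $B=k[t]$. A relative vector field $f\partial_x+g\partial_y$ must satisfy $yf+xg=0$, so $\mct_{\mcy/B}$ is free on $x\partial_x-y\partial_y$. On the fibre $t=0$, however, $\mct_{Y/k}$ is generated by $x\partial_x$ and $y\partial_y$, so the restriction map has cokernel $k$ at the node. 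The super models behave the same way. For $xy-t-\eta\theta$ over $k[t,\eta]$, a relative field $f\partial_x+g\partial_y+h\partial_\theta$ satisfies $yf+xg\pm\eta h=0$. This forces the restriction of $h$ to the central fibre into $(x,y)$, so $\partial_\theta$ does not lift. Since $C_0\to A_0'$ can be an arbitrary surjection (take $A'=A$), the base change $Y_0\hookrightarrow Z_0$ can specialize a smoothed node to an actual node. So the comparison map is in general not surjective along $\Sigma$, and the verification you plan cannot succeed.

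The missing observation, which is how the paper argues, is that surjectivity at the nodes is never needed. On the smooth locus $\mct$ is locally free and commutes with base change, and $J_C^k/J_C^{k+1}\to J_{A'}^k/J_{A'}^{k+1}$ is surjective. Hence the comparison map $\phi_Y$, from (the restriction to $Y_0$ of) $\mct_{Z_0/C_0}\otimes J_C^k/J_C^{k+1}$ to $\mct_{Y_0/A_0'}\otimes J_{A'}^k/J_{A'}^{k+1}$, has cokernel $Q$ supported on the finite set $\Sigma_{Y_0}$. Factor $\phi_Y$ through its image $I$. The map from $H^1$ of the source to $H^1(I)$ is surjective because $H^2$ of the kernel vanishes on a curve. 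The map $H^1(I)\to H^1$ of the target is surjective because $H^1(Q)=0$. (The paper writes $H^1(\ker\phi_Y)=0$, but only the vanishing of $H^2$ is actually used; the kernel need not be supported on $\Sigma$.) With this substitution the rest of your argument goes through as in the paper.
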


\begin{proof}
For (S1a): Proposition \ref{prop: torsion free sheaves of rank 1} implies that the bosonic reduction of $\ovscg$ is the stack of torsion free sheaves of rank $1$ on the moduli of stable curves. This is an algebraic stack, \emph{e.g.} \cite{orecchia2014torsion}. We may therefore apply Proposition \ref{athree: schlessinger} to the two outside terms in \eqref{ses of ss}, and reduce to proving that the following maps are surjective:
\begin{equation} \label{one}
H^1(Z_0,T_{Z_0/C_0}\otimes_{C_0} J^k/J^{k+1}) \to H^1(Y_0,T_{Y_0/A_0} \otimes_{A_0} J_{A'}^k/J_{A'}^{k+1}),
\end{equation}
\begin{equation} \label{two}
H^0(Z_0,T_{Z_0/C_0}^1\otimes_{C_0} J^k/J^{k+1}) \to H^0(Y_0,T_{Y_0/A_0}^1 \otimes_{A_0} J_{A'}^k/J_{A'}^{k+1}),
\end{equation}
\begin{equation} \label{three}
H^1(Z_0,T_{Z_0/C_0}\otimes_{C_0} J^k/J^{k+1}) \to H^1(W_0,T_{W_0/B_0} \otimes_{B_0} J_B^k/J_B^{k+1}),
\end{equation}
\begin{equation} \label{four}
H^0(Z_0,T_{Z_0/C_0}^1\otimes_{C_0} J^k/J^{k+1}) \to H^0(W_0,T_{W_0/B_0}^1 \otimes_{B_0} J_B^k/J_B^{k+1}).
\end{equation}

Consider \eqref{one}; the argument for \eqref{three} is identical. Let
\[
\phi_Y : \prjpull\bigl(T_{Z_0/C_0}\otimes_{C_0} J^k/J^{k+1}\bigr) 
\longrightarrow 
T_{Y_0/A_0}\otimes_{A_0} J_{A'}^k/J_{A'}^{k+1}, \qquad \prj: Y_0 \hra Z_0
\]
be the natural map induced by base change. Since $J^k/J^{k+1} \to J_{A'}^k/J_{A'}^{k+1}$ is surjective and the relative tangent sheaf commutes with base change along the smooth locus (where it is locally free), the map $\phi_Y$ is a surjection over the smooth locus. Thus, $\coker(\phi_Y)$ is supported on the singular locus $\Sigma_{Y_0}$.

Factoring $\phi_Y$ through its image we get a short exact sequences
\[
0 \to \ker(\phi_Y) \to 
\prjpull\bigl(T_{Z_0/C_0}\otimes_{C_0} J^k/J^{k+1}\bigr)
\to \on{image}(\phi_Y) \to 0
\]
and
\[
0 \to \on{image}(\phi_Y) \to 
T_{Y_0/A_0}\otimes_{A_0} J_{A'}^k/J_{A'}^{k+1}
\to \coker(\phi_Y) \to 0.
\]
Since $\Sigma_{Y_0}$ is $0$-dimensional, 
\[
H^1(Y_0,\ker(\phi_Y))=H^1(Y_0,\coker(\phi_Y))=0.
\]
Taking cohomology, we find that the induced map
\[
H^1\bigl(Y_0,\prjpull(T_{Z_0/C_0}\otimes_{C_0} J^k/J^{k+1})\bigr)
\to
H^1\bigl(Y_0,T_{Y_0/A_0}\otimes_{A_0} J_{A'}^k/J_{A'}^{k+1}\bigr)
\]
is surjective. Identifying $H^1(Z_0,\ff) \cong H^1(Y_0,\prjpull \ff)$ for the coherent sheaf $\ff=T_{Z_0/C_0}\otimes_{C_0} J^k/J^{k+1}$, this proves surjectivity of \eqref{one}.

For \eqref{two} and \eqref{four}, the sheaves $T^1_{Z_0/C_0}$, $T^1_{Y_0/A_0}$, and $T^1_{W_0/B_0}$ are supported on their respective singular loci. By \ref{computation of ti for the nodes}, each is a direct sum of copies of the structure sheaf of the singular locus $\osig$: $(2|2)$ copies along the NS locus and $(1|1)$ copies along the Ramond locus. These identifications are compatible with pullback. Since the singular loci of $Y_0$ and $W_0$ are obtained by pullback from that of $Z_0$, the induced maps
\[
\osig \to \oo_{\Sigma_{Y_0}}, \qquad \osig \to \oo_{\Sigma_{W_0}}
\]
are surjective, and hence the corresponding maps on the $T^1$-sheaves are surjective. This proves \eqref{two} and \eqref{four}.

Conditions (S1b) and (S2) follow from Proposition \ref{athree: schlessinger} applied to the two outside terms of \eqref{ses of ss}.
\end{proof}

   \begin{prop} \label{prop: effectivity for stable supercurves}
  The superstack $\ovscg$ satisfies condition (A4) of Theorem \ref{theorem: introduction super artin statement}.
    
\end{prop}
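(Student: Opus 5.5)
We follow the proof of Proposition \ref{proposition: effectivity supercurves}, replacing the canonical polarization by the one from Lemma \ref{lemma: proj system of line on stable}. We use the notation of \ref{proj notation}. Let $\xbull=(X_j, X_i\to X_j)$ be a projective system of stable supercurves over $(T_j, T_i\to T_j)$. By Lemma \ref{lemma: proj system of line on stable} we obtain a projective system $\mclbull=(\mcl_j,\mcl_j\to\mcl_i)$ of strongly relatively very ample line bundles with $R^1\pist\mclbull=0$. Then cohomology and base change shows that $\mce_j:=\pist\mcl_j$ is locally free and that the base change maps $\prjipull\mce_j\to\mce_i$ are isomorphisms. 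Hence $\mcebull$ is a coherent system on $\on{Spf}(R)$. The super Grothendieck existence theorem \cite[Theorem 7.6]{faroogh2019existence} then gives a coherent (indeed locally free) sheaf $\mce$ on $T$ with $\prjpull\mce\cong\mce_j$ compatibly with the transition maps.

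\textbf{Step 1: algebraizing $\mcx$.} The closed immersions $X_j\hookrightarrow\pp(\mce_j)=\pp(\mce)\times_T T_j$ determined by $\mcl_j$ are compatible. This is the one place where more care is needed than in the smooth case. The lifts $\mcl_j$ were chosen inductively, so compatibility of $\mcl_j$ with $\mcl_i$ holds by construction: $\mcl_i=\prjipull\mcl_j$ as chosen lifts. The induced sections, and hence the embeddings, therefore agree. Consequently the ideal sheaves $I_j\subset\oo_{\pp(\mce_j)}$ form a coherent system. Grothendieck existence gives an ideal $I\subset\oo_{\pp(\mce)}$, and we let $X\subset\pp(\mce)$ be the corresponding closed subscheme, so that $\prjpull X\cong X_j$.

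\textbf{Step 2: properties of $X/T$.} Properness is immediate. For flatness we argue as before, using the Hilbert superscheme $\Hilb(\pp(\mce)/T)$ of \cite[Theorem 4.3]{bruzzo2023notes}. It is an algebraic superstack, so by Theorem \ref{theorem: introduction super artin statement} it is effective. The compatible system $(X_j)$ therefore comes from a $T$-flat closed subscheme $X'$, and uniqueness in Grothendieck existence gives $X=X'$.

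Next, the relative Cohen--Macaulay property and the condition that geometric fibers are connected stable nodal supercurves must be verified. Since $X/T$ is flat and proper, we check fiberwise:
\begin{itemize}
\item The locus of points where a flat finite-type morphism has Cohen--Macaulay fibers is open.
\item Since $T=\Spec R$ with $R$ local, every closed point of $X$ lies over the closed point of $T$.
\end{itemize}
So the non-CM locus is a closed subset missing the closed fiber $X_0$, and properness forces it to be empty. The same openness-plus-properness argument applies to smoothness away from $\Sigma$. For the nodes, we use that the relevant structure of a fiber over a point $t\in T$ specializes to the closed fiber. More simply, we note that the closed fiber is $X_0$, which is stable by hypothesis.

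\textbf{Main obstacle.} The key difficulty is controlling the geometric fibers over non-closed points of $T$, in particular showing that they are nodal supercurves. Two approaches are available:
\begin{itemize}
\item \emph{Local models (first attempt).} By Lemma \ref{lemma: miniversal node} and its proof, the completed local rings of $X$ at points of the closed fiber are formal deformations of Ramond or NS node models. These are pulled back from the miniversal families $\mcy\to\Spec B$. Each fiber of $\mcy$ over $\Spec B$ has only smooth points or nodes of the same type. Since the locus of points of $X$ whose fibers are smooth or nodal is open and contains $X_0$, properness again shows it is everything.
\item \emph{Bosonic reduction (fallback).} Fiberwise nodality is read off from the bosonic reduction, and the classical openness of nodal curves together with Proposition \ref{prop: torsion free sheaves of rank 1} reduces this to the ordinary statement.
\end{itemize}

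Finally, stability and connectedness are open and closed conditions in flat proper families, by upper semicontinuity and the constancy of arithmetic genus. Hence they also follow from the closed fiber.
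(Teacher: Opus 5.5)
Your proof is sound in outline. The algebraization and flatness steps match the paper; where you differ is in verifying that $X/T$ is a stable supercurve.

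\emph{The paper's route.} The paper only treats the bosonic reduction. The system $C_j=(X_j)_{\bos}$ is an object of $\varprojlim \ov{\mm}_g$, so effectivity of the classical stack gives a flat stable curve $C$ over $\tbos$. Uniqueness in Grothendieck existence then identifies $\xbos\cong C$. This gets flatness and stability of $\xbos\to\tbos$ for free, with no openness results. It leaves the odd structure of the fibers over non-closed points of $T$ unaddressed: relative Cohen--Macaulayness, odd rank, $\jj^2=0$, and node type.

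\emph{Your route.} You show each fiberwise condition holds on an open set containing $X_0$. Since $X\to T$ is proper over a local base, every closed point of $X$ lies in $X_0$, so such an open set is all of $X$. This treats all the conditions uniformly, including the CM condition the paper skips. The cost is that you need super analogues of openness results that the paper does not supply.

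\emph{The delicate step.} The hard case is openness of the smooth-or-nodal locus. Knowing that the completed local rings at nodes of $X_0$ are pulled back from $\mcy$ does not by itself give an open neighborhood with nodal fibers. Even that formal statement needs compatible choices over all $R_j$, since $\mcy$ is only miniversal. What you need is an \'etale neighborhood of each node that is \'etale over $\mcy\times_B T$. That requires super Artin approximation over $R$, as in the proof of Lemma \ref{lemma: over Tprime}.

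\emph{A harmless slip.} The fibers of the NS family need not carry nodes of the same type. Over $t_1=0\neq t_2$ the fiber is $k[x,y,\theta_1]/(xy)$, a Ramond node. This does not matter, since nodal supercurves allow both types.

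\emph{Your fallback.} Reading nodality off the bosonic reduction is essentially the paper's route, with two caveats. Classical openness of the nodal locus needs $\xbos$ to be flat over $\tbos$, which is exactly what the paper extracts from effectivity of $\ov{\mm}_g$. Recovering the super nodes from $\xbos$ also needs the CM property, to make $\jj$ torsion-free on the fibers as in Proposition \ref{prop: torsion free sheaves of rank 1}.
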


\begin{proof} Using the notation of Theorem \ref{theorem: introduction super artin statement}, let $(X_j/T_j, X_i \to X_j)$ in $\varprojlim \ovscg(T_j)$ be a projective system of stable supercurves over $(T_j, T_i \to T_j)$, and let $\mclbull=(\mcl_j,  \mcl_j \to \mcl_i)$ be a projective system of strongly relatively very ample line bundles satisfying $R^1 \pist \mcl_{\bullet} =0$ as in Lemma \ref{lemma: proj system of line on stable}. 

Let $\mce_j= \pist \mcl_j$, and for each $j \ge i$, define the transition map $\mce_j \to \mce_i$ to be the composition of the adjunction map $\mce_j \to \pr_{ij*} \prijpull \mce_j = \prijpull \mce_j$ with the base change morphism $\prijpull \mce_j \to \mce_i$. 
Since $R^1 \pist \mcl_j =0$, the base change morphism is an isomorphism, and so the projective system $\mcebull=(\mce_j, \mce_j \to \mce_i)$ is a coherent system of $\mc{O}_{T_j}$ modules on the formal affine superscheme $\mct=\on{Spf}(R)$. Since $\mct$ is affine, there exists a unique coherent sheaf $\mce$ on $T=\Spec(R)$ such that $\prjpull \mce \cong \mce_j$. 
We can now repeat the arguments of  Proposition \ref{proposition: effectivity supercurves}, to find a superscheme $X \subset \pp(\mce)$, flat and proper over $T$, such that $\prjpull X \cong X_j$. 

We claim $\xbos$ is an ordinary family of stable curves over $\tbos$. Indeed, the system $(X_j/T_j)$ reduces to a compatible system of ordinary stable curves $C_j:=(X_j)_{\bos}$ over $(T_j)_{\bos}$, and
\[
(X_{\bos}\times_{T_{\bos}} (T_j)_{\bos}) \cong (X_j)_{\bos}=C_j.
\]
The system $(C_j/T_j)$ is an object in $\varprojlim \ov{\mm}_g(T_j)$, where $\ov{\mm}_g$ is the moduli of stable curves. This is famously an algebraic stack (in fact DM), and is therefore effective. Thus, there exists a unique stable curve $C$ over $T$ such that $\prjpull C \cong C_j$. Since $X_{\bos}$ and $C$ are both isomorphic to $C_j$ after pullback by $\prj$, $X_{\bos} \cong C$ by uniqueness. 

\end{proof}

  \begin{prop} \label{proposition: remaining conditions for stable supercurves}
  The superstack $\ovscg$ satisfies condition (A5-A8) of Theorem \ref{theorem: introduction super artin statement}.
    
\end{prop}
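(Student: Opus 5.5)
The plan is to treat the four conditions separately, reusing the arguments for smooth supercurves (Propositions \ref{prop: supercurves satisfy coherent obstruction theory, condition afive}--\ref{prop: supercurves satisfy aeight}). The only new input is the short exact sequence \eqref{ses of ss} of Proposition \ref{deformations of stabel supercurves}. It expresses $D_X(M)=\Def_{X/T}(T')$ as an extension of $H^0(X,\mct^1_{X/T}\otimes_A M)^+$ by $H^1(X,\mct_{X/T}\otimes_A M)^+$. In each case I would prove the required property for the two outer terms and then pass it to the middle term by the five lemma. To make this possible, I would first check that \eqref{ses of ss} is functorial in $M$ and compatible with base change. This means the maps in (A6), (A7) and (A8) should fit into morphisms of short exact sequences, whose rows are the sequences for the source and target.

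For (A5): Proposition \ref{deformations of stabel supercurves} shows that deformations over square-zero extensions are unobstructed. The obstruction module $O_X(M)$ is therefore zero, which is trivially coherent. This also gives the injectivity of \eqref{construct ob} in (A6) for free.

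For the $H^1$-terms in (A6), (A7) and (A8), I would repeat the arguments verbatim with the coherent sheaf $\mct_{X/T}$; Remark \ref{remark about freeness} shows that only coherence is needed. For (A6), $\mct_{X_0/T_0}$ is generically flat over the integral $S$. Upper semicontinuity plus super Grauert then give a dense open $V$ on which base change holds for $H^1$. For (A7), Čech cohomology commutes with the flat base change $A_0\to\wh{A_0}$. For (A8), flat base change along the étale $e$, together with $\mct_{Y/B}\cong \mct_{X/A}\otimes_A B$, does the job.

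For the $H^0$-terms, I would use the explicit description of $\mct^1$ from Appendix \ref{computation of ti for the nodes} (cited in Proposition \ref{prop: schlessinger for stable supercurves}). That description gives $\mct^1_{X/T}\cong \mc{O}_{\Sigma_{\mathrm{NS}}}^{2|2}\oplus\mc{O}_{\Sigma_{\mathrm R}}^{1|1}$, compatibly with base change. The sheaf $\mct^1_{X/T}$ is supported on the singular locus $\Sigma$, which is finite over $T$. Hence $H^0(X,\mct^1\otimes M)=\Gamma(\Sigma,\mc{O}_\Sigma^{p|q}\otimes_A M)$ is a module over a finite $A$-algebra. Its formation commutes with any flat base change, so (A7) and (A8) follow, and also with arbitrary base change. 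In particular, for (A6) I would shrink $V$ further so that $\Sigma\times_T V\to V$ is flat. On that locus, $\Gamma(\Sigma_V,\mc{O}_{\Sigma_V})\otimes k(t)\to\Gamma(\Sigma_t,\mc{O}_{\Sigma_t})$ is an isomorphism, because $\Sigma_V\to V$ is finite. Finally, the five lemma on the morphism of sequences \eqref{ses of ss} gives the required bijections for $D$.

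I expect the main obstacle to be the naturality of \eqref{ses of ss} itself. The sequence depends on a chosen base deformation $X_1'$, and one needs the right-hand map $\Def\to H^0(\mct^1\otimes M)$ to commute with base change and completion. I would handle this via the local-to-global description. The map $\Def_{X/T}(T')\to H^0(\mct^1\otimes M)$ records the local deformation classes at the nodes, using the miniversal families of Lemma \ref{lemma: miniversal node} and Lemma \ref{lemma: over Tprime}. These classes are manifestly compatible with pullback, because the local models $\mcy\to\Spec(B)$ pull back. The kernel is the locally trivial deformations, which are classified by $H^1(\mct\otimes M)$ by Theorem \ref{from Hart a}, and these are also compatible with pullback.
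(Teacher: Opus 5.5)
Your proposal is correct and follows essentially the paper's route: (A5) from unobstructedness, then (A6)--(A8) by reducing via \eqref{ses of ss} to the two outer terms and rerunning the smooth-case arguments, which need only coherence (Remark \ref{remark about freeness}). You are more careful than the paper on two points: the naturality of \eqref{ses of ss}, and the $H^0(X,\mct^1_{X/T}\otimes_A M)^+$ term, which you handle via the finiteness of $\Sigma$ over $T$ rather than a blanket appeal to coherence (finiteness alone already gives compatibility with arbitrary base change, so shrinking $V$ to make $\Sigma_V\to V$ flat is harmless but unnecessary).
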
 
\begin{proof} 
Since there are no obstructions to deformation a stable supercurve over square-zero extensions, there are no non-zero obstructions modules, so (A5) holds automatically.

The deformation functor $\Def_{X/T}(T')$ fits into the exact sequence
\[
0 \to H^1(X,\mct_{X/T}\otimes_{A}M)^+ \to \Def_{X/T}(T') \to H^0(X,\mct^{1}_{X/T}\otimes_{A}M)^+ \to 0,
\]
so to verify conditions (A6)--(A8) it suffices to check them for the two outer modules.

For (A7) and (A8) this follows immediately from the arguments of Propositions \ref{prop: supercurves satisfy aseven, comp with completion} and \ref{prop: supercurves satisfy aeight} applied to the coherent sheaves $\mct_{X/T}$ and $\mct^1_{X/T}$ on the proper morphism $X \to T$, since those proofs only use coherence of the deformation sheaves, and flat and properness of $X$ over $T$.

For (A6) we apply the argument of Proposition \ref{prop: supercurves satisfy aeight}. The proof there uses that the deformation module can be written as $H^1(X_0,\mc{F}\otimes M)^+$ for a sheaf $\mc{F}$ on $X_0$ that is flat over $T_0$. For stable supercurves, $\mct_{X_0/T_0}$ and $\mct^1_{X_0/T_0}$ are not necessarily flat over $T_0$, but they are coherent. This is enough to apply the same arguments, see Remark \ref{remark about freeness}. 
\end{proof}

\section{Stable super Riemann surfaces} \label{section: stable super Riemann surfaces}

We now recall several properties of smooth super Riemann surfaces that will be relevant for the definition of a stable super Riemann surface.

Let $(X/T,\dd)$ be a \emph{smooth} super Riemann surface. Applying $\Ber$ to the short exact sequence
\begin{equation}
\label{dual of ses seq}
0 \to \dd^{-2} \to \Omega_{X/T}^1 \to \dd^{-1} \to 0
\end{equation}
gives a canonical isomorphism
\[
\duaxt := \Ber(\omegaxt) \cong \dd^{-1}.
\]

In particular, we find that the superconformal structure on $X$ induces a surjective $\ox$-linear morphism $\omegaxt \to \duaxt$, and thus also a grading-preserving $\ot$-linear derivation
\[
\delta: \ox \to \duaxt.
\]

A fundamental fact about super Riemann surfaces is the existence of \'etale local coordinates $(z, \theta)$ on $X$, called \emph{superconformal coordinates}, in which $\dd$ is locally generated by the odd vector field: 
\begin{equation} \label{standard form} D_{\theta} = \frac{\partial}{\partial \theta} + \theta \frac{\partial}{\partial z}.\end{equation}
In these superconformal coordinates $(z,\theta)$, the derivation $\delta$ takes the form
\begin{equation}
\label{scf der}
\delta(f)=D_{\theta}(f)[dz | d\theta].
\end{equation}

Conversely, a surjective $\ot$-linear grading-preserving derivation $\delta:\ox\to\duaxt$ arises from a superconformal structure on $X$ if and only if the following conditions hold.

\begin{itemize}

\item[(1)]
$\dd=\ker(\delta)^{\perp}\subset \mct_{X/T}$ is maximally non-integrable in the sense that the composition
\[
\dd\otimes\dd \xrightarrow{\bracket} \mct_{X/T} \to \mct_{X/T}/\dd
\]
is an $\ox$-linear isomorphism.

\item[(2)]
Locally,
\[
\delta(f)=D(f)[D^2/D]^{-1},
\]
where $D$ and $D^2$ are local generators of $\dd$ and $\dd\otimestwo$, respectively, and
\[
[D^2/D]^{-1}:=(D^{-1}\otimes D^2)^{-1}=D\otimes D^{-2}
\]
is a local generator of $\duaxt$ under the identification $\duaxt\cong\dd\otimes\dd^{-2}=\dd^{-1}$.

\end{itemize}

If $\delta$ arises from a superconformal structure, then there exist \'etale local coordinates $(z,\theta)$ on $X$ such that $\delta$ is locally of the form \eqref{scf der} with $D_{\theta}$ replaced by $D$.

\begin{definition}\label{definition: stable super Riemann surface}
A family of \emph{stable super Riemann surfaces} over a superscheme $T$ consists of a pair
\[
(X \to T,\; \delta\!: \ox \to \omega_{X/T}),
\]
where $X \to T$ is a family of stable supercurves and $\delta$ is a grading-preserving $\ot$-linear derivation such that

\begin{enumerate}

\item[(i)]
the restriction of $\delta$ to the smooth locus $U \subset X$ is superconformal;

\item[(ii)]
for every geometric point $\ovt \to T$, the induced morphism
\begin{equation}
\label{geo fiber odd}
\delta^- \colon \mc{O}_{X_t}^- \lgr \omega_{X_t}^-
\end{equation}
is an $\mc{O}_{C_t}$-linear isomorphism.

\end{enumerate}

We refer to $\delta$ as a \emph{quasi superconformal structure} (qscf structure) on $X/T$. An isomorphism of stable super Riemann surfaces $X/T \to Y/T$ is an isomorphism $\phi\!: X \to Y$ of stable supercurves over $T$ such that $\phipull(\delta_Y)=\delta_X$. Such an isomorphism is called a \emph{superconformal isomorphism}.
\end{definition}

A \emph{quasi spin curve} $(C/T, \mcl, b)$ over an ordinary scheme $T$, as in Definition 2.1.2 of \cite{jarvis1998torsion}, is an ordinary family of stable curves $C/T$ together with a pair $(\mcl,b)$ where $\mcl$ is a torsion-free sheaf of rank one and degree $g-1$, and
\[
b:\mcl \otimestwo \to \omega_C
\]
is a morphism of $\oc$-modules such that $b$ is an isomorphism on the open subset where $\mcl$ is locally free, and for each singular point $q$ of $C$ where $\mcl$ is not locally free, the length of the cokernel of $b$ at $q$ is one.
These conditions on $b$ are equivalent to requiring that $b$ induce an isomorphism
\[
\mcl \lgr \Hom_{\oc}(\mcl, \omega_C).
\]

\begin{prop} \label{prop: stable srs and quasi spin curves} There is a one-to-one correspondence between families of stable super Riemann surfaces over an ordinary base scheme $T$ and families of  quasi spin curves over $T$.
    
\end{prop}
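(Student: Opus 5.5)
The plan is to build on Proposition \ref{prop: torsion free sheaves of rank 1}, which identifies stable supercurves $X/T$ over a purely bosonic $T$ with pairs $(C/T,L)$, where $C=\xbos$, $L=\Pi\jj$ is torsion-free of rank one, and $\ox=\oc\oplus\Pi L$. It then remains to match qscf structures $\delta$ on $X$ with morphisms $b:L^{\otimes 2}\to\omega_C$ having the quasi spin properties. For this I first compute $\duaxt$ when $T$ is bosonic. Since $\jj^2=0$, the parity decomposition of $\duaxt$ over $\oc$ should read $\duaxt^{+}\cong \omega_C\otimes\jj$-type and $\duaxt^{-}\cong$ a rank one torsion-free sheaf. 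On the smooth locus the formula $\duaxt|_C\cong\omega_{C/T}\otimes\jj^{-1}$ from \eqref{degree formula} gives $\duaxt^-\cong \Pi(\omega_C\otimes L^{-1})$. At the nodes I would read the decomposition off from the local models, using the dualizing sheaf of the Cohen--Macaulay algebras. The natural guess is $\duaxt^-\cong\Pi\,\undhom_{\oc}(L,\omega_C)$.

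Given $\delta$, its even and odd parts are fixed by parity. Because $\delta$ is $\ot$-linear and $T$ is bosonic, the odd part $\delta^-:\ox^-=\Pi L\to\duaxt^-\cong\Pi\,\undhom(L,\omega_C)$ is $\oc$-linear. Condition (ii) makes it an isomorphism $L\lgr\undhom_{\oc}(L,\omega_C)$, and by adjunction this is a map $b:L^{\otimes2}\to\omega_C$. As recalled before the statement, $b$ inducing such an isomorphism is exactly the quasi spin condition. The degree $g-1$ then follows from this self-duality, via Riemann--Roch or Serre duality for torsion-free sheaves: $\chi(L)=-\chi(\undhom(L,\omega_C))$ forces $\chi(L)=0$. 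On the smooth locus this recovers Lemma \ref{lemma: correspondence with spin curves}.

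For the converse, start from $(C,L,b)$. Form $X$ by Proposition \ref{prop: torsion free sheaves of rank 1} and set $\delta^-:=\Pi$ of the isomorphism $L\to\undhom(L,\omega_C)$ induced by $b$. The even part $\delta^+$ is forced by the Leibniz rule, since $\delta(f\theta)=\delta(f)\theta+f\delta(\theta)$ and $\delta$ on $\oc$ has to be compatible with $d:\oc\to\omega_C$. On the smooth locus this gives the superconformal structure from Lemma \ref{lemma: correspondence with spin curves}, so (i) holds. For (ii) I would note that fiberwise the identification $\delta^-$ commutes with base change, because $L$ and $\omega_C$ are $T$-flat and the local models are the same.

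I expect the main obstacle to be the local analysis at NS nodes, where $L\cong(x)\oplus(y)$ is not locally free. There I must check that $\duaxt^-$ really is $\Pi\,\undhom(L,\omega_C)$, and that some $\delta$ satisfying the Leibniz rule with odd part an isomorphism exists. I would do both by direct computation in $k[[x,y,\theta_1,\theta_2]]/(xy,x\theta_2,y\theta_1,\theta_1\theta_2)$, writing $\omega$ via residues on the normalization, $dx/x-dy/y$, with $\theta_1,\theta_2$ paired against $dx,dy$. At Ramond nodes $L$ is free and the computation reduces to the smooth case with $\omega_C$ in place of $\Omega^1$. Finally, one verifies that the two constructions are mutually inverse and respect isomorphisms. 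This is formal once the local identification is fixed.
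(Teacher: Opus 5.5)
Your outline is sound and covers more than the paper's proof. The paper only constructs $b$ from $\delta$. It leaves implicit the converse, the degree $g-1$, and bijectivity; your Serre-duality argument $\chi(L)=-\chi(\undhom_{\oc}(L,\omega_{C/T}))$ is a good addition.

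The real difference is how $\duaxt^-$ is identified. The paper does no local analysis at the nodes. The inclusion $\oc\hookrightarrow\ox$ gives a finite morphism $p\colon X\to C$, and $\duaxt=p^{!}\omega_{C/T}=\undhom_{\oc}(\ox,\omega_{C/T})$. So, globally and canonically, $\duaxt^+=\omega_{C/T}$ and $\duaxt^-=\Pi\,\undhom_{\oc}(L,\omega_{C/T})$. Relative Cohen--Macaulayness then gives $T$-flatness and compatibility with base change. That is exactly what turns condition (ii), which concerns only geometric fibres, into a global isomorphism $L\to\undhom_{\oc}(L,\omega_{C/T})$.

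Your forward direction skips this fibre-to-global step. In the converse, base change for $\undhom_{\oc}(L,\omega_{C/T})$ does not follow merely from $T$-flatness of $L$ and $\omega_{C/T}$. It follows from the $p^{!}$ description, or from vanishing of $\mathcal{E}xt^1$ on the fibres. Your residue computation at the nodes can be made to work: regard both sheaves inside $j_*$ of their common restriction to the smooth locus, so the local checks glue. But it is precisely the ``main obstacle'' that $p^{!}$ removes.

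Two statements need correcting.

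First, $\duaxt^+$ is $\omega_{C/T}$, not of ``$\omega_C\otimes\jj$-type''. On the smooth locus $\duaxt$ has an odd local generator, so $\duaxt^+=\jj\cdot\duaxt^-\cong\Pi L\otimes\Pi(\omega_C\otimes L^{-1})\cong\omega_C$. Your converse tacitly uses this anyway.

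Second, $\delta^+$ is not forced by the Leibniz rule. Since $\jj\cdot\duaxt^+=0$, the term $\delta(f)\theta$ in your identity vanishes, leaving only $\delta(f\theta)=f\,\delta(\theta)$. This is the actual reason $\delta^-$ is $\oc$-linear (not $\ot$-linearity), and it says nothing about $\delta^+$. Leibniz on $\oc$ only gives $\delta^+=\lambda\circ d$ for some $\oc$-linear $\lambda\colon\Omega_{C/T}\to\omega_{C/T}$. What fixes $\lambda$ is condition (i). In superconformal coordinates $\delta(f)=\theta\,\partial_z f\,[dz\,|\,d\theta]$ for $f\in\oc$, so $\delta^+$ is $d$ on the smooth locus under $\duaxt^+\cong\omega_{C/T}$. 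Since $\omega_{C/T}$ has no nonzero local sections supported on the nodes, this holds everywhere. You need this for $\delta\mapsto b$ to be injective, so it must be argued this way.

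Finally, in the converse check Leibniz on products $\theta_1\theta_2=0$ with $\theta_i\in\jj$. This amounts to symmetry of $b$, which is automatic because $\omega_{C/T}$ is torsion-free. With these repairs your argument goes through.
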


\begin{proof} Let $(X/T, \delta)$ be a stable super Riemann surface over an ordinary scheme $T$. By definition, $C=\xbos$ is an ordinary family of stable curves over $T$, and we showed in Proposition \ref{prop: torsion free sheaves of rank 1}, that $\jj \subset \ox$ is a torsion-free sheaf of $\oc$-modules of rank $(0|1)$.  We therefore set
\[
\mcl := \Pi \jj .
\]
We are left to show that $(X/T,\delta)$ determines a morphism
\[
b:\mcl \otimestwo \to \omega_{C/T}
\]
satisfying the conditions of a quasi spin structure. 

The proof is from \cite{deligneletter}.
Let $p: X\to C$ denote the morphism onto $C$ induced by the inclusion $\oc\to\ox= \oc \oplus \Pi L$. Then
\[
\duaxt := p^{!}\omega_{C/T}=\Hom_{\oc}(\ox,\omega_{C/T}).
\]

The assumption that $X/T$ is relatively Cohen--Macaulay, implies $\duaxt$ is flat over $T$, with formation compatible with base change. We have: 
\begin{align*}
\duaxt^+ &= \omega_{C/T}, \\
\duaxt^- &= \Hom_{\oc}(\mcl,\omega_{C/T}).
\end{align*}

The derivation $\delta$ therefore induces morphisms
\begin{align*}
\tildelt^+ &: \oc \to \omega_{C/T} \qquad\text{(a derivation)},\\
\tildelt^- &: \mcl \to \Hom_{\oc}(\mcl,\omega_{C/T}) \qquad\text{($\oc$-linear)} .
\end{align*}

Since $\Pi \mcl = \ox \minus$, the map $\tildelt^-$ is an isomorphism on every geometric fiber. Since both sides are coherent $\oc$-modules compatible with base change, it follows that $\tildelt^-$ is an isomorphism globally. This induces the desired morphism
\[
b:\mcl \otimestwo \to \omega_{C/T}.
\]

\end{proof}

\subsection{Superconformal coordinate charts}

Let $X/T$ be a stable super Riemann surface. A \emph{superconformal deformation} of $X/T$ over a square-zero extension $T \hra T'$ is a stable super Riemann surface $X'/T'$ with a superconformal identification
\[ X \cong X' \times_{T'} T\]
over $T$.

By Theorem 3.1 of \cite{felder2020moduli} (following \cite{deligneletter}), the Ramond and NS models of the two kinds of nodes on a stable supercurve  admit miniversal superconformal deformations. 
The base of the miniversal superconformal deformation for both models is $\Spec k[t]$, and the total space for the Ramond model is 
\[ \mcy=\Spec k[t][x,y, \theta]/(xy-t).\]
For the NS model the total space is 
\[ \mcy=\Spec k[x,y, \theta_1, \theta_2]/(xy-t^2, x \theta_2 - t \theta_1, y \theta_1 - t \theta_2, \theta_1 \theta_2).\]

The canonical qscf structure on $\mcy$ are described in \cite{felder2020moduli}. We provide them here for reference.  
\begin{itemize}

\item \emph{Ramond model.}The node in $\mcy$ is the point $y$ associated to the maximal ideal $(x,y, \theta)$. The complement of the node $y\in \mcy$ is the union of two branches $U_1$ and $U_2$, where $x$ is invertible on $U_1$ and $y$ on $U_2$. The sheaf $\omega_{\mcy/k[t]}$ is free with basis $b$ given by
\[
b=
\begin{cases}
\left[\dfrac{dx}{x}\,\middle|\, d\theta\right] & \text{on } U_1,\\[1em]
-\left[\dfrac{dy}{y}\,\middle|\, d\theta\right] & \text{on } U_2 .
\end{cases}
\]

There is a canonical qscf structure $\delta$ on $\mcy$ given by: 
\[
\delta(f)=
\begin{cases}
(\partial_\theta+\theta x\partial_x)(f)\cdot b & \text{on } U_1,\\
(\partial_\theta-\theta y\partial_y)(f)\cdot b & \text{on } U_2 .
\end{cases}
\]

\item \emph{NS model.}  The node in $\mcy$ is the point $y$ associated to the maximal ideal $(x,y, \theta_1, \theta_2)$. 
The complement of the node is again the union of two branches $U_1$ and $U_2$, on which $x$ and $y$ are respectively invertible.

The generators $\left[dx\mid d\theta_1\right]$ and $\left[dy\mid d\theta_2\right]$ of $\omega_{U_1}$ and $\omega_{U_2}$ extend to sections of $\omega_{Y/A}$ (vanishing on the opposite branch), but they do not generate $\omega_{\mcy/k[t]}$. There is an additional section
\[
s_0=
\begin{cases}
\dfrac{\theta_1}{x}\left[dx\mid d\theta_1\right] & \text{on } U_1,\\[1em]
-\dfrac{\theta_2}{y}\left[dy\mid d\theta_2\right] & \text{on } U_2 .
\end{cases}
\]

There is a canonical qscf structure $\delta$ on $\mcy$ given by: 
\[
\delta(f)=
\begin{cases}
(\partial_{\theta_1}+\theta_1\partial_x)(f)\cdot b & \text{on } U_1,\\
(\partial_{\theta_2}+\theta_2\partial_y)(f)\cdot b & \text{on } U_2 .
\end{cases}
\]
\end{itemize}

The next theorem is really a corollary of the existence of miniversal superconformal deformations of the two kinds of models, the proof is entirely analogous to that of Lemma \ref{lemma: over Tprime}. 

\begin{prop}
    \label{etale structure theorem} Let $(X/S, \delta)$ be a stable super Riemann surface, let $x \in X$ be a node, and let $s \in S$ be its image.  Then there exists an \'etale neighborhood $S' \to S$ of $s$, a morphism
    \[ f: S' \to \Spec k[t],\]
    and an \'etale neighborhood $X' \to X \times_S S'$ of $x$, and a superconformal isomorphism
    \[ X' \cong \mcy \times_{k[t]} S'\]
    where $\mcy \to \Spec k[t]$ is the miniversal superconformal deformation corresponding to the type of node $x$, and $\mcy  \times_{k[t]} S'$ has the induced qscf structure. In particular, by the same reasoning as Lemma \ref{lemma: over Tprime}, there are no obstructions to deforming a stable super Riemann surface over a square--zero extension $S \hra S'$.  
 
    \end{prop}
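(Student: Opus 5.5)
The plan is to mirror the proof of Lemma \ref{lemma: over Tprime}, replacing the miniversal deformation of the node by the miniversal superconformal deformation $\mcy\to\Spec k[t]$ of \cite[Theorem 3.1]{felder2020moduli}, and keeping track of the qscf structure at each step. The statement is \'etale local on $S$ near $s$. So first I replace $S$ by $\Spec(R^h)$, with $R^h=\mc{O}^h_{S,s}$ the henselization. Using the colimit-preserving property (the analogue of Proposition \ref{prop: colimit preserving for stable supercurves} for stable super Riemann surfaces, whose proof is the same after carrying the derivation $\delta$ along, which is finitely presented data), I reduce to $S$ of finite type over $k$.

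\textbf{Step 1 (formal chart).} Over $\widehat{R^h}$, the restriction of $(X,\delta)$ to the formal neighborhood of $x$ is a superconformal deformation of the standard Ramond or NS model. By miniversality, this gives a morphism $\widehat f:\Spec(\widehat{R^h})\to\Spec k[t]$ and a superconformal isomorphism between an \'etale neighborhood $\widehat X'$ of $x$ and $\mcy\times_{k[t]}\Spec(\widehat{R^h})$ carrying the induced qscf structures onto each other. This step needs miniversality in the strong form: every superconformal deformation over a complete local base is pulled back from $\mcy$ \emph{together with} its qscf structure. I would obtain this from the Artinian statement of \cite{felder2020moduli} by passing to the inverse limit over $R^h/\maxid^{n}$, choosing compatible maps to $k[t]$ inductively.

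\textbf{Step 2 (Artin approximation).} As in Lemma \ref{lemma: over Tprime}, $(R^h)^+\to(\widehat{R^h})^+$ is geometrically regular, so by \cite[Lemma 7.30]{faroogh2019existence} we have $\widehat{R^h}=\varinjlim R_i$ with each $R_i$ smooth over $R^h$. The map $k[t]\to\widehat{R^h}$, the \'etale neighborhood $\widehat X'$, the isomorphism with $\mcy$, and the equality of derivations $\phi^*\delta_{\mcy}=\delta_X$ are all finitely presented data. Hence they descend to some $R_i$. I then take a section $\sigma:R_i\to R^h$, which exists because $R^h$ is henselian and $R_i$ is smooth, and pull back along it. This yields an \'etale neighborhood $X'$ over $\Spec R^h$ and a superconformal isomorphism $X'\cong\mcy\times_{k[t]}\Spec R^h$. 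Finally, by the definition of henselization as a limit of \'etale neighborhoods, I spread this out to an honest \'etale neighborhood $S'\to S$.

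\textbf{Step 3 (unobstructedness).} Given a square-zero extension $S\hra S''$, I lift each \'etale chart as in Lemma \ref{lemma: over Tprime}, taking any $f'$ that lifts $f$; this is possible since $k[t]$ is a polynomial ring. On the smooth locus I use the unobstructedness of Proposition \ref{prop: defs of srs}. The local lifts glue up to an obstruction class in $H^2$ of the sheaf of infinitesimal superconformal automorphisms tensored with $M$. After reduction to $T_0$, as in Proposition \ref{deformations of stabel supercurves}, this is a coherent sheaf on a one-dimensional bosonic curve, so the $H^2$ vanishes.

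I expect the main obstacle to be Step 2, specifically ensuring that the superconformal compatibility descends and survives the section $\sigma$. I would handle this by encoding the condition $\phi^*\delta_{\mcy}=\delta_X$ as the vanishing of a map of coherent sheaves. That vanishing is a finitely presented condition, so it holds at some finite stage $R_i$, and it is preserved by any base change, in particular by $\sigma$.
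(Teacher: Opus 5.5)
Your proposal takes essentially the same route as the paper. The paper's proof reduces to $S$ of finite type via the colimit-preserving property and then repeats the argument of Lemma \ref{lemma: over Tprime}: henselize, use miniversality over the completion, approximate by smooth $R^h$-algebras via \cite[Lemma 7.30]{faroogh2019existence}, take a henselian section, and spread out. Your version is more explicit than the paper's in two places: you carry the qscf structure through each step as finitely presented data, and you spell out the global $H^2$ gluing argument for unobstructedness.
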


    \begin{proof} We prove in Section \ref{section: moduli of stable super Riemann surfaces is a superstack} that the moduli of stable super Riemann surfaces is colimit preserving. This allows us to assume that the base $S$ is finite type over $k$, and so we can repeat the arguments of Lemma \ref{lemma: over Tprime}.
        
    \end{proof}

\subsection{Deformation theory of stable super Riemann surfaces}

In this section we describe the deformation theory of stable super
Riemann surfaces. Conceptually, it is obtained from the
deformation theory of the underlying stable supercurve by imposing
a superconformal condition along the nodes. 

Our main result is that this deformation theory is governed by  certain \emph{coherent} subsheaves
\[
\mc{A}^i_{X/T} \subset \mct^i_{X/T}
\]
of the sheaves controlling the deformation theory of the underlying
stable supercurve.

The following theorem is a direct consequence of the definition of $\mc{A}^1_{X/T}$ together with the $T^i$–formalism for deformation theory applied to these subsheaves.
\begin{theorem}\label{scf deformations of stable srs}
Let $T\hra T'$ be a square-zero extension with kernel $M$, and let
$(X/T,\delta)$ be a stable super Riemann surface. 
Upon fixing a superconformal deformation $(X_1'/T',\delta_1')$ of $(X/T, \delta)$---one always exists by Proposition \ref{etale structure theorem} --- we have a short exact sequence
\begin{equation}\label{ses for s srs}
0 \to H^1(X,\mc{A}_{X/T}\otimes_A M)^+
\to \sDef_{X/T}(T')
\to H^0(X,\mc{A}_{X/T}^1\otimes_A M)^+
\to 0
\end{equation}
where $\sDef_{X/T}(T')$ denotes the set of superconformal deformation
classes.
\end{theorem}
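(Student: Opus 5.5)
The plan mimics the classical local-to-global argument for deformations of schemes (Theorem \ref{from Hart a}), run inside the sheaf of superconformal automorphisms. I would proceed in four steps.

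\textbf{Step 1: the sheaf of local superconformal deformations.} Fix the superconformal deformation $(X_1'/T',\delta_1')$ supplied by Proposition \ref{etale structure theorem}. On the \'etale site of $X$, consider the sheaf $\mathcal{S}$ sending an \'etale $V \to X$ to the set of isomorphism classes of superconformal deformations of $(V,\delta|_V)$ over $T'$.

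\begin{itemize}
\item On the smooth locus $U$, every superconformal deformation is locally isomorphic to the trivial one $\Spec A'[z,\theta]$ with $D_\theta$. This is the argument in the proof of Proposition \ref{prop: defs of srs}.
\item Near a node, Proposition \ref{etale structure theorem} identifies every superconformal deformation \'etale locally with a pullback $\mcy\times_{k[t]}S''$ along some $f'\equiv f \bmod M$.
\item Two such pullbacks differ exactly by the choice of $f'-f$. This choice is a section of $f^*(\Omega_{k[t]})^\vee\otimes M$ restricted to the node. By definition (Section \ref{definition of aone}), $\mca_{X/T}\otimes_A M$ is cut out inside $\mct^1_{X/T}\otimes_A M$ by exactly these directions.
\item Hence taking the difference with $X_1'$ gives an identification $\mathcal{S}\cong \mca_{X/T}\otimes_A M$. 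This is a sheaf supported on the nodes.
\end{itemize}

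\textbf{Step 2: automorphisms of local deformations.} Automorphisms of a local superconformal deformation restricting to the identity mod $M$ are superconformal vector fields. Concretely, they are sections of $\mc{A}_{X/T}\otimes_A M$, where $\mc{A}\subset\mct$ is the subsheaf preserving $\delta$. The reason is that an infinitesimal automorphism $1+\epsilon v$ preserves $\delta'$ iff $v$ preserves $\delta$. On the smooth locus this is the statement already used in Proposition \ref{prop: defs of srs}. At the nodes I would check it directly on the Ramond and NS models with their explicit $\delta$.

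\textbf{Step 3: global classification.} Given a global deformation, its local classes give a section of $H^0(X,\mca_{X/T}\otimes_A M)^+$. This defines the map $\sDef_{X/T}(T')\to H^0$.

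\begin{itemize}
\item \emph{Exactness in the middle.} Deformations with trivial local class are locally isomorphic to $X_1'$. Comparing the local isomorphisms on overlaps produces a \v{C}ech $1$-cocycle in $\mc{A}_{X/T}\otimes_A M$, well defined up to coboundaries. This identifies the fibre over $0$ with $H^1(X,\mc{A}_{X/T}\otimes_A M)^+$. The same cocycle argument shows that this group acts freely on $\sDef_{X/T}(T')$, which gives the injectivity on the left.
\item \emph{Surjectivity on the right.} Given a global section $\sigma$ of $\mca\otimes M$, choose local deformations realising $\sigma$ on an \'etale cover. Each node lies in one chart, so these local choices glue with the trivial deformation on $U$ along isomorphisms defined on the overlaps, which lie in the smooth locus. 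The obstruction to the cocycle condition lies in $H^2(X,\mc{A}_{X/T}\otimes_A M)^+$.
\item \emph{Vanishing of $H^2$.} Using \eqref{isomorphism over the reduction}, $H^2(X,\mc{A}_{X/T}\otimes_A M)\cong H^2(X_0,\mc{A}_{X_0/T_0}\otimes_{A_0}M)$. As in Proposition \ref{prop: defs of srs}, this splits into $H^2$ of coherent sheaves on the one-dimensional curve $C$, so it vanishes.
\end{itemize}

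\textbf{Step 4: the main obstacle.} The main obstacle is Step 1 together with Step 2 at the nodes. There one must verify that the local superconformal deformations correspond \emph{exactly} to $\mca\otimes M$, i.e.\ that the subsheaf $\mca\subset\mct^1$ defined by local computation matches the image of the miniversal base $\Spec k[t]$. One must also verify that the superconformal automorphisms of the node models are precisely $\mc{A}\otimes M$, with no extra automorphisms coming from the non-free $\omega$ at NS nodes. I would first handle the Ramond model, using the basis $b$ of $\omega_{\mcy/k[t]}$. I would then treat the NS model using the extra section $s_0$, computing the automorphisms via the branches $U_1,U_2$ and extending by Hartogs-type codimension arguments as in Proposition \ref{prop: stable supercurves are a superstack}.
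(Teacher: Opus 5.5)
Your plan is correct at the level of detail of the paper, but it is organized differently.

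\textbf{The paper's route.} The paper does not rerun a local-to-global argument in the superconformal category. It starts from the already established sequence for the underlying stable supercurve (Proposition \ref{deformations of stabel supercurves}). It then argues that the forgetful map $\sDef_{X/T}(T')\to\Def_{X/T}(T')$ is injective, because a qscf extension of $\delta$ is unique when it exists. Next it identifies the kernel of the composite $\sDef_{X/T}(T')\to H^0(X,\mct^1_{X/T}\otimes_A M)^+$ with $H^1(X,\mc{A}_{X/T}\otimes_A M)^+$. Finally it \emph{defines} $\mc{A}^1_{X/T}\subset\mct^1_{X/T}$ through Deligne's local conditions ($\alpha=0$ at Ramond nodes; $a_1=a_2$ and $\alpha_1=\alpha_2=0$ at NS nodes), precisely so that it cuts out the image.

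\textbf{Your route.} You instead build the superconformal $T^i$-formalism directly:
\begin{itemize}
\item local superconformal deformation classes form a torsor under $\mc{A}^1\otimes M$, which you derive from the miniversal superconformal base $\Spec k[t]$ rather than importing Deligne's criterion;
\item infinitesimal superconformal automorphisms form $\mc{A}\otimes M$;
\item surjectivity onto $H^0$ is proved by gluing together with the vanishing of $H^2(X,\mc{A}_{X/T}\otimes_A M)$.
\end{itemize}

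\textbf{What each buys.} The paper's route is shorter, since the $H^1$-part is inherited from the supercurve sequence. But it rests on a global uniqueness claim that is delicate. Already for a smooth super Riemann surface over $k$, the grading vector field (locally $\theta\partial_\theta$) moves $\dd$ on the trivial first-order deformation. So extensions of the superconformal structure to a fixed $X'$ are unique only up to infinitesimal automorphisms of $X'$. The paper's route also leaves the surjectivity onto $H^0(X,\mc{A}^1_{X/T}\otimes_A M)^+$ implicit. Your route needs only local statements and makes surjectivity explicit, at the price of the node computations you list in Step 4.

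\textbf{One step to tighten.} $\mc{A}$ is not an $\ox$-submodule of $\mct_{X/T}$, so the splitting argument of Proposition \ref{prop: defs of srs} does not apply to it verbatim. In the smooth case it is rescued by \eqref{good for a}. Near the nodes you need an analogous identification of $\mc{A}$ with a coherent sheaf, as sheaves of $\pi^{-1}\ot$-modules, before concluding that this $H^2$ vanishes.
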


We explain the structure of the proof. By Proposition \ref{deformations of stabel supercurves}, the underlying
supercurve $X/T$ has unobstructed deformations and there is an
exact sequence
\[
0 \to H^1(X,\mct_{X/T}\otimes_A M)^+
\to \Def_{X/T}(T')
\to H^0(X,\mct^1_{X/T}\otimes_A M)^+
\to 0 .
\]

A deformation $X'/T'$ of $X/T$ is superconformal if it admits a qscf
structure extending the one on $X$. Such a structure, if it exists,
is unique, so the forgetting map
\[
\sDef_{X/T}(T') \longrightarrow \Def_{X/T}(T'),\qquad
(X'/T',\delta') \mapsto X'/T'
\]
is injective.

Composing this inclusion with the right arrow in the sequence above
gives a map
\begin{equation}\label{map into T1}
\sDef_{X/T}(T') \longrightarrow
H^0(X,\mct^1_{X/T}\otimes_A M)^+ .
\end{equation}

The kernel consists of superconformal deformations locally
isomorphic to the fixed deformation $(X_1'/T',\delta_1')$,
and is naturally identified with
\[
H^1(X,\mc{A}_{X/T}\otimes_A M)^+ .
\]
Thus Theorem~\ref{scf deformations of stable srs} reduces to
constructing a subsheaf
\[
\mc{A}_{X/T}^1 \subset \mct_{X/T}^1
\]
whose global sections describe the image of
\eqref{map into T1}.

\subsection{Definition of $\mc{A}^1$} \label{definition of aone}

Recall from the supercurve deformation theory that the sheaf
$\mct_{X/T}^1$ controls local deformations of the nodes of $X/T$
and is supported on the singular locus $\Sigma\subset X$.
Étale locally along $\Sigma$ the curve is modeled on the superconformal Ramond
and NS node charts described in the previous section. Let
\[
Y=\Spec(R) \xrightarrow{\phi} \Sigma \subset X
\]
be one such chart.

Since $\mct_{X/T}^1$ is compatible with \'etale localization, for such a chart we have
\[
\Gamma(Y,\phi^*\mct_{X/T}^1)\cong T^1(R/A,R\otimes_A M).
\]

The appendix computes these modules explicitly. Writing
\[
N=M\otimes_A R/I,
\]
one obtains
\[
T^1(R/A,R\otimes_A M) \cong
\begin{cases}
N\oplus N\theta & \text{(Ramond node)},\\
N\theta_1\oplus N\theta_2\oplus N\theta_1\oplus N\theta_2
& \text{(NS node)} .
\end{cases}
\]

Thus deformations of the Ramond node are parametrized by pairs
$(a,\alpha)$, while deformations of the NS node are parametrized
by $(a_1,a_2,\alpha_1,\alpha_2)$.

By \cite{deligneletter}, such a deformation is superconformal precisely when
\[
\alpha=0 \qquad\text{(Ramond case)},
\]
and
\[
a_1=a_2,\qquad \alpha_1=\alpha_2=0
\qquad\text{(NS case)}.
\]

These conditions define kernels of linear maps.
For the Ramond node define
\[
\pi_2:T^1(R/A,R\otimes_A M)\to N,\qquad (a,\alpha)\mapsto\alpha
\]
and set
\[
A^1(R/A,P)=\ker(\pi_2).
\]

For the NS node define
\[
\sigma(\alpha_1, \alpha_2, a_1,a_2)
=(\alpha_1,\alpha_2,a_1-a_2,a_2-a_1)
\]
and set
\[
A^1(R/A,R\otimes_A M)=\ker(\sigma).
\]

In both cases $A^1(R/A,R\otimes_A M)^+$ parametrizes superconformal
node deformations.

Since $Y$ is affine these modules define coherent sheaves
$\mc{A}_{Y/T}^1$ satisfying
\[
\Gamma(Y,\mc{A}_{Y/T}^1)=A^1(R/A,R\otimes_A M).
\]
Moreover $R/I=\Gamma(Y,\phi^*\osig)$, so the maps above define
$\oy$--linear morphisms
\[
\phi^*\mct_{X/T}^1 \longrightarrow
M\otimes_A\phi^*\osig
\]
whose kernels are precisely $\mc{A}_{Y/T}^1$.

\begin{prop}
The subsheaves $\{\mc{A}_{Y/T}^1\subset\phi^*\mct_{X/T}^1\}$
agree on overlaps and therefore glue to a coherent subsheaf
\[
\mc{A}_{X/T}^1 \subset \mct_{X/T}^1 .
\]
\end{prop}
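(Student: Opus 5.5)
The approach is \'etale descent for subsheaves of the quasi-coherent sheaf $\mct^1_{X/T}$. Fix an \'etale cover of a neighbourhood of $\Sigma$ by superconformal node charts $\phi_\lambda:Y_\lambda=\Spec(R_\lambda)\to X$, each carrying a superconformal isomorphism to a model $\mcy\times_{k[t]}S'$ as in Proposition \ref{etale structure theorem}. On $Y_\lambda$ the subsheaf $\mc{A}^1_{Y_\lambda/T}$ is the kernel of the $\oo_{Y_\lambda}$-linear map $\pi_2$ (Ramond) or $\sigma$ (NS) into $M\otimes_A\phi_\lambda^*\osig$. To glue, it suffices to show that on each fibre product $Y_{\lambda\mu}=Y_\lambda\times_X Y_\mu$ the two pullbacks of $\mc{A}^1$ coincide as subsheaves of $\phi_{\lambda\mu}^*\mct^1_{X/T}$. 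The cocycle condition on triple overlaps is then automatic, because we are gluing subsheaves of a fixed sheaf rather than abstract sheaves. Descent for quasi-coherent subsheaves (\cite[Proposition A.29]{bruzzo2021supermoduli}) then produces $\mc{A}^1_{X/T}\subset\mct^1_{X/T}$. Away from $\Sigma$ we take $\mc{A}^1_{X/T}=0=\mct^1_{X/T}$.

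Coherence will follow once the subsheaf is shown to be \'etale locally a kernel of a map of coherent sheaves. Each $\mc{A}^1_{Y_\lambda/T}$ is such a kernel, since $\phi_\lambda^*\mct^1$ and $M\otimes\phi_\lambda^*\osig$ are coherent. Coherence then descends.

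The main step is the comparison on overlaps, and this is where I expect the real difficulty. The two charts give two coordinate systems near the same node, related by a superconformal isomorphism $\psi$ of \'etale neighbourhoods of the node in the standard model. The induced isomorphism $\psi^*$ on $T^1(R/A,R\otimes_A M)$ need not send the coordinate description $(a,\alpha)$ or $(a_1,a_2,\alpha_1,\alpha_2)$ to itself. I would avoid tracking $\psi$ in coordinates by using the intrinsic characterisation from \cite{deligneletter}. In each chart, $\mc{A}^1_{Y/T}\otimes M$ evaluated on sections is exactly the set of first-order node deformations that admit a qscf structure extending $\delta$. This property is invariant under superconformal isomorphism, because $\psi$ carries qscf deformations to qscf deformations, and by uniqueness of extensions it carries the extended structure as well. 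So $\psi^*$ preserves the superconformal locus inside $T^1$.

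The remaining issue is that this characterisation is a statement about sections over $A'$-points, that is, about the $+$ part for each $M$, while the claim is about subsheaves. To pass to subsheaves, I would vary $M$ over $\Pi$-shifts and arbitrary finite $A_0$-modules, so that both even and odd components are detected. Since $\mct^1$ and $\osig$ are supported on the finite scheme $\Sigma$, two subsheaves with the same sections for every test module $M$ and every affine \'etale $Y$ are equal. If this invariance argument turns out to be delicate, the fallback is a direct check. A superconformal automorphism of the model preserves the ideal of $\Sigma$ and fixes or swaps the two branches; it acts on $(a_1,a_2)$ by a unit or by a swap, and on the odd coordinates $\alpha$ linearly. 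In every case it preserves $\ker\pi_2$ and $\ker\sigma$, because $\ker\sigma$ is symmetric under exchanging the branches.
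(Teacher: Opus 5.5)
Your argument is correct, but it is organized differently from the paper's.

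\textbf{The paper's route.} The paper's proof of this statement is essentially formal. It uses the special cover of Lemma~\ref{etalenbhd}, in which each node chart meets $\Sigma$ in a single point. Near $\Sigma$ the only overlaps are therefore self-overlaps $Y\times_X Y$. On these, the two pullbacks of the defining map $\phi^*\mct^1_{X/T}\to M\otimes_A\phi^*\osig$ coincide because sources and targets are canonically identified. (Implicitly: since $Y\to X$ is \'etale, the diagonal is open and closed in $Y\times_X Y$, and near the preimage of $\Sigma$ the self-overlap is just the diagonal, where $p_1=p_2$.) The dependence on the choice of coordinates is handled separately in the next proposition. There $\on{sAut}(Y,\delta)$ acts on deformation classes by twisting the identification of the special fibre, and one checks that $A^1$ is stable under this action.

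\textbf{Your route.} You allow arbitrary \'etale covers and arbitrary overlaps. You therefore have to compare two genuinely different superconformal coordinate systems on $Y_{\lambda\mu}$. You do this with the intrinsic (Deligne) description of $A^1$ as the superconformal locus, which is the same mechanism as the paper's following proposition. You also add $\Pi$-shifts of $M$ to capture the odd part of the subsheaf, which the paper never addresses, and you give an explicit descent argument for coherence.

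\textbf{What each buys.} The paper's route is shorter but tied to the special cover. Its ``single point'' condition is literal only over a point base; over a positive-dimensional $T$ the locus $\Sigma$ is finite over $T$, not a point. Your route is independent of the cover and proves the gluing and the coordinate-independence at once.

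\textbf{One point to state precisely.} On $Y_{\lambda\mu}$ the two charts give two \'etale superconformal maps to the model. They are not in general related by a superconformal isomorphism $\psi$. What you actually need is that Deligne's criterion identifies $\ker\pi_2$ (resp.\ $\ker\sigma$) with the superconformal locus for \emph{every} \'etale superconformal chart. That includes $Y_{\lambda\mu}$ equipped with either of its two induced coordinate systems, using that $\delta$ pulls back to the same qscf structure on $Y_{\lambda\mu}$ along both projections. With that, both pullbacks equal the intrinsic subsheaf on $Y_{\lambda\mu}$, using flatness of \'etale maps to commute pullback with kernels.
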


\begin{proof}
The étale charts intersect $\Sigma$ in a single point, so their
only overlaps are self--overlaps. Since the defining morphisms
have canonically identified sources and targets, their pullbacks
along the two projections coincide, and the kernels therefore
agree on overlaps.
\end{proof}

\begin{prop}
The definition of $\mc{A}_{X/T}^1$ is independent of the choice of
superconformal coordinates.
\end{prop}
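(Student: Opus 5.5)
Two sets of superconformal coordinates at the same node differ by a local superconformal automorphism of the model $\mcy\times_{k[t]}S'$. The plan is therefore to show that the kernel $\mc{A}^1_{Y/T}$ is carried to itself under the induced action of any such automorphism on $T^1(R/A,R\otimes_A M)$. First I would reduce to a single node and a single chart. By Proposition \ref{etale structure theorem}, two superconformal \'etale charts $\phi:Y\to X$ and $\phi':Y'\to X$ around the same node $q$ admit a common refinement $Y''$. On $Y''$ the two pulled-back model coordinates are related by a superconformal isomorphism $\psi$ of (\'etale localizations of) the model, that is, $\psi^*\delta=\delta$. Since $\mct^1_{X/T}$ is compatible with \'etale localization, the two presentations of $\phi^*\mct^1_{X/T}$ on $Y''$ differ exactly by the automorphism $\psi_*$ of $T^1(R/A,R\otimes_A M)$. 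It then suffices to prove that $\psi_*$ preserves $\ker(\pi_2)$ in the Ramond case and $\ker(\sigma)$ in the NS case.

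Next I would use the characterization recalled above from \cite{deligneletter}: a class in $T^1(R/A,R\otimes_AM)^+$ lies in the kernel if and only if the corresponding first-order deformation of the node admits a qscf structure extending $\delta$. This is an intrinsic property, so it is preserved by $\psi$. Concretely, a deformation $Y'$ carrying an extension $\delta'$ of $\delta$ is transported by $\psi$ to the deformation $\psi_*Y'$, which carries $\psi_*\delta'$; this extends $\psi_*\delta=\delta$. Hence $\psi_*$ maps the superconformal classes to superconformal classes, and applying the same argument to $\psi^{-1}$ gives equality of the two subsets. To pass from even parts to the full $\z_2$-graded submodule, I would apply the argument with $M$ replaced by $M\otimes_k k^{1|1}$, using parity shift as in (A6). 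Together with $A$-linearity of $\psi_*$, this gives equality of kernels as submodules of $T^1(R/A,R\otimes_AM)$.

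I expect the main obstacle to be that the Deligne criterion ($\alpha=0$, resp.\ $a_1=a_2$ and $\alpha_1=\alpha_2=0$) is only stated in the standard coordinates. One must check that it really is equivalent to the existence of a qscf extension, rather than just an artifact of the normal form. If invoking \cite{deligneletter} is not enough, my fallback is a direct computation. The automorphism group of the model preserving $\delta$ and fixing the node is generated by rescalings $x\mapsto ux$, $y\mapsto u^{-1}y$ together with the compatible scaling of $\theta$ (and, for NS, of $\theta_i$ by $\pm u^{\pm1/2}$-type factors), plus the swap of the two branches and higher-order terms. The higher-order terms act trivially on $T^1$, since $T^1$ is a module over $\osig$ and so only their linear part at the node matters. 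I would then verify by hand, using the explicit basis in Appendix \ref{computation of ti for the nodes}, that these generators act diagonally on $(a,\alpha)$ and permute $(a_1,a_2)$ and $(\alpha_1,\alpha_2)$ symmetrically, so that $\ker\pi_2$ and $\ker\sigma$ are preserved.
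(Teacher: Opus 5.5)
Your main argument is the paper's: you read the Deligne conditions intrinsically as existence of a qscf extension of $\delta$ to the first-order deformation, and note that transporting a deformation by a superconformal automorphism of the chart (the paper's twisted identification $i\circ\rho$) preserves that property, so the kernel is stable. The common refinement, the appeal to $\psi^{-1}$, and the parity shift $M\otimes_k k^{1|1}$ (which recovers the odd part that the paper leaves untreated) are harmless refinements. The coordinate fallback is unnecessary; in any case its claim that automorphisms act diagonally would fail over bases with odd elements, where odd supertranslations act triangularly on $(a,\alpha)$, although they still preserve $\alpha=0$.
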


\begin{proof}
Let
\[
\phi\colon Y=\Spec(R)\to \Sigma\subset X
\qquad\text{and}\qquad
\phi'\colon Y=\Spec(R)\to \Sigma\subset X
\]
be two superconformal coordinate charts of the same type, so that
\[
\phi'=\phi\circ \rho
\]
for some superconformal automorphism
\[
\rho\colon (Y,\delta)\to (Y,\delta)
\]
over $T=\Spec(A)$. We must show that the two submodules of
$T^1(R/A,R\otimes_A M)$ obtained from the charts $\phi$ and $\phi'$ coincide.

The automorphism $\rho$ lifts to an action of $T^1(R/A,R\otimes_A M)$. Indeed, let
$Y'/A'$ be a superconformal deformation of $Y$ with superconformal identification denoted by
\[
i\colon Y \xrightarrow{\sim} Y'\times_{A'} A .
\]
Define $\rho\cdot Y'$ to be the same $A'$--superscheme $Y'$, but with  superconformal identification twisted by $\rho$:
\[
i\circ \rho\colon Y \xrightarrow{\sim} Y'\times_{A'} A .
\]
This gives an action of $\on{sAut}(Y,\delta)$ on deformation classes of
$Y$ over $A'$. Using $Y'$ as a base point, we have a canonical identification of deformation
classes of $X/T$, forgetting the qscf structure, with $T^1(R/A,R\otimes_A M)^+$. The $\rho$ action clearly descends to an $R\plus$-linear action on $T^1(R/A,R\otimes_A M)^+$ by superconformal automorphisms $\rho \in \on{sAut}(Y,\delta)$.

To prove the lemma,  it suffices to show that $A^1(R/A,R\otimes_A M)^+\subset T^1(R/A,R\otimes_A M)^+$
is stable under this action. Let $[Y']\in A^1(R/A,R\otimes_A M)^+$, so by
definition $Y'$ admits a qscf structure $\delta'$ extending the given
qscf structure $\delta$ on $Y$. We claim that $\rho\cdot Y'$ is again
superconformal. Indeed, we equip the same total space $Y'$ with the
same qscf structure $\delta'$. With respect to the new identification
of the special fiber, its reduction is
\[
(i\circ \rho)^*(\delta'|_{Y'\times_{A'}A})
=
\rho^*\bigl(i^*(\delta'|_{Y'\times_{A'}A})\bigr)
=
\rho^*(\delta)
=
\delta ,
\]
because $\rho$ is superconformal. Thus, $(Y', i \circ\rho)$ is a superconformal deformation of $Y$.
\end{proof}

\subsection{Moduli of stable super Riemann surfaces is an algebraic superstack} \label{section: moduli of stable super Riemann surfaces is a superstack}

Let $\ovmfr$ be the category over $\sS$ whose fiber over $T \in \sS$ is the groupoid $\ovmfr(T)$ whose objects are families $(X \to T, \delta)$ of stable super Riemann surfaces  and whose morphisms are superconformal isomorphisms.
The pullback in $\ovmfr$ of an object $(X/T, \delta)$ by a morphism $S \to T$ in $\sS$ is the pair
\[ (X_S, \delta_S):= (X \times_T S, \ \ppull \delta: \ppull \ox \cong \oxs \to \ppull \duaxt \cong \duaxs )\]
where $p: X_S \to X$ is the canonical morphism, and both isomorphisms are canonical. 
We claim $\delta_S$ is quasi superconformal: For (i): if $U$ is the smooth locus in $X$, then $U_S$ is smooth in $X_S$. Let $\dd$ denotes the superconformal structure on $U$ determined by $\delta$. Then by definition the derivation $\tildelt$ associated to $\delta$ fits into the short exact sequence 
\[ 0 \to \dd^{-1} \to \Omega_{U/T}^1 \xrightarrow{\tildelt \vert_U} \dd^{-1} \cong \duaxt \to 0\]
Since $\dd$ pulls back to a superconformal structure on $U_S$, the sequence remains exact after pullback: 
\[ 0 \to \ppull \dd^{-1} \to \Omega_{U_S/S}^1 \xrightarrow{\ppull\tildelt \vert_U} \ppull \dd^{-1} \cong \duaxs \vert_{U_S} \to 0\]
where $\delta_S$ is the derivation associated to $\ppull \tildelt \vert_U$.

For (ii): for each geometric point $\ov{s} \to S$ the corresponding geometric fiber of $X_S$ is canonically identified with the geometric fiber of $X$ over the point $\ov{s} \to S \to T$ and $\delta_S$ and $\delta$ restrict to the same derivation under this identification.

\begin{prop}
    \label{prop: stable srs are an etale superstack}
    $\ovmfr$ is an \'etale superstack. 
\end{prop}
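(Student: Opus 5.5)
The plan mirrors the proof of Proposition \ref{prop: srs is superstack}, with Proposition \ref{prop: stable supercurves are a superstack} replacing Proposition \ref{proposition: supercurves is a superstack}. A stable super Riemann surface is a stable supercurve together with a qscf structure $\delta\colon \ox \to \duaxt$. So it suffices to show that the morphism data and the derivation data descend, and that the descended derivation still satisfies conditions (i) and (ii) of Definition \ref{definition: stable super Riemann surface}.

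\emph{Descent for isomorphisms.} Let $(X,\delta_X),(Y,\delta_Y)\in\ovmfr(U)$, and suppose we are given superconformal isomorphisms $\alpha_i$ over an \'etale cover $\{U_i\to U\}$ that agree on overlaps. By Proposition \ref{prop: stable supercurves are a superstack}, the $\alpha_i$ glue to a unique isomorphism $\alpha\colon X\to Y$ of stable supercurves. It then remains to check the equality $\alpha^*\delta_Y=\delta_X$. Both sides are morphisms of sheaves $\ox\to\duaxt$; they are $\mc{O}_U$-linear, though not $\ox$-linear. Here we use that the dualizing sheaf of a relatively Cohen--Macaulay family commutes with base change, so that $\pripull\duaxt\cong\omega_{X_i/U_i}$. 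The two morphisms agree after pullback to each $X_i=X\times_U U_i$. Since $X_i\to X$ is an \'etale cover, and morphisms of quasi-coherent sheaves (equivalently, sections of the sheaf of differential operators) satisfy \'etale descent, the two morphisms coincide on $X$.

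\emph{Descent for objects.} Let $(X_i/U_i,\delta_i)$ be given together with superconformal isomorphisms $\alpha_{ij}$ satisfying the cocycle condition. First apply Proposition \ref{prop: stable supercurves are a superstack} to obtain a stable supercurve $X/U$ with isomorphisms $\pripull X\cong X_i$. Using base change for $\duaxt$ again, the derivations $\delta_i\colon\mc{O}_{X_i}\to\omega_{X_i/U_i}$ are compatible on the overlaps $X_{ij}$, precisely because the $\alpha_{ij}$ are superconformal. They therefore form descent data for a morphism of sheaves of $\mc{O}_U$-modules $\ox\to\duaxt$. This glues to a unique $\delta$, and the Leibniz rule for $\delta$ can be verified \'etale locally. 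Parity preservation and $\mc{O}_U$-linearity are likewise local.

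\emph{Checking the qscf conditions.} For (i), the smooth locus $V\subset X$ pulls back to the smooth loci $V_i\subset X_i$, since smoothness is \'etale local. The superconformal distributions $\dd_i$ on $V_i$ therefore glue exactly as in Proposition \ref{prop: srs is superstack}. Surjectivity of $\delta|_V$ and maximal non-integrability of $\ker(\delta)^\perp$ are checked \'etale locally, so $\delta|_V$ is superconformal. For (ii), every geometric point $\ovt\to U$ lifts to a geometric point of some $U_i$. Along this lift the geometric fibers of $X$ and $X_i$ are identified, compatibly with $\delta$ and $\delta_i$, so $\delta^-$ is an isomorphism on each geometric fiber.

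I expect the main obstacle to be the gluing of $\delta$. It relies on $\duaxt$ being a coherent sheaf whose formation commutes with arbitrary (in particular \'etale) base change, which is needed to identify $\pripull\duaxt$ with $\omega_{X_i/U_i}$ compatibly with the $\alpha_{ij}$. It also requires treating a derivation as an $\mc{O}_U$-linear morphism of quasi-coherent sheaves, so that fpqc/\'etale descent applies. I would justify the first point using relative Cohen--Macaulayness, as in the proof of Proposition \ref{prop: stable srs and quasi spin curves}. For the second, I would encode $\delta$ as an $\ox$-linear map $\Omega^1_{X/U}\to\duaxt$, which reduces it to descent for homomorphisms of quasi-coherent sheaves.
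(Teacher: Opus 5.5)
Your proposal is correct and follows the paper's proof essentially step for step. You glue the underlying isomorphisms and stable supercurves via Proposition~\ref{prop: stable supercurves are a superstack}, compare or glue the derivations $\delta_i$ by \'etale descent using superconformality of the $\alpha_{ij}$, verify condition (i) by gluing the superconformal distributions on the smooth loci, and verify condition (ii) fiberwise. Encoding $\delta$ as an $\ox$-linear map $\Omega^1_{X/U}\to\omega_{X/U}$, and invoking base change of the dualizing sheaf for relatively Cohen--Macaulay families, spells out the justification the paper leaves implicit when it says the $\delta_i$ ``glue to a global derivation.''
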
 

\begin{proof}
    \noindent \emph{Descent for isomorphisms.}  Let $X,Y$ in $\ovmfr(U)$, and let $\{U_i \to U\}$ be an \'etale covering, and let $\alpha_i: \pripull X \lgr \pripull Y$ be superconformal isomorphisms agreeing on overlaps. Morphisms of superschemes satisfy etale descent, and so there exists an isomorphism $\alpha: X \lgr Y$ of superschemes over $U$ such that $\pripull \alpha = \alpha_i$, and since $\alpha_i$ is superconformal the derivation
    \[ \alpha^*(\delta_Y) - \delta_X : \ox \to \omega_{X/U}\]
    vanishes along the \'etale cover $\pripull X \to X$ and thus vanishes globally, implying $\alpha^*(\delta_Y) = \delta_X$.
      \smallskip
    
      \noindent \emph{Descent for objects.} We are given a collection of stable super Riemann surfaces $\{(X_i/U_i, \delta_i) \}$ in $\ov{\mfr}(U_i)$ together with superconformal isomorphisms $\alpha_{ij}: \prijpull X_i  \lgr \prjipull X_j$ satisfying the cocycle condition on triple overlaps.
 
   By Proposition \ref{prop: stable supercurves are a superstack}, we know that the $X_i/U_i$ glue to a unique stable supercurve $X/U$ such that $\pripull X \cong X_i$, so we are left to show that $X$ admits a quasi superconformal structure $\delta$ satisfying  $\pripull \delta = \delta_i$.  Since the $\alpha_{ij}$ are superconformal,
   \[ \prijpull \delta_i =\alpha_{ij}^* \prjipull(\delta_j), \]
 and so the $\delta_i$ agree on all overlaps and thus glue to a global derivation, $\delta: \ox \to \omega_{X/U}$ such that $\pripull \delta= \delta_i$. 

We are left to check the two conditions for a derivation $\delta$ to be a qscf structure. For condition (i): First note that smoothness is an \'etale local property, so the smooth loci $Y_i \subset X_i$ glue to the smooth locus $Y \subset X$, and $\delta \vert_Y$ pulls back to $\delta_i \vert_{Y_i}$ for all $i$. The $\delta_i \vert_{Y_i}$ come from a superconformal structure $\dd_i$ defined on $Y_i \subset X$. The isomorphisms $\alpha_{ij}$ restrict to superconformal isomorphisms on all smooth overlaps, and thus the $\dd_i$ glue to a superconformal structure $\dd$ on $Y \subset X$ since the $\alpha_{ij}$ restrict to superconformal isomorphisms on all smooth overlaps. The superconformal structure $\dd$ determines a derivation $\delta': \mc{O}_Y \to \omega_{Y/U}$ which agrees with $\delta \vert_Y$ after pullback to the \'etale cover, and thus the two must agree on $Y$. 

For condition(ii): The condition is imposed on the geometric fibers of $X/U$ which are canonically identified with the geometric fibers of the $X_i/U_i$, and $\delta$ and $\delta_i$ both pullback to the same derivation on each geometric fiber. 
\end{proof}

\noindent \textbf{A1.}  
Let $X/T$ and $X/T$ be stable super Riemann surfaces over a base superscheme $T$. From Proposition \ref{proposition: rep of diagonal for prestable supercurves}, the isomorphism functor $\uniso_T(X,X')$ of the underlying stable supercurves is represented by an open subscheme $U$ of the Hilbert superscheme parmetrizing closed subschemes of $X \times_T X'$ which are flat and proper over $T$. 

It remains to identify the locus in $U$ of superconformal isomorphisms. Let $\phi: X_U \cong X_U'$ be the universal isomorphism over $U$, and let $\delta_{X_U}: \Omega_{X/U} \to \omega_{X/U}$ and $\delta_{X_U'}: \Omega_{X'/U} \to \omega_{X'/U}$ be the respective quasi superconformal structures. The $\phi_U$ induces isomorphisms
\[  \phi_U^* \Omega_{X'/U} \cong \Omega_{X/U}, \qquad \phi_U^* \omega_{X'/U} \cong \omega_{X/U}, \] 
and we can consider the morphism on $X_U$,
\[ \Delta: \Omega_{X/U} \to \omega_{X/U}, \] where $\Delta := \phi_{U}^*(\delta_{X_U'}) - \delta_{X_U}$. Since $\Delta$ is a a morphism of coherent sheaves on $X$, both flat over $U$, we can apply \cite[Lemma 6.1]{felder2020moduli}, to find that there exists a largest closed subscheme $Z \subset U$ such that $\Delta=0$ over $T$. 
Then $T$ is exactly the locus of superconformal isomorphisms. 
\vspace{10mm}

   \noindent \textbf{A2.} Since the stable super Riemann surface $X/A$ is finite type we can repeat the arguments from Proposition \ref{atwo: colimit preserving} to show there exists $\lambda$ such that $X\cong X_{\lambda} \times_{A_{\lambda}} A$ as superschemes over $A$. The quasi superconformal structure on $X_{\lambda}$ pulls back to a quasi superconformal structure on $X$, and this must agree with the quasi superconformal structure on $X$, since a stable supercurve admits at most one quasi superconformal structure. 
 \vspace{10mm}

      \noindent \textbf{A3.}   For (S1)a:  From Proposition \ref{prop: stable srs and quasi spin curves} it follows that the bosonic reduction of $\ovmfr$ is moduli space of quasi spin curves. This moduli space is an algebraic stack, \cite{jarvis1998torsion}, and so we can apply the induction argument from Proposition \ref{athree: schlessinger} to the outside groups in \eqref{ses for s srs}. Note that the subsheaf $\mc{A}_{X/T}^1 \subset \mct_{X/T}^1$ inherits the same formal properties since its defines as a kernel, so in particular, it is coherent and
      \[ \mca_{X/T} \otimes_A M \cong \mca_{X_0/T_0} \otimes_{A_0} M.\]
      That 
      For (S1)b, and (S2) we apply the same arguments to the outside groups of \eqref{ses for s srs}. 
      \vspace{10mm}

\noindent \textbf{A4.} By Proposition \ref{prop: effectivity for stable supercurves}, there exists a stable supercurve $X/T$ such that $\prjpull X \cong X_j$, so it remains to construct a qscf structure $\delta$ on $X/T$ satisfying $\prjpull \delta=\delta_j$. 
We use the super Grothendieck existence theorem from \cite{faroogh2019existence}:
\begin{equation} \label{GET}
\on{Coh}(X) \cong \on{Coh}(\mcx),
\end{equation}
where $\mcx$ is the formal superscheme determined by the projective system $(X_j, X_i \to X_j)$. The stable supercurve $X/T$ satisfies $\mcx \cong \varprojlim (\prjpull X)$ as formal superschemes over $\mct=\on{Spf}(R)$. 

It is convenient to work with the $\ox$-linear maps $\tildelt_j: \omegaxjtj \to \duaxjtj$ corresponding to $\delta_j$. Since $\omegaxjtj$ and $\duaxjtj$ are compatible with base change, they form projective systems of $\oxj$-modules with transition maps
\[
\ff_j \to \pr_{ij*}\prijpull \ff_j \cong \ff_i, \qquad \ff_j \in \{\omegaxjtj,\ \duaxjtj\},
\]
and the $\tildelt_j$ are compatible with these transitions. Thus they define a morphism of coherent systems. By \eqref{GET}, there exist unique coherent sheaves $\omegaxt$ and $\duaxt$ on $X$ with $\prjpull \omegaxt \cong \omegaxjtj$ and $\prjpull \duaxt \cong \duaxjtj$, together with a unique morphism
\[
\tildelt:\omegaxt \to \duaxt
\]
such that $\prjpull \tildelt=\tildelt_j$ for all $j$. Let $\delta:\ox \to \duaxt$ be the associated derivation.

We verify that $\delta$ is quasi superconformal. For (ii), since $p_j^*\delta=\delta_j$, the restriction of $\delta$ to each geometric fiber agrees with that of $\delta_j$, so the condition holds. 

For (i), let $\Sigma_j \subset X_j$ denote the singular loci. These are compatible under base change, so define a formal closed subscheme $\wh{\Sigma}\subset \mcx$ with ideal sheaf $\wh{\ii}$. By \eqref{GET}, this corresponds to a unique ideal sheaf $\ii\subset \ox$ defining a closed subscheme $\Sigma\subset X$, and we set $U=X\setminus \Sigma$. Then $\prjpull U \cong U_j$, so $U$ is smooth over $T$ since this holds on the central fiber.

On $U$, the restrictions $\delta_j|_{U_j}$ determine superconformal structures, which glue (by the same argument as in the descent step) to a superconformal structure $\dd_U$ on $U$, with associated derivation $\delta_U:\ou \to \omega_{U/T}$. Since both $\prjpull \delta_U$ and $\prjpull(\delta|_U)$ agree with $\delta_j|_{U_j}$, they define the same morphism of coherent systems on $\mcx$, hence $\delta_U=\delta|_U$. This verifies (i), and completes the proof.
\vspace{10mm}

\noindent \textbf{A5.} All obstruction modules are trivial, so coherent. 
\vspace{10mm}

\noindent \textbf{A6.} 
For (A6) apply the arguments from Proposition \ref{proposition: remaining conditions for stable supercurves} to the two outside groups of \eqref{ses for s srs}. Note both are coherent so we can apply Remark \ref{remark about freeness}. 
\vspace{10mm}

\noindent \textbf{A7, A8.}
For the remaining conditions, we can apply the arguments from Proposition \ref{prop: supercurves satisfy constructibility, condition asix} and Proposition \ref{prop: supercurves satisfy aeight}  to the two outside groups in \eqref{ses for s srs} directly as those arguments only used coherence.

\section{Appendix} \label{appendix}

\subsection{The $T^i$--functor formalism}

We describe the $T^i$--functor formalism in the super setting.
The construction is formally identical to the classical one
(cf.~\cite{hartshorne2010deformation}), but one must keep track of
the $\mathbb{Z}_2$--grading.

\paragraph{Truncated cotangent complex and definition of $T^i$.} Let $A \to B$ be a morphism of superrings, and let $M$ be a $B$-module. First choose a polynomial ring $R = A[x]$, where $x$ is a set of variables $\{x_i,\theta_i\}$ with $|x_i|=0$ and $|\theta_i|=1$ such that $R \to B$ is surjective. Let $I = \ker(R \to B)$. Next choose a free $R$-module $F$ together with a surjection $j \colon F \to I \to 0$, and let $Q = \ker(F \to I)$. There are no further choices. Let $F_0$ be the $R$-submodule of $F$ generated by the Koszul relations \[ j(a)b - a j(b) \] for all $a,b \in F$. Note that $F_0 \subset Q$. Indeed, \[ j(j(a)b - a j(b)) = j(a)j(b) - j(a)j(b) = 0 \] by $R$-linearity of $j$, and since $j(a), j(b) \in I \subset R$. The truncated cotangent complex is the complex of $B$-modules \[ L_2 \lra{d_2} L_1 \lra{d_1} L_0 \] defined as follows: \[ L_2 = Q/F_0, \qquad L_1 = F \otimes_R B = F/IF, \qquad L_0 = \Omega_{R/A} \otimes_R B. \] The map $d_2 \colon L_2 \to L_1$ is induced by the inclusion $i \colon Q \to F$. Indeed, $i(F_0) \subset IF$, since $j(a), j(b) \in I$ and $a,b \in F$, so \[ j(a)b - a j(b) \in IF. \] The map $d_1 \colon L_1 \to L_0$ is obtained by precomposing the map \[ d: I/I^2 \longrightarrow \Omega_{R/A} \otimes_R B \] in the conormal sequence with the map $F/IF \to I/I^2$ induced by $j$, noting that $j(IF) \subset I^2$. As in the classical case, $L_0$ and $L_1$ are free $B$-modules, since both are obtained by base change from free $R$-modules.

For any $B$--module $M$ we define
\[
T^i(B/A,M)=h^i\!\left(\underline{\on{Hom}}_B(L_\bullet,M)\right),
\]
where $\underline{\on{Hom}}$ denotes the internal Hom of graded
$B$--modules.

A point to keep in mind is the following.  The functors $T^i$
are defined using the \emph{internal} Hom in the category of
$B$--modules, not the grading--preserving Hom.  One basic reason for this is that we want 
\[
T^0(B/A,B) \cong T_{B/A}:= \underline{Der}_A(B,B).
\]

All classical properties of the $T^i$--functors described in
Chapter~1.1 of \cite{hartshorne2010deformation} continue to hold
verbatim in the super setting.

\paragraph{Deformation-obstruction theory for superschemes}
Let $f\colon X\to Y$ be a morphism of superschemes and let $\ff$
be a coherent sheaf on $X$.  The $T^i$--functors are compatible
with localization and therefore define coherent sheaves
\[
T^i(X/Y,\ff)
\]
on $X$.  For any affine open
\[
U=\Spec(B)\subset X
\]
with image $\Spec(A)\subset Y$, we set
\[
T^i(X/Y,\ff)(U)=T^i(B/A,M).
\]

For $i\in\{1,2\}$ the sheaves $\mct^i$ are supported along the
singular locus of $X$.

The following theorem describes the deformation--obstruction
theory of $X$.

\begin{theorem}[Compare to Theorem 10.2 in \cite{hartshorne2010deformation}]
\label{from Hart a}
Let $X$ be a superscheme flat and proper over $A$, and let
$A'\to A$ be a square--zero extension with kernel
$M=\ker(A'\to A)$ a finitely generated $A$--module. Then:
\begin{enumerate}
\item[(a)] There are three successive obstructions to the existence
of an extension $X'$ of $X$ over $A'$, lying in
\[
H^0(X,\mct_{X/A}^2 \otimes_A M)^+,\qquad
H^1(X,\mct_{X/A}^1\otimes_A M)^+,\qquad
H^2(X,\mct_{X/A} \otimes_A M))^+ .
\]

\item[(b)] If these obstructions vanish and a deformation $X'$
over $A'$ is fixed, there is an exact sequence of pointed sets
\[
0\to H^1(X,\mct_{X/A} \otimes_A M )^+ \to \Def(X/A,A') \to H^0(X,\mct_{X/A}^1 \otimes_A M)^+
\to H^2(X,\mct_{X/A} \otimes_A M)^+ .
\]
\end{enumerate}
\end{theorem}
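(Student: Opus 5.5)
The plan is to copy Hartshorne's argument for Theorem~10.2, keeping track of the $\z_2$-grading: restrict to even (grading-preserving) cocycles, since deformations over $A'$ are even objects. Each obstruction and classification step is a local-to-global statement. I would organize the proof around a finite affine open cover $\{U_\alpha=\Spec(B_\alpha)\}$ of $X$; one exists since $X$ is proper, hence quasi-compact, and we may take it separated so that \v{C}ech cohomology computes $H^i$. I would use the local theory on each $B_\alpha$: the super analogues of Hartshorne's Theorems 3.x and 5.x. Their proofs are formal manipulations with the truncated cotangent complex $L_\bullet$ above and carry over verbatim. They say:
\begin{itemize}
\item[(1)] the obstruction to extending a flat $A$-algebra $B$ to a flat $A'$-algebra lies in $T^2(B/A,B\otimes_A M)^+$;
\item[(2)] if extensions exist, their isomorphism classes form a torsor under $T^1(B/A,B\otimes_A M)^+$;
\item[(3)] automorphisms of a given extension restricting to the identity are $T^0(B/A,B\otimes_A M)^+=\Gamma(U,\mct_{X/A}\otimes_A M)^+$.
\end{itemize}
Only even elements appear, because the isomorphisms and automorphisms involved are grading-preserving.

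For (a), I would proceed in three steps. First, the local obstructions in (1) are compatible with localization, so they glue to a global section of $\mct^2_{X/A}\otimes_A M$; it is even and vanishes iff each $U_\alpha$ extends. This is the first obstruction, in $H^0(X,\mct^2\otimes M)^+$. Second, assuming it vanishes, choose local extensions $U'_\alpha$. On overlaps, the two restrictions $U'_\alpha|_{U_{\alpha\beta}}$ and $U'_\beta|_{U_{\alpha\beta}}$ differ by a section of $\mct^1\otimes M$ over $U_{\alpha\beta}$, by (2). These differences form an even \v{C}ech $1$-cocycle of the sheaf $\mct^1\otimes M$. Changing the choices of $U'_\alpha$ modifies it by a coboundary, so we get a class in $H^1(X,\mct^1\otimes M)^+$, which vanishes iff the $U'_\alpha$ can be re-chosen to be pairwise isomorphic on overlaps. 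Third, choose such isomorphisms $\varphi_{\alpha\beta}$. The failure of the cocycle condition, $\varphi_{\gamma\alpha}\varphi_{\beta\gamma}\varphi_{\alpha\beta}$, is an automorphism restricting to the identity, hence by (3) an even $2$-cocycle in $\mct_{X/A}\otimes M$. Its class in $H^2(X,\mct\otimes M)^+$ is independent of the choices up to coboundary and vanishes iff the $\varphi$ can be adjusted to glue to an $X'$. Flatness of the glued $X'$ is local, so it holds.

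For (b), fix a deformation $X'$. The map $\Def(X/A,A')\to H^0(X,\mct^1\otimes M)^+$ records the local difference classes of a deformation $X''$ from $X'$ on each $U_\alpha$ via (2); these glue to a global section. Its kernel consists of the deformations locally isomorphic to $X'$. For those, the comparison of local isomorphisms on overlaps gives the standard torsor identification with $H^1(X,\mct\otimes M)^+$, using (3) for the ambiguity of the local isomorphisms. Exactness at $\Def(X/A,A')$ then follows. The final map to $H^2(X,\mct\otimes M)^+$ sends a section to the gluing obstruction of the third step, applied to the local deformations it prescribes. Exactness at $H^0$ is exactly the statement that the third-step obstruction vanishes.

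The step I expect to be the main obstacle is the sign and parity bookkeeping. One must check that (1)--(3) hold with internal $\underline{\Hom}$, and that restricting to even parts commutes with the \v{C}ech constructions. This holds because taking $(-)^+$ is exact on super modules, so $H^i(\,\cdot\,)^+=H^i((\cdot)^+)$. I would verify the local statements (1)--(3) first, in the super setting, by direct construction from $R=A[x]$, $I$ and $F$.
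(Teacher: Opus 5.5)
Your proposal is correct and follows the paper's route: the paper's proof is the single assertion that Hartshorne's classical argument goes through unchanged, and you spell that argument out. That is, you use the local $T^2$/$T^1$/$T^0$ theory on an affine cover with affine overlaps, then \v{C}ech gluing in three successive stages, keeping only even classes. One point to keep straight: the $H^2$ class in your third step is independent of the choice of the $\varphi_{\alpha\beta}$, but it depends on the local extensions chosen in the second step, and that dependence is exactly what the map $H^0(X,\mct^1_{X/A}\otimes_A M)^+\to H^2(X,\mct_{X/A}\otimes_A M)^+$ in (b) records.
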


\begin{proof}
The proof is identical to the classical case.
\end{proof}

If $M$ is finitely generated over the full reduction $A_0=A/\frak N_A$, then
\begin{equation}\label{reduce to the reduction}
\mct_{X/T}^i\otimes_A M
\cong
\mct_{X_0/A_0}^i\otimes_{A_0} M .
\end{equation}

For $i=0$ this follows from the identification $\mct_{X/T}^0 \cong \mct_{X/T}$ with the relative tangent sheaf. Indeed, for any affine open $U=\Spec(B)\subset X$, set $B_0=B/\frak N_A B$. Any $A$--derivation $\delta\colon B\to M$ satisfies $\delta(\frak N_A B)=0$, so it factors through $B\to B_0$, giving an $A_0$--linear derivation $\tilde\delta\colon B_0\to M$. This yields a canonical map
\[
\on{Der}_A(B,M)\to \on{Der}_{A_0}(B_0,M),
\]
whose inverse is obtained by composing an $A_0$--derivation $B_0\to M$ with the projection $B\to B_0$.

        \subsection{Computation of $T^1$ for the Ramond and NS models} \label{computation of ti for the nodes}

We compute the $T^1$-modules for both the Ramond and NS node. 

Let $A$ be a superring, and let $B$ be a finite-type $A$-superalgebra. 
The general procedure used to compute $T^1(B/A, B)$, or more generally $T^1(B/A,M)$, is the same as in the ordinary case: choose a surjection $R=A[x_1, \dots, x_n] \to B$, where $x_i$ can be even or odd, and denote its kernel by $I$. 
Then
\begin{equation} \label{tone}
T^1(B/A,B) = \on{coker}(\undhom(\Omega_{R/A} \otimes_R B,B) \xrightarrow{\psi} \homiisq)
\end{equation}
where $\psi$ is the $B$-linear morphism sending $\phi \in \undhom(\Omega_{R/A} \otimes_R B,B)$ to the composition $\phi \circ d$, and 
\[
d: \iisq \to \Omega_{R/A} \otimes_R B
\]
is the K\"ahler differential. 
\medskip
\paragraph{Ramond node.} Let $R=k[x,y,\theta]$, $I=(xy)$, and let $B=R/I$. 
\medskip

Choose a presentation $R \to I$, $r \mapsto r\cdot xy$, and compose with the canonical surjection $R \to R/I^2$. Then 
\[
\iisq=\on{image}\!\left(R \to R \to R/I^2\right).
\]
Since $I\cdot(I/I^2)=0$, the induced map $R \to I/I^2$, $r\mapsto \ov r\,xy$, factors through $B=R/I \to \iisq$. If $b\,xy\in I^2$, then $b\in I$, so this map is injective and hence induces a $B$-linear isomorphism $\iisq\cong B$. Therefore
\[
\homiisq\cong B.
\]
For $\phi\in\homomrk$,
\[
\psi(\phi)(xy)=(\phi\circ d)(xy)=x\,\phi(dy)+y\,\phi(dx).
\]
Since $\phi(dx)$ and $\phi(dy)$ are arbitrary elements of $B$, the image is $(x,y)$. Hence
\[
T^1(B/k,B)\cong B/(x,y)\cong k[\theta],
\]
a $k$-super vector space of dimension $(1|1)$.
\medskip

For a finite $B$-module $M$, the same argument with 
    \[
    \homomrk \Longrightarrow \undhom(\Omega_{R/k}\otimes B,M),
    \qquad
    \homiisq \Longrightarrow \undhom(\iisq,M),
    \]
    gives
    \[
    T^1(B/k,M)\cong M/(x,y)M=M\otimes_B k[\theta].
    \]
    \medskip

We can make the same computation over an arbitrary superring $A$: Let $R=A[x,y, \theta]$, $I=(xy-t)$, and let $B=R/I$. 
The same arguments show
\[ T^1(B/A,B) \cong B/(x,y)  \cong  B/(x,y, \theta) \oplus  B/(x,y, \theta) \theta.\] 

From the computation it follows that every deformation of the Ramond model over a square--zero extension is isomorphic to 
\[ \Spec A'[x,y, \theta]/(xy-a - \alpha \theta)\]
for unique $a \in (A')^+$ and $\alpha \in (A')\minus$ 
\medskip

\paragraph{NS case.} Let $R=\nspoly$, let $I=(xy, x \theta_2, y \theta_1, \theta_1 \theta_2)$, and let $B=R/I$. Note that $\Spec(B)$ is the standard \'etale neighborhood of an NS node on a stable supercurve. 
\medskip 

Choose a presentation $R^{2|2} \to I$,
\[
(r,s,t,u) \longmapsto r \cdot xy + s \cdot x \theta_2 + t \cdot y \theta_1 + u \cdot \theta_1 \theta_2,
\]
and compose with $R \to R/I^2$ to obtain 
\[
\iisq = \on{image}(R^{2|2} \to R \to R/I^2).
\]
Since $I \cdot (I/I^2)=0$, the induced map factors through $B^{2|2} \to \iisq$. Let $K$ denote its kernel, so $\iisq \cong B^{2|2}/K$, where $K$ is generated by
\[
k_1= (\theta_2,-y, 0,0), 
\qquad 
k_2=(\theta_1,0, -x, 0), 
\qquad 
k_3=(0, \theta_1,0,-x), 
\qquad 
k_4=(0, 0,\theta_2,y).
\]
\medskip

Let $L_1,L_2,L_3,L_4$ be a basis for $\undhom(B^{2|2},B)$. An element $L=fL_1+gL_2+hL_3+iL_4$ lies in $\homiisq$ if and only if $L\vert_K=0$, which gives
\[
f \theta_2 - gy =0, 
\qquad 
f \theta_1 - hx=0,
\qquad 
g\theta_1-ix=0, 
\qquad 
h\theta_2- i y=0.
\]
Hence
\[
f \in (x,y,\theta_1,\theta_2), 
\qquad 
g \in (x,\theta_1,\theta_2), 
\qquad 
h \in (y,\theta_1,\theta_2), 
\qquad 
i \in (\theta_1,\theta_2),
\]
and
\[
\homiisq =
(x,y,\theta_1,\theta_2)L_1
+ (x,\theta_1,\theta_2)L_2
+ (y,\theta_1,\theta_2)L_3
+ (\theta_1,\theta_2)L_4
\subset \undhom(B^{2|2},B).
\]

The image of $\psi$ in \eqref{tone} is generated by 
\[
(y,\theta_2,0,0), 
\qquad 
(x,0,\theta_1,0), 
\qquad 
(0,0,y,\theta_2), 
\qquad 
(0,x,0,\theta_1),
\]
and therefore
\begin{align*}
T^1(B/k,B) 
& = (x,y,\theta_1,\theta_2)/(x,y)\,L_1
+ (x,\theta_1,\theta_2)/(x,\theta_2)\,L_2  \\
& \quad 
+ (y,\theta_1,\theta_2)/(y,\theta_1)\,L_3
+ (\theta_1,\theta_2)/(\theta_1,\theta_2)\,L_4. \\
{} & \cong k^{2|2}
\end{align*}

We can make the same kind of computation over an arbitrary superring $A$: Let $R=A[x,y, \theta_1, \theta_2]$, $I=(xy-t^2, x \theta_2- t \theta_1, y \theta_1 -t \theta_2, \theta_1\theta_2)$, and let $B=R/I$. 
The same arguments show
\begin{align*}
     T^1(B/A,B) & \cong B/(x,y, \theta_1, \theta_2,t) \theta_1  \oplus  B/(x,y, \theta_1, \theta_2,t) \theta_2  \oplus  B/(x,y, \theta_1, \theta_2,t) \theta_1 \oplus B/(x,y, \theta_1, \theta_2,t)    \theta_2 \\
     {} & \cong  A/(t) \theta_1 \oplus A/(t) \theta_2 \oplus A/(t) \theta_1 \oplus A/(t) \theta_2
\end{align*} 

This shows that every deformation of the NS model over a square--zero extension $A' \to A$ is isomorphic to 
\[ \Spec A'[x,y, \theta_1, \theta_2]/(xy-t^2 - \alpha_1 \theta_1 - \alpha_2, x \theta_2 - t_1 \theta_1, y \theta_1 - t_2 \theta_2, 
\theta_1, \theta_2).\]

\subsection{\'Etale neighborhood of the node on a stable supercurve}

    \begin{lemma}[Compare with {\cite[Corollary 1.14]{alper2015artin}}]  \label{etalenbhd}
  Let $X_1,X_2$ be superschemes of finite type over $k$. Suppose $x_1 \in X_1, x_2 \in X_2$ are $k$-points such that $\wh{\mc{O}}_{X_1,x_1} \cong \wh{\mc{O}}_{X_2,x_2}$. There exists a common \'etale neighborhood 
\[
\begin{tikzcd}
            & {(X',x')} \arrow[ld] \arrow[rd] &             \\
{(X_1,x_1)} &                                 & {(X_2,x_2)}.
\end{tikzcd}
\]
In particular, if $x$ is a Ramond node on a stable supercurve $X$, then $(X,x)$ and 
\[ (Y,y)=(\Spec k[x,y, \theta]/(xy), x=y=\theta=0)\]
share a common \'etale neighborhood. 
If $x$ is a NS node, then the common \'etale neighborhood is
\[ (Y,y)= \Spec k[x,y, \theta_1, \theta_2]/(xy, x \theta_2, y \theta_1, \theta_1 \theta_2),x=y=\theta_1=\theta_2=0). \]

        \end{lemma}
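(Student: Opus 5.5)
The plan is to reduce to Artin's classical result on common étale neighborhoods (\cite[Corollary 1.14]{alper2015artin}, which rests on Artin approximation) by working with a finite presentation and approximating the formal isomorphism by an étale one. Set $A_1=\mc{O}_{X_1,x_1}$, $A_2=\mc{O}_{X_2,x_2}$, and fix an isomorphism $\wh{A_1}\cong\wh{A_2}$. Both are local rings of superschemes of finite type over $k$, so after shrinking we may take $X_2=\Spec(k[z,\eta]/I)$ with finitely many even variables $z$, finitely many odd variables $\eta$, and $I=(f_1,\dots,f_r)$ finitely generated. A morphism $X_1\to X_2$ defined near $x_1$ is then the same as a tuple of elements of the right parities in $\mc{O}_{X_1}$ satisfying the equations $f_i=0$.

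The formal isomorphism gives such a solution $(\hat z,\hat\eta)$ in $\wh{A_1}$ with $\hat z(x_1)=0$ and $\hat\eta\in\maxid$. By super Artin approximation (available in \cite{ott2023artin}, or derived from the henselian approximation argument in \cite{faroogh2019existence}), there is an étale neighborhood $(X',x')\to(X_1,x_1)$ and a solution $(z',\eta')$ in $\mc{O}_{X',x'}$ agreeing with $(\hat z,\hat\eta)$ modulo $\maxid^2$. This solution defines a morphism $X'\to X_2$ sending $x'\mapsto x_2$. I expect this approximation step to be the main obstacle. The odd variables must be handled carefully: the even part $\mc{O}^+$ is Noetherian and $\mc{O}^-$ is a finite $\mc{O}^+$-module, so the problem can be rewritten as a system of polynomial equations over the even henselian ring $(\mc{O}_{X_1,x_1}^h)^+$, with odd unknowns expanded in generators of $\mc{O}^-$. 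Classical Artin approximation then applies to this even system.

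It remains to show that $X'\to X_2$ is étale at $x'$. The induced map $\wh{A_2}\to\wh{\mc{O}_{X',x'}}\cong\wh{A_1}$ agrees with the given isomorphism modulo $\maxid^2$. Hence it induces an isomorphism on cotangent spaces $\maxid/\maxid^2$, including the odd parts, and so it is surjective. A surjective endomorphism-type map between complete Noetherian local superrings that are abstractly isomorphic must be an isomorphism. To see this, compare lengths of $\wh{A}/\maxid^n$, which are finite: a surjection between modules of equal finite length is bijective. This gives an isomorphism on completions, so $X'\to X_2$ is flat and unramified at $x'$, hence étale there after shrinking $X'$.

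The special cases are immediate. By the definitions of Ramond and NS nodes, $\wh{\mc{O}}_{X,x}$ is isomorphic to the completion of the corresponding model ring at the origin, and both $X$ (a stable supercurve over $k$) and $Y$ are of finite type over $k$.
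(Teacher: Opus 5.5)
Your proposal is correct and takes the same route as the paper. The paper's proof simply says to run the argument of \cite[Corollary 1.14]{alper2015artin} with super Artin approximation from \cite{ott2023artin}, and you spell out exactly that argument: approximate to first order from an \'etale neighborhood, get surjectivity on completions from the cotangent spaces, conclude by comparing lengths, and read off the node cases directly from the definitions.
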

        
        \begin{proof}
        For the first statement, we can use the same arguments as Corollary 1.14 of \cite{alper2015artin}, but with super Artin approximation \cite{ott2023artin}.
       The statements about the Ramond and NS nodes follow immediately from the definition.
        \end{proof}

        \subsection{Degree condition for strong projectivity}

        The next lemma is a statement about ordinary algebraic geometry. We include it for completeness. 

        \begin{lemma}\label{degree condition for sva} 
    Let $\pi\colon C \to T$ be a smooth proper family of curves of genus $g \ge 2$.  
    If $\mcl$ is a line bundle on $C$ such that $\deg(\mcl) \ge 2g+1$ on each connected component of $T$
    then $\mcl$ is strongly relatively very ample over $T$. 
    \end{lemma}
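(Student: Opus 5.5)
The plan is to verify the three conditions (i)–(iii) in the definition of strongly relatively very ample directly. We use cohomology and base change for the proper flat morphism $\pi$, and reduce everything to the classical fiberwise statement. The degree is locally constant on $T$, so we may assume $T$ is connected and $\deg(\mcl|_{C_t}) = d \ge 2g+1$ for every $t \in T$. We may also assume $T$ is locally Noetherian, as in the rest of the paper.

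\emph{Step 1: vanishing and local freeness.} For each $t \in T$, Serre duality on the smooth genus $g$ curve $C_t$ gives
\[
H^1(C_t, \mcl_t) \cong H^0(C_t, \omega_{C_t} \otimes \mcl_t^{-1})^\vee .
\]
Since $\deg(\omega_{C_t}\otimes \mcl_t^{-1}) = 2g-2-d < 0$, this group vanishes. The sheaf $\mcl$ is flat over $T$, because $C/T$ is flat and $\mcl$ is a line bundle. By cohomology and base change:
\begin{itemize}
\item $R^1\pi_*\mcl = 0$, and its formation commutes with arbitrary base change;
\item the base change map $\pi_*\mcl \otimes k(t) \to H^0(C_t,\mcl_t)$ is an isomorphism;
\item $\pi_*\mcl$ is locally free of rank $d+1-g$, by Riemann--Roch.
\end{itemize}
This gives (i), and shows that the formation of $\pi_*\mcl$ commutes with base change.

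\emph{Step 2: surjectivity of $\pi^*\pi_*\mcl \to \mcl$.} By Nakayama's lemma it suffices to check surjectivity at each point $p \in C_t$. Using the base change isomorphism from Step 1, this reduces to $\mcl_t$ being globally generated at $p$, i.e.\ to surjectivity of $H^0(\mcl_t) \to H^0(\mcl_t|_p)$. This follows from $H^1(\mcl_t(-p)) = 0$, which holds by the same duality argument since $\deg \mcl_t(-p) = d-1 > 2g-2$. This gives (ii), and hence a $T$-morphism $i_{\mcl}\colon C \to \pp(\pi_*\mcl)$.

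\emph{Step 3: $i_{\mcl}$ is a closed immersion (the main point).} Since $C/T$ is proper, $i_{\mcl}$ is proper. By the fiberwise criterion for closed immersions, a proper morphism over $T$ whose restriction to every fiber over $t \in T$ is a closed immersion is itself a closed immersion. This can be seen, for example, by noting that $i_{\mcl}$ is then a proper monomorphism (unramified and universally injective), hence a closed immersion.

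By Step 1, the fiber of $i_{\mcl}$ over $t$ is the morphism $C_t \to \pp(H^0(C_t,\mcl_t))$ given by the complete linear system of $\mcl_t$. This reduces us to the classical statement that a line bundle of degree at least $2g+1$ on a smooth curve is very ample. That statement follows from
\[
H^1(\mcl_t(-p-q)) = 0 \quad \text{for all } p,q \in C_t,
\]
since $d-2 \ge 2g-1 > 2g-2$, and this vanishing means the linear system separates points and tangent vectors.

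We expect the main care to be needed in this last step: invoking the fiberwise criterion correctly, over a possibly non-reduced base, requires that the identification of the fibers of $i_{\mcl}$ with the complete linear systems holds over arbitrary points of $T$. That identification comes from the base-change compatibility established in Step 1.
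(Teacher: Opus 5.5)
Your proposal is correct and follows the same route as the paper's proof: reduce to $T$ connected, check conditions (i)--(iii) fiberwise using Serre duality, and invoke the classical fact that a line bundle of degree $\ge 2g+1$ on a smooth curve is very ample. Two of your steps are stronger than the paper's. You use cohomology and base change with $H^1(C_t,\mcl_t)=0$ rather than Grauert's theorem, which needs a reduced base. You also justify the fiberwise criterion for closed immersions via proper monomorphisms, a step the paper only asserts, so your argument works over non-reduced $T$ as well.
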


    \begin{proof}   
        We may assume $T$ is connected. Choose a point $t \in T$; by assumption $\deg(\mcl_t) \ge 2g+1$.
        \smallskip

        \noindent \emph{$\pist \mcl$ is locally free:} By Grauert’s theorem if
        \[
        t \longmapsto h^0(C_t,\mcl_t)
        \]
        is locally constant, then $\pist \mcl$ is locally free on an open subset of $t$. To show the above map is constant, it suffices to show that $h^1(C_t,\mcl_t)=0$ for all $t\in T$. Indeed, if this is the case, 
        \[
        h^0(C_t,\mcl_t)=\chi(\mcl_t), \qquad \forall t \in T
        \]
        and the Euler characteristic is constant in flat families. By Serre duality, and the assumption that $\deg(\mcl_t) \ge 2g+1$, 
        \[
        h^1(C_t,\mcl_t)=h^0(K_{C_t}-\mcl_t)=0
        \]
       since $\deg(K_{C_t}-\mcl_t)\le -3$. Thus $h^1(C_t,\mcl_t)=0$ for all $t$, as required.
        \smallskip

        \noindent \emph{$\pipull \pist \mcl \to \mcl$ is surjective:} Since thee canonical map $\pipull \pist \mcl \to \mcl$ is a morphism of locally free sheaves it is surjective if and only if its restriction to each fiber $C_t$ is surjective. Since $\deg(\mcl_t)\ge 2g+1$, the line bundle $\mcl_t$ is very ample on $C_t$,in particular, it is generated by its global sections, and so the map
        \begin{equation}\label{gen by global}
        H^0(C_t,\mcl_t)\otimes \mc O_{C_t}\;\longrightarrow\;\mcl_t
        \end{equation}
        is surjective.

   \noindent \emph{$C \to \pp_T(\pist \mcl)$ is a closed immersion:}. 
    For each $t \in T$, $\mcl_t$ is very ample on $C_t$, and so the induced map
    \[
C_t \to \pp(H^0(C_t, \mcl_t))
    \]
    is a closed immersion, and thus is a closed immersion globally. 
    \end{proof}

\printbibliography

\end{document}